\title[$p$-adic iterated integration on semistable curves]{$p$-adic iterated integration and the Frobenius and monodromy operators on semistable curves}
\author{Eric Katz and Daniel Litt}
\date{\today}
\numberwithin{equation}{subsection}
\newcommand{\nc}[2]{\newcommand{#1}{#2}}
\nc{\on}{\operatorname}
\nc{\Berk}{\on{Berk}}
\nc{\End}{\on{End}}
\nc{\Gm}{{\mathbf{G}_m}}
\nc{\Ga}{\mathbf{G}_a}
\nc{\Log}{\on{Log}}
\nc{\Spec}{\on{Spec}}
\nc{\Vect}{\on{Vect}}
\nc{\abs}{\on{abs}}
\nc{\an}{\on{an}}
\nc{\conv}{{\on{conv}}}
\nc{\dR}{\on{dR}}
\nc{\gp}{\on{gp}}
\nc{\rig}{\on{rig}}
\nc{\st}{\on{st}}
\nc{\un}{\on{un}}
\nc{\nr}{\on{nr}}
\nc{\cA}{\mathcal{A}}
\nc{\cC}{\mathcal{C}}
\nc{\cD}{\mathcal{D}}
\nc{\cE}{{\mathcal{E}}}
\nc{\cF}{\mathcal{F}} 
\nc{\cG}{\mathcal{G}}
\nc{\cO}{\mathcal{O}}
\nc{\cP}{\mathcal{P}}
\nc{\cQ}{\mathcal{Q}}
\nc{\cU}{\mathcal{U}}
\nc{\cV}{\mathcal{V}}
\nc{\cW}{\mathcal{W}}
\nc{\cX}{\mathcal{X}}
\nc{\cY}{\mathcal{Y}}
\nc{\C}{\mathbb{C}}
\nc{\M}{{\mathbb{M}}}
\nc{\N}{{\mathbb{N}}}
\renewcommand{\P}{\mathbb{P}}
\nc{\Q}{\mathbb{Q}}
\nc{\R}{{\mathbb{R}}}
\nc{\Z}{\mathbb{Z}}
\nc{\sC}{{\mathscr{C}}}
\nc{\val}{{\on{val}}}
\newcommand{\<}{\langle}
\renewcommand{\>}{\rangle}
\nc{\checkthis}{[C]}
\nc{\rfthis}{[R]}
\DeclareMathOperator{\Spf}{Spf}
\DeclareMathOperator{\Res}{Res}
\DeclareMathOperator{\Isoc}{Isoc}
\DeclareMathOperator{\Lie}{Lie}
\newcommand\ps[1]{{\llbracket#1\rrbracket}}
\newcommand{\exterior}[1]{\mathop{\mathpalette\exterior@{#1}}}
\newcommand{\exterior@}[2]{%
  % raise to the baseline
  \raisebox{\depth}{%
  % select the script size relative to the current font
  \fontsize{\sf@size}{0}%
  % nullify the math surround
  \m@th
  % typeset \bigwedge, but using \textstyle in displays
  $\ifx#1\displaystyle\textstyle\else#1\fi\bigwedge$}%
  % the exponent, a bit nearer to the symbol
  ^{\mspace{-2mu}#2}%
  % remove the \scriptspace
  \kern-\scriptspace
}
\def\presuper#1#2%
\newcommand{\BC}{{\operatorname{BC}}}
\newcommand\BCint{\presuper\BC\int}
\newcommand\Vint{\presuper{{\operatorname{V}}}\int}
\newcommand\cint{\presuper{{\operatorname{c}}}\int}
\newcommand{\mynewtheorem}[2]{%
  \newtheorem{#1}[subsubsection]{#2}%
  \newtheorem*{#1*}{#2}%
  \newtheorem{#1sub}[subsubsection]{#2}} %Hey mom! Where's my muscle milk?
\theoremstyle{definition}
\theoremstyle{remark}
\begin{document}

\begin{abstract}
We reformulate the theory of $p$-adic iterated integrals on semistable curves using the unipotent log rigid fundamental group. This fundamental group carries Frobenius and monodromy operators whose basic properties are established. By identifying the Frobenius-invariant subgroup of the fundamental group with the fundamental group of the dual graph, we characterize Berkovich--Coleman integration, which is path-dependent, as integration along the Frobenius-invariant lift of a path in the dual graph. Vologodsky's path-independent integration theory which was previously described using a monodromy condition can now be identified as Berkovich--Coleman integration along a combinatorial canonical path arising from the theory of combinatorial iterated integration as developed by the first-named author and Cheng. 
\end{abstract}

\maketitle

\tableofcontents

%\part{Introduction}
\section{Introduction}

%Organization:
%\begin{enumerate}
%    \item Graph theoretic preliminaries
%    \begin{enumerate}
%        \item Fundamental groups
%        \item combinatorial integration
%    \end{enumerate}
%    \item Log rigid fundamental groups of curves
%    \begin{enumerate}
%        \item Abstract facts about Tannakian categories and fundamental groups
%        \item Log rigid isocrystals and cohomology
%        \item Log curves 
%        \item Log basepoints
%        \item Log rigid fundamental groups
%    \end{enumerate}
%    \item Frobenius and Monodromy on $\pi_1$
%    \begin{enumerate}
%        \item Frobenius
%        \item Monodromy
%    \end{enumerate}
%    \item Integration
%    \begin{enumerate}
%        \item Berkovich integration
%        \item Vologodsky integration
%        \item Examples: Tate curve, comparison with KRZB
%    \end{enumerate}
%\end{enumerate}

\subsection{$p$-adic integration}

The theory of line integrals on $p$-adic curves, introduced by Coleman \cite{Coleman:Annals}, produces locally analytic functions that vanish on points of number-theoretic interest. Specifically, the Chabauty method (as summarized in, say \cite{Serre:Lectures}) recognizes the image of torsion (resp., rational points under certain conditions) in the Jacobian as zeroes of (resp., linear projections of) the Lie group-theoretic $p$-adic logarithm. In fact, given a curve $X/\C_p$ with Abel--Jacobi map $\iota_p\colon X\to J$, one identifies $(\Lie J)^\vee$ with $\Omega^1(X)$, the vector space of $1$-forms. Given $\omega\in\Omega^1(X)$, the composition
\[\xymatrix{
X(K)\ar[r]^{\iota_p}& J(X)\ar[r]^{\Log}&\Lie J\ar[r]^\omega &\C_p
}\]
(where $\Log$ is $p$-adic logarithm) can be considered as a function $x\mapsto \int_p^x \omega$.
This type of integration (here called {\em abelian integration}) is global and depends on the Jacobian. On the other hand, Coleman's method of analytic continuation by Frobenius shows that these integrals could be performed {\em locally} on the curve if it is of good reduction. For bad reduction curves, the Lie group-theoretic integral still makes sense, but it does not agree with the analogue of Coleman integration (here called {\em Berkovich--Coleman integration}) as introduced by Coleman and de Shalit \cite{Coleman-deShalit} and fully developed by Berkovich \cite{Berkovich:integration}. For Berkovich--Coleman integration theory, one pieces together locally analytic Coleman functions on good reduction curves and rigid analytic annuli, but unfortunately, the resulting functions may not be single-valued. Instead, one obtains an integration theory depending on the homology class of paths in the dual graph of the curve (equivalently, paths in the Berkovich analytification of the curve). However, it is possible to compare the two notions of integral as was done by Stoll \cite{Stoll:uniform} who analyzed functions on annuli, by Besser--Zerbes \cite{BZ:Vologodsky} who made use of $p$-adic height pairings \cite{Besser:heights}, and by the first author with Rabinoff and Zureick--Brown \cite{KRZB} who studied the tropical Abel--Jacobi map. The comparison between these integration theories was employed in \cite{Stoll:uniform,KRZB} to obtain uniform bounds on rational and torsion points on curves under particular hypotheses.

The $p$-adic analogue of Chen's theory of iterated integrals \cite{Chen:iterated} (see also \cite{Hain:Bowdoin}) was employed in Minhyong Kim's non-abelian Chabauty method \cite{Kim:original} where rational points are obtained as the zeroes of locally analytic functions defined locally by iterated integration. Here, one has an expression  $\int \omega_1\omega_2\dots\omega_r$ where $\omega_i\in \Omega^1(X)$ and sets $F_1=\int \omega_1$, $F_2=\int F_1\omega_2$, \dots, $F_r=\int F_{r-1}\omega_r$. The integral is defined to be $F_r$.
An important challenge in making Kim's work uniform is to define these integrals globally. Now, a change of perspective is necessary. Instead of studying the integral $\int \omega$, one solves the differential equation $ds=\omega$. For iterated integrals, one solves the differential system for functions $F_1,\dots,F_r$
\[dF_0=0,\ dF_1=F_0\omega_1,\ dF_2=F_1\omega_2,\ dF_r=F_{r-1}\omega_r.\]
One can rewrite this system as solving the parallel transport equation $dF-\omega F=0$ on a trivial bundle equipped with connection matrix 
\[\omega=
\left[\begin{array}{cccccc}
0&0&0&\dots&0&0\\
\omega_1&0&0&0&\dots&0\\
0&\omega_2&0&0&\dots&0\\
0&0&\omega_3&0&\dots&0\\
0&0&0&\ddots&\ddots&0\\
0&0&\dots&0&\omega_r&0
\end{array}\right].\]
The notion of solving for parallel transport from $a\in X(K)$ to $b\in X(K)$ can be interpreted as finding an element of $\pi_1^{\dR,\un}(X;a,b)$, the Tannakian fundamental torsor of the category of unipotent bundles with integrable connection on $X$. In \cite{Besser:Coleman}, Besser reformulated Coleman integration on good reduction curves as finding a Frobenius-invariant path from $a$ to $b$. Through an analysis of weights on this fundamental group (which carries a Frobenius action), the path was found to be unique. In the bad reduction case, because of a non-trivial Frobenius-invariant subspace of the first cohomology, this is no longer the case. 
Instead, Vologodsky \cite{Vologodsky} imposed a monodromy condition on Frobenius-invariant paths to pick out a unique path from $a$ to $b$. This gives a unique integral depending only on endpoints. For single integrals, Vologodsky integration coincides with abelian integration. For iterated integrals, it is more mysterious.

In this paper, we reformulate Berkovich--Coleman integration and make Vologodsky integration explicit. We have chosen to use the language of log schemes and the unipotent log rigid fundamental group, i.e.,~ the Tannakian fundamental group of unipotent overconvergent isocrystals on a log curve $(X,M)$. The underlying scheme $X$ will be proper, so we do not have to make use of ovcerconvergence; indeed, our work fits into the {\em log convergent cohomology} and {\em analytic cohomology} frameworks of Shiho \cite{Shiho1,Shiho2}. Here, the coefficient objects are overconvergent isocrystals that can be given a filtration
\[\cE=\cE^0\supset \cE^1\supset \dots \supset \cE^{n+1}=0\]
where $\cE^i/\cE^{i+1}\cong \mathbf{1}^{\oplus n_i}$. Our arguments are inspired by rigid analytic geometry, and in order to reduce to the cases of  good reduction curves and annuli, we develop the theory of log basepoints, an enlargement of the theory of tangential basepoints. We will now describe the main results of our paper working backwards from our results on integration to the foundational work that they require.

\subsection{Berkovich--Coleman and Vologodsky integration}

Let $k$ be a field of finite characteristic; $W=W(k)$, the ring of Witt vectors; $V$, a totally ramified extension of $W$ with uniformizer $\pi$; and $K$ the field of fractions of $V$. 
Let $(X,M)$ be a proper log smooth curve defined over the standard log point $(S,\N)$ (with $S=\Spec k$) with dual graph $\Gamma$. Pick a lift of $X$ to a log smooth formal curve $(\cP,L)$ over $(\Spf V,N)$ where $N$ is the log structure induced by $\Z_{\geq 0}\to V$ given by $e_\pi\mapsto \pi$. Write $\cX$ for the tube $]X[_{\cP}$ (which coincides with the rigid generic fiber of $\cP$ when $X$ is proper).  Let $a$ and $b$ be points of $\cX(K)$ with induced fiber functors $F_a$ and $F_b$. Let $\pi^{\rig,\un}_1((X,M);F_a,F_b)$ be the log rigid unipotent fundamental torsor of $(X,M)$ from $F_a$ to $F_b$.  Let $\omega_1,\dots,\omega_n\in\Omega^1$ be $1$-forms on $\cX$.  For $p\in \pi^{\rig,\un}_1((X,M);F_a,F_b)$, in Section~\ref{s:integrationalongpaths},  we define the integral
\[\int_{p,a}^b \omega_1\dots\omega_r\]
in terms of parallel transport.
For added flexibility, we enlarge the notion of basepoints to {\em log basepoints equipped with lifts}.  
The main result of that section is the following:
\begin{theorem}
The integration theory $\int_{p,a}^b \omega_1\dots\omega_r$ has the following properties:
\begin{enumerate}
    \item (Proposition~\ref{l:multilinear}) multilinearity in $1$-forms;
    \item (Proposition~\ref{p:concatenation}) concatenation: for $p\in \pi_1^{\rig,\un}((X,M);F_a,F_b)$ and $q\in \pi_1^{\rig,\un}((X,M);F_b,F_c)$
\[\int_{pq,a}^c \omega_1\dots \omega_r=\sum_{k=0}^r\int_{p,a}^b \omega_{1}\dots \omega_k\int_{q,b}^c \omega_{k+1}\dots\omega_{r};\]
     \item (Proposition~\ref{p:functoriality})
functoriality: for a morphism of proper log smooth frames (as defined in Section~\ref{s:logrigidisocrystals}) 
    \[f\colon ((X,M_X),(Y,M_Y),(\cP,L))\to ((X',M_{X'}),(Y',M_{Y'}),(\cP',L'))\]
where $(X,M_X)$ and $(X',M_{X'})$ are weak log curves,
  \[\int_{p} f^*\omega'_1\dots f^*\omega'_r=
  \int_{f_*p} \omega'_1\dots\omega'_r;\]
  \item (Lemma~\ref{l:integrationbyparts}) integration by parts; and
  \item (Proposition~\ref{p:symmetrization}) symmetrization:
\[\sum_{\sigma\in S_r} \int_{p,a}^b \omega_{\sigma(1)}\dots\omega_{\sigma(r)}=\prod_{i=1}^r \int_{p,a}^b\omega_i.\]
\end{enumerate}
\end{theorem}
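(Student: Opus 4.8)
The plan is to reduce all five properties to the single observation that the integrals in question are the matrix entries of parallel transport. Given forms $\omega_1,\dots,\omega_r$, let $(\cE,\nabla)$ be the unipotent isocrystal with connection matrix the displayed $\omega$, and for $p\in\pi_1^{\rig,\un}((X,M);F_a,F_b)$ let $T_p$ be the matrix of the isomorphism $F_a(\cE)\to F_b(\cE)$ induced by $p$, in the standard basis. Since $\omega$ is strictly lower triangular, $T_p$ is lower triangular and unipotent, and I would first verify by solving $dF=\nabla F$ recursively --- $F_0=1$, $F_i=\int F_{i-1}\omega_i$ --- that the $(i,j)$ entry of $T_p$ is $\int_{p,a}^b\omega_{j+1}\cdots\omega_i$ for $i\ge j$ (with the empty integral equal to $1$), so that $\int_{p,a}^b\omega_1\cdots\omega_r=(T_p)_{r,0}$. \emph{Multilinearity} (Proposition~\ref{l:multilinear}) is then immediate from this recursion: $F_i$ is $K$-linear in $\omega_i$ for fixed $F_{i-1}$ and depends on each earlier form only through $F_{i-1}$, so an induction on $i$ shows every entry is separately linear in each $\omega_j$; equivalently, the entries are the manifestly multilinear coefficients of the Chen holonomy series.

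For \emph{concatenation} (Proposition~\ref{p:concatenation}) I would invoke the defining multiplicativity of parallel transport: groupoid composition sends $(p,q)\mapsto pq$ and, applying the fiber functors, gives $T_{pq}=T_qT_p$. Reading off the $(r,0)$ entry of the product,
\[
(T_qT_p)_{r,0}=\sum_{k=0}^r (T_q)_{r,k}(T_p)_{k,0}=\sum_{k=0}^r\int_{q,b}^c\omega_{k+1}\cdots\omega_r\int_{p,a}^b\omega_1\cdots\omega_k,
\]
which is the stated formula. For \emph{functoriality} (Proposition~\ref{p:functoriality}), a morphism of frames $f$ identifies $f^*(\cE',\nabla')$ with the connection attached to $f^*\omega'_1,\dots,f^*\omega'_r$, and the canonical identifications of fiber functors $F_a\circ f^*\cong F_{f(a)}$ intertwine $T_p^{f^*\cE'}$ with $T_{f_*p}^{\cE'}$; comparing $(r,0)$ entries yields the claim. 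The only input here beyond bookkeeping is the compatibility of $f_*$ on torsors with these fiber-functor identifications, which is furnished by Section~\ref{s:logrigidisocrystals}.

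Finally, \emph{integration by parts} (Lemma~\ref{l:integrationbyparts}) and \emph{symmetrization} (Proposition~\ref{p:symmetrization}) are the shuffle relations. The structural crux is that $w\mapsto\int_{p,a}^b w$ is a homomorphism from the shuffle algebra of words in the $\omega_i$ to $K$, which I would deduce from the compatibility of holonomy with tensor products of unipotent isocrystals; at the level of integrals this product rule is exactly integration by parts, the integrated form of $d(FG)=(dF)G+F(dG)$ applied to partial primitives. Granting the shuffle homomorphism, symmetrization is its degenerate case in which all factors have length one: the shuffle of the singletons $\omega_1,\dots,\omega_r$ equals $\sum_{\sigma\in S_r}\omega_{\sigma(1)}\cdots\omega_{\sigma(r)}$, so applying the homomorphism gives $\prod_{i=1}^r\int_{p,a}^b\omega_i$ on one side and the symmetric sum on the other.

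I expect the main obstacle to be the genuinely non-formal input underlying these manipulations: establishing, within the log rigid unipotent category, that Tannakian parallel transport is multiplicative under path composition, compatible with pullback along morphisms of frames, and compatible with tensor products (so as to yield the shuffle homomorphism), all while correctly handling log basepoints equipped with lifts. Once those structural facts are in hand from the earlier sections, what remains is the index bookkeeping for the entries of $T_p$ and the elementary shuffle combinatorics.
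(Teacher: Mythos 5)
Your framework for properties (2) and (3) --- multiplicativity of Tannakian parallel transport for concatenation, and the identification $f^*\cE_{\omega'_1\dots\omega'_r}\cong\cE_{f^*\omega'_1\dots f^*\omega'_r}$ for functoriality --- is exactly the paper's argument, and your route to symmetrization via tensor-compatibility (group-likeness $\Delta(p)=p\otimes p$) also matches the proof of Proposition~\ref{p:symmetrization}; note only that this restricts $p$ to $\pi_1((X,M);F_a,F_b)$ rather than all of $\Pi((X,M);F_a,F_b)$, exactly as in the paper's statement. However, there are two genuine gaps.

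First, and most seriously, integration by parts (Lemma~\ref{l:integrationbyparts}) is not a shuffle relation and cannot be deduced from the shuffle homomorphism: the shuffle product governs products of iterated integrals of the given forms, whereas integration by parts involves the exact form $dg$, multiplication of adjacent forms by the function $g$, and the hypothesis that $g$ vanishes at the log basepoint $a$. Your heuristic that it is ``the integrated form of $d(FG)=(dF)G+F(dG)$ applied to partial primitives'' is the classical-path argument; it has no meaning for an abstract Tannakian path $p\in\Pi((X,M);F_a,F_b)$, which is merely a natural transformation of fiber functors --- there is no ``along $p$'' over which to integrate. The paper's mechanism is different: it builds an explicit horizontal morphism $s$ from a kernel bundle $\mathscr{K}\subset E_{\omega^+}\oplus E_{\omega^-}$ to $E_{\omega}$ carrying lifted bases to lifted bases, and then uses that $p$, being natural, commutes with $F(s)$, so the desired identity among matrix entries falls out. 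The same objection undercuts your multilinearity argument: for abstract $p$ there is no recursion $F_i=\int F_{i-1}\omega_i$ computing the entries of $T_p$ (that description is valid only for parallel transport within a residue disc, Remark~\ref{r:paralleltransport}), and since replacing $\omega_i$ by $\omega_i+c\omega_i'$ changes the bundle $\cE_{\omega_1\dots\omega_r}$ itself, linearity must again be proved by exhibiting horizontal morphisms between the different bundles --- which is precisely the kernel construction in the paper's Lemma~\ref{l:multilinear}. Relatedly, the matrix-entry identification $(T_p)_{i,j}=\int_{p,a}^b\omega_{j+1}\cdots\omega_i$ that your whole scheme rests on is correct, but should be justified by naturality of $p$ under the horizontal inclusions $\iota_k$ and projections $p_k$ (under which the lifted bases are natural), not by solving a differential equation.
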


Here, we find it convenient to work with {\em weak log curves} which are a generalization of proper log smooth curves  that allows singularities coming from annular ends of the curve. 

Berkovich--Coleman integration is obtained by restricting the above integration theory on $(X,M)$ to Frobenius-invariant paths which are described using the fundamental group of the dual graph $\Gamma$. Let $F_a,F_b$ be log basepoints of $(X,M)$ anchored at vertices $\overline{a}, \overline{b}\in V(\Gamma)$ corresponding to components of $X$, respectively. There is a {\em specialization map} (Section~\ref{ss:specializationmap})
\[\on{sp}\colon \pi_1^{\rig,\un}((X,M);F_a,F_b)\to \pi_1^{\un}(\Gamma;\overline{a},\overline{b}).\]
Once we pick a branch of $p$-adic logarithm by mandating a value for $\Log(\pi)$ (our choice will be an indeterminate $\ell$), we can define a Frobenius automorphism $\varphi$ on $\pi^{\rig,\un}_1((X,M);F_a,F_b)$. The Frobenius-invariant paths can be identified by using the following result (where the subscript $\ell$ means tensoring with $K[\ell]$):
\begin{proposition}[Proposition~\ref{prop:Frob-invariants}]
Let $(X,M)$ be a weak log curve with dual graph $\Gamma$. Let $F_a,F_b$ be fiber functors induced by log points anchored at $\overline{a},\overline{b}\in V(\Gamma)$, respectively. Then, the specialization  map yields an isomorphism:
\[\on{sp}\colon \pi_1^{\rig,\un}((X,M);F_a,F_b)_{\ell}^{\varphi}\cong \pi_1^{\un}(\overline{\Gamma};\overline{a},\overline{b}).\]
\end{proposition}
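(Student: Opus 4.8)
The plan is to pass from the pro-unipotent torsors to their Lie algebras and to prove the statement on the associated graded for the weight (equivalently, lower central series) filtration, where it becomes a computation in cohomology. First I would record that $\on{sp}$ is $\varphi$-equivariant once the target carries its natural Frobenius structure: the unipotent fundamental torsor $\pi_1^{\un}(\overline{\Gamma};\overline a,\overline b)$ is purely combinatorial and carries the trivial (weight-zero) Frobenius action, so $\on{sp}$ automatically sends the source into the $\varphi$-invariants, and it suffices to show the induced map on $\varphi$-invariants, after extending scalars to $K[\ell]$, is an isomorphism of torsors. Rigidifying by any element (or first treating the case $\overline a=\overline b$), I reduce the torsor statement to the corresponding statement for the structure groups, i.e.\ to showing
\[\on{sp}\colon \Lie\big(\pi_1^{\rig,\un}((X,M);F_a)\big)_\ell^{\varphi}\;\xrightarrow{\ \sim\ }\;\Lie\big(\pi_1^{\un}(\overline{\Gamma};\overline a)\big)\]
is an isomorphism of pro-nilpotent Lie algebras. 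Since $\pi_1^{\un}(\overline{\Gamma})$ is the pro-unipotent completion of the free group on the cycles of $\overline{\Gamma}$, its Lie algebra is the free pro-nilpotent Lie algebra on $H_1(\overline{\Gamma})$; a morphism into it from a pro-nilpotent Lie algebra whose associated graded is generated in degree one is an isomorphism as soon as it is an isomorphism on abelianizations. Thus the proof reduces to the degree-one (cohomological) statement, together with a check that the source's graded object is freely generated by its weight-zero part.

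The heart is therefore the computation of $\varphi$-invariants on $H_1 = H^1_{\log\text{-}\rig}(X,M)^\vee$. Here I would invoke the monodromy--weight filtration for the semistable curve $(X,M)$: the weight-graded pieces of $H^1_{\log\text{-}\rig}(X,M)$ are $\on{gr}^W_0 \cong H^1(\Gamma)$ (the combinatorial cohomology of the dual graph), $\on{gr}^W_1 \cong \bigoplus_v H^1(\widetilde X_v)$ (the pure weight-one contribution of the normalized components), and $\on{gr}^W_2$, related to $\on{gr}^W_0$ by the monodromy operator $N$. Since Frobenius acts on $\on{gr}^W_w$ through Weil numbers of weight $w$, it has no nonzero invariants on the weight-one and weight-two pieces, and acts trivially on the weight-zero piece (the combinatorial classes are Frobenius-fixed); dually, on $H_1$ the $\varphi$-invariants are exactly the weight-zero part $\on{gr}^W_0 H_1 \cong H_1(\overline{\Gamma})$, which is precisely the image of $\on{sp}$ on abelianizations. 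The role of the scalar extension to $K[\ell]$ enters here: the naive combinatorial loops around the nodes are not themselves Frobenius-invariant in $\pi_1^{\rig,\un}$ --- Frobenius moves each by a weight-two correction --- and this correction becomes a Frobenius-coboundary, so that an actual $\varphi$-invariant lift exists, only after adjoining $\ell=\Log(\pi)$. This is what makes the torsor of $\varphi$-invariant paths nonempty and simultaneously shows $\on{sp}$ is surjective on invariants.

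Finally I would bootstrap from $H_1$ to the full group. The key structural input is that the single defining relation of the fundamental Lie algebra of the proper curve (the cup-product/symplectic relation) lies in weight two, so it imposes no condition on the subalgebra generated by the weight-zero generators; consequently the $\varphi$-invariant (equivalently weight-zero) part of $\Lie\,\pi_1^{\rig,\un}$ is the free pro-nilpotent Lie algebra on $\on{gr}^W_0 H_1 \cong H_1(\overline{\Gamma})$, matching $\Lie\,\pi_1^{\un}(\overline{\Gamma})$ under $\on{sp}$. Exponentiating and re-introducing the basepoint dependence then yields the torsor isomorphism.

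I expect the main obstacle to be precisely the interaction between the weight filtration and the lower central series: I must show that taking $\varphi$-invariants is exact on each graded piece --- which rests on semisimplicity of Frobenius on the weight-graded quotients and the vanishing of invariants outside weight zero --- and that there are no extension problems obstructing the lift of the degree-one isomorphism to all of $\Lie\,\pi_1^{\rig,\un}$. This is bundled with the delicate verification that adjoining $\ell$ simultaneously kills the weight-two obstruction to the existence of $\varphi$-invariant paths and leaves the space of invariants no larger than the weight-zero part, so that $\on{sp}$ is both injective and surjective on $\varphi$-invariants.
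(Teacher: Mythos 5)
Your plan is correct and coincides in all essentials with the paper's own proof: reduce to $F_a=F_b$; show $\on{sp}$ is an isomorphism on abelianizations by identifying $H_{\rig}^1((X,M))_{\ell}^\varphi$ with $H^1(\Gamma)$ and using semisimplicity of $\varphi$ on the Lie algebra (whose associated graded is generated in degree one with non-positive $\varphi$-weights) to commute $\varphi$-invariants with abelianization, exactly as in Lemma~\ref{lem:specialization-abelianization}; then conclude with the freeness criterion of Lemma~\ref{l:free-prounipotent-iso}. Your step that the defining cup-product relation lies in weight two is precisely the paper's argument that $H_{\rig}^2((X,M))_{\ell}^\varphi=0$ gives $R^{(n)\varphi}=(H_{\rig}^1((X,M))_{\ell}^\varphi)^{\otimes n}$ in the universal-object construction, so that $\pi_1^{\rig,\un}((X,M),F_a)_{\ell}^{\varphi}$ is free pro-unipotent.
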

This is a natural generalization of the identification of the Frobenius invariants of rigid cohomology in terms of the cohomology of the dual graph \cite{CI:Frobandmonodromy}.
Now, we can define Berkovich--Coleman integration: let $F_a$ and $F_b$ be fiber functors based at log points with lifts (Definition~\ref{d:liftoflogpoint}) $a$ and $b$; let $\overline{p}\in \pi_1^{\un}(\overline{\Gamma};\overline{a},\overline{b})$; we define
\[\BCint_{\overline{p},a}^b \omega_1\dots\omega_r=\int_{p,a}^b \omega_1\dots\omega_r\]
where $p$ is the Frobenius-invariant path specializing to $\overline{p}$.
This definition is a natural generalization of Besser's characterization of Coleman integration as parallel transport along the Frobenius-invariant path.
We show that our definition of Berkovich integration coincides with the usual one. It inherits multilinearity, concatenation, functoriality, integration by parts, and symmetrization properties from the integration theory on $(X,M)$.

Vologodsky defined a path-independent integration theory by making use of the Frobenius and monodromy operators, $\varphi$ and $N$, acting on $\Pi((X,M);F_a,F_b)$, the completed dual coalgebra to the ring of functions on $\pi^{\rig,\un}((X,M);F_a,F_b)$. Specifically, he defines a canonical element $p_{\on{Vol}}$ in $\Pi((X,M);F_a,F_b)$, characterized by the following three properties:
\begin{enumerate}
    \item $p_{\on{Vol}}=1\bmod \mathscr{I}_a$
    \item $\varphi(p_{\on{Vol}})=p_{\on{Vol}}$
    \item $N^r(p_{\text{Vol}})\in W_{-r-1}$ for all $r>0$.
\end{enumerate}
where $W$ is the weight filtration and $\mathscr{I}_a$ is the augmentation ideal.
The Vologodsky integral is defined as 
\[\Vint_a^b \omega_1\dots \omega_r\coloneqq\int_{p_{\on{Vol}},a}^b \omega_1 \dots \omega_r.\] 

We describe Vologodsky integration explicitly by applying Berkovich--Coleman integration to the {\em combinatorial canonical path} 
\[\overline{p}_{\overline{a}\overline{b}}\in \pi_1^{\un}(\Gamma;\overline{a},\overline{b}).\]
This combinatorial canonical path is defined in terms of combinatorial integration as developed in \cite{ChengKatz}.
Specifically, there is a vector space $\Omega^1(\Gamma)$ of tropical $1$-forms on the dual graph $\Gamma$. Given a path $\overline{p}$ between vertices in $\Gamma$, and $\eta_1,\dots,\eta_r\in \Omega^1(\Gamma)$, we define combinatorial iterated integral
\[\cint_{\overline{p}} \eta_1\dots\eta_r.\]
in Section~\ref{s:combint}.
If $\Pi(\Gamma;\overline{a},\overline{b})$ denotes the completed groupoid module of $\pi_1(\Gamma;\overline{a},\overline{b})$ with respect to the augmentation ideal $\mathscr{I}_{\overline{a}}$, and $T_n\Omega$ denotes the length $n$ truncated tensor algebra on a particular subspace $\Omega^1(\overline{\Gamma})\subset \Omega^1(\Gamma)$, combinatorial iterated integration induces a perfect pairing
\[\cint\colon \Pi(\Gamma; \overline a, \overline b)/\mathscr{I}_{\overline a}^{n+1}\otimes T_n\Omega\to K.\]
The combinatorial canonical path $p_{\overline{a}\overline{b}}\in \Pi(\Gamma; \overline a, \overline b)$ is the unique element inducing the truncation map $T_n\Omega\to T_0\Omega=K$ for all $n$.
\begin{theorem}
  The Vologodsky integral is equal to the Berkovich--Coleman integral along the combinatorial canonical path:
  \[\Vint_a^b \omega_1\dots\omega_r=\BCint_{\overline{p}_{\overline{a}\overline{b}},a}^b \omega_1\dots\omega_r.\]
\end{theorem}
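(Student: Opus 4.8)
The plan is to reduce the asserted equality of integrals to an equality of paths, and then to pin down that path combinatorially through the specialization map. Both sides are values of the integration theory $\int_{p,a}^{b}\omega_1\dots\omega_r$ of the previous sections at a specific path: the left-hand side at the Vologodsky element $p_{\on{Vol}}$, and the right-hand side---by the definition of Berkovich--Coleman integration---at the Frobenius-invariant lift of the combinatorial canonical path $\overline{p}_{\overline{a}\overline{b}}$. Since the integral depends only on the path, it suffices to prove that these two paths coincide in $\pi_1^{\rig,\un}((X,M);F_a,F_b)_{\ell}$.

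Now $p_{\on{Vol}}$ is Frobenius-invariant by its characterizing property (2). Hence, by Proposition~\ref{prop:Frob-invariants}, it lies in the Frobenius-invariant subgroup and is the lift of a unique class $\on{sp}(p_{\on{Vol}})\in\pi_1^{\un}(\Gamma;\overline{a},\overline{b})$. Because $p_{\on{Vol}}$ is itself the Frobenius-invariant path specializing to $\on{sp}(p_{\on{Vol}})$, uniqueness of the lift gives $\Vint_a^b\omega_1\dots\omega_r=\BCint_{\on{sp}(p_{\on{Vol}}),a}^b\omega_1\dots\omega_r$ directly from the definition of Berkovich--Coleman integration. It therefore remains only to identify $\on{sp}(p_{\on{Vol}})$ with $\overline{p}_{\overline{a}\overline{b}}$. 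By the perfection of the combinatorial pairing $\cint$ and the characterization of $\overline{p}_{\overline{a}\overline{b}}$ as the element inducing the truncation map, this amounts to showing that every positive-length combinatorial integral $\cint_{\on{sp}(p_{\on{Vol}})}\eta_1\dots\eta_r$ vanishes.

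The decisive step is to translate Vologodsky's monodromy condition (3) into this combinatorial vanishing. The key is a compatibility dictionary: the specialization map carries the weight filtration $W_\bullet$ on $\Pi((X,M);F_a,F_b)$ to the $\mathscr{I}_{\overline{a}}$-adic filtration on $\Pi(\Gamma;\overline{a},\overline{b})$, and the monodromy operator $N$, on associated graded pieces, is computed by the combinatorial structure underlying $\cint$. Concretely, I expect that for a Frobenius-invariant $p$ the image of $N^{r}(p)$ in $\on{gr}^W_{-r}$ is, under the pairing, the length-$r$ combinatorial iterated integral $\cint_{\on{sp}(p)}\eta_1\dots\eta_r$. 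Granting this, condition (3), that $N^{r}(p_{\on{Vol}})\in W_{-r-1}$ for all $r>0$, says precisely that the image of $N^{r}(p_{\on{Vol}})$ in $\on{gr}^W_{-r}$ vanishes, i.e.\ that all length-$r$ combinatorial integrals of $\on{sp}(p_{\on{Vol}})$ vanish. This is exactly the defining property of $\overline{p}_{\overline{a}\overline{b}}$, so $\on{sp}(p_{\on{Vol}})=\overline{p}_{\overline{a}\overline{b}}$ and the theorem follows.

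The main obstacle is this dictionary---making precise how $(N,W)$ on the log rigid fundamental torsor matches $(\cint,\mathscr{I})$ on the dual graph, and in particular the formula expressing the graded action of $N^{r}$ through the length-$r$ combinatorial integral. This rests on the identification of Frobenius invariants with graph homology in Proposition~\ref{prop:Frob-invariants} together with a careful analysis of how $N$ shifts the weight filtration, essentially the semistable, iterated analogue of the monodromy--weight relationship. I would establish the length-one case first, where the statement recovers the coincidence of Vologodsky and abelian integration, and then induct on $r$, using the multilinearity and concatenation properties of the integral to reduce the graded computation at length $r$ to lower-length data controlled by the inductive hypothesis.
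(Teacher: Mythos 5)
Your proposal is correct in outline, but it is the paper's argument run in the opposite direction, and the two directions are not quite symmetric in cost. The paper takes $p$ to be the Frobenius-invariant lift of $\overline{p}_{\overline{a}\overline{b}}$, verifies Vologodsky's three conditions for $p$, and concludes by uniqueness of $p_{\on{Vol}}$; you take $p_{\on{Vol}}$, specialize it to the graph, and want to conclude by uniqueness of the canonical path (perfectness of $\cint$). The ``dictionary'' you flag as the main obstacle is exactly Proposition~\ref{p:combinatorialmonodromy}, which the paper proves before the theorem (by induction on the length of $\overline{p}$, via the nodal computation and concatenation --- close to your sketch, though the induction is on path length rather than on $r$), so that ingredient is available. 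The real asymmetry is this: the paper's direction only needs the implication ``combinatorial integrals along $\overline{p}_{\overline{a}\overline{b}}$ vanish $\Rightarrow N^r p\in W_{-r-1}$,'' which it gets from Corollary~\ref{c:vanishingmonodromy} plus a functoriality trick (push forward along $f\colon (X,M)\to (X,M')$ to the puncture-free modification, where $W_{-r-1}=\mathscr{I}^{r+1}$, then apply Corollary~\ref{c:weightsubmodule}). Your direction needs the \emph{converse}: ``$N^r p_{\on{Vol}}\in W_{-r-1}\Rightarrow \cint_{\on{sp}(p_{\on{Vol}})}\eta_1\dots\eta_r=0$.'' This is true, but it is precisely the converse of Corollary~\ref{c:vanishingmonodromy}, which the paper only remarks upon without proof; to establish it you must identify the class of $N^r p_{\on{Vol}}$ in $M_{-2r}(\mathscr{I}^r/\mathscr{I}^{r+1})\cong (H_1^{\diamond}(\Gamma)^{\otimes r})^{\vee}$ (Proposition~\ref{p:monodromyidentification}) with the functional $\eta_1\otimes\dots\otimes\eta_r\mapsto r!\cint_{\on{sp}(p_{\on{Vol}})}\eta_1\dots\eta_r$ --- for instance by realizing tuples of tropical forms as multiforms of the isocrystals $\cE_{\omega_1\dots\omega_r}$ and matching the action pairing with the pairing of Proposition~\ref{p:monodromyidentification} --- before invoking Corollary~\ref{c:weightsubmodule}. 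So your route is viable but costs an extra compatibility that the paper's route is structured to avoid.

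Two precision points if you carry this out. First, your claim that $\on{sp}$ carries $W_{\bullet}$ to the $\mathscr{I}_{\overline{a}}$-adic filtration is only correct when $(X,M)$ has no punctures or annular points; in general $W_{-2}=\mathscr{I}^2+\ker(f_*)$ is strictly larger, and the correct statement (Corollary~\ref{c:weightsubmodule}) passes through $\overline{\Gamma}$: the image of $W_{-r-1}$ is the kernel of restriction to $H_1^{\diamond}(\overline{\Gamma})^{\otimes r}$. Consequently condition (3) only forces $\cint_{\on{sp}(p_{\on{Vol}})}\eta_1\dots\eta_r=0$ for $\eta_i\in\Omega^1(\overline{\Gamma})$; this still suffices, since the canonical path of a non-proper graph is by definition pushed forward from $\overline{\Gamma}$ and is characterized by vanishing against $\Omega^1(\overline{\Gamma})$ (Proposition~\ref{p:canonicalpath} applied to $\overline{\Gamma}$, using $\Pi(\Gamma;\overline{a},\overline{b})=\Pi(\overline{\Gamma};\overline{a},\overline{b})$ and Proposition~\ref{p:perfectpairing}), but your argument must be routed through this. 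Second, $p_{\on{Vol}}$ is an element of $\Pi((X,M);F_a,F_b)_{\ell}$ with $p_{\on{Vol}}\equiv 1\bmod\mathscr{I}$, not a priori a group-like point of the torsor, so the specialization isomorphism you need is Corollary~\ref{c:liftinggroupoidmodule} (the $\Pi$-level statement), not Proposition~\ref{prop:Frob-invariants}.
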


To illustrate our results, we provide new formulas and work out Berkovich--Coleman and Vologodsky integrals on a Tate elliptic curve.
%examples. Our formula for computing Vologodsky integrals in terms of Berkovich--Coleman and combinatorial integrals specializes in the single integral case to the formula relating abelian and Berkovich--Coleman integrals as described in \cite{KatzKaya}. 
%These formulas will be used to compute Vologodsky integrals in future work of the first-named author with Enis Kaya. 
%As a special case, we work out Berkovich--Coleman and Vologodsky integrals on a Tate elliptic curve in Section~\ref{ss:berkexamples} and Example~\ref{ex:vexample}.
The combinatorial canonical path was used to give a description of the unipotent Kummer map in work of Betts--Dogra \cite{BD:Kummer}.

\subsection{Log rigid fundamental groups of curves}

We develop foundational results about the unipotent log rigid fundamental group of curves and log points over the standard log point $(S,\N)$ (where $S=\Spec k$) thickened to the standard formal point $(\Spf V,N)$. 
%The central object is $\pi_1^{\rig,\un}((X,M);F_1,F_2)$, the Tannakian fundamental torsor of the category of unipotent overconvergent isocrystals. 
%, and compute the log rigid cohomology of log points $(x,\overline{M})$ where the point $x=\Spec k$ is equipped by log structure induced by the monoid $\overline{M}$:
%\begin{theorem}
%  The log rigid cohomology and the unipotent log rigid  fundamental group %of the log point $(x,\overline{M})$ are
%\begin{align*}
% H^*_{\rig}((x,\overline{M}))&\cong \exterior{*}(\overline{M}/f^*\N)^{\gp}\otimes K\\
% \pi_1^{\rig,\un}((x,\overline{M}),F_B)&\cong \on{Hom}(\overline{M}/f^*\N,\Ga).
%\end{align*}
%\end{theorem}

We make use of alternative descriptions of the fundamental group and fundamental torsor coming from universal objects \cite{Hadian} and the Deligne--Goncharov construction \cite{DG:groupes}. We introduce log basepoints, a new class of fiber functors generalizing tangential basepoints and basepoints attached to residue discs by finding $\Log$-analytic sections (as introduced in \cite{Coleman-deShalit}) of a suitable pullback of a unipotent isocrystal. 
%Specifically, by pulling back a unipotent overconvergent isocrystal to a log point, evaluating it on a frame, and finding $\Log$-analytic sections . This allows us an explicit description of Tannakian paths on the log points corresponding to annuli. %We also introduce the Hopf groupoid $\Pi((X,M);F,G)$ which behaves as the groupoid module of $\pi_1^{\rig,\un}((X,M);F,G)$ (alt.~ the group algebra if $F=G$).

\subsection{Frobenius and monodromy operators on the fundamental group}

There is a considerable technical obstacle to defining the relative Frobenius and monodromy operators on the fundamental group of a log curve. The relative Frobenius is not an endomorphism of a log curve over the standard log point because 
it acts non-trivially on the log structure and thus does not commute
with the structure morphism.  We resolve this problem by adapting Faltings's solution in the case of crystalline cohomology \cite{Faltings:crystalline}: 
embedding the curve into a family over the formal disc $\Spf V\ps{t}$ where it is the fiber over $t=\pi$; one redefines cohomology as the
pushforward from that family to the disc and takes the fiber over
origin; then, Frobenius does act. In our setting, we adapt  Faltings's embedding by a certain modification of the log structure that we call $\pi$-$t$ base-change. By means of a homotopy exact
sequence, we reinterpret 
$\pi_1^{\rig,\un}((X,M);F_a,F_b)$ as
the kernel of homomorphism of Tannakian fundamental groups of weakly unipotent overconvergent $F$-isocrystals on a weak log curve $(X_t,M_t)$ over a log point $(S_t,\N_t)$ corresponding to a standard log disc:
\[\pi_1^{\rig,\un}((X,M);F_a,F_b)_{\ell}\cong \ker\left(\pi_1^{\rig,\varphi,\un(f_t,\nr)}((X_t,M_t);\tilde{F}_a,\tilde{F}_b)_{\ell}\to\pi_1^{\rig,\varphi,\un}((S_t,\N_t);\tilde{F}_a,\tilde{F}_b)_{\ell}\right).\]
Here, our arguments are indebted to \cite{Lazda:rational} and \cite{CPS:Logpi1}. 
We thus obtain a Frobenius automorphism and a monodromy operator
\[\varphi,N\colon \Pi((X,M);F_a,F_b)_{\ell}\to \Pi((X,M);F_a,F_b)_{\ell},\]
and explain their relation to their cohomological analogues.
%: the induced Frobenius action on cohomology; and the monodromy operator given by the residue of the Gauss--Manin connection.

Later in Section~\ref{ss:monodromycomparison}, we will give a combinatorial description of the action of the power of the monodromy operator $N^n$ on the truncation $\Pi((X,M);F_a,F_b)/\mathscr{I}_a^{n+1}$. This description is the unipotent analogue of the combinatorial formula for the monodromy pairing in Picard--Lefschetz theory (see e.g.,~\cite{Coleman:monodromy,SGA7-1}) and the main technical tool in our description of Vologodsky integration.
To a overconvergent isocrystal $\cE$ of index of unipotency $n$, by considering the residues of the connections induced on $\cE^i/\cE^{i+2}$, one attaches matrices of tropical $1$-forms
\[\eta_i\in \Omega^1(\Gamma)\otimes \on{Hom}(K^{n_i},K^{n_{i+1}}).\]
\begin{proposition} (Proposition~\ref{p:combinatorialmonodromy})
Let $F_a, F_b$ be fiber functors on $(X,M_X)$ attached to log points and anchored at components $\overline{a},\overline{b}\in V(\Gamma)$. Let $p$ be the Frobenius-invariant lift of  $\overline{p}\in\Pi(\Gamma;\overline{a},\overline{b})$. Let $\cE$ be a unipotent overconvergent isocrystal of index of unipotency $n$. The morphism $N^np\colon K^{n_0}=F_a(\cE/\cE^1)\to F_b(\cE^n)=K^{n_n}$ is multiplication by
\[n!\cint_{\overline{p}} \eta_1\dots\eta_n\]
\end{proposition}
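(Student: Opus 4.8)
The plan is to transport the whole computation onto the dual graph $\Gamma$ and to recognize $N^{n}p$, evaluated on $\cE$, as the logarithm of a monodromy. Since $p$ is the Frobenius-invariant lift of $\overline p$, Proposition~\ref{prop:Frob-invariants} shows that $p$ is pinned down by $\overline p$, so it suffices to compute the image of $N^{n}p$ in the representation $\rho_{\cE}$ afforded by $\cE$. With respect to the unipotency filtration $\cE=\cE^{0}\supset\dots\supset\cE^{n+1}=0$ the diagonal blocks of $\rho_{\cE}(p)$ are the identity, and $N^{n}p$ maps the top graded piece $F_a(\cE/\cE^{1})$ to the bottom graded piece $F_b(\cE^{n})$; by definition $\cint_{\overline p}\eta_1\dots\eta_n$ is the corresponding extreme block of the path-ordered parallel transport of $\overline p$ for the connection on $\Gamma$ whose successive residues are the $\eta_i$. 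I would first treat the single-power case: $N$ is the residue at $t=0$ of the Gauss--Manin connection of the family $(X_t,M_t)/(S_t,\N_t)$ (following \cite{Faltings:crystalline}), and after specialization the residues of the connections induced on the quotients $\cE^{i}/\cE^{i+2}$ are the tropical matrices $\eta_i\in\Omega^1(\Gamma)\otimes\on{Hom}(K^{n_i},K^{n_{i+1}})$. This is the direct unipotent analogue of the Picard--Lefschetz description of the monodromy pairing \cite{Coleman:monodromy,SGA7-1}.

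The backbone of the argument is an induction on the index of unipotency $n$. For the inductive step I would peel off the bottom of the filtration using the exact sequence $0\to\cE^{n}\to\cE\to\cE/\cE^{n}\to 0$: the subquotient $\cE/\cE^{n}$ has index $n-1$, so by induction $N^{n-1}$ computes, up to the factor $(n-1)!$, the iterated integral of the first $n-1$ residues, and one further application of $N$ contributes the last residue along the bottom step $\on{gr}^{\,n-1}\cE\to\on{gr}^{\,n}\cE$. On the combinatorial side this is exactly the recursion $F_{r}=\cint F_{r-1}\eta_{r}$ defining iterated integration on $\Gamma$ in \cite{ChengKatz}, combined with the concatenation property of the integration theory established above. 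That $N$ is strictly lower triangular of degree one with respect to the weight grading---forced by the weight condition $N^{r}(\cdot)\in W_{-r-1}$---guarantees that only the straight-through composite survives in $N^{n}$ and that the induction closes.

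The delicate point, and the origin of the factor $n!$, is the precise passage between the monodromy operator and the ordered integral. The operator $N$ is the logarithm of the local monodromy, whereas the iterated integral $\cint_{\overline p}\eta_1\dots\eta_n$ computes the monodromy itself, an exponential/group-like object; in the model of a single Jordan block on a loop of length $\ell$ one has $N=\ell J$ and $\rho_{\cE}(p)=\exp(\ell J)$, so the extreme entries are $\ell^{n}$ and $\ell^{n}/n!$ respectively, and their ratio is exactly $n!$. Equivalently, passing from the product of single monodromies to the simplex-ordered integral inserts the simplex volume $1/n!$, which is precisely the content of the symmetrization identity for the integration theory. I expect the main obstacle to be making the combinatorial description of $N$ on $\Pi$ sharp enough that the non-commuting matrix residues $\eta_i$ assemble, in the order dictated by the filtration, into $\cint_{\overline p}\eta_1\dots\eta_n$, and then verifying that the residue normalization of the Gauss--Manin connection produces exactly $n!$ and not some other constant. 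Frobenius-invariance of $p$ is what rigidifies this normalization, since fixing the branch $\Log(\pi)=\ell$ fixes the scale against which the edge-lengths of $\Gamma$, and hence the monodromy, are measured.
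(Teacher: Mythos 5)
Your preparatory observations are correct --- the residues of the connections on the quotients $\cE^i/\cE^{i+2}$ are indeed the tropical matrices $\eta_i$, and the factor $n!$ does come from the single-edge phenomenon you describe (in the paper's conventions it is built into the definition $\cint_e\eta_1\dots\eta_n=\frac{1}{n!}\prod_i\eta_i(e)$, so the product of residues along one edge is $n!\cint_e\eta_1\dots\eta_n$). But the backbone of your proof --- induction on the index of unipotency via $0\to\cE^n\to\cE\to\cE/\cE^n\to 0$ --- has a genuine gap precisely at the step you yourself flag as the ``main obstacle,'' and it cannot be repaired without changing the induction. The operator $N$ is defined (Definition~\ref{defn:monodromy-operator}) by differentiating the two-sided conjugation $\delta\mapsto(s_{1*}\gamma_u^{-1})(i_*\delta)(s_{2*}\gamma_u)$, where $s_1,s_2$ are sections attached to the two \emph{endpoint} log points only; hence $N^np$ is a binomial sum of terms $(s_{1*}N_0)^j(i_*p)(s_{2*}N_0)^{n-j}$. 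Your inductive step ``one further application of $N$ contributes the last residue along the bottom step'' is therefore unjustified: applying $N$ once more inserts residues in the deformation direction at the two endpoints, and extracting from this the quantity $\cint_{\overline{p}}\eta_1\dots\eta_n$ --- an ordered sum of products of the non-commuting matrices $\eta_i(e)$ distributed over \emph{all} edges of $\overline{p}$ --- requires knowing how those insertions interact with the parallel transport $i_*p$ across the interior nodes, which is exactly what the induction is supposed to establish. Relatedly, your appeal to ``$N^r(\cdot)\in W_{-r-1}$'' to force triangularity is misplaced: that condition is Vologodsky's characterization of one distinguished path, not a property of $N$ applied to an arbitrary Frobenius-invariant lift; the factorization in part (1) of the proposition has to be proven, and the paper does so via the residue computation at a node (Lemma~\ref{l:monodromyresidue}).

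The missing mechanism is supplied in the paper by inducting on the \emph{length of the path} $\overline{p}$ rather than on $n$. Writing $\overline{p}=\overline{p}_1e$, the lift factors as $p=p_1\delta_0q$ through the intermediate fiber functors $F_{\{f_1\}},F_{\{f_2\}}$ at the node corresponding to $e$; the Leibniz (derivation) property of $N$ on composable paths gives $N^kp=\sum_i\binom{k}{i}(N^ip_1)(N^{k-i}\delta_0)q$, where $Nq=0$ because monodromy is trivial on a smooth component (Lemma~\ref{l:trivialmonodromy}). The node-local computation (Lemmas~\ref{l:monodromyannulus} and \ref{l:monodromyresidue}) evaluates $N^{k-i}\delta_0$ as a product of residues, i.e.\ $(n-i)!\cint_e\nu_{i+1}\dots\nu_n$, the inductive hypothesis gives $i!\cint_{\overline{p}_1}\nu_1\dots\nu_i$, and the binomial coefficients recombine with these factorials via the concatenation formula for combinatorial iterated integrals to yield $n!\cint_{\overline{p}_1e}\eta_1\dots\eta_n$. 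Any repaired version of your argument will have to introduce this path decomposition and these intermediate basepoints; the filtration-peeling induction alone never sees them.
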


\subsection{Acknowledgments}
An enormous debt is owed to Amnon Besser. His paper on Coleman integration through the Tannakian formalism \cite{Besser:Coleman} informed the perspective of our work, and our discussions with him were tremendously helpful. Many of the technical arguments in our work were inspired by those of Chiarellotto, Di Proietto, and Shiho in \cite{CPS:Logpi1}.
We would also like to thank Alexander Betts, Kyle Binder, Patrick Brosnan, Bruno Chiarellotto, and Kiran Kedlaya for valuable conversations.
Gratitude is due to Margaret Lanterman for allowing us to ask questions about log structures.
\part{Graphs and combinatorial integration}

\section{Fundamental groups of graphs}
For our  applications, graphs will have closed and half-open edges with two or one endpoints, respectively. 
Correspondingly, we will modify Gersten's definition of graphs \cite{Gersten:intersections}. Here $\circ$ plays the role of a missing vertex. 

\begin{definition}
A {\em graph} $\Gamma$ is a finite non-empty set with an following data:
\begin{enumerate}  
    \item an involution $\overline{\phantom{x}}\colon \Gamma\to \Gamma$,
    \item a distinguished element $\circ$ that belongs to the fixed point set of $\overline{\phantom{x}}$ which is denoted $\overline{V}(\Gamma)$,
    \item a function $i\colon\Gamma\to \overline{V}(\Gamma)$ satisfying the following conditions: for all $x\in\Gamma$, $i(i(x))=i(x)$; for all $x\in\overline{V}(\Gamma)$, $i(x)=x$; and if $i(x)=i(\overline{x})=\circ$, then $x=\circ$.
\end{enumerate} 
The {\em vertex set} is $V(\Gamma)\coloneqq \overline{V}(\Gamma)\setminus\{\circ\}$. The {\em directed edge set} is $\vec{E}(\Gamma)\coloneqq \Gamma\setminus \overline{V}(\Gamma)$. The set of edges is 
\[E(\Gamma)\coloneqq\{ \{e,\overline{e}\}\mid e\neq \overline{e} \}.\]
An edge $\{e,\overline{e}\}$ with either $i(e)=\circ$ or $i(\overline{e})=\circ$ is said to be {\em half-open}. Otherwise, an edge is {\em closed}.
\end{definition}

The function $i$ takes an edge to its initial point while the involution reverses the orientation of each edge.
We will identify a graph with its topological realization where edges are homeomorphic to $[0,1]$ or $[0,1)$ according to whether they are closed or half-open. A graph is {\em proper} if it has only closed edges. For a graph $\Gamma$, let $\overline{\Gamma}$ be the graph obtained by discarding half-open edges.

The star $\on{St}_v(\Gamma)$ of a vertex in a graph $\Gamma$ is the set of directed edges
\[\on{St}_v(\Gamma)\coloneqq \{e\in \vec{E}(\Gamma)\mid i(e)=v\}.\]
A {\em morphism of graphs} $f\colon \Gamma\to\Gamma'$ is a function $f\colon \Gamma\to\Gamma'$ such that
\begin{enumerate}
    \item $f(\overline{x})=\overline{f(x)}$,
    \item $f(\circ)=\circ$, and
    \item if $i(e)\neq\circ$ then $i(f(e))=f(i(e))$.
\end{enumerate}
Let the uncontracted edge set be
\[\operatorname{NC}(\Gamma,f)\coloneqq\{e\in\Gamma\mid \overline{f(e)}\neq f(e)\}.\]
The morphism $f$ is a {\em submersion of graphs} if for all vertices $v\in V(\Gamma)$, $\on{St}_v(\Gamma)\cap \operatorname{NC}(\Gamma,f)\to \on{St}_{f(v)}(\Gamma')$ is surjective. If the above function on stars is a bijection and $f$ is injective on vertices and closed directed edges, we say that $f$ is a {\em weak embedding}.
Note that weak embeddings can map a half-open edge to a closed edge, a half-open edge, or to the image of its closed endpoint.

We can define {\em log homology} of a graph with coefficients in a field $K$. Let the compactification $\Gamma'$ be obtained from $\Gamma$ by adding distinct endpoints to the half-open edges. Let $\partial\Gamma'$ be the set of those new endpoints, and define 
\[H_n^\diamond(\Gamma)\coloneqq H_n(\Gamma',\partial\Gamma';K).\]
Log homology is a special case of Borel–Moore homology and is contravariant under weak embeddings.

\begin{remark} \label{r:contracttocore}
The natural contraction of half-open edges $\varrho\colon \Gamma\to\overline{\Gamma}$ corresponds to 
an inclusion of pairs
$(\overline{\Gamma},\varnothing)\to(\Gamma',\partial\Gamma')$ that induces 
an inclusion $H^{\diamond}_1(\overline\Gamma)\to H_1^{\diamond}(\Gamma)$
whose image is the subgroup of cycles supported on closed edges.
\end{remark}

Given two vertices $a,b$ on a graph $\Gamma$, we let $\pi_1(\Gamma; a,b)$ be the set of homotopy classes of paths from $a$ to $b$. For a field $K$, we let $K[\pi_1(\Gamma; a,b)]$ be the $K$-vector space generated by $\pi_1(\Gamma;a,b)$. For $a,b,c$ on $\Gamma$, the usual concatenation  
\[\pi_1(\Gamma; a,b)\times \pi_1(\Gamma; b,c)\to \pi_1(\Gamma; a,c)\]
induces a map 
\[K[\pi_1(\Gamma; a, b)]\otimes K[\pi_1(\Gamma; b,c)]\to K[\pi_1(\Gamma; a,c)].\]
The group algebra $K[\pi_1(\Gamma, a)]\coloneqq K[\pi_1(\Gamma; a,a)]$ has an augmentation map 
\[\epsilon\colon K[\pi_1(\Gamma, a)]\to K\]
sending every path to $1$ whose kernel is called the {\em augmentation ideal} $\mathscr{I}_{a}$. 

The ring $K[\pi_1(\Gamma, a)]$ acts naturally on the left on $K[\pi_1(\Gamma; a,b)]$ for any $b$ via concatenation, making $K[\pi_1(\Gamma; a,b)]$ into a free left $K[\pi_1(\Gamma, a)]$-module of rank $1$. Let $\Pi(\Gamma; a,b)$ be the completion of $K[\pi_1(\Gamma; a,b)]$ with respect to the $\mathscr{I}_a$-adic filtration. 

The vector space $K[\pi_1(\Gamma; a,b)]$ naturally has the structure of a coalgebra, with comultiplication defined via 
\[\Delta(p)=p\otimes p\] for 
$p\in\pi_1(\Gamma; a,b)$; this operation extends to $\Pi(\Gamma; a,b)$, giving it the structure of a (topological) coalgebra and making $\Pi(\Gamma, a)$ a cocommutative Hopf algebra.
Then, $\Spec \Pi(\Gamma,a)^\star$ is the pro-unipotent completion 
$\pi^{\un}_1(\Gamma,a)$ over $K$, where $\star$ is the topological dual \cite{vezzani2012pro}. 
This group and the analogously-defined pro-unipotent completion $\pi^{\un}_1(\Gamma;a,b)$ also arise from the Tannakian formalism.

\begin{remark}
One could also complete at the $\mathscr{I}_b$-adic filtration, defined via the right action of $K[\pi_1(\Gamma,b)]$; the two filtrations are the same.
\end{remark}

\section{Combinatorial iterated integrals} \label{s:combint}

\subsection{Definitions}
We will review combinatorial iterated integrals beginning with single integration as described in \cite{BakerFaber,MikhalkinZharkov}

\begin{definition}[Tropical $1$-forms]
A tropical $1$-form is a function $\eta\colon \vec{E}(\Gamma)\to K$ from the set of directed edges to $K$ such that
\begin{enumerate}
    \item $\eta(\overline{e})=-\eta(e)$, and
    \item $\eta$ satisfies the harmonicity condition: for each $v\in V(\Gamma)$,
\[\sum_e \eta(e)=0\]
where the sum is over edges adjacent to $v$ directed away from $v$ (i.e.,~such that $i(e)=v$).
\end{enumerate} 
Let $\Omega^1(\Gamma)$ denote the space of tropical $1$-forms on $\Gamma$.
\end{definition}
There is an isomorphism $\Omega^1(\Gamma)\to H_1^\diamond(\Gamma)$ given by 
\[\eta\mapsto \sum_e \eta(e)e.\]

{\em Single combinatorial integration} is the map $\cint\colon C_1(\Gamma;K)\otimes \Omega^1(\Gamma) \to K$ characterized by
\begin{enumerate}
    \item For a closed edge $e$ of $\Gamma$, considered as an element of $C_1(\Gamma;K)$, $\int_e \omega=\omega(e)$; and 
    \item for each $\omega\in\Omega^1(\Gamma)$, the map $C\mapsto \int_C \omega$ is linear on $C_1(\Gamma;K)$
\end{enumerate}
We may also interpret combinatorial integration as giving maps
\[\cint\colon H_1(\Gamma;K)\otimes \Omega^1(\Gamma) \to K,\quad \cint\colon \Omega^1(\Gamma)\to H_1(\Gamma;K)^\vee.\]

If $\Gamma$ is proper, and one interprets $\Omega^1(\Gamma)$ as $H_1(\Gamma;K)$, combinatorial integration is induced by the Kronecker pairing on edges (making edges into a orthonormal basis), \[C_1(\Gamma;K)\otimes C_1(\Gamma;K)\to K.\] This is the usual cycle pairing \[H_1(\Gamma;K)\otimes H_1(\Gamma;K)\to K\] which gives a combinatorial description of monodromy on the cohomology of a regular semistable family of curves over a discrete valuation ring where $\Gamma$ is the dual graph of the closed fiber (see e.g.,~\cite{Coleman:monodromy}). 
Because the Kronecker pairing, and hence the cycle pairing is nondegenerate, combinatorial integration is a perfect pairing.

Observe that a weak embedding of graphs $\iota\colon \Gamma\to\Gamma'$ induces a pullback $\iota^*\colon \Omega^1(\Gamma')\to \Omega^1(\Gamma)$. This in turn yields a change-of-variables formula: for $C\in H_1(\Gamma)$,
\[\cint_C \iota^*\eta = \cint_{\iota(C)} \eta.\]

We describe the iterated extension of this pairing and its basic properties as  established in \cite[Section~3]{ChengKatz}.
Let $T\Omega=\bigoplus_{k=0}^\infty \Omega^1(\Gamma)^{\otimes k}$ denote the tensor algebra on $\Omega^1(\Gamma)$, 
$T_n\Omega=\bigoplus_{k=0}^n \Omega^1(\Gamma)^{\otimes k}$ be its truncation for $n\in \Z$, and $\hat{T}\Omega=\varprojlim T_n\Omega$ be its completed algebra.
Let $\mathcal{P}(\overline{\Gamma})$ denote the set of paths in the graph $\overline{\Gamma}$. Here, paths are viewed as an ordered list of closed oriented edges.

\begin{definition}[Combinatorial iterated integrals]
Combinatorial iterated integration is a collection of maps for nonnegative $n$, 
\[\cint\colon \mathcal{P}(\overline{\Gamma})\otimes \Omega^1(\Gamma)^{\otimes n}\to K\]
characterized by
\begin{enumerate}
    \item For $n=0$, the map is identically equal to $1$,
    \item For a directed edge $e$, 
%\begin{align*}
 \[  \cint_e \eta=\eta(e),\quad
    \cint_e \eta_1\dots\eta_n=\frac{1}{n!}
     \prod_{i=1}^n \cint_e \eta_i.\]
%\end{align*}
    \item \label{i:combconcatenation} Let $p_1, p_2$ be paths in $\overline{\Gamma}$ such that the terminal point of $p_1$ is the initial point of $p_2$. Then
    \[\cint_{p_1p_2} \eta_1\cdots \eta_n=\sum_{i=0}^n \left(\cint_{p_1}\eta_1\cdots\eta_i\right)\left( \cint_{p_2}\eta_{i+1}\cdots\eta_n\right).\]
\end{enumerate}
\end{definition}

Instead of using paths in $\overline{\Gamma}$, we may treat combinatorial iterated integrals as defined on homotopy classes of paths in $\Gamma$ with endpoints in $V(\Gamma)$.

\subsection{Properties}
We restate \cite[Proposition~3.5]{ChengKatz}:

\begin{proposition}[Symmetrization relation]\label{prop:shuffle-formula}
Let $S_n$ be the symmetric group on $n$ letters. Then
\[\sum_{\sigma\in S_n}\cint_p\eta_{\sigma(1)}\cdots \eta_{\sigma(n)}=\prod_{i=1}^n \cint_p\eta_i.\]
\end{proposition}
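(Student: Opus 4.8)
The plan is to induct on the number of closed edges in the path $p$, using the concatenation property (item~\ref{i:combconcatenation} of the definition of combinatorial iterated integrals) to split $p$ and reduce everything to the single-edge case, which follows immediately from the commutativity of multiplication in $K$.

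For the base case, let $p=e$ be a single directed edge. By the defining formula $\cint_e \eta_1\cdots\eta_n=\tfrac{1}{n!}\prod_{i=1}^n \eta_i(e)$, and since the product $\prod_{i=1}^n \eta_{\sigma(i)}(e)$ does not depend on $\sigma$, we get
\[\sum_{\sigma\in S_n}\cint_e \eta_{\sigma(1)}\cdots\eta_{\sigma(n)}=n!\cdot\frac{1}{n!}\prod_{i=1}^n\eta_i(e)=\prod_{i=1}^n \cint_e\eta_i.\]

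For the inductive step, assume $p$ has at least two edges and write $p=p_1p_2$ with $p_1$ its initial edge and $p_2$ the remaining path, both strictly shorter than $p$. Applying concatenation termwise,
\[\sum_{\sigma\in S_n}\cint_p \eta_{\sigma(1)}\cdots\eta_{\sigma(n)}=\sum_{\sigma\in S_n}\sum_{i=0}^n\left(\cint_{p_1}\eta_{\sigma(1)}\cdots\eta_{\sigma(i)}\right)\left(\cint_{p_2}\eta_{\sigma(i+1)}\cdots\eta_{\sigma(n)}\right).\]
The heart of the argument is to regroup this double sum by the unordered set $A=\{\sigma(1),\dots,\sigma(i)\}$, with complement $B=\{1,\dots,n\}\setminus A$. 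For a fixed $A$ with $|A|=i$, the permutations $\sigma$ satisfying $\{\sigma(1),\dots,\sigma(i)\}=A$ are precisely the pairs given by an ordering of $A$ followed by an ordering of $B$, so the associated terms factor as
\[\left(\sum_{\tau}\cint_{p_1}\eta_{\tau(1)}\cdots\right)\left(\sum_{\rho}\cint_{p_2}\eta_{\rho(1)}\cdots\right),\]
where $\tau$ and $\rho$ range over the orderings of $A$ and $B$. By the inductive hypothesis applied to $p_1$ with the forms indexed by $A$ and to $p_2$ with those indexed by $B$, each inner sum symmetrizes to a product, contributing $\prod_{j\in A}\cint_{p_1}\eta_j\cdot\prod_{j\in B}\cint_{p_2}\eta_j$.

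Finally, summing over all subsets $A\subseteq\{1,\dots,n\}$ and invoking the single-integral concatenation $\cint_p\eta_j=\cint_{p_1}\eta_j+\cint_{p_2}\eta_j$, we recognize a binomial expansion:
\[\sum_{A\subseteq\{1,\dots,n\}}\prod_{j\in A}\cint_{p_1}\eta_j\prod_{j\notin A}\cint_{p_2}\eta_j=\prod_{j=1}^n\left(\cint_{p_1}\eta_j+\cint_{p_2}\eta_j\right)=\prod_{j=1}^n\cint_p\eta_j,\]
which closes the induction. The only real obstacle is bookkeeping: verifying that the sum over $S_n$ together with the split index $i$ reorganizes cleanly into a sum over subsets with the two factors symmetrized independently. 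This is the standard shuffle decomposition, and the point demanding care is the bijective matching between permutations $\sigma$ restricting to $A$ and pairs of orderings of $A$ and $B$.
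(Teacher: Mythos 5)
Your proof is correct. Note that the paper itself gives no proof of this proposition: it restates it from Cheng--Katz, and the route suggested by the surrounding text is to deduce symmetrization from the more general shuffle formula (equivalently, from the fact that an honest path $p$ is group-like, $\Delta(p)=p\otimes p$), by iterating $(k,1)$-shuffles so that the products $\cint_p\eta_1\cdots\cint_p\eta_n$ expand into a sum over all of $S_n$. Your argument is genuinely different and more elementary: it uses only the two defining axioms of combinatorial iterated integration --- the $\tfrac{1}{n!}$ normalization on a single directed edge and the concatenation formula --- together with an induction on the number of edges of $p$. The key bookkeeping step is sound: grouping the double sum over $(\sigma,i)$ by the subset $A=\{\sigma(1),\dots,\sigma(i)\}$ is legitimate because permutations with $\sigma(\{1,\dots,i\})=A$ biject with pairs (ordering of $A$, ordering of its complement $B$), the inductive hypothesis (stated for all shorter paths and all finite collections of forms) symmetrizes each factor, and the final identity $\sum_{A}\prod_{j\in A}\cint_{p_1}\eta_j\prod_{j\notin A}\cint_{p_2}\eta_j=\prod_{j}\bigl(\cint_{p_1}\eta_j+\cint_{p_2}\eta_j\bigr)$ combines with the $n=1$ concatenation to close the induction. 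What each approach buys: yours is self-contained and needs no shuffle combinatorics, but it only applies to actual paths (concatenations of edges), which is exactly the scope of the stated proposition; the shuffle/group-like route is more conceptual and extends to arbitrary group-like elements of $\Pi(\Gamma;\overline{a},\overline{b})$, which is the form the paper actually needs later (e.g.\ to see that the combinatorial canonical path, which is not a concatenation of edges, obeys symmetrization).
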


Moreover, combinatorial iterated integration obeys the  more general {\em shuffle formula}: for positive integers $k$ and $\ell$ and a path $p$ in $\Gamma$
\[
    \cint_p \eta_1 \cdots \eta_k \cint_p \eta_{k + 1} \cdots \eta_{k + \ell} = \sum_{\sigma \in \operatorname{Sh}(k,\ell)} \cint_p \eta_{\sigma(1)} \cdots \eta_{\sigma(k + \ell)}.
\]
where $\operatorname{Sh}(k,\ell)$ is the set of $(k,\ell)$-shuffles.
%to be the following subset of the symmetric group
%$S_{k+\ell}$ on $k + \ell$ symbols:
%\[
%  \operatorname{Sh}(k,\ell) =
%    \{\sigma \in S_{k + \ell}\ |\ \sigma^{-1}(1) \leq \cdots \leq \sigma^{-1}(k) \;\text{and}\; \sigma^{-1}(k+1) \leq \cdots \leq \sigma^{-1}(k + \ell)\}.
%\]
An element $p\in\Pi(\Gamma;a,b)$ with $\epsilon_a(p)=1$ is group-like (i.e.,~ $\Delta(p)=p\otimes p$) if and only if it satisfies the shuffle formula for all choices of $k,\ell,\eta_i$.

For a weak embedding of graphs $\iota\colon\Gamma\to\Gamma'$, a path $p$ in $\Gamma$ and $\eta_1\dots\eta_n\in\Omega^1(\Gamma')^{\otimes n}$, there is a change-of-variables formula,
\[\cint_p \iota^*(\eta_1\dots\eta_n)=\cint_{\iota(p)} \eta_1\dots\eta_n.\]
By applying \eqref{i:combconcatenation} and using inclusion-exclusion, we have 
\begin{proposition}\cite[Theorem~3.8]{ChengKatz} \label{c:uppertriangular} 
Let $p_1,\dots,p_r$ be loops in $\Gamma$ based at $a$ and let $p$ be a path in $\Gamma$ with initial point $a$. Let $\eta_1,\ldots,\eta_r$ be tropical $1$-forms. Then
 \[   \cint_{(p_1 - 1) \cdots (p_r - 1)p} \eta_1 \cdots \eta_n =
      \begin{cases}
        0 & r > n, \\
        \left(\cint_{p_1} \eta_1\right) \dots 
        \left(\cint_{p_r} \eta_n\right) & r = n.
      \end{cases}
\]
\end{proposition}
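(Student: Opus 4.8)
The plan is to expand the group-algebra element $(p_1-1)\cdots(p_r-1)p$ into a signed sum of honest paths, apply an iterated version of the concatenation formula \eqref{i:combconcatenation} to each term, and then harvest the cancellation through inclusion--exclusion.

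First I would upgrade \eqref{i:combconcatenation} from two factors to many: for any composable concatenation $q_1\cdots q_m$, an immediate induction gives
\[
\cint_{q_1\cdots q_m}\eta_1\cdots\eta_n=\sum_{0=j_0\le j_1\le\cdots\le j_m=n}\ \prod_{\ell=1}^m \cint_{q_\ell}\eta_{j_{\ell-1}+1}\cdots\eta_{j_\ell},
\]
that is, one cuts the ordered word $\eta_1\cdots\eta_n$ into $m$ consecutive (possibly empty) blocks and integrates the $\ell$-th block over $q_\ell$. Multilinearity of $\cint$ in the path then yields
\[
\cint_{(p_1-1)\cdots(p_r-1)p}\eta_1\cdots\eta_n=\sum_{S\subseteq\{1,\dots,r\}}(-1)^{r-|S|}\,\cint_{p_S\,p}\eta_1\cdots\eta_n,
\]
where $p_S$ is the concatenation of the loops $p_i$ with $i\in S$, taken in increasing order of index (with $p_\varnothing=1$). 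Applying the multi-factor formula to each $\cint_{p_S p}$ presents the whole quantity as a sum, indexed by pairs (subset $S$, splitting of the word), of products of single-loop integrals $\cint_{p_i}(\cdots)$ with one trailing factor $\cint_p(\cdots)$.

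The key step is to reorganize this double sum by the combinatorial type of the splitting. For each term I record the set $T\subseteq\{1,\dots,r\}$ of loops receiving a \emph{nonempty} block, together with the resulting partition of $\eta_1\cdots\eta_n$ into the nonempty blocks $B_t$ ($t\in T$, in increasing order of $t$) followed by a final block $B_p$ assigned to $p$. A loop $i\in S\setminus T$ receives an empty block and so contributes the factor $\cint_{p_i}(\varnothing)=1$. Crucially, for a fixed such active datum $(T,\{B_t\},B_p)$ and for \emph{each} $S\supseteq T$ there is exactly one splitting realizing it — insert empty blocks at the inactive indices — and all of these have the same value $\prod_{t\in T}\cint_{p_t}(B_t)\cdot\cint_p(B_p)$, independent of $S$. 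Hence the total coefficient of a fixed active datum is
\[
\sum_{S\supseteq T}(-1)^{r-|S|}=\sum_{S'\subseteq\{1,\dots,r\}\setminus T}(-1)^{(r-|T|)-|S'|}=(1-1)^{\,r-|T|},
\]
which vanishes unless $T=\{1,\dots,r\}$, where it equals $1$. Conceptually this reflects that $(p_1-1)\cdots(p_r-1)$ lies in $\mathscr{I}_{\overline a}^{\,r}$, on which length-$n$ integrals vanish once $r>n$.

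Only data in which every loop is active survive, so each of $p_1,\dots,p_r$ must be given a nonempty block. If $r>n$ there are too few forms to fill $r$ nonempty blocks, the index set is empty, and the integral is $0$. If $r=n$, the $n$ forms must be distributed one per loop with $B_p$ empty, and the order constraint forces $B_i=\{\eta_i\}$, leaving the single surviving term $\prod_{i=1}^n\cint_{p_i}\eta_i$, exactly as asserted. The main obstacle I anticipate is the bookkeeping in the reorganization: checking that every $S\supseteq T$ contributes precisely one splitting of the prescribed type, that its value is independent of $S$, and that the ordering of the blocks is consistent with the concatenation order in $p_S p$. Once that is verified, the binomial cancellation above finishes the argument.
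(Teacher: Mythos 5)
Your proof is correct and is essentially the argument the paper has in mind: the paper gives no detailed proof, citing \cite[Theorem~3.8]{ChengKatz} and remarking only that the statement follows ``by applying the concatenation formula and using inclusion-exclusion,'' which is exactly your expansion of $(p_1-1)\cdots(p_r-1)p$ into a signed sum over subsets followed by the binomial cancellation $\sum_{S\supseteq T}(-1)^{r-|S|}=(1-1)^{r-|T|}$. Your write-up simply supplies the bookkeeping (the multi-factor concatenation formula and the reorganization by nonempty blocks) that the paper delegates to the reference.
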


Consequently, the restriction of combinatorial iterated integration to $T_n\Omega\subset T\Omega$ factors as 
\[
\xymatrix{\cint\colon \Pi(\Gamma; a,b)\otimes T_n\Omega\ar[r]&
\Pi(\Gamma; a,b)/\mathscr{I}_{a}^{n+1}\otimes T_n\Omega\ar[r]&
K.}
\]

The linear extension of combinatorial iterated integrals gives a map
\[\cint\colon \Pi(\Gamma; a, b)\otimes T\Omega\to K.\]
By Corollary~\ref{c:uppertriangular}, the restriction
\[
\cint\colon \mathscr{I}_{a}^n/\mathscr{I}_{a}^{n+1} \otimes T_k\Omega\to K\] 
is the zero map for $k<n$. For $k=n$, there is a factorization
\[\xymatrix{
\cint\colon \mathscr{I}_{a}^n/\mathscr{I}_{a}^{n+1} \otimes T_n\Omega\ar[r]&\mathscr{I}_{a}^n/\mathscr{I}_{a}^{n+1} \otimes \Omega_{\on{trop}}^1(\Gamma)^{\otimes n}\ar[r]&
K
}\] 
where the second arrow is given by 
\[\mathscr{I}_{a}^n/\mathscr{I}_{a}^{n+1} \otimes \Omega_{\on{trop}}^1(\Gamma)^{\otimes n}\cong
H_1(\Gamma;K)^{\otimes n}\otimes \Omega_{\on{trop}}^1(\Gamma)^{\otimes n}\cong\left(H_1(\Gamma;K)\otimes \Omega_{\on{trop}}^1(\Gamma)\right)^{\otimes n}\to K
\]
where the map above is equal to the tensor power of combinatorial single integration. For proper graphs, combinatorial single integration is a perfect pairing, and hence the above pairing is perfect. The upper triangular structure (with non-degeneracy along the diagonal) immediately implies the following (compare \cite[Theorem~3.11]{ChengKatz}):
\begin{proposition}\label{p:perfectpairing}
For a proper graph $\Gamma$, combinatorial iterated integration induces a perfect pairing
\[\cint\colon \Pi(\Gamma; a, b)/\mathscr{I}_{a}^{n+1}\otimes T_n\Omega\to K.\]
\end{proposition}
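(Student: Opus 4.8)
The plan is to deduce perfection of the pairing from the upper-triangular structure already extracted from Corollary~\ref{c:uppertriangular}, via a standard filtered linear-algebra argument. First I would filter the source $\Pi(\Gamma;\overline a,\overline b)/\mathscr{I}_{\overline a}^{n+1}$ by the decreasing filtration $F^k := \mathscr{I}_{\overline a}^k/\mathscr{I}_{\overline a}^{n+1}$ (so $F^0$ is everything and $F^{n+1}=0$), with associated graded pieces $\mathscr{I}_{\overline a}^k/\mathscr{I}_{\overline a}^{k+1}$ for $0\le k\le n$. On the target side I would use the length grading $T_n\Omega=\bigoplus_{k=0}^n\Omega^1(\Gamma)^{\otimes k}$ together with its increasing filtration $T_k\Omega$.

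The pairing supplies an adjoint map $\Phi\colon \Pi(\Gamma;\overline a,\overline b)/\mathscr{I}_{\overline a}^{n+1}\to (T_n\Omega)^\vee$. The essential input, from Corollary~\ref{c:uppertriangular}, is that a class in $\mathscr{I}_{\overline a}^k$ pairs to zero against every form of length strictly less than $k$; equivalently, $\Phi(F^k)$ annihilates $T_{k-1}\Omega$. Thus $\Phi$ is a morphism of filtered spaces, where the target carries the dual filtration $G^k:=(T_{k-1}\Omega)^{\perp}$, whose associated graded in degree $k$ is $(T_k\Omega/T_{k-1}\Omega)^\vee=(\Omega^1(\Gamma)^{\otimes k})^\vee$.

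Next I would identify the induced map on associated graded in each degree $k$, namely $\mathrm{gr}^k\Phi\colon \mathscr{I}_{\overline a}^k/\mathscr{I}_{\overline a}^{k+1}\to(\Omega^1(\Gamma)^{\otimes k})^\vee$, with the adjoint of the diagonal pairing. Running the factorization displayed just before the statement in each degree $k$ (not only in the top degree $k=n$), this diagonal pairing is the composite $H_1(\Gamma;K)^{\otimes k}\otimes\Omega^1(\Gamma)^{\otimes k}\cong\bigl(H_1(\Gamma;K)\otimes\Omega^1(\Gamma)\bigr)^{\otimes k}\to K$, i.e.~the $k$-th tensor power of combinatorial single integration. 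Since $\Gamma$ is proper, single integration is the Kronecker/cycle pairing, which is nondegenerate, so each of its tensor powers is nondegenerate and every $\mathrm{gr}^k\Phi$ is an isomorphism.

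Finally I would invoke the elementary fact that a morphism of finitely filtered vector spaces inducing an isomorphism on every associated graded piece is itself an isomorphism; applied to $\Phi$, this gives perfection of $\cint$. I do not anticipate a genuine obstacle: the content sits entirely in Corollary~\ref{c:uppertriangular} (vanishing of the pairing between depth $\ge k$ and length $<k$) and in the nondegeneracy of single integration on a proper graph (the diagonal), with the remainder being the purely formal filtered-triangularity step, which also makes a separate dimension count unnecessary. The one point deserving care is checking that the identification $\mathscr{I}_{\overline a}^k/\mathscr{I}_{\overline a}^{k+1}\cong H_1(\Gamma;K)^{\otimes k}$ is compatible with the comparison $\Omega^1(\Gamma)\cong H_1(\Gamma;K)$ used on the form side, so that each $\mathrm{gr}^k\Phi$ is genuinely the intended tensor power of single integration; this is where I would spend the most effort.
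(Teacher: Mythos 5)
Your proposal is correct and takes essentially the same route as the paper: the paper deduces the proposition in one line from the block upper-triangular structure supplied by Proposition~\ref{c:uppertriangular}, with diagonal blocks identified (via $\mathscr{I}_{\overline a}^k/\mathscr{I}_{\overline a}^{k+1}\cong H_1(\Gamma;K)^{\otimes k}$) as tensor powers of single combinatorial integration, which are perfect because $\Gamma$ is proper. Your filtered-linear-algebra argument is precisely the careful spelling-out of that ``upper triangular with nondegenerate diagonal implies perfect'' step, including the compatibility check the paper leaves implicit.
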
 
  
\begin{proposition}[Combinatorial Canonical Path] \label{p:canonicalpath}
Let $\Gamma$ be a proper graph.
Given $a,b\in V(\Gamma)$, there is a {\em canonical path} $\overline{p}_{ab}\in \Pi(\Gamma;a,b)$ such that
\begin{enumerate}
    \item \[\cint_{\overline{p}_{ab}} 1 = 1\]
    \item for all $n\in \N$ and $\eta_1\dots\eta_n\in \Omega^1(\Gamma)$,
    \[ \cint_{\overline{p}_{ab}} \eta_1\dots\eta_n = 0.\]
\end{enumerate}
The canonical path obeys the shuffle formula, and hence $p_{ab}\in \pi^{\un}_1(\Gamma;a,b)$.
For $a,b,c\in V(\Gamma)$, the canonical path obeys the {\em concatenation property}:
\[\overline{p}_{ac}=\overline{p}_{ab}\overline{p}_{bc}.\]
\end{proposition}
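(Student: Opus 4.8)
The plan is to extract the canonical path directly from the perfect pairing of Proposition~\ref{p:perfectpairing}, and then check each listed property against that pairing. First I would fix $n$ and use the perfect pairing $\cint\colon \Pi(\Gamma;\overline a,\overline b)/\mathscr{I}_{\overline a}^{n+1}\otimes T_n\Omega\to K$ to identify $\Pi(\Gamma;\overline a,\overline b)/\mathscr{I}_{\overline a}^{n+1}$ with the linear dual $(T_n\Omega)^\vee$. The functional $\operatorname{pr}_n\colon T_n\Omega\to T_0\Omega=K$ projecting onto the degree-zero summand is an element of $(T_n\Omega)^\vee$, hence corresponds to a unique $p^{(n)}\in \Pi(\Gamma;\overline a,\overline b)/\mathscr{I}_{\overline a}^{n+1}$. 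Because the inclusion $T_n\Omega\hookrightarrow T_{n+1}\Omega$ intertwines $\operatorname{pr}_{n+1}$ with $\operatorname{pr}_n$, and because each combinatorial iterated integral of length $\le n$ factors through $\Pi/\mathscr{I}_{\overline a}^{n+1}$, the dual of the quotient $\Pi/\mathscr{I}_{\overline a}^{n+2}\to\Pi/\mathscr{I}_{\overline a}^{n+1}$ is the restriction $(T_{n+1}\Omega)^\vee\to(T_n\Omega)^\vee$, which carries $\operatorname{pr}_{n+1}$ to $\operatorname{pr}_n$. Thus $p^{(n+1)}\mapsto p^{(n)}$, the $p^{(n)}$ form a compatible system, and they assemble to a unique element $p_{\overline a\overline b}\in\varprojlim_n \Pi(\Gamma;\overline a,\overline b)/\mathscr{I}_{\overline a}^{n+1}=\Pi(\Gamma;\overline a,\overline b)$. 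Unwinding the definition of $\operatorname{pr}_n$ yields exactly properties (1) and (2), and perfectness of the pairing furnishes the asserted uniqueness.

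Next I would show $p_{\overline a\overline b}$ is group-like. Since every iterated integral of positive length vanishes on $p_{\overline a\overline b}$ while the empty integral is $1$, both sides of the shuffle formula vanish for all positive $k,\ell$: the left-hand side is a product of two iterated integrals of positive lengths $k$ and $\ell$, each of which is zero, and every summand on the right is an iterated integral of length $k+\ell\ge 2$, hence also zero. Combined with $\epsilon_{\overline a}(p_{\overline a\overline b})=\cint_{p_{\overline a\overline b}}\varnothing=1$, the group-like criterion recorded after Proposition~\ref{prop:shuffle-formula} shows $p_{\overline a\overline b}$ is group-like, so $p_{\overline a\overline b}\in\pi^{\un}_1(\Gamma;\overline a,\overline b)$.

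Finally, for concatenation I would verify that $p_{\overline a\overline b}p_{\overline b\overline c}$ satisfies the two defining conditions and then invoke the uniqueness just established. Applying the concatenation property~\eqref{i:combconcatenation}, extended bilinearly and continuously from honest paths to the completed modules, gives
\[\cint_{p_{\overline a\overline b}p_{\overline b\overline c}}\eta_1\cdots\eta_n=\sum_{i=0}^n\left(\cint_{p_{\overline a\overline b}}\eta_1\cdots\eta_i\right)\left(\cint_{p_{\overline b\overline c}}\eta_{i+1}\cdots\eta_n\right).\]
For $n=0$ the single term equals $1\cdot 1=1$; for $n\ge 1$ the factor $\cint_{p_{\overline a\overline b}}\eta_1\cdots\eta_i$ vanishes unless $i=0$ while $\cint_{p_{\overline b\overline c}}\eta_{i+1}\cdots\eta_n$ vanishes unless $i=n$, so no term survives and the sum is $0$. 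Hence $p_{\overline a\overline b}p_{\overline b\overline c}$ obeys (1) and (2), and uniqueness forces $p_{\overline a\overline c}=p_{\overline a\overline b}p_{\overline b\overline c}$.

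The argument is essentially formal once the perfect pairing is available, so there is no single hard step; the points requiring care are the inverse-limit bookkeeping—checking that the dual of $\Pi/\mathscr{I}_{\overline a}^{n+2}\to\Pi/\mathscr{I}_{\overline a}^{n+1}$ is the restriction map so that the projection functionals patch into an element of $\Pi(\Gamma;\overline a,\overline b)$—and justifying that the concatenation formula~\eqref{i:combconcatenation}, stated for paths in $\overline\Gamma$, passes to the completion. The latter holds precisely because each combinatorial iterated integral factors through a fixed truncation $\Pi/\mathscr{I}_{\overline a}^{n+1}$, so the bilinear extension is continuous.
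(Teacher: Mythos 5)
Your proposal is correct and follows essentially the same route as the paper: construct the truncations $p_n$ via the perfect pairing of Proposition~\ref{p:perfectpairing}, check compatibility under the quotient maps, and assemble the element by $\mathscr{I}_{\overline a}$-adic completeness, with concatenation deduced from the concatenation formula for combinatorial iterated integrals. The only difference is that you spell out the shuffle and uniqueness-based concatenation verifications that the paper leaves as one-line remarks, which is a faithful expansion rather than a different argument.
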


\begin{proof}
By Proposition~\ref{p:perfectpairing}, for any $n$, there is a unique $\overline{p}_n\in \Pi(\Gamma; a, b)/\mathscr{I}^{n+1}$ such that
\[\cint_{\overline{p}_n} 1 = 1, \quad \cint_{\overline{p}_n} \eta_1\dots\eta_k = 0\]
for any $1\leq k\leq n$ and tropical $1$-forms $\eta_1,\dots,\eta_k$.
By the uniqueness of $\overline{p}_n$, under the quotient $\Pi(\Gamma; a, b)/\mathscr{I}^{n+2}\to \Pi(\Gamma; a, b)/\mathscr{I}^{n+1}$, $\overline{p}_{n+1}$ is taken to $\overline{p}_n$. By $\mathscr{I}_{a}$-adic completeness of $\Pi(\Gamma; a, b)$,
$\overline{p}_{ab}$ is the inverse limit of the $\overline{p}_n$'s.

The shuffle relation follows from definitions. The concatenation property follows from the concatenation property of iterated integrals.
\end{proof}

For a non-proper graph $\Gamma$ and $a,b\in V(\Gamma)$, define the canonical path $\overline{p}_{ab}$ to be the pushforward of the canonical path $\overline{p}_{ab}$ from $\overline{\Gamma}$. It vanishes when integrated against tropical $1$-forms in $\Omega^1(\overline{\Gamma})\subset \Omega^1(\Gamma)$.

\begin{remark}\label{r:canonicalfunctorial}
Because morphisms $f\colon\Gamma\to\Gamma'$ do not, in general, induce injections $f^*\colon \Omega^1(\Gamma')\to \Omega^1(\Gamma)$, the canonical path does not enjoy good functoriality properties. %It is functorial for maps $f$ such that $f^*\colon \Omega^1(\Gamma')\to \Omega^1(\Gamma)$ is injective, in the sense that $f_*p_{\overline{a}\overline{b}}=p_{f(\overline{a})f(\overline{b})}$ 
\end{remark}

%We will use the (block) upper-triangular structure of combinatorial iterated integrals to give a formula for $p_{\overline a\overline b}$ in  \eqref{eq:canonicalformula}.

Henceforth, paths and vertices will always be named by overlined letters. This will help us refer to paired objects with one in a curve and one in a graph. This overlining does not refer to the reversal of orientation of edges as in the definition of a graph.

\part{The rigid cohomology of log curves and points}

\section{Log rigid isocrystals and cohomology} \label{s:logrigidisocrystals}

We will use the notions of frames and overconvergent isocrystals \cite{Berthelot:cohomologie-rigide,LS:Rigid-cohomology} in the log setting. We recommend \cite{Ogus:logbook} for background on log schemes and \cite{LS:Rigid-cohomology} and \cite[Appendix~A]{LP:Rigidcohomology} for rigid cohomology. A good reference for log rigid cohomology is \cite{GK:Frobandmonodromy}. In all of our cases, the underlying scheme will be proper and is thus equivalent to log convergent cohomology for which \cite{Shiho2} is the supremely useful reference.

Let $W$ be the ring of Witt vectors of $k$, $V$ a finite totally ramified extension of $W$, and $K$ the field of fractions of $V$. Let $S=\Spec k$ be equipped with the log structure $\N$ induced by $\Z_{\geq 0}$ (i.e. generated by $e_\pi$ with morphism of monoids $e_\pi\mapsto 0$). Let $N$ be the log structure on $\Spf V$ induced by $\Z_{\geq 0}$ (i.e.,~generated by $e_\pi$ with $e_\pi\mapsto \pi$ for $\pi$ a uniformizer of $V$). There is an exact immersion  $(S,\N)\to (\Spf V,N)$ given by a closed embedding on schemes and the identity on $\Z_{\geq 0}$. We will work with overconvergent isocrystals thickened along this morphism.

\begin{definition}
A $(\Spf V,N)$-\emph{frame} (frame for short) $((X',M_{X'}),(Y',M_{Y'}),(\cP',L'))$ is a triple consisting of an exact open immersion $(X',M_{X'})\hookrightarrow (Y',M_{Y'})$ of log $(S,\N)$-varieties and an exact closed immersion $(Y',M_{Y'})\hookrightarrow (\cP',L')$ of log formal $(\Spf V,N)$-schemes. A morphism of frames 
\[u\colon ((X',M_{X'}),(Y',M_{Y'}),(\cP',L'))\to ((X'',M_{X''}),(Y'',M_{Y''}),(\cP'',L''))\]
is a triple of morphisms $(X',M_{X'})\to (X'',M_{X''})$, $(Y',M_{Y'})\to (Y'',M_{Y''})$, $(\cP',L')\to (\cP'',L'')$ such that the relevant diagram commutes.

A frame is {\em proper} if $Y'$ is proper. It is {\em log smooth} if there is a log smooth (over $(\Spf V,N)$) open subscheme $(\mathcal{U},L_{\mathcal{U}})\subset (\cP,L')$ containing $X'$.

For an $(S,\N)$-variety $(X,M)$, a \emph{frame over $(X,M)$} is a $(\Spf V,N)$-frame \[((X',M_{X'}),(Y',M_{Y'}),(\cP',L'))\] 
together with an $(S,\N)$-morphism $(X',M')\to (X,M)$. Morphisms of frames over $(X,M)$ are morphisms of frames that commute with the map to $(X,M)$.
\end{definition}

Given a locally closed immersion $\iota\colon X'\hookrightarrow \cP'$ of a $k$-variety into a formal $V$-scheme, we can define the {\em tube}, a rigid analytic space,
\[ ]X'[_{\cP'}=\on{sp}^{-1}(\iota(X'))\]
where $\on{sp}\colon (\cP')^{\an}\to \cP'_k$ is the specialization map. If the tube comes from a frame, we give $]X'[_{\cP'}$ the log structure induced from $(\cP',L)$.
For a frame $((X',M_{X'}),(Y',M_{Y'}),(\cP',L'))$, 
the open immersion $j\colon X'\to Y'$ induces a morphism $j\colon ]X'[_{\cP'}\to ]Y'[_{\cP'}$. Berthelot defined  $j^\dagger$, an exact functor of abelian sheaves on $]Y'[_{\cP'}$. 

\begin{definition} (Compare \cite[Definition~8.1.3]{LS:Rigid-cohomology}) An overconvergent isocrystal $\cE$ on $(X,M)$ over $(\Spf V,N)$ is the following the data:
\begin{enumerate}
    \item For each $(\Spf V,N)$-frame $((X',M_{X'}),(Y',M_{Y'}),(\cP',L'))$ over $(X,M)$, a coherent $j^{\dagger}\cO_{]Y'[_{\cP'}}$-module $E_{\cP'}$,
    \item For each morphism of $(\Spf V,N)$-frames over $(X,M)$ 
    \[((X',M_{X'}),(Y',M_{Y'}),(\cP',L'))\to ((X'',M_{X''}),(Y'',M_{Y''}),(\cP'',L'')),\]
    where $u\colon \cP'\to \cP''$, 
    an isomorphism  
    \[\phi_u\colon u^\dagger E_{\cP''} \to E_{\cP'}\]
    (where $u^\dagger$ is the induced pullback) such that the $\phi_u$'s satisfy the cocycle condition.
\end{enumerate}
A morphism of overconvergent isocrystals $\psi\colon\cE\to\cF$ is a family of compatible morphisms of $j^{\dagger}\cO_{]Y'[_{\cP'}}$-modules
$\psi_{\cP'}\colon E_{\cP'}\to F_{\cP'}$.
Write $\Isoc^\dagger((X,M)/(\Spf V,N))$ for the category of overconvergent isocrystals on $(X,M)$ over $(\Spf V,N)$. We will often suppress ``$(\Spf V,N)$.''
\end{definition}

A morphism $f\colon (X,M_X)\to (Z,M_Z)$ of log $(S,\N)$-schemes induces a pullback functor 
\[f^*\colon \Isoc^\dagger((Z,M_Z)/(\Spf V,N))\to \Isoc^\dagger((X,M_X)/(\Spf V,N)).\]
It is constructed by taking a frame $((X',M_{X'}),(Y',M_{Y'}),(\cP',L'))$ over $(X,M_X)$ to the corresponding frame over $(Z,M_Z)$ whose structure morphism is the composition
\[(X',M_{X'})\to (X,M_X)\to (Z,M_Z).\]

Overconvergent isocrystals form an abelian category. 
The {\em trivial overconvergent isocrystal $\mathbf{1}$} evaluates on any frame $((X',M_{X'}),(Y',M_{Y'}),(\cP',L'))$ as the module  $j^{\dagger}\cO_{]Y'[_{\cP'}}$.
We may define unipotent objects in this category, that is, isocrystals possessing a filtration 
\[\cE=\cE^0\supset \cE^1\supset \dots\supset \cE^{n+1}=0.\]
whose associated gradeds are direct sums of $\mathbf{1}$. 

Henceforth, in all our examples $X$ will be proper, and  we can choose $(Y,M_Y)=(X,M_X)$. In this case, the functor $j^\dagger$ is the identity functor, one does not need to worry about overconvergence, and we will omit ``overconvergent'' when describing isocrystals which will be taken to mean ``convergent isocrystals.'' 
We record the following equivalence of categories:

\begin{theorem} \cite[Section~2.2]{Shiho2} \label{p:equivalenceofcategories}
Let $((X,M_X),(X,M_X),(\cP,L))$ be a proper log smooth frame. Then $\Isoc^{\dagger}((X,M)/(\Spf V,N))$ is equivalent to the category of coherent $\cO_{]X[_\cP}$-modules $E$ equipped with an integrable  log connection 
\[\nabla\colon E\to E\otimes \Omega^1_{(]X[_\cP,L)/((\Spf V)^{\an},N)}
\]
where $\Omega^1_{(]X[_\cP,L)/(S^{\an},N)}$ denotes the sheaf of relative log $1$-forms. This equivalence is canonical for morphisms of proper log smooth frames.
\end{theorem}

Then, $\mathbf{1}$ evaluates on a frame $((X,M_X),(X,M_X),(\cP,L))$ as $\cO_{]X[_{\cP'}}$ equipped with the trivial connection $\nabla=d$.

\begin{definition}
Let $((X,M_X),(X,M_Y),(\cP,L))$ be a log smooth frame.
For an overconvergent isocrystal $\cE$ on $(X,M_X)$, the log rigid cohomology 
\[H^*_{\rig}((X,M_X)/(\Spf V,N),\cE)\]
is defined to be the cohomology of the logarithmic de Rham complex 
$E_\cP\otimes\Omega^\bullet_{(]X[_\cP,L)/((\Spf V)^{\an},N)}$, considered as a complex of sheaves on the admissible site of 
$]X[_{\cP}$.
\end{definition}

We will suppress ``$(\Spf V,N)$'' where it is understood. 
Log rigid cohomology coincides with the {\em analytic cohomology} as defined by Shiho \cite[Section~2.2]{Shiho2} and will be independent of $(\cP,L)$ by \cite[Proposition~2.2.14]{Shiho2}. 

Unipotent isocrystals corresponds to $\cO_{]X[_\cP}$-modules $E$ equipped
with an integrable log connection and a horizontal descending filtration
\[E=E^0\supset E^1\supset \dots \supset E^{n+1}=0\]
whose associated gradeds are isomorphic to direct sums of $(\cO_{(]X[_\cP},d)$. We will make use of the full subcategory consisting of unipotent  isocrystals, which will be denoted by the ``$\un$'' superscript.

By considering the frame $((S, \N),(S,\N),(\Spf V,N))$ and noting that $]S[_{\Spf V}=(\Spf V)^{\an}$ is a point, one immediately obtains the following: 

\begin{corollary} \label{c:isocrystalsonapoint}
The category of unipotent isocrystals on $(S,\N)$ over $(\Spf V,N)$ is equivalent to the category of $K$-vector spaces.
\end{corollary}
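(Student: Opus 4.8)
The plan is to apply Proposition~\ref{p:equivalenceofcategories} directly to the frame $((S,\N),(S,\N),(\Spf V,N))$, where $X=Y=S=\Spec k$ carries the log structure $\N$ and $\cP=\Spf V$ carries $N$. First I would check that this is a proper and log smooth frame: the exact closed immersion $(S,\N)\hookrightarrow(\Spf V,N)$ is the one described at the start of the section, the open immersion $(S,\N)\hookrightarrow(S,\N)$ is the identity (hence trivially exact), the scheme $Y=S$ is proper over $k$, and $(\Spf V,N)$ is log smooth over itself, so it serves as its own log smooth open neighborhood of $X$. Thus the hypotheses of the proposition are met, and overconvergent isocrystals on $(S,\N)$ are equivalent to coherent $j^\dagger\cO_{]S[_{\Spf V}}$-modules $E$ equipped with an integrable overconvergent log connection $\nabla\colon E\to E\otimes\Omega^1_{(]S[_{\Spf V},N)/(S^{\an},N)}$.

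Next I would identify both factors on the target side. The tube $]S[_{\Spf V}=(\Spf V)^{\an}$ is the rigid generic fiber of $\Spf V$, which is the single $K$-rational point $\Spec K$. Since $X=Y$, the map $j\colon {]X[}\to{]Y[}$ is the identity and $j^\dagger$ is the identity functor, so a coherent $j^\dagger\cO_{]S[_{\Spf V}}$-module is just a coherent module over the structure sheaf of a point, i.e.~a finite-dimensional $K$-vector space.

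It then remains to show the connection datum is vacuous. The structure morphism of the frame is the exact immersion $(S,\N)\to(\Spf V,N)$, which is the identity on the monoid $\N$; consequently the sheaf of relative log differentials $\Omega^1_{(]S[_{\Spf V},N)/(S^{\an},N)}$ vanishes. Hence $\nabla$ is forced to be the zero map, integrability is automatic, and the only datum is the underlying $K$-vector space, with morphisms the $K$-linear maps. Assembling these observations, Proposition~\ref{p:equivalenceofcategories} yields the asserted equivalence with the category of (finite-dimensional) $K$-vector spaces.

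The steps are all routine given the proposition; the only point requiring genuine care is the vanishing of $\Omega^1_{(]S[_{\Spf V},N)/(S^{\an},N)}$, which is where the bookkeeping of the two log structures enters. I expect this to be the main (though still minor) obstacle: one must confirm that the log structures $N$ on source and base coincide under the structure morphism---equivalently, that the morphism has trivial relative log cotangent---so that no nonzero relative log $1$-forms survive to carry a connection.
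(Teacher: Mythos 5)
Your proposal is correct and follows essentially the same route as the paper, whose entire proof consists of applying Proposition~\ref{p:equivalenceofcategories} to the frame $((S,\N),(S,\N),(\Spf V,N))$ and noting that $]S[_{\Spf V}=(\Spf V)^{\an}$ is a point. You merely make explicit the details the paper treats as immediate --- verifying the proper/log smooth frame hypotheses and, in particular, the vanishing of $\Omega^1_{(]S[_{\Spf V},N)/(S^{\an},N)}$ that renders the connection datum vacuous --- which is a sound and complete filling-in of the same argument.
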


A smooth $k$-point $x$ of the underlying scheme $X(k)$ that has the standard log structure (i.e.~the stalk of the relative characteristic sheaf vanishes) can be lifted to a section $x\colon (S,\N)\to (X,M)$. The fiber functor $\cE\to x^*\cE$ allows one to take the fiber over $x$. While unipotent  isocrystals on $(X,M)$ equipped with fiber functor $x^*$ form a Tannakian category, we will define some additional fiber functors in Section~\ref{s:logpoints} using ideas from \cite{Besser:Coleman} and \cite{Vologodsky}.

\section{Log curves}

\subsection{Background on log curves} \label{ss:logcurvesbackground}

We will work with geometrically connected one-dimensional log schemes $(X,M)$ over the following log schemes:
\begin{enumerate}
    \item the standard log point $(S,\N)$ where $S=\Spec k$ and the log structure is induced by $\Z_{\geq 0}\to k$ where $\Z_{\geq 0}$ generated by $e_\pi$ with $e_\pi\mapsto 0$; 
    \item $(S_t,\N_t)$ where $S_t=\Spec k$ and the log structure is induced by $\Z_{\geq 0}^2\to k$ with $\Z_{\geq 0}^2$ generated by $e_t, e_\pi$ with $e_t\mapsto 0$ and $e_\pi\mapsto 0$.
\end{enumerate}
There is a natural log morphism $i\colon (S,\N)\to (S_t,\N_t)$ given on schemes by the identity and on log structures by $e_t\mapsto e_\pi, e_\pi \mapsto e_\pi$. One should think of $i$ as the log analogue of the morphism $\Spf V\to\Spf V\ps{t}$ given by $t\mapsto\pi$. We can make $(S_t,\N_t)$ into an $(S,\N)$-scheme by the morphism $h\colon(S_t,\N_t)\to (S,\N)$ induced by $e_\pi\mapsto e_\pi$. 

We will make use of punctures, nodes, and annuli over $(S,\N)$ and $(S_t,\N_t)$. These structures are also studied in \cite[Section~6]{CPS:Logpi1}. Our log curves will be modeled on the following standard curves over $(S,\N)$:
\begin{enumerate}
    \item The standard line is $(X_0,M_0)$ where $X_0=\Spec k[x_1]$ with $M_0$ induced by the pre-log structure $\Z_{\geq 0}=\langle e_\pi\rangle$ given by $e_\pi\mapsto 0$, and equipped with the map to $(S, \mathbb{N})$ induced by $e_\pi\mapsto e_\pi$;
    \item The standard punctured line is $(X_1,M_1)$ where $X_1=\Spec k[x_1]$ with $M_1$ induced by $\Z_{\geq 0}^2=\langle e_\pi, f\rangle$, where $e_\pi\mapsto 0$ and $f\mapsto x_1$, and equipped with the map to $(S, \mathbb{N})$ induced by $e_\pi\mapsto e_\pi$;
    \item The standard nodal curve is $(X_2,M_2)$ where $X_2=\Spec k[x_1,x_2]/(x_1x_2)$ with $M_2$ induced by $\Z_{\geq 0}^2=\langle f_1,f_2\rangle$ with $f_i\mapsto x_i$, equipped with the map to $(S,\N)$ given by $e_\pi\mapsto f_1+f_2$;
    \item The standard annular line $(X_{2'},M_{2'})$ is the subscheme of $(X_2,M_2)$ cut out by $x_2=0$ with induced log structure, i.e.,~$X_{2'}=\Spec k[x_1]$ with $M_{2'}$ generated by $f_1,f_2$ with $f_1\mapsto x_1$ and $f_2\mapsto 0$, equipped with the map to $(S,\N)$ given by $e_\pi\mapsto f_1+f_2$.
\end{enumerate}

The standard annular line is not log smooth over $(S,\N)$ but belongs to frames that are log smooth. There is a natural closed exact embedding from the standard annular line to the standard node given by mapping to the closed subscheme $x_2=0$.

We will pass from log schemes over $(S,\N)$ to ones over $(S_t,\N_t)$ by a particular base-change.

\begin{definition} \label{d:pitbasechange}
The \emph{$\pi$-$t$ base-change morphism} $h_t\colon (S_t,\N_t)\to (S,\N)$ is given by $e_\pi\mapsto e_t$. For a log scheme $(X,M)$ over $(S,\N)$, we define
\[(X_t,M_t)=(X,M)\times_{(S,\N),h_t} (S_t,\N_t).\]
We define the structure morphism to be the composition
\[\xymatrix{
(X_t,M_t)\ar[r]& (S_t,\N_t)\ar[r]^h& (S,\N).}
\]
\end{definition}

\begin{remark} \label{r:usualbasechange}
The morphism $h\colon (S_t,\N_t)\to (S,\N)$ corresponds  to the projection $\Spf V\ps{t}\to \Spf V$. Base-changing by this morphism is the usual product. The $\pi$-$t$ base-change is not a morphism over $(S,\N)$. It takes semistable curves for which $\pi$ is a deformation parameter for the nodes (i.e.,~modelled on $x_1x_2=\pi$) and replaces them with curves where $t$ is a deformation parameter for nodes (i.e.,~modelled on $x_1x_2=t$). It will be important for defining the monodromy and Frobenius operators on fundamental groups. Observe that base-changing by $i$ is a left inverse to $\pi$-$t$ base-change. We will therefore also write $i\colon (X,M)\to (X_t,M_t)$ for the base-change of $i$ by $(X_t,M_t)\to (S_t,\N_t)$.
\end{remark}

For a log scheme $(X,M)$ over $(S,\N)$ and a closed point $x$, we will say that $x$ is a {\em smooth point}, {\em puncture}, {\em node}, or {\em annular point} if it has an \'etale neighborhood modeled on one the above. 

\begin{definition}
A morphism of fine saturated proper geometrically connected log schemes $f\colon (X,M)\to (S,\N)$ is a {\em weak log curve} if every point is a smooth point, a puncture, a node, or an annular point.
\end{definition}

If a weak log curve does not have any annular points, then it is a log curve. See \cite{Kato:logcurves} for a characterization of such curves.

\begin{remark} \label{r:modifylogstructure}
Given a weak log curve $(X,M)$ over $(S,\N)$, we can modify the log structure at standard annuli and punctures. Specifically, there are morphisms
\begin{enumerate}
    \item $(X_{2'},M_{2'})\to (X_1,M_1)$ induced by the inclusion of pre-log structures $M_1\to M_{2'}$ taking $f\mapsto f_1$;
    \item $(X_1,M_1)\to (X_0,M_0)$ induced by the inclusion of pre-log structures $M_0\to M_1$.
\end{enumerate}
We can replace
any of these log curves by their images by modifying the log structure.
This replacement can be performed \'{e}tale locally and hence on weak log curves. Moreover, it extends to a morphism of log smooth proper frames.
In the rigid analytic picture, the first inclusion corresponds to including an annulus into a punctured disc while the second corresponds to including a punctured disc into a disc. We will refer to a weak log curve thus obtained as a {\em modification}. This is also discussed following the introduction of examples (6.13) and (6.14) in \cite{CPS:Logpi1}.
\end{remark}

Over $(S_t,\N_t)$ we have $(X_{0,t},M_{0,t})$, $(X_{1,t},M_{1,t}), (X_{2,t},M_{2,t}), (X_{2',t},M_{2',t})$ obtained from their $(S,\N)$-analogues by by $\pi$-$t$ base-change.
%\begin{enumerate}
%    \item The standard punctured line is $(X_{1,t},M_{1,t})$ where $X_{1,t}=\Spec k[x]$ with $M_1=\N^3$ generated by $e_\pi,e_t,f$ with $e_t\mapsto 0$, $e_\pi\mapsto 0$, and $f\mapsto x$;
%    \item The standard node is $(X_{2,t},M_{2,t})$ where $X_2=\Spec k[x_1,x_2]/(x_1x_2)$ and $M_{2,t}=\N^3$ is generated by $e_\pi,f_1,f_2$ with $e_\pi\mapsto 0$, $f_i\mapsto x_i$ equipped with map to $(S_t,N_t)$ given by $e_t\mapsto f_1+f_2$;
%    \item The standard annulus $(X_{2',t},M_{2',t})$ is the subscheme of $(X_{2,t},M_{2,t})$ cut out by $x_2=0$ with induced log structure.
%\end{enumerate}

%For $i=1,2,2'$, there are natural embeddings $X_i\to X_{i,t}$ induced by $e_\pi\mapsto e_t$. This morphism will induce a map on enlargements taking $t\mapsto \pi$. Similarly, there is a morphism $\Spec k \to D_t$. These morphisms will be essential for defining monodromy.

\begin{definition}
The {\em log dual graph} of a weak log curve $(X,M)$ over $(S,\N)$ is a graph whose vertices correspond to components, whose closed edges correspond to nodal points, and whose half-open edges correspond to annular points and punctures.
\end{definition}

A {\em morphism} of weak log curves $f\colon (X,M_X)\to (Y,M_Y)$ is defined to be the composition of a strict morphism of log curves $(X,M_X)\to (Y,M'_Y)$ (i.e.~$f^*M'_Y\to M_X$ is an isomorphism) and a morphism $(Y,M'_Y)\to (Y,M_Y)$ obtained by a composition of modifications as in Remark~\ref{r:modifylogstructure} at punctures and annular points.
A {\em weak embedding} of weak log curves $f\colon (X,M_X)\to (Y,M_Y)$ is the composition of a strict closed immersion $(X,M_X)\to (Y,M'_Y)$ and modifications $(Y,M'_Y)\to (Y,M_Y)$. A  morphism of weak log curves induces a submersion $\Gamma_X\to \Gamma_Y$ of log dual graphs while a weak embedding induces a weak embedding of log dual graphs. In particular, the weak embedding $(X,M_X)\to (X,M'_X)$ obtained by modifying the log structure by replacing punctures and annular points with smooth points induces the weak embedding $\Gamma_X\to\overline{\Gamma}_X$ in Remark~\ref{r:contracttocore}.

Given a weak log curve $(X,M)$ and a node $p\in X(k)$, we may normalize $X$ at $p$ to produce $f\colon \tilde{X}\to X$. The pullback log structure induces annular points at $f^{-1}(p)$ and a strict morphism $f\colon (\tilde{X},M_{\tilde{X}})\to (X,M)$ of weak log curves which extends to a morphism of log smooth proper frames. The log dual graph $\Gamma_{\tilde{X}}$ is obtained from $\Gamma_X$ by subdividing the edge $e$ corresponding to $p$ and removing the new vertex. There is a weak embedding $\Gamma_{\tilde{X}}\to \Gamma_X$, taking each of the edges arising from the subdivision of $e$ to $e$.

\subsection{Cohomology of log curves} \label{ss:cohomologyoflogcurves}

Let $(X,M)$ be a proper log curve. One can compute log rigid cohomology on a proper log smooth frame
\[((X,M),(X,M),(\cP,L))\]
which exists by standard arguments in deformation theory.
The associated gradeds of the monodromy-weight filtration on $H_{\rig}^1((X,M))$ can be given an explicit description by means of the Steenbrink--Zucker mixed analogue of Steenbrink's spectral sequence  \cite{Steenbrink:Limits}. Specifically, one begins with the double complex $A^{\bullet,\bullet}$ \cite[p.~526]{SteenbrinkZucker}, takes the associated single complex, and writes the spectral sequence induced by the monodromy-weight filtration $M_k$. This also appears with some details in \cite[Section~8.2]{BrosnanElZein}.
The log crystalline analogue of the usual Steenbrink spectral sequence is given by Mokrane \cite{Mokrane}, and once the machinery is developed, adapting it to the Steenbrink--Zucker setting is straightforward. Another approach making use of the Cech spectral sequence is given by Coleman--Iovita \cite[Section~1.1]{CI:Frobandmonodromy} which was generalized to the log crystalline setting in \cite{GK:Cech}. 

Let $(X^{(0)},\tilde{M})$ be the normalization of $X$ with pullback log structure. It will have annular points at the preimages of nodes. Let $(X^{(0)},M^{(0)})$ be obtained from $(X^{(0)},\tilde{M})$ by replacing annuli with punctures as in Remark~\ref{r:modifylogstructure}. Let $X^{(1)}$ and $D$ be the union of the nodes and punctures of $X$, respectively, considered as a disjoint union of copies of $(S,\N)$. 
 The $E_1$-page of the Steenbrink--Zucker spectral sequence is 
\[\begin{array}{ccccc}
E_1^{-1,2}=H_{\rig}^0(X^{(1)}\cup D)(-1)&\rightarrow & E_1^{0,2}=H_{\rig}^2((X^{(0)},M^{(0)})) & & \\
 & & E_1^{0,1}= H_{\rig}^1((X^{(0)},M^{(0)})) & & \\
 & & E_1^{0,0}=H_{\rig}^0((X^{(0)},M^{(0)}))& \rightarrow&
 E_1^{1,0}=H_{\rig}^0(X^{(1)}).
\end{array}\]
The apparent asymmetry of the spectral sequence with respect to $D$ is a consequence of the mixedness of the relevant Hodge structure.
The sequence clearly degenerates at $E_2$. One has
\[E_1^{-i,2}\cong C_i(\Gamma',\partial \Gamma'),\quad E_1^{i,0}\cong C^i(\overline{\Gamma})\]
where $\overline{\Gamma}$ is the dual graph of $X$, $\Gamma$ is the logarithmic dual graph of $(X,M)$, $\Gamma'$ is its compactification, and the $E_1$-differentials is given by the differentials in the relative homology of $(\Gamma',\partial\Gamma')$ and the cohomology of $\overline{\Gamma}$, respectively.
The $E_2$-page of the spectral sequence is
\[\arraycolsep=8pt
\begin{array}{ccc}
H^\diamond_1(\Gamma)(-1) & H^\diamond_0(\Gamma) & \\
 & \bigoplus_{\overline{v}} H_{\rig}^1((X_{\overline{v}},M_{\overline{v}})) & \\
 & H^0(\overline{\Gamma})& H^1(\overline{\Gamma})
\end{array}\]
where $\{(X_{\overline{v}},M_{\overline{v}})\}$ is the set of components of $X^{(0)}$. Observe that $H^i(\overline{\Gamma})\cong H^i(\Gamma)$. 
The top sub-quotient of $H_{\rig}^1((X,M))$, i.e.,~ the quotient, is isomorphic to $H^\diamond_1(\Gamma)(-1)$. The quotient map $\rho\colon H_{\rig}^1((X,M))\to H^\diamond_1(\Gamma)(-1)$ is given by taking residues at nodes and punctures (where log rigid cohomology is interpreted as de Rham cohomology of a tube).
The bottom sub-quotient, i.e.,~the submodule, is isomorphic to $H^1(\overline{\Gamma})$. 

The monodromy operator $N$ on the $E_1$-page is nonzero only from $E_1^{-1,2}$ to $E_1^{1,0}(-1)$ where it is induced by the inclusion $X^{(1)}\hookrightarrow X^{(1)}\cup D$. On the $E_2$-page, it is
\[N\colon H^\diamond_1(\Gamma)(-1)\to H^1(\overline{\Gamma})(-1).\]
The monodromy filtration is thus,
\[M_0=H^1(\overline{\Gamma}),\ M_1=\ker(\rho),\ M_2=H^1_{\rig}((X,M)).\]

The monodromy operator is usually written as the cycle pairing \cite{SGA7-1}
\[H^{\diamond}_1(\Gamma)\otimes H_1(\overline{\Gamma})\to K,\ 
C^\diamond\otimes C\mapsto \sum_e C^\diamond(e)C(e),
\]
but by using the isomorphism between  $H^\diamond_1(\Gamma)$ and $\Omega^1(\Gamma)$, the map $\Omega^1(\Gamma)\otimes H_1(\overline{\Gamma})\to K$ can be rewritten as combinatorial single integration:

\begin{proposition} \label{p:monodromycohomology}
The monodromy map $N$ factors as 
\[\xymatrix{
H_{\rig}^1((X,M))\ar[r]^>>>>>\rho & \Omega^1(\Gamma)\ar[r]^{\cint}& H^1(\overline{\Gamma})\ar[r]& H_{\rig}^1((X,M)).
}
\]
\end{proposition}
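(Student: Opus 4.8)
The plan is to read off the factorization directly from the Steenbrink--Zucker spectral sequence recalled above, using two structural inputs that are already in place: the identification of the weight-graded pieces of $H^1_{\rig}((X,M))$, and the description of the monodromy operator $N$ on the $E_1$-page. Writing $W_\bullet$ for the weight filtration, the three nonzero gradeds are, in increasing weight, $\on{gr}^W_0\cong H^1(\overline{\Gamma})$ (the bottom submodule), $\on{gr}^W_1\cong\bigoplus_v H^1_{\rig}((X_v,M_v))$, and $\on{gr}^W_2\cong H^\diamond_1(\Gamma)(-1)\cong\Omega^1(\Gamma)$ (the top quotient). Under these identifications the quotient map onto $\on{gr}^W_2$ is the residue map $\rho$, and the map out of $\on{gr}^W_0=W_0$ is the inclusion of the bottom submodule into $H^1_{\rig}((X,M))$.

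First I would invoke the standard fact (established by Mokrane \cite{Mokrane} in the log-crystalline setting) that $N$ lowers the weight filtration by two, i.e.\ $N(W_i)\subseteq W_{i-2}$. Since the filtration is supported in weights $0,1,2$, this gives $N(W_1)=0$ and $N(H^1_{\rig}((X,M)))\subseteq W_0=H^1(\overline{\Gamma})$. Thus $N$ annihilates the subobject $W_1$, hence factors through the quotient $H^1_{\rig}((X,M))/W_1=\on{gr}^W_2\cong\Omega^1(\Gamma)$, and lands in the submodule $W_0=H^1(\overline{\Gamma})$. This already yields a factorization
\[N=\iota\circ\overline{N}\circ\rho,\]
with $\rho$ the residue map, $\iota$ the inclusion of the bottom submodule, and $\overline{N}\colon\Omega^1(\Gamma)\to H^1(\overline{\Gamma})$ the map induced on associated gradeds. (Tate twists are suppressed, the diagram being one of underlying $K$-vector spaces; restoring them replaces the target $H^1(\overline{\Gamma})$ by $H^1(\overline{\Gamma})(-1)$, matching the $E_2$-level description above.)

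It remains to identify $\overline{N}$ with combinatorial single integration, and this is the step that carries the real content. I would argue on the $E_1$-page, where $N$ is the map $E_1^{-1,2}\to E_1^{1,0}(-1)$ induced by the inclusion $X^{(1)}\hookrightarrow X^{(1)}\cup D$ of the double points into the double points together with the punctures, and is zero elsewhere. Under the identifications $E_1^{-1,2}\cong C_1(\Gamma',\partial\Gamma')$ and $E_1^{1,0}\cong C^1(\overline{\Gamma})$, this inclusion-induced map is identified, via the Kronecker pairing on edges, with the restriction of a relative $1$-chain to the closed edges. Passing to $E_2$ (where the sequence degenerates) and using $\Omega^1(\Gamma)\cong H^\diamond_1(\Gamma)$ together with $H^1(\overline{\Gamma})\cong H_1(\overline{\Gamma})^\vee$, the map $\overline{N}$ becomes the cycle pairing $H^\diamond_1(\Gamma)\otimes H_1(\overline{\Gamma})\to K$, which is exactly combinatorial single integration as recorded in Section~\ref{s:combint}. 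The care needed here, and what I expect to be the main obstacle, lies in matching the inclusion-induced differential with the cycle pairing while keeping the distinction between closed and half-open edges (equivalently between $\Gamma$ and $\overline{\Gamma}$): a representative of a class in $H^\diamond_1(\Gamma)$ may be supported on half-open edges, whereas the pairing against $H_1(\overline{\Gamma})$ only sees the closed ones. Once this identification is made, the factorization $N=\iota\circ\cint\circ\rho$ is precisely the claim, and the weight argument of the previous paragraph shows there is nothing further to check.
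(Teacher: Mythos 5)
Your proposal is correct and follows essentially the same route as the paper: the paper's ``proof'' is precisely the preceding Steenbrink--Zucker discussion, where $N$ on the $E_1$-page is nonzero only from $E_1^{-1,2}\cong C_1(\Gamma',\partial\Gamma')$ to $E_1^{1,0}(-1)\cong C^1(\overline{\Gamma})(-1)$ (induced by $X^{(1)}\hookrightarrow X^{(1)}\cup D$), which on $E_2$ gives the cycle pairing, i.e.\ combinatorial single integration. Your explicit appeal to $N(W_i)\subseteq W_{i-2}$ is just a repackaging of that same $E_1$-level vanishing, so the two arguments coincide in substance.
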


\subsection{Log points} \label{ss:logpoints}

In this section, we study the log rigid cohomology of log points, that is, log schemes of the form $(\Spec k,\overline{\M})$ over $(S,\N)$ or $(S_t,\N_t)$. For example, we may take $(\Spec k,\overline{\M}_1)$, $(\Spec k,\overline{\M}_{1,t})$ (resp.~$(\Spec k,\overline{\M}_2)$, $(\Spec k,\overline{\M}_{2,t})$) to be the subschemes of $X_1$, $X_{1,t}$ cut out by $x_1=0$ (resp. $X_2$, $X_{2,t}$ cut out by $x_1=x_2=0$) with induced log structure. For convenience, we will often write $x$ for $\Spec k$.

All of our log points will be induced from a homorphism of monoids $f^*\colon \Z_{\geq 0}=\langle e_\pi\rangle\to \overline{M}=\Z_{\geq 0}^k$. Take the pre-log structure $\overline{M}\to k$ taking each nonzero element of $\overline{M}$ to $0$. Then one obtains a log point $(x,\overline{\M})$ with $x=\Spec k$ and $\overline{\M}=\overline{M}\oplus k^*$.
The structure morphism to $(S,\N)$ is induced by $f^*$, yielding a log point $(x,\overline{\M})\to (S,\N)$.

The log rigid cohomology of $(x,\overline{\M})$ can be computed on a frame of the form \[((x,\overline{\M}),(x,\overline{\M}),(\cP,L))\]
where $\cP=\Spf V\ps{\Z_{\geq 0}^k}/(f^*e_\pi-\pi)$ and $L$ is induced by $\Z_{\geq 0}^k$ with $m\mapsto m$. 
Then, $]x[_{\cP}$, the tube around $x$ in $\cP$, is
\[]x[_{\cP}=\{x\in \cP_K\colon \|m\|<1 \text{ for }m\in M\} \]
where we interpret $m$ as a function on $\cP_K$.
We have the following examples (where unless otherwise noted, $f^*$ is given by $e_{\pi}\mapsto e_{\pi}$):
\begin{enumerate}
    \item $\overline{\M}_0$ induced by $\overline{M}_0\coloneqq\Z_{\geq 0}=\<e_\pi\>$ over $\Z_{\geq 0}$ where $f^*(e_\pi)=e_\pi$, $\cP_0=\Spf V$;
    \item $\overline{\M}_{0,t} $ induced by $\overline{M}_{0,t}\coloneqq \Z_{\geq 0}^2=\<e_\pi,e_t\>$ where $f^*(e_\pi)=e_\pi$, $\cP_{0,t}=\Spf V\ps{t}$;
    \item $\overline{\M}_1$ induced by $\overline{M}_1\coloneqq \Z_{\geq 0}^2=\<e_\pi,f\>$ where $f^*(e_\pi)=e_\pi$, $\cP_1=\Spf V\ps{x_1}$;
    \item $\overline{\M}_{1,t}$ induced by $\overline{M}_{1,t}\coloneqq\Z_{\geq 0}^3=\<e_\pi,e_t,f\>$, where $f^*(e_\pi)=e_\pi$, $\cP_{1,t}=\Spf V\ps{x_1,t}$;
    \item $\overline{\M}_2$ induced by $\overline{M}_2\coloneqq\Z_{\geq 0}^2=\<f_1,f_2\>$ where $f^*(e_\pi)=f_1+f_2$, $\cP_2=\Spf V\ps{x_1,x_2}/(x_1x_2-\pi)$; and
    \item $\overline{\M}_{2,t}$ induced by $\overline{M}_{2,t}\coloneqq\Z_{\geq 0}^3=\langle e_\pi,f_1,f_2\>$ where $f^*(e_\pi)=e_\pi$, $\cP_{2,t}=\Spf V\ps{x_1,x_2,t}$.
\end{enumerate}

\begin{remark}
Observe that $(x,\overline{\M}_{i,t})$ is obtained from $(x,\overline{\M}_i)$ by $\pi$-$t$ base-change and thus possesses a morphism to $(S,\N_t)$.
For $\overline{\M}_i$ and $\overline{\M}_{i,t}$, let $(\cP,L)$ and $(\cP_t,L_t)$ be the formal log schemes obtained
as above. Then $(\cP,L)$ is the closed subscheme cut out by $t=\pi$ from $(\cP_t,L_t)$
\end{remark}

A unipotent isocrystal $\cE$ on $(x,\overline{\M})$ induces a unipotent vector bundle with integrable connection $(E,\nabla)$ on $]x[_{\cP}$. We note the log rigid cohomology of each of the examples by identifying the relevant tubes.

\begin{proposition} \label{p:cohomologyoflogpoint}
In each of the above examples, the log rigid cohomology of $f\colon (x,\overline{\M})\to (S,\N)$ is  (canonically) given by
\[H^*_{\rig}((x,\overline{\M})/(\Spf V,N))\cong \exterior{*}(\overline{M}/f^*\Z_{\geq 0})^{\gp}\otimes K.\]
\end{proposition}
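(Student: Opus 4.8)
The plan is to compute $H^*_{\rig}((x,\overline M))$ directly as the cohomology of the logarithmic de Rham complex of the trivial isocrystal $\mathbf 1$ on the tube $]x[_{\cP}$, as permitted by the definition of log rigid cohomology together with Proposition~\ref{p:equivalenceofcategories}; since $]x[_{\cP}$ is cut out by strict inequalities, overconvergence is automatic, so $E_{\cP}$ is simply $\cO_{]x[_\cP}$. First I would identify this complex explicitly. The sheaf of relative log differentials on $(]x[_\cP,L)$ over $(S^{\an},N)$ is free, spanned by the classes $d\log z^m$ for $m\in\overline M$, subject only to killing the class of $f^*e_\pi$ (which comes from the base $N$), so that
\[\Omega^1_{(]x[_\cP,L)/(S^{\an},N)}\cong \cO_{]x[_\cP}\otimes_K\left((\overline M/f^*\N)^{\gp}\otimes K\right).\]
Consequently the entire log de Rham complex is the Koszul complex $\cO_{]x[_\cP}\otimes_K\exterior{\bullet}\big((\overline M/f^*\N)^{\gp}\otimes K\big)$ for the family of commuting logarithmic derivations $\theta_\phi$, indexed by $\phi$ in the dual of $(\overline M/f^*\N)^{\gp}$, where $\theta_\phi(z^m)=\langle\phi,m\rangle z^m$. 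The isomorphism of the statement will then be realized on generators by the $d\log z^m$, which also explains compatibility with the algebra (cup-product) structure, since the log de Rham complex is a differential graded algebra.

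The key structural observation is that $\cO_{]x[_\cP}$ is graded by $(\overline M/f^*\N)^{\gp}$ via the natural torus action: a monomial $z^m$ has weight $\bar m\in \overline M^{\gp}/\Z\cdot f^*e_\pi$, and this weight is well defined because the defining relation $z^{f^*e_\pi}=\pi$ identifies monomials differing by a multiple of $f^*e_\pi$. Since any two monomials of the same weight differ by a power of $f^*e_\pi$, hence by a scalar in $K$, each graded piece is one-dimensional (or zero); in particular the weight-zero part is exactly the constants $K$. The derivation $\theta_\phi$ acts on the weight-$\lambda$ line by the scalar $\langle\phi,\lambda\rangle\in\Z$. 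I would then compute the Koszul cohomology weight by weight: on the weight-zero line every $\theta_\phi$ vanishes, contributing a full copy of $\exterior{\bullet}\big((\overline M/f^*\N)^{\gp}\otimes K\big)$, whereas on each nonzero weight $\lambda$ some $\langle\phi,\lambda\rangle$ is a nonzero integer, so the corresponding finite-dimensional Koszul complex is acyclic. The open disc and open annulus (the rank-one cases among the examples (1)--(6)) serve as consistency checks.

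The hard part will be the passage from this weight-by-weight computation to the cohomology of the actual \emph{completed} complex of analytic functions: one must verify that the primitives produced on the individual nonzero-weight lines assemble into a convergent analytic form on the open tube, so that the completed direct sum of acyclic pieces remains acyclic. Concretely, inverting $\theta_\phi$ divides the weight-$\lambda$ coefficient by the integer $\langle\phi,\lambda\rangle$, and $p$-adically $\|\langle\phi,\lambda\rangle^{-1}\|$ may be large; the saving point is that $\|\langle\phi,\lambda\rangle^{-1}\|^{1/|\lambda|}\to 1$, so dividing by these integers preserves the radius of convergence precisely on the \emph{open} polyannulus $]x[_\cP$. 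This is exactly where the strict inequalities are indispensable, and it mirrors the explicit overconvergent Laurent-series estimates already carried out in Lemma~\ref{l:aphomologyequivalence}. Granting this, the cohomology of the completed complex equals the weight-zero contribution, giving
\[H^*_{\rig}((x,\overline M)/(\Spf V,N))\cong \exterior{*}(\overline M/f^*\N)^{\gp}\otimes K,\]
as claimed, with any torsion in $(\overline M/f^*\N)^{\gp}$ harmlessly annihilated upon tensoring with the characteristic-zero field $K$.
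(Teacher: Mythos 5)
Your computational core is correct, and it takes a genuinely different route from the paper's. You work directly with global sections on the open tube, decompose them by monomial weight under the $(\overline{M}/f^*\N)^{\gp}$-grading, and observe that the log de Rham complex is a completed Koszul complex that is acyclic in each nonzero weight, with the estimate $\|\langle\phi,\lambda\rangle^{-1}\|^{1/|\lambda|}\to 1$ guaranteeing that termwise primitives still converge on the open polyannulus. The paper instead exhausts $]x[_{\cP}$ by closed affinoid subdomains $\cY_n$ (cut out by $\|x_i\|\leq\lambda_n$), quotes the analogous computation for the \emph{overconvergent} complex on each $\cY_n$ (the ``direct computation'' there is precisely the estimate you spell out), and then assembles the answer through a two-column double complex built from $\prod_n\Gamma(\cY_n,\Omega^q)$, using shift-operator homotopies to show the overconvergent and analytic double complexes are quasi-isomorphic. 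So your proof makes explicit the analytic input that the paper leaves implicit, at the price of needing a different globalization step --- and that is where your argument has a gap.

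The gap is this: the paper defines log rigid cohomology as the cohomology of the de Rham complex \emph{as a complex of sheaves on the admissible site} of $]x[_{\cP}$, i.e.\ as hypercohomology, whereas you compute the cohomology of the complex of global sections. These agree only given an acyclicity statement for coherent sheaves on the open tube, which is not affinoid; neither ``the definition'' nor Proposition~\ref{p:equivalenceofcategories} supplies this. You need either Kiehl's Theorem B for quasi-Stein spaces (the tube is quasi-Stein via the exhaustion by the $\cY_n$), or --- what the paper actually does --- the \v{C}ech-type computation over that exhaustion via \cite[Lemma~6.5.10]{LS:Rigid-cohomology} and Tate acyclicity, in which one must additionally kill the $\varprojlim^1$ contribution of the system $\{\Gamma(\cY_n,\Omega^q)\}_n$; handling this is the entire purpose of the paper's $\delta$-double-complex and shift-operator argument. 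The gap is fillable (cite Kiehl, or rerun your weight estimate uniformly on each $\cY_n$ to obtain Mittag--Leffler surjectivity of the restrictions), but as written the passage from your completed Koszul complex to $H^*_{\rig}$ is missing. A smaller point: your closing remark about torsion is backwards. If $(\overline{M}/f^*\N)^{\gp}$ had a nonzero torsion class $\lambda$, then the Koszul differential would vanish identically on the weight-$\lambda$ line (its image in $(\overline{M}/f^*\N)^{\gp}\otimes K$ is zero), so that line would contribute \emph{extra} cohomology rather than being annihilated; your identification of the weight-zero part with the constants requires the torsion-freeness that the paper assumes (and which holds in all of its examples), and tensoring with $K$ does not rescue the argument when torsion is present.
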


\begin{proof}
For $\overline{M}_0$, $]x[_{\cP}=\Spf V$. For $\overline{M}_{0,t}, \overline{M}_{1}$, $(]x[_{\cP},L)$ is isomorphic to the standard log open disc, i.e.~$\Spf V\ps{t}$ with log structure induced by $\Z_{\geq 0}^2=\<e_\pi,e_t\>$ with $e_\pi\mapsto \pi$, $e_t\mapsto t$. Then the log differentials are generated by $\frac{dt}{t}$, and the cohomology is canonically isomorphic to $\wedge^*(\overline{M}/f^*\Z_{\geq 0})\otimes K$. For $\overline{M}_{1,t}$ and $\overline{M}_{2,t}$, $]x[_\cP$ is isomorphic to the product of two standard log open discs with log differentials generated by $\frac{dx_1}{x_1},\frac{dx_2}{x_2}$ from which the conclusion follows. Now, for $\overline{M}_2$, we have an isomorphism with the annulus
\[]x[_{\cP}\cong \{x_1\mid |\pi|<|x|<1.\}\]
The log differential $\frac{dx}{x}$ generates de Rham cohomology.
\end{proof}

\part{Fundamental groups}

\section{Tannakian fundamental groups}

\subsection{Tannakian categories and fundamental groups}\label{subsection:tannakian-and-fundamental-groups}

In this section, we review Tannakian fundamental groups \cite{Deligne:groupefondamental,Szamuely}.
Let $K$ be a field and $\mathscr{C}$ a essentially small rigid abelian tensor category with unit object $\mathbf{1}_\mathscr{C}$ such that $\on{End}_{\mathscr{C}}(\mathbf{1}_{\mathscr{C}})=K$. Let $F\colon \mathscr{C}\to \on{Vect}_K$ be a fiber functor, i.e.,~a faithful exact $K$-linear tensor functor
so that $\mathscr{C}$ is a \emph{neutral Tannakian category} (see e.g.,~\cite{DM:tannakian} for definitions). Then, given a $K$-algebra $R$, the functor $F\otimes R$ is obtained from the composition 
\[\mathscr{C}\overset{F}{\longrightarrow} \on{Vect}_K\overset{-\otimes R}{\longrightarrow}R\on{-mod}.\]
The main theorem of Tannakian duality \cite[Theorem 2.11]{DM:tannakian} states that the functor \[R\mapsto \on{Aut}^{\otimes}(F\otimes R)\] is representable by a $K$-group scheme $\pi_1(\mathscr{C}, F)$. Moreover for two different fiber functors $F_1, F_2$, the functor \[R\mapsto \on{Isom}^\otimes(F_1\otimes R, F_2\otimes R)\] is represented by a $\pi_1(\mathscr{C}, F_1)$-$\pi_1(\mathscr{C},F_2)$-torsor $\pi_1(\mathscr{C}; F_1, F_2)$ \cite[Theorem 3.2]{DM:tannakian}.

\begin{definition}
We say a Tannakian category $\mathscr{C}$ is {\em unipotent} if every object $E$ of $\mathscr{C}$ admits a filtration $E=E^0\supset E^1\supset \cdots \supset E^{n+1}=0$ (for some $n$) such that each $E^i/E^{i+1}\cong \mathbf{1}_{\mathscr{C}}^{\oplus n_i}$ for some positive integer $n_i$. The maximum value of $n$ for which such a filtration exists (and each quotient is isomorphic to $\mathbf{1}$)
is called the {\em length}. The mimimum value of $n$ is called the {\em index of unipotency}.
\end{definition}
Note that a filtration as above is necessarily of finite length. For a Tannakian category $\mathscr{C}$, write $\mathscr{C}^{\un}$ for the full subcategory of unipotent objects in $\mathscr{C}$. The fundamental torsor $\pi_1(\mathscr{C}^{\un};F_1,F_2)$ is the pro-unipotent completion of 
$\pi_1(\mathscr{C};F_1,F_2)$ as a torsor over $\pi_1(\mathscr{C},F_1)$ or $\pi_1(\mathscr{C},F_2)$

\subsubsection{Hopf algebras} There is a topological cocommutative coalgebra $\Pi(\mathscr{C}; F_1,F_2)$ attached to a unipotent $K$-linear Tannakian category $\mathscr{C}$ with two fiber functors $F_1,F_2$. Let $\mathscr{O}\coloneqq \mathscr{O}_{\pi_1(\mathscr{C}; F_1, F_2)}$ be the coordinate ring of $\pi_1(\mathscr{C}; F_1,F_2)$ --- as the coordinate ring of a (pro-)algebraic variety, it is a commutative algebra. If $F_1=F_2$, it is the coordinate ring of a (pro-)algebraic group, hence a commutative Hopf algebra. In this case, $\Pi(\mathscr{C}; F_1,F_2)$ will be a cocommutative Hopf algebra. The construction of $\Pi(\mathscr{C};F_1,F_2)$ will take a certain (topological) dual of $\mathscr{O}$, meant to imitate the classical formalism of the Mal'cev completion of a discrete group (i.e.,~the completion of the group ring of a discrete group at its augmentation ideal).

We define 
\[\Pi(\mathscr{C}; F_1,F_2)=\varprojlim_{W\subset \mathscr{O}\text{ f.d.}} W^\vee,\]
where  $W$ runs over all finite dimensional subspaces of $\mathscr{O}$. Each $W$ is given the discrete topology and $\Pi(\mathscr{C}; F_1,F_2)$ is given the inverse limit topology (sometimes referred to as the \emph{linearly compact} topology). See e.g.,~ \cite{vezzani2012pro} for a reference on these matters.

The coalgebra $\Pi(\mathscr{C}; F_1,F_2)$ is endowed with a natural augmentation map $\Pi(\mathscr{C}; F_1,F_2)\to K$, dual to the structure map $K\to \mathscr{O}$. We denote its kernel by $\mathscr{I}$. If $F_1=F_2$, $\mathscr{I}$ is an ideal, and its powers form an exhaustive filtration on $\Pi(\mathscr{C}; F_1,F_2)$. For $F_1\not=F_2$, $\pi_1(\mathscr{C}; F_1,F_2)$ is a $\pi_1(\mathscr{C},F_1)$-$\pi_1(\mathscr{C},F_2)$-torsor, and hence $\Pi(\mathscr{C}; F_1,F_2)$ is a free $\Pi(\mathscr{C}, F_1)$-$\Pi(\mathscr{C}, F_2)$-bimodule of rank one. Thus the $\mathscr{I}$-adic filtration on $\Pi(\mathscr{C},F_1)$ (resp.~$\Pi(\mathscr{C}, F_2)$) yields a $\mathscr{I}$-adic filtration on $\Pi(\mathscr{C}; F_1,F_2)$ by left (resp.~right) multiplication --- in fact, one may easily check that these two filtrations agree.
It is a general property of pro-unipotent algebraic groups that $\mathscr{I}/\mathscr{I}^2\cong\pi_1^{\operatorname{ab}}(\mathscr{C},F_1)(K)$ canonically. 

Elements of $\Pi(\mathscr{C}; F_1,F_2)$ give $K$-linear natural transformations $F_1\to F_2$.
%For an object $\cV$ of $\mathscr{C}$, an element of $\Pi(\mathscr{C}; F_1,F_2)$ induces a $K$-linear map $F_1(\cV)\to F_2(\cV)$.
The data of $\Pi(\mathscr{C}; F_1,F_2)$ as $F_1,F_2$ range over a collection of fiber functors makes up the data of a \emph{Hopf groupoid} --- see e.g.,~\cite[\S9]{fresse-operads} for details.

\begin{remark}\label{rmk:construction-of-grouplike-elements}
By construction, a $K$-point $p$ of $\pi_1(\mathscr{C}; F_1, F_2)$ (i.e.,~ a map $\mathscr{O}_{\pi_1(\mathscr{C}; F_1, F_2)}\to K$) yields $g_p\in \Pi(\mathscr{C}; F_1, F_2)$ (namely, the image of $1$ under the dual map). Unwinding the duality above shows that this element is \emph{group-like}. Thus, we have, \[\Delta(g_p)=g_p\otimes g_p,\]
where $\Delta$ is the comultiplication on $\Pi(\mathscr{C}; F_1, F_2)$.
\end{remark}
\subsection{Universal objects and unipotent fundamental groups}  \label{ss:unipotentfundamentalgroups}

We summarize a construction of Hadian \cite[Section~2]{Hadian} (see also \cite{AIK}) to characterize the unipotent fundamental group through cohomological data. Let $\mathscr{C}$ be a unipotent neutral Tannakian category such that $\on{Ext}^1(\mathbf{1},\mathbf{1})$ is a finite dimensional $K$-vector space. 
Let $F$ be a fiber functor on $\mathscr{C}$.

\begin{definition}
An {\em $F$-pointed object} is a pair $(V,v)$ where $V$ is an object of $\mathscr{C}$ and $v\in F(V)$. A morphism of $F$-pointed objects $f\colon(V,v)\to(W,w)$ is a morphism $f\colon V\to W$ such that $F(f)(v)=w$. 
\end{definition}

\begin{definition}
A projective system of $F$-pointed objects $\{(E_n,e_n)\}$ such that $E_n$ has index of unipotency $n$ is \emph{universal} if for every pointed object $(V,v)$ with index of unipotency at most $n$, there is a unique morphism $f\colon (E_n,e_n)\to (V,v)$.
\end{definition}

The universal projective system is constructed by iterated extension \cite[Proposition~3.4]{AIK}.
There are objects $\{E_n\}_n$ with $E_0=\mathbf{1}$, and 
an exact sequence of objects
\[
\xymatrix{0\ar[r]&T_n\ar[r]& E_{n+1}\ar[r]& E_n\ar[r]&0}
\]
For an object $V$, write $H^i(V)$ for $\on{Ext}^i(V^\vee,\mathbf{1})$.
Now, 
\[T_n=\on{Ext}^1(E_n,\mathbf{1})^\vee\otimes\mathbf{1}=H^1(E_n^\vee)^\vee\otimes\mathbf{1}\] 
and the extension class in
\[\on{Ext}^1(E_n,T_n)=H^1(E_n^\vee)\otimes H^1(E_n^\vee)^\vee
\]
is given by the identity homomorphism.
There is an alternative description of $T_n$ from \cite[Section~3.6]{AIK}:  $T_n=(R^{(n)})^\vee\otimes \mathbf{1}$ for a vector space $R^{(n)}$ described inductively by  $R^{(0)}=K$, $R^{(1)}=H^1(\mathbf{1})$, and $R^{(n+1)}$ is the kernel of a homomorphism induced by the Yoneda product 
\[R^{(n)}\otimes H^1(\mathbf{1})\to R^{(n-1)}\otimes H^2(\mathbf{1}).\]
The Yoneda product will be equal to the cup product in our examples (see \cite[p.~2614]{AIK}).

There is an isomorphism
\[\on{End}(E_n)\cong F(E_n)\]
given by $f\mapsto f(e_n)$. Composition of endomorphisms induces multiplication on $F(E_n)$.
Set $A_\infty=\varprojlim F(E_n)$. By universality, there is a unique morphism $\Delta_{m,n}\colon E_{m+n}\to E_m\otimes E_n$ taking $e_{m+n}$ to $e_m\otimes e_n$. This induces comultiplication $\Delta\colon A_\infty\to A_\infty\hat{\otimes} A_\infty$ making $A_\infty$ into a cocommutative Hopf algebra.
Set $A_\infty^\star=\on{Hom}_{K, \on{cts}}(A_\infty,K)$, the topological dual as defined in \cite[Definition 6]{vezzani2012pro}.  Then, 
$\pi_1(\mathscr{C},F)\cong \Spec A_\infty^\star.$

\begin{remark}
We may extend the above approach to encompass fundamental torsors. Let $F_1$ and $F_2$ be fiber functors on $\cC$. Let $\{(E^1_n,e^1_n)\}$ and $\{(E^2_n,e^2_n)\}$ be universal $F_1$- and $F_2$-pointed objects. By writing $F_{i,n}$ for the restriction of $F_i$ to objects of index of unipotency at most $n$, the Yoneda Lemma gives a canonical isomorphism
\[F_2(E_n^1)=F_{2,n}(E_n^1)\cong \on{Hom}(F_{1,n},F_{2,n}),\]
the vector space of (not-necessarily tensor compatible) natural transformations.
The collection of vector spaces $H_{n;F_1,F_2} \coloneqq F_2(E_n^1)$
form a projective system with morphism $F_2(E_m^1)\to F_2(E_n^1)$ for $m>n$.
%By functoriality $\on{End}(E_n)$ acts on the vector space $F_i(E_n)$. The vector spaces
%\[H_{n;F_1,F_2}=\on{Hom}_{\on{End}(E_n)}(F_1(E_n),F_2(E_n))\]
%form a projective system induced by $F_2(E_m)\to F_2(E_n)$ for $m>n$. Indeed, the natural truncation map
%\[\on{Hom}_{\on{End}(E_m)}(F_1(E_m),F_2(E_m))
%\to \on{Hom}_{\on{End}(E_m)}(F_1(E_m),F_2(E_n))
%\]
%factors through $\on{Hom}_{\on{End}(E_n)}(F_1(E_n),F_2(E_n))$.
%There is an isomorphism
%\begin{align*}
%H_{n;F_1,F_2}&\cong F_2(E_n),\\
%f&\mapsto f(e_n^1).
%\end{align*}
The morphism 
\[F_2(\Delta_{m,n})\colon F_2(E^1_{m+n})\to F_2(E^1_m)\otimes F_2(E^1_n)\]
induces a linear map $\Delta^H_{m,n}\colon H_{m+n;F_1,F_2}\to H_{m;F_1,F_2}\otimes H_{n;F_1,F_2}$.

The projective limits $H_{F_1,F_2}=\varprojlim_n H_{n;F_1,f_2}$
have an obvious composition map 
\[H_{F_1,F_2}\hat{\otimes} H_{F_2,F_3}\to H_{F_1,F_3}.\]
By specializing to $F_1=F_2$ or $F_2=F_3$, we get a left-action of $F_1(E_n)$ and a right-action of $F_2(E_n)$ on $H_{F_1,F_2}.$
There is a comultiplication induced by $\Delta^H_{m,n}$:
\[\Delta\colon H_{F_1,F_2}\to H_{F_1,F_2}\hat{\otimes} H_{F_1,F_2}.\]
%is independent of the choice of $\{e^1_n\}$ and $\{e^2_m\}$ up to an isomorphism given by a left-action of $A_{F_1,\infty}$ and right-action of $A_{F_2,\infty}$. 
In analogy to the above, $\pi_1(\cC;F_1,F_2)\cong \Spec \left(H_{F_1,F_2}^\star\right).$
%\left(\on{Hom}_K(H_{F_1,F_2},K)\right).\]
The construction of the universal bundle yields an alternate construction of $\Pi(\mathscr{C}; F_1,F_2)$ --- namely there is a natural isomorphism, $H_{F_1,F_2}\overset{\cong}{\to} \Pi(\mathscr{C}; F_1, F_2).$
\end{remark}

%\begin{remark}
%The functoriality of fundamental groups can be formulated in terms of universal objects. Let $f^*\colon\mathscr{C}_2\to \mathscr{C}_1$ be an exact $K$-linear faithful tensor functor of unipotent Tannakian categories. This abstracts the situation where there is a morphism $f\colon X_1\to X_2$ of spaces, $\mathscr{C}_i$ is some category of ``bundles'' on $X_i$, and $f^*$ is the functor given by pullback.
%Let $F$ be a fiber functor on $\cC_1$ and write $f_*F$ for the fiber functor on $\cC_2$ given by $F\circ f^*$. Let $(E_n^1,e^1_n),(E_n^2,e^2_n)$ be the universal objects in each category with $e^1_n\in F(E_n^1)$ and $e^2_n\in f_*F(E_n^2)$. The map $f_*\colon E^1_n\to f^*E^2_n$ taking $e^1_n\mapsto e^2_n$ coming from universality induces a homomorphism
%\[f_*\colon\pi_1(\sC_1,F)\to \pi_1(\sC_2,f_*F).\]
%An analogous argument holds for fundamental torsors.
%\end{remark}

\subsection{Examples of fundamental groups}

We introduce the unipotent Tannakian categories that will be important in the sequel.

\subsubsection{Fundamental groups of graphs}
\begin{definition}
For a connected graph $\Gamma$, let $\mathscr{C}_\Gamma$ be the Tannakian category of unipotent $K$-local systems with fiber functor given by taking the fiber at a vertex $\overline{a}$. 
%These are locally constant sheaves of $K$-vector spaces that admit a filtration
%\[\cF=\cF^0\supset \cF^1\supset\dots\supset \cF^{n+1}=0\]
%whose associated gradeds are constant sheaves attached to vector spaces.
\end{definition}

%\begin{remark} \label{r:tannakiangraphs}  Such objects can be described by a length $N\in\N$ and for each closed edge $e=\overline{a}\overline{b}$, a lower-triangular $(N\times N)$-matrix $M_e$ with coefficients in $K$ and $1$'s along the diagonal.
%\end{remark}

The attached fundamental groups and torsors are $\pi_1^{\un}(\Gamma,\overline{a})$ and $\pi_1^{\un}(\Gamma;\overline{a},\overline{b})$. They coincide with the pro-unipotent completions of the usual  fundamental groups and torsors.

\subsubsection{Log rigid fundamental groups}

Let $(X,M)$ be a geometrically connected log scheme over $(S,\N)$. Let $\sC^{\rig,\un}(X,M)$ be the category of unipotent overconvergent isocrystals on $(X,M)$ over $(\Spf V,N)$. By Corollary~\ref{c:isocrystalsonapoint}, for $(X,M)=(S,\N)$, this is  equivalent to the category of $K$-vector spaces. A section $a\colon (S,\N)\to (X,M)$ induces a pullback functor
\[a^*\colon\sC^{\rig,\un}(X,M)\to \sC^{\rig,\un}(S,\N)\]
and thus a fiber functor on $\sC^{\rig,\un}(X,M)$. Therefore, we may define the fundamental group and torsors $\pi_1^{\rig,\un}((X,M)/(\Spf V,N),a)$ and $\pi_1^{\rig,\un}((X,M)/(\Spf V,N);a,b)$. We will suppress ``$(\Spf V,N)$'' in the sequel. In Section~\ref{s:logpoints}, we will introduce  fiber functors based at points with non-trivial log structures. Observe that $\pi_1^{\rig,\un}$ is functorial for log morphisms over $(S,\N)$.

\subsubsection{de Rham fundamental groups}
The de Rham fundamental group and its comparison to the log rigid fundamental group lies in the background of many of our constructions. Here, we summarize some basic definitions from  \cite{Deligne:groupefondamental}.
Let $X$ be a geometrically connected smooth proper curve over a field $K$, and let $D\subset X$ be a reduced effective divisor. 
\begin{definition}
A \emph{unipotent integrable vector bundle with logarithmic singularities at infinity} on $(X, D)$ is an vector bundle $\mathscr{E}$ on $X$ with integrable logarithmic connection 
\[\nabla\colon \mathscr{E}\to \mathscr{E}\otimes \Omega^1_X(\log D),\]
such that $(\mathscr{E}, \nabla)$ is an iterated extension by trivial  line bundles $(\mathscr{O}_X, d)$. We denote the category of unipotent flat vector bundles with logarithmic singularities at infinity on $(X,D)$ by $\mathscr{C}^{\dR}(X,D)$.
\end{definition}
It is well-known that that the category $\mathscr{C}^{\dR}(X,D)$ is Tannakian. Given a fiber functor $F_1\colon \mathscr{C}^{\dR}(X,D)\to \on{Vect}(K)$ we denote the associated pro-unipotent group scheme by $\pi_1^{\dR}((X,D), F_1)$. If $F_2$ is another fiber functor, we denote the corresponding fundamental torsor by $\pi_1^{\dR}((X,D); F_1,F_2)$

\subsection{The Deligne--Goncharov construction} \label{ss:delignegoncharov}
 The following alternative construction of $\Pi(\sC;F_1,F_2)$ for certain fiber functors works in the log rigid and de Rham settings as well as the topological and \'{e}tale ones not discussed here. Here, a {\em space} will be a proper fine log scheme over $(S,\N)$ or a smooth proper scheme over $\Spec K$.
Let $f\colon X\to T$ be the appropriate structure morphism where $T=(S,\N)$ or $T=\Spec K$. 
Let $\sC$ be the appropriate category of unipotent objects (unipotent isocrystals or unipotent integrable vector bundles). Observe that $\sC(T)$ is equivalent to the category of $K$-vector spaces in either cases. Let $a,b\colon T\to X$ be sections in the appropriate categories which therefore induce fiber functors (also denoted $a,b$) on $\sC(X)$.

Let $\underline{n}$ be the category of finite subsets of $\{0,1, \dots n\}$ with morphisms given by inclusion. For $i\in \{0, 1, \dots, n\}$, let $Y_i\subset X\times_T X \times_T \dots \times_T X$ (where the product is taken $n+2$ times) be the subset $Y_i\coloneqq\{(x_0, \dots, x_{n+1})\mid x_i=x_{i+1}\}.$
Given $J\subset \{0, 1,\dots, n\}$, set  
$Y_J=\bigcap_{j\in J} Y_j.$
For $J_1\subset J_2$, there is a natural inclusion $Y_{J_2}\to Y_{J_1}$, defining a functor $P_X^n$ from the category $\underline{n}^{\on{op}}$ to the category of log schemes. Projection to the first and last factor yield maps 
\[(\pi_0, \pi_{n+1})_J: Y_J\to X^2\]
for each $J$; given $(a,b)\in X^2$, we let $P_{X,a,b}^n$ be the functor sending $J$ to $(\pi_0, \pi_{n+1})_J^{-1}(a,b)$ for all $J$.

Deligne and Goncharov explain how to realize the truncation of the fundamental groupoid module from the cohomology of this diagram of schemes. Indeed, the following is a consequence of the proof of \cite[Proposition~3.4]{DG:groupes} where $\mathbf{1}_\bullet$ refers to the cartesian sheaf on $P_{X, a,b}^n$ whose value on each object is $\mathbf{1}$ (compare also to the main result of \cite{Wojtkowiak:cosimplicial}):
\begin{theorem}\label{t:deligne-goncharov}
There is a canonical isomorphism 
\[\mathbb{H}^{n-1}_\bullet(P_{X, a,b}^n, \mathbf{1}_\bullet)\cong(\Pi(\sC(X); a,b)/\mathscr{I}^n)^\vee.\]
\end{theorem}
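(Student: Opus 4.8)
The plan is to compute $\mathbb{H}^{n-1}_\bullet(P^n_{X,a,b}, \mathbf{1}_\bullet)$ by descent along the diagram $J\mapsto Y_J$ and to recognize the resulting total complex as a geometric model for the truncated (two-sided) bar complex on the cohomology of $X$, whose zeroth cohomology is classically the coordinate ring of the path torsor. Since $\sC(T)\simeq \Vect_K$ by Corollary~\ref{c:isocrystalsonapoint}, the unit object $\mathbf{1}_\bullet$ and its cohomology behave formally as in the topological and de Rham settings, so the argument of \cite[Proposition~3.4]{DG:groupes} should transport essentially verbatim; the real work is to verify that each ingredient of that proof makes sense for unipotent overconvergent isocrystals (respectively unipotent flat bundles) and that the output matches the universal-object description of $\Pi(\sC(X);a,b)$ recalled in Section~\ref{ss:unipotentfundamentalgroups}.

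First I would analyze the individual spaces. For $J\subset\{0,\ldots,n\}$ the scheme $Y_J$ is the locus in $X^{n+2}$ obtained by imposing the diagonal conditions indexed by $J$ and then pinning $x_0=a$, $x_{n+1}=b$; concretely it is a product of copies of $X$ indexed by the maximal runs of un-identified coordinates, with the two outer factors collapsed to the basepoints. By a Künneth formula for log rigid cohomology, $H^*_{\rig}(Y_J,\mathbf{1})\cong\bigotimes H^*_{\rig}(X,\mathbf{1})$ over these factors, the two endpoint slots contributing the ground field through the fiber functors $a$ and $b$. The inclusions $Y_{J'}\hookrightarrow Y_J$ for $J\subset J'$, which collapse one further diagonal, induce on cohomology either a cup-product map or an endpoint evaluation, giving exactly the face maps one needs.

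Next I would run the descent (Čech-type) spectral sequence associated to the functor $J\mapsto Y_J$ on $\underline{n}^{\on{op}}$, whose $E_1$-page assembles the groups $H^*_{\rig}(Y_J)$ with differential the alternating sum of the maps above. The key computation is that, after the Künneth identification, this $E_1$-complex is precisely the tensor-length $\le n$ truncation of the reduced bar complex $B(K,H^*_{\rig}(X),K)$ on the augmented algebra $H^*_{\rig}(X)$, with the two module structures supplied by the fiber functors $a$ and $b$. Tracking the shift coming from the $n+2$ factors places the relevant cohomology in total degree $n-1$. Finally, $H^0$ of the untruncated bar complex is the coordinate ring $\mathscr{O}$ of $\pi_1(\sC(X);a,b)$, whose length-filtration truncations are precisely the finite-dimensional duals $(\Pi(\sC(X);a,b)/\mathscr{I}^n)^\vee$; matching the truncation of the diagram with this filtration, via the universal-object description, yields the isomorphism compatibly with the transition maps $\Pi/\mathscr{I}^{n}\to\Pi/\mathscr{I}^{n-1}$. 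Canonicity follows because every step is functorial in the pair $(a,b)$.

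The main obstacle I anticipate is the precise matching of the differentials: showing that the alternating sum of the diagonal-collapse maps equals the bar differential on the nose, signs included, while simultaneously controlling the contributions of $H^{\ge 2}_{\rig}(X)$ so that the only surviving cohomology in total degree $n-1$ is the length-$(n-1)$ part of the bar construction. This is where convergence of the spectral sequence and the combinatorics of the poset $\underline{n}^{\on{op}}$ must be handled with care. A secondary, more bookkeeping difficulty is confirming that Künneth and the endpoint base change hold for the unit isocrystal on the (possibly non-proper, log) fibers $Y_J$ with the prescribed sections, so that the formal Deligne--Goncharov argument genuinely applies in the log rigid setting rather than only formally.
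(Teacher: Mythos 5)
Your overall strategy --- compute $\mathbb{H}^{n-1}_\bullet(P^n_{X,a,b},\mathbf{1}_\bullet)$ by the \v{C}ech-type spectral sequence over $\underline{n}^{\on{op}}$, recognize the $E_1$-page as a truncated bar complex on $H^*_{\rig}(X)$, and identify its $H^0$ with the coordinate ring of the path torsor --- is the cosimplicial/bar-construction route of Chen, Beilinson and Wojtkowiak, which the paper only gestures at (``compare also to the main result of \cite{Wojtkowiak:cosimplicial}''). The paper itself argues differently: it imports the statement as a consequence of the proof of \cite[Proposition~3.4]{DG:groupes}, and the mechanism it records (Remark~\ref{r:delignegoncharovobject}) is relative and inductive rather than absolute: one considers the diagram $P^n_{X,a}$ over $X$ via $\pi_{n+1}$, identifies the derived pushforward $R^{n-1}\pi_{n+1*}\mathbf{1}_\bullet$ with $E_n^\vee$, the dual of Hadian's universal pointed object from Section~\ref{ss:unipotentfundamentalgroups} together with its marked vector, and then takes the fiber at $b$, using that $F_b(E_n)$ is by construction the relevant truncation of $\Pi(\sC(X);a,b)$. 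This sidesteps any bar-complex computation.

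The genuine gap in your proposal is its final step. The assertion that $H^0$ of the bar complex built on the cohomology of $X$ is the coordinate ring of $\pi_1(\sC(X);a,b)$ is not a ``classical'' fact that can be cited in this setting; it is essentially the statement being proven. In the Betti/de Rham settings it is Chen's theorem, and even there it concerns the bar complex of the de Rham \emph{dga}, not of the cohomology ring; replacing the dga by $H^*$ requires formality. For the curves this theorem is actually applied to --- semistable, so that $H^1_{\rig}((X,M))$ is mixed of weights $0,1,2$ and $H^2_{\rig}\neq 0$ --- no formality statement is available or invoked, and $A_\infty$-corrections $(H^1)^{\otimes k}\to H^2$ are not excluded by weight considerations for any $k$; hence the $E_1$-page alone does not determine the abutment, and your argument stalls exactly at the point you flag as the ``main obstacle.'' Relatedly, the answer is not ``the length-$(n-1)$ part of the bar construction'': as the paper's own computation of $\on{gr}_{\mathscr{I}}\Pi$ shows (proof of Proposition~\ref{p:monodromyidentification}), the truncation $(\Pi/\mathscr{I}^n)^\vee$ involves relations coming from the dual of the cup product into $H^2_{\rig}$, and producing these relations is precisely what the uncontrolled differentials do. To close the gap you would either need a Chen-type theorem for unipotent overconvergent isocrystals at the dga level (a substantial result not established in the paper), or you should abandon the absolute spectral-sequence computation and run the relative universal-object induction of Deligne--Goncharov, as in Remark~\ref{r:delignegoncharovobject}.
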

One upshot of this canonical isomorphism is that $\Pi(\sC(X); a,b)/\mathscr{I}^n$ immediately obtains all the structure that is present in an ordinary cohomology group; for example, this immediately gives a construction of the $(\varphi, N)$-module structure on $\Pi^{\rig,\un}(X; a,b)/\mathscr{I}^n$ which will be discussed below.

\begin{remark} \label{r:delignegoncharovobject}
The Deligne--Goncharov construction can be used to identify the universal object $E_n$. Indeed, this is how Theorem~\ref{t:deligne-goncharov} is proven. Let $P_{X,a}^n$ be the functor sending $J\subset \{0, 1,\dots, n\}$ to $\pi_0^{-1}(a)\subset Y_J$. The morphism $\pi_{n+1}$ makes $P_{X,a}^n$ into a diagram of schemes over $X$. The pushforward $R^{n-1}\pi_{{n+1}*}\mathbf{1}_\bullet$ is  canonically isomorphic to $E_{n-1}^\vee$. It comes equipped with a homomorphism $(R^{n-1}\pi_{{n+1}*}\mathbf{1}_\bullet)_a\to K$ which is dual to $e_{n-1}\in (E_{n-1})_a$.
This association is functorial: for a morphism $f\colon X_1\to X_2$, by unwinding the construction, there is a morphism $E^1_{n-1}\to f^*E^2_{n-1}$ whose fiber over $a$ takes $e^1_{n-1}$ to $e^2_{n-1}$.
%commutative diagram
%\[\xymatrix{
%P_{X_1,a}^n\ar[d]_{\pi_{n+1}^1}\ar[r]^{f_*}&P_{X_2,f(a)}^n\ar[d]^{\pi_{n+1}^2}\\
%X_1\ar[r]_f&X_2
%}\]
%whose top arrow induces a morphism
%\[f^*R^{n-1}\pi^2_{{n+1}*}\mathbf{1}_\bullet
%\to R^{n-1}\pi^1_{{n+1}*}\mathbf{1}_\bullet.
%\]
%This morphism is dual to the unique induced morphism $E^1_n\to f^*E^2_n$ whose fiber over $a$ takes $e^1_n$ to $e^2_n$.
\end{remark}

\section{Log points, fiber functors, and fundamental groups} \label{s:logpoints}

In this section, we study isocrystals on a log point $(x,\overline{\M})$ (as in our examples) using $\Log$-analytic functions as motivated by work of Coleman--de Shalit \cite{Coleman-deShalit} and Berkovich \cite{Berkovich:integration}. This will lead to new fiber functors on $\sC^{\rig,\un}(x,\overline{\M})$ and a computation of the unipotent fundamental group of a log point. The log basepoints defined below were inspired by the residue disc basepoint in \cite[Section~2]{Besser:Coleman}, tangential basepoints in \cite[Section~3]{BesserFurusho}, and sections on annuli in \cite{Vologodsky}. They are related to the basepoints defined in \cite[Section~6]{CPS:Logpi1}.
Let $f\colon (x,\overline{\M})\to(S,\N)$ be one of our examples of log points with $(\cP,L)$ defined as in Subsection~\ref{ss:logpoints}.
A {\em coordinate system on $\overline{M}$} is a subset $B=\{m_1,\dots,m_k\}$ of $\overline{M}$ descending to a basis of $(\overline{M}/f^*\Z_{\geq 0})^{\gp}\otimes K$. 
Write $x_i$ for the coordinate on $\cP$ corresponding to $m_i$.
The module of relative log differentials $\Omega^1_{(]x[_{\cP},L)/((\Spf V)^{\an},N)}$ is free with basis $\frac{dx_1}{x_1},\dots,\frac{dx_k}{x_k}$. 

\subsection{$\Log$-analytic sections}

\begin{definition} Let $\Log(x_1),\dots,\Log(x_k)$ be indeterminates.
A $\Log$-analytic function on $]x[_{\cP}$ is a polynomial in $\Log(x_1),\dots,\Log(x_k)$ whose coefficients are analytic functions on $]x[_{\cP}$. Let $A_{\Log}(]x[_{\cP})$ denote the algebra of $\Log$-analytic functions. For an indeterminate $\ell$, write
\[A_{\Log,\ell}(]x[_{\cP})=A_{\Log}(]x[_{\cP})[\ell]\]

Extend the usual  differential to $d\colon A_{\Log}(]x[_{\cP})\to
\Omega^1_{(]x[_{\cP},L)/((\Spf V)^{\an},N)}\otimes A_{\Log}(]x[_{\cP})$  by mandating $d\Log(x_i)=\frac{dx_i}{x_i}$. It extends to $A_{\Log,\ell}(]x[_{\cP})$ by setting 
$d\ell=0$.
\end{definition}

This definition is inspired by \cite[2.3]{Coleman-deShalit}.
Here, $\ell$ will be interpreted as $\Log(\pi)$, and will be used to fix a branch of logarithm.
The $\Log$-analytic de Rham complex is exact so that any closed $1$-form has a primitive.

\begin{lemma} \label{l:loganalyticsections}
The integrable unipotent  connection $(E,\nabla)$ induced by a unipotent  isocrystal $\cE$ on $(x,\overline{\M})$ has a basis of horizontal $\Log$-analytic
sections on $]x[_{\cP}$.
\end{lemma}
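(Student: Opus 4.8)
The plan is to construct the horizontal $\Log$-analytic sections explicitly by inducting on the length of the unipotency filtration of $\cE$. Unwinding the equivalence in Proposition~\ref{p:equivalenceofcategories}, the data of $\cE$ on the frame $((x,\overline{M}),(x,\overline{M}),(\cP,L))$ is a free $j^\dagger\cO_{]x[_\cP}$-module $E$ with an integrable overconvergent log connection $\nabla$ and a horizontal filtration $E=E^0\supset E^1\supset\cdots\supset E^{n+1}=0$ whose graded pieces are trivial. First I would treat the rank-one trivial case: the trivial isocrystal $\mathbf 1$ has the constant section as its horizontal generator. The inductive step is then to lift a basis of horizontal sections from $E/E^1$ (which is a sum of copies of $\mathbf1$, so has horizontal constant sections) to $E$ itself, where the obstruction to horizontality is controlled by the connection matrix.

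Concretely, after choosing the basis $\frac{dx_1}{x_1},\dots,\frac{dx_k}{x_k}$ of $\Omega^1_{(]x[_\cP,M)/((\Spf V)^{\an},N)}$ from the setup, I would trivialize $E$ compatibly with the filtration and write $\nabla = d - \Theta$ for a nilpotent (strictly block-upper-triangular, hence nilpotent) matrix $\Theta$ of log $1$-forms. Finding a horizontal basis amounts to solving $dS = \Theta S$ for an invertible matrix $S$ of $\Log$-analytic functions; the columns of $S$ are then the desired horizontal sections. The key input is the exactness of the $\Log$-analytic de Rham complex asserted in the sentence preceding the lemma: any closed $\Log$-analytic $1$-form has a $\Log$-analytic primitive. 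I would solve for $S$ one filtration step at a time. Having lifted the first $i$ block-rows, the next equation is of the form $d\sigma = \alpha$ where $\alpha$ is a $\Log$-analytic $1$-form built from $\Theta$ and the already-constructed lower-order sections; integrability of $\nabla$ guarantees $\alpha$ is closed, so the exactness statement provides a $\Log$-analytic primitive $\sigma$, completing the step. Because $\Theta$ is nilpotent this process terminates and produces a unipotent (hence invertible) $S$.

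I expect the main obstacle to be justifying the exactness of the $\Log$-analytic de Rham complex precisely enough to close the induction, i.e.~confirming that the primitives produced at each stage genuinely lie in $A_{\Log}(]x[_\cP)$ rather than some larger completion, and that the closedness of $\alpha$ holds with the correct sign conventions coming from integrability $\nabla^2=0$. The exactness is stated as given in the excerpt, so the real work is bookkeeping: writing a closed $\Log$-analytic $1$-form as a polynomial in the $\Log(x_i)$ with analytic coefficients, and checking that integrating in the $\Log(x_i)$-variables (using $d\Log(x_i)=\frac{dx_i}{x_i}$) together with integrating the remaining genuinely analytic part stays within $A_{\Log}(]x[_\cP)$. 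Here the analytic coefficients are honest rigid analytic functions on the tube cut out by strict inequalities, so the overconvergence noted in Subsection~\ref{ss:logpoints} is automatic and causes no difficulty. The nilpotence of $\Theta$ ensures only finitely many integrations are required, so no convergence issue in the $\ell$ or $\Log(x_i)$ directions arises.
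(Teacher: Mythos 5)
Your proposal matches the paper's proof essentially step for step: both trivialize $E$ compatibly with the unipotence filtration so that the connection matrix is a nilpotent triangular matrix of log $1$-forms, and then solve the parallel transport equation inductively by formal anti-differentiation of $\Log$-analytic $1$-forms, whose closedness follows from integrability of the connection. The only step the paper makes explicit that you assert without argument is the existence of the (non-horizontal) trivialization compatible with the filtration, which the paper obtains by induction on the length of the filtration using the identification $\on{Ext}^1(\mathbf{1},\mathbf{1})\cong H^1_{\dR}((]x[_{\cP},L))$ from Proposition~\ref{p:cohomologyoflogpoint}.
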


\begin{proof}
By, Proposition~\ref{p:cohomologyoflogpoint}, $H^1_{\dR}((]x[_\cP,L))\cong (\overline{M}/f^*\Z_{\geq 0})^{\gp}\otimes K$ interpreted as log $1$-forms (where $L$ is the induced log structure as in Section~\ref{ss:logpoints}).
By refining the filtration on $\mathscr{E}$ arising from its unipotence, we may suppose that it has $1$-dimensional associated gradeds. 
By induction on the length of the filtration and interpreting the extension group $\on{Ext}^1(\mathbf{1},\mathbf{1})$ as $H^1_{\dR}((]x[_{\cP},L))$, we see that the filtration has a (possibly non-horizontal) splitting 
\[E\cong \bigoplus_{i=0}^{n} \on{gr}^i E\cong \cO^{\oplus n+1};\]
and the connection $1$-form $\omega$ is lower-triangular in the following sense: pick a nonzero (not necessarily horizontal) constant section  $e_k\in \Gamma(\on{gr}^k E)$,
then 
\[\omega e_k=\sum_{i=k+1}^{n} \omega_{ik} e_i\]
for relative log differentials $\omega_{ik}$. For $j=0,\dots,n$, the parallel transport equation $ds_j-\omega s_j=0$ has a basis of solutions given by
\[s_j=\sum s_{ji}e_i\]
where
\[
  s_{ji}=
  \begin{cases}
    0 & \text{if}\ i<j\\
    1 & \text{if}\ i=j\\
    \int\left(\sum_{k=j}^{i-1}\omega_{ik}s_{jk}\right) & \text{if}\ i>j
  \end{cases}
\]
Here, $\int$ refers to formal anti-differentiation of $\Log$-analytic $1$-forms 
which are closed by integrability of the connection.
Our desired basis of horizontal sections is $\{s_0,s_1,\dots,s_n\}$.
\end{proof}

The vector space of such horizontal $\Log$-analytic functions is denoted by $F_B(\cE)$. It gives the \emph{log basepoint fiber functor attached to the basis $B$}, $F_B\colon \cE\mapsto F_B(\cE)$. Write $F_{B,\ell}$ for the functor $F_B\otimes K[\ell]$. It gives $A_{\Log,\ell}(]X[_\cP)$-sections of $\cE$.

Log point fiber functors on $(x,\overline{\M})$ induce those on the log point $(x,\overline{\M}_t)$ obtained by $\pi$-$t$ base-change. Indeed, 
for if $B$ is a coordinate system on $\overline{\M}$, $B\cup \{e_t\}$ is a coordinate system on $\overline{\M}_t$ giving a fiber functor $F_{B\cup \{e_t\}}$

\begin{example}
For $(x,\overline{\M}_2)$, we can define functors $F_{\{f_1\},\ell},F_{\{f_2\},\ell}$. The {\em canonical log path} is the isomorphism of functors
\[\delta_{0}\colon F_{\{f_1\},\ell}\to F_{\{f_2\},\ell}\]
given by the substitution $\Log(x_1)\mapsto\ell-\Log(x_2)$
which is justified because on $]x[_{\cP}$, $x_1x_2=\pi$ (from $f_1+f_2=f^*e_\pi$) and $\Log(\pi)=\ell$. 
Intuitively, the inner and outer boundaries on the annulus $]x[_{\cP}$ should be thought of as $x_1=1$ and $x_2=1$ (which do not belong to $]x[_{\cP}$), and the path goes from one to the other. 

We may replace $\ell$ by an element of $K$ to obtain a non-canonical isomorphism $F_{\{f_1\}}\to F_{\{f_2\}}$ and thus an element of $\pi_1^{\rig,\un}((x,\overline{\M});F_{\{f_1\}},F_{\{f_2\}})$.
\end{example}

\begin{example}
For $(x,\overline{\M}_{2,t})$, there are two analogous fiber functors, $F_{\{f_1,e_t\}}$ and $F_{\{f_2,e_t\}}$, one expressing horizontal sections  using $\Log(x_1)$ and $\Log(t)$, the other using $\Log(x_2)$ and $\Log(t)$.
The {\em canonical log path} $\delta_{0,0}$ between them is induced from the substitution
$\Log(x_1)\mapsto\Log(t)-\Log(x_2)$,
yielding an element of $\pi_1^{\rig,\un}((x,\overline{\M}_{2,t});F_{\{f_1,e_t\}},F_{\{f_2,e_t\}})$.
\end{example}

\begin{remark} \label{r:tangentialbasepoint}
If $\overline{M}\cong f^*\Z_{\geq 0}\oplus \Z_{\geq 0}^{r}$ for $r\geq 1$ (i.e.~when $\overline{M}$ is $\overline{M}_{0,t}$, $\overline{M}_{1}$, $\overline{M}_{1,t}$, or $\overline{M}_{2,t}$), we can define tangential basepoints. The tube $]x[_\cP$ is an open $r$-dimensional polydisc. A coordinate system $B$ gives coordinates on $\cP$. Then, the logarithmic sections can be expressed as polynomials in $\Log(x_1),\dots,\Log(x_r)$ whose coefficients are power series in $x_1,\dots,x_r$. We can specialize $x_i\mapsto 0,\Log(x_i)\mapsto 0$ to get a {\em tangential basepoint} $\tilde{F}$ such that $F(\cE)=E_{0}$, where $0$ is the origin in $]x[_\cP$. This specialization $F_B\to \tilde{F}$ is called the {\em tangential path}.
\end{remark}

%For $(x,\overline{M}_t)$, the structure morphism is 
%\[q\circ f_t\colon (x,\overline{M}_t)\to (S_t,\N_t)\to (S,\N)\]
%as in Section~\ref{ss:logcurvesbackground}. Then, $\overline{M}_t=(q\circ f_t)^*\N\oplus \overline{M}$, and we can always define tangential basepoints on %$(x,\overline{M}_t)$.
%
%There are some fiber functors on $(x,\overline{M}_t)$ which will be useful in the sequel.

\begin{definition} \label{d:pushforwardfiberfunctor}
Let $(x,\overline{\M})$ be a log point. let $B$ be a coordinate system on $\overline{M}$. 
Let $(x,\overline{\M}_t)$ be obtained by $\pi$-$t$ base-change. The pushforward $i_*F_{B,\ell}$ by the natural map $i\colon (x,\overline{\M})\to (x,\overline{\M}_t)$, considered as a fiber functor on $\sC^{\rig,\un}(x,\overline{\M}_t)$ will be denoted by $F_{B,\pi}$.
\end{definition}

The fiber functor $F_{B,\pi}$ can be understood explicitly in terms of frames.
Let 
\[((x,\overline{\M}_t),(x,\overline{\M}_t),(\cP_t,L_t))\]
be the frame as above. Let $(\cP,L)$ be the formal subscheme of $(\cP_t,L_t)$ cut out by $t=\pi$ with the induced log structure. Then 
$((x,\overline{\M}),(x,\overline{\M}),(\cP,L))$
is a frame, and there is a natural morphism
\[i\colon ((x,\overline{\M}),(x,\overline{\M}),(\cP,L))\to ((x,\overline{\M}_t),(x,\overline{\M}_t),(\cP_t,L_t))\]
Then, $F_{B,\pi}$ can be interpreted as taking horizontal sections on the fiber over $t=\pi$.

There is a natural path from  $F_{B\cup e_t}\otimes K[\ell]$ to $F_{B,\pi}\otimes K[\ell]$ corresponding to the specialization $t\mapsto \pi$. Because  $\Log$-analytic functions on $(x,\overline{\M}_t)$ involve $\Log(t)$, we will need to choose a value of $\Log(\pi)$. We  make the canonical choice of indeterminate $\ell$ which can be incorporated at the level of pro-algebraic groups. For a pro-algebraic group $H$ over $K$, write $H_\ell$ for $H\times_K \Spec K[\ell]$. If $H$ is the Tannakian fundamental group representing $\on{Isom}^{\otimes}(F_1,F_2)$, for a $K$-algebra $R$, an $R$-point of $H_\ell$ is the data of 
\begin{enumerate}
    \item an element $r\in R$ (i.e.,~the image of $\ell$), and
    \item a tensor isomorphism $F_1\otimes R\overset{\cong}{\to} F_2\otimes R$.
\end{enumerate} 
Here, $r$ will play the role of $\Log(\pi)$. Here, we may abuse notation and consider the case where $R=R'[\ell]$ for some ring $R'$ and the element $r$ is taken to be $\ell$.

\begin{definition} \label{d:specializationpath}
Let $B$ be a coordinate system on $\overline{M}$. 
Then $B\cup \{e_t\}\subset \overline{M}_t$ is a coordinate system on $\overline{M}_t$.
The {\em specialization path} is the $R$-point of $\pi^{\rig,\un}_1((x,\overline{\M}_t);F_{B\cup \{e_t\}},F_{B,\pi})_{\ell}$ considered as the
isomorphism of functors on $\sC^{\rig,\un}(X_t,M_t)$,
$F_{B\cup \{e_t\}}\otimes R\to F_{B,\pi}\otimes R$ 
given by the specializations
\[t\mapsto \pi,\ \Log(t)\mapsto \ell.\]
\end{definition}

The above fiber functors on log points can be used to define fiber functors on weak log curves. 

\begin{definition} \label{d:fiberfunctorlogpoint}
Let $\iota\colon (x,\overline{\M})\to (X,M)$ be an $(S,\N)$-morphism from a log point to a weak log curve. Let $B$ be a coordinate system on $\overline{M}$.
We call $\iota_*F_B$ the {\em fiber functor attached to $\iota$ with respect to $B$}.  When $\iota$ clear, we will write $F_B$.
\end{definition}

\begin{remark}
For a frame around an annular point $(x,\overline{\M}_{2'})$, $f_1\in \overline{M}_{2'}$ maps to the coordinate $x_1$ while $f_2$ maps to $0$. For that reason, it will be most natural to take the fiber functor with respect to $B=\{f_1\}$. Similarly, for the $\pi$-$t$ base-change of an annular point, $(x,\overline{\M}_{2',t})$ over $S_t$, we will take $B=\{e_t,f_1\}$.
\end{remark}

For an $(S,\N)$-morphism $\iota\colon (x,\overline{\M}_0)\to (X,M)$ with target a smooth point of $X(k)$ with relatively trivial log structure, we have the usual fiber functor coming from pulling back by $\iota$ and evaluating on the frame
\[((x,\overline{\M}_0),(x,\overline{\M}_0),(\Spf V,N)).\] 
This is equal to $F_B$ for $B=\{\varnothing\}$.

Following \cite[(2.4)]{Besser:Coleman}, it will be useful to define a fiber functor attached to a residue disc around a smooth $k$-point of $X$.

\begin{definition} \label{d:smoothpoint}
Let $L$ be the log structure on $\Spf V\ps{x_1}$ induced by $\Z_{\geq 0}\to V\ps{x_1}$ with $e_\pi\mapsto \pi$.
Consider the frame
\[((x,\overline{\M}_0),(x,\overline{\M}_0),(\Spf V\ps{t},L)).\] 
The {\em residue disc fiber} functor $F$ on $(x,\overline{\M}_0)$ is given by evaluating a unipotent isocrystal on the above frame and taking horizontal sections on the tube $]0[_\cP$ about the origin in $\Spf V\ps{x_1}$.

Given an $(S,\N)$-morphism  $\iota\colon (x,\overline{\M}_0)\to (X,M)$, the residue disc fiber functor attached to $\iota$ is given by $\iota_*F$.
We may also call $\iota_*F$ the residue fiber functor attached to $\iota(x)\in X(k)$.
\end{definition}

Given a morphisms of frames
\[a\colon ((x,\overline{\M}_0),(x,\overline{\M}_0),(\Spf V,N))
\to ((x,\overline{\M}_0),(x,\overline{\M}_0),(\Spf V\ps{x_1},L)),
\] 
we may define a path from the residue disc fiber functor $F$  to the fiber functor $a^*$ by evaluating analytic sections at the $K$-point in $]0[_{\cP}$ corresponding to $a$. Given morphisms $a_1,a_2$ as above, we can compose such paths to produce a {\em parallel transport path} $a_1^* \to \iota_*F \to a_2^*$ between points in the same residue disc.

\begin{definition}
Let $F$ be a fiber functor attached to a log point of $(X,M)$. If $F$ is induced from a smooth point, puncture, or annular point on an irreducible component $X_{\overline{v}}$, we say $F$ is {\em anchored} at $X_{\overline{v}}$. If $F$ is attached to a nodal point and is of the form $F_{\{f_i\}}$, we say $F$ is {\em anchored} at the component $X_{\overline{v}}$ if $f_i$ maps to a uniformizer on $X_{\overline{v}}$ at the nodal point.
\end{definition}

The following (also observed for log de Rham cohomology in \cite[Subsection~6.6]{CPS:Logpi1}) 
will be useful for computing monodromy:
\begin{lemma} \label{l:aphomotopyequivalence}
Let $(X,M)$ be a weak log curve whose underlying scheme is a smooth curve. Let $F_1,F_2$ be fiber functors attached to  smooth points, punctures, or annular points anchored at $X$. Let $(X,M')$ be obtained from $(X,M)$ by replacing annular points by punctures as in Remark~\ref{r:modifylogstructure}. 
Then the map $g\colon (X,M)\to (X,M')$ induces a commutative diagram whose horizontal maps are isomorphisms
\[\xymatrix{
\pi_1^{\rig,\un}((X,M);F_1,F_2)\ar[r]^{g_*}\ar[d]_{i_*}& \pi_1^{\rig,\un}((X,M');g_*F_1,g_*F_2)\ar[d]_{i_*}\\
\pi_1^{\rig,\un}((X_t,M_t);i_*F_1,i_*F_2)\ar[r]^>>>>{g_{t*}}&\pi_1^{\rig,\un}((X_t,M'_t);(i\circ g)_*F_1,(i\circ g)_*F_2).
}\]
\end{lemma}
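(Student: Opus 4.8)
The plan is to deduce both horizontal isomorphisms from the Tannakian equivalence criterion of Theorem~\ref{t:equivofcategories}, with the cohomological comparison of Lemma~\ref{l:aphomologyequivalence} as the sole geometric input. Set $\mathscr{C}=\sC^{\rig,\un}(X',M')$ and $\mathscr{D}=\sC^{\rig,\un}(X,M)$, and let $f^*\colon\mathscr{C}\to\mathscr{D}$ denote pullback along $f\colon(X,M)\to(X',M')$; this is an exact $K$-linear tensor functor of unipotent Tannakian categories, since pullback of isocrystals preserves short exact sequences and sends $\mathbf{1}$ to $\mathbf{1}$. I will show that $f^*$ is an equivalence of categories and then invoke the corollary to Theorem~\ref{t:equivofcategories}: taking the fiber functors there to be $F_1,F_2$ on $\mathscr{D}$, and noting $F_i\circ f^*=f_*F_i$, this yields precisely the isomorphism
\[f_*\colon \pi_1^{\rig,\un}((X,M);F_1,F_2)\overset{\cong}{\longrightarrow}\pi_1^{\rig,\un}((X,M');f_*F_1,f_*F_2)\]
of the top row.

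To verify the hypotheses of Theorem~\ref{t:equivofcategories}, I will use the standard identification $\on{Ext}^i_{\sC^{\rig,\un}(X,M)}(\mathbf{1},\mathbf{1})\cong H^i_{\rig}((X,M))$, and similarly for $(X',M')$, under which $f^*$ realizes the pullback on log rigid cohomology. By Lemma~\ref{l:aphomologyequivalence} this pullback is an isomorphism in every degree; in particular it is an isomorphism on $\on{Ext}^i(\mathbf{1},\mathbf{1})$ for $i=0,1$, so $f^*$ is fully faithful. For essential surjectivity I must check that $f^*$ sends each image $H_{\mathscr{C}}(S)$ injectively into $H_{\mathscr{D}}(f^*S)$. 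Since $f^*$ is a tensor functor it commutes with the connecting maps $\delta_S\colon\on{Ext}^1(U',\mathbf{1})\to\on{Ext}^2(\mathbf{1},\mathbf{1})$ attached to a short exact sequence $S\colon 0\to U'\to U\to\mathbf{1}\to 0$, and it is an isomorphism on $\on{Ext}^2(\mathbf{1},\mathbf{1})\cong H^2_{\rig}$, again by Lemma~\ref{l:aphomologyequivalence}. The resulting commuting square of connecting maps forces $f^*$ to restrict to an injection on $\on{im}(\delta_S)=H_{\mathscr{C}}(S)$, which is the required condition. Hence $f^*$ is an equivalence, and the top arrow is an isomorphism.

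The bottom arrow $f_{t*}$ is obtained verbatim by running the same argument for $f_t\colon(X_t,M_t)\to(X'_t,M'_t)$, using the lower row of the diagram in Lemma~\ref{l:aphomologyequivalence} (the comparison over $(S_t,\N_t)$) in place of the upper one. Commutativity of the square is then formal: the base change $i$ and the modification $f$ satisfy $f_t\circ i=i\circ f$ as morphisms of weak log curves, so functoriality of $\pi_1^{\rig,\un}$ makes the induced square commute; tracing the pushforward fiber functors either way around it sends $F_i$ to $F_i\circ(i\circ f)^*=(i\circ f)_*F_i$, matching the labelling of the bottom-right corner.

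The substantive content is entirely contained in Lemma~\ref{l:aphomologyequivalence}, which has already been established by the explicit computation comparing annuli and punctured discs; the present argument is purely Tannakian once that input and the identification $\on{Ext}^\bullet(\mathbf{1},\mathbf{1})\cong H^\bullet_{\rig}$ are available. The one place demanding care is the essential-surjectivity hypothesis, where rather than appealing to any vanishing of $H^2_{\rig}$ I exploit the functoriality of the connecting map together with the degree-two isomorphism to secure the injection $H_{\mathscr{C}}(S)\hookrightarrow H_{\mathscr{D}}(f^*S)$.
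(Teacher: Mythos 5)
Your overall architecture — invoking Theorem~\ref{t:equivofcategories} with Lemma~\ref{l:aphomologyequivalence} as the geometric input, running the same argument over $(S_t,\N_t)$ for the bottom row, and getting commutativity formally — is exactly the paper's. However, your verification of the essential-surjectivity hypothesis has a genuine gap: it rests on the identification $\on{Ext}^2_{\mathscr{C}}(\mathbf{1},\mathbf{1})\cong H^2_{\rig}$, which is neither established in the paper nor true in general. The cocycle computation identifying $\on{Ext}^i(\mathbf{1},\mathbf{1})$ in the category of \emph{unipotent} isocrystals with $H^i_{\rig}$ is valid only for $i=0,1$; in degree $2$ the categorical $\on{Ext}$ computes (roughly) $H^2$ of the pro-unipotent fundamental group, which can differ from $H^2_{\rig}$ of the space. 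For instance, for $X=\P^1$ proper with log structure pulled back from $(S,\N)$, every unipotent isocrystal is trivial (since $H^1_{\rig}=0$), so the unipotent category is equivalent to $\Vect_K$ and $\on{Ext}^2(\mathbf{1},\mathbf{1})=0$, while $H^2_{\rig}=K(-1)\neq 0$. Since Lemma~\ref{l:aphomologyequivalence} is a statement about $H^\ast_{\rig}$ of the spaces, its degree-two part cannot be transported to $\on{Ext}^2$ of the unipotent categories, and your claim that $f^*$ is an isomorphism (hence injective) on $\on{Ext}^2(\mathbf{1},\mathbf{1})$ is unsupported; the functoriality of the connecting maps, which you correctly note, does not rescue this.

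The paper's own proof avoids degree two entirely, and that is the fix you need. From the long exact sequence attached to $S\colon 0\to \cU'\to\cU\to\mathbf{1}_{\mathscr{C}}\to 0$, the image $H_{\mathscr{C}}(S)$ of the boundary map is canonically the cokernel of $\on{Ext}^1_{\mathscr{C}}(\cU,\mathbf{1})\to\on{Ext}^1_{\mathscr{C}}(\cU',\mathbf{1})$, and these $\on{Ext}^1$ groups are identified (by a cocycle computation, legitimate in degree one) with $H^1_{\rig}((X,M'),\cU^\vee)$ and $H^1_{\rig}((X,M'),(\cU')^\vee)$. One then shows that $f^*$ induces an isomorphism on $H^1_{\rig}$ with unipotent coefficients by induction on the length of the coefficient object, using Lemma~\ref{l:aphomologyequivalence} and the five lemma; this yields an isomorphism, in particular an injection, $H_{\mathscr{C}}(S)\to H_{\mathscr{D}}(f^*S)$. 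Absent this, you would have to prove that the natural comparison map $\on{Ext}^2_{\mathscr{C}}(\mathbf{1},\mathbf{1})\to H^2_{\rig}$ is injective — a nontrivial assertion the paper never makes and which your argument silently replaces by an equality.
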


The most natural choices for $F_1$ and $F_2$ will be fiber functors attached to an annular point. In this case $g_*F_{\{f_1\}}=F_{\{f\}}$. Indeed, this is a consequence of the morphism  $j\colon (x,\overline{\M}_{2'})\to (x,\overline{\M}_1)$ described in Remark~\ref{r:modifylogstructure} satisfying $j_*F_{\{f_1\}}=F_{\{f\}}$.

\begin{proof}
Note that the tube around $(X,M)$ is obtained from that of $(X,M')$ by removing a closed disc around each puncture.
By 
a Mayer--Vietoris argument using the isomorphism between the log de Rham cohomology of an annulus and a punctured disc, the analogue of this theorem is true with the fundamental group replaced by the log analytic cohomology groups of Shiho \cite{Shiho2}. Because log analytic cohomology is isomorphic to log convergent cohomology \cite[Corollary~2.3.9]{Shiho2}, which is computed on a site, by identifying $H^i_{\rig}((X,M),\cE^\vee)$ with $\on{Ext}^i_{(X,M)}(\cE,\mathbf{1})$, we have an isomorphism of extension groups of log convergent isocrystals, $\on{Ext}^i_{(X,M)}(g^*\cE,\mathbf{1})\cong \on{Ext}^i_{(X,M')}(\cE,\mathbf{1})$. By an induction on the length of unipotent objects, $g^*\colon \sC^{\rig,\un}(X,M')\to \sC^{\rig,\un}(X,M)$ is an equivalence of categories. The argument for $(X_t,M_t)$ is analogous.
\end{proof}

\subsection{Fundamental groups of log points}

\begin{lemma} \label{l:pi1logpoint} Let $(x,\overline{\M})$ be a log point over $(S,\N)$. Let $B$ be a coordinate system on $\overline{M}$. Then 
\[\pi_1^{\rig,\un}((x,\overline{\M}),F_B)\cong \on{Hom}((\overline{M}/f^*\Z_{\geq 0})^{\gp},\Ga).\]
This isomorphism is functorial for log points over $(S,\N)$.

For a unipotent isocrystal $\cV$, the $K$-points of the fundamental group act on $F_B(\cV)$ by 
\[h\in \on{Hom}((\overline{M}/f^*{\Z_{\geq 0}})^{\gp},\Ga)(K)\mapsto[\Log(x_i)\mapsto \Log(x_i)+h(m_i)]\]
\end{lemma}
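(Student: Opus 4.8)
The plan is to realize $\sC^{\rig,\un}((x,\overline{M}))$ as the category of unipotent representations of the vector group $G_0:=\on{Hom}(\overline{M}/f^*\N,\Ga)\cong\Ga^{\oplus k}$, compatibly with the fiber functor $F_B$, and then read off the fundamental group by Tannakian duality. Since $\overline{M}/f^*\N$ is torsion free, $(\overline{M}/f^*\N)^{\gp}$ is free on the classes of $m_1,\dots,m_k$, and a unipotent representation of $G_0$ is the data of a finite-dimensional $V$ together with commuting nilpotent endomorphisms $N_1,\dots,N_k$ indexed by $B$, an element $h\in G_0$ acting through $h\mapsto\exp(\sum_i h(m_i)N_i)$. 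I would then define
\[G\colon \on{Rep}^{\un}(G_0)\to \sC^{\rig,\un}((x,\overline{M})),\qquad (V,N_\bullet)\mapsto \Big(\cO_{]x[_\cP}\otimes_K V,\ d-\textstyle\sum_i N_i\tfrac{dx_i}{x_i}\Big).\]
Integrability holds because the $\tfrac{dx_i}{x_i}$ are closed and the $N_i$ commute, unipotence because the $N_i$ are nilpotent, and overconvergence is automatic since $]x[_\cP$ is cut out by strict inequalities (Section~\ref{ss:logpoints}); thus by Proposition~\ref{p:equivalenceofcategories} this really is an overconvergent isocrystal, and $G$ is an exact $K$-linear tensor functor with $G(\mathbf{1})=\mathbf{1}$. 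Most importantly, the horizontal $\Log$-analytic sections of $G(V,N_\bullet)$ are exactly $s_v:=\exp(\sum_i N_i\Log(x_i))\,v$ for $v\in V$ (a finite sum, as the $N_i$ are nilpotent), so $F_B\circ G$ is canonically the forgetful functor $(V,N_\bullet)\mapsto V$.

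Next I would verify the hypotheses of Theorem~\ref{t:equivofcategories} for $G$. Both categories are unipotent Tannakian with $\on{End}(\mathbf{1})=K$. For a vector group in characteristic zero one has $\on{Ext}^i_{\on{Rep}^{\un}(G_0)}(\mathbf{1},\mathbf{1})\cong \exterior{i}(\Lie G_0)^\vee$ as graded algebras under Yoneda product, while Proposition~\ref{p:cohomologyoflogpoint} gives $\on{Ext}^i_{\sC}(\mathbf{1},\mathbf{1})=H^i_{\rig}((x,\overline{M}))\cong \exterior{i}(\overline{M}/f^*\N)^{\gp}\otimes K$. In degrees $0$ and $1$ these are $K$ and $(\overline{M}/f^*\N)^{\gp}\otimes K$ on both sides, and $G$ matches them: it sends the extension class $N_i$ to the class of $\tfrac{dx_i}{x_i}$, i.e.\ to $m_i$, so $G$ induces the identity on $\on{Ext}^0$ and an isomorphism on $\on{Ext}^1$. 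Because $G$ is a tensor functor it is multiplicative for cup (Yoneda) products; since both $\on{Ext}$-algebras are the exterior algebra on $H^1$ and $G$ is an isomorphism in degree $1$, it is an isomorphism in degree $2$, and therefore induces the injection $H_{\mathscr{C}}(S)\hookrightarrow H_{\mathscr{D}}(G(S))$ required for essential surjectivity. Hence $G$ is an equivalence, and the remark following Theorem~\ref{t:equivofcategories} yields
\[\pi_1^{\rig,\un}((x,\overline{M}),F_B)\cong \pi_1(\on{Rep}^{\un}(G_0),F_B\circ G)=\pi_1(\on{Rep}^{\un}(G_0),\text{forget})=G_0=\on{Hom}(\overline{M}/f^*\N,\Ga).\]

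Finally I would record the explicit action and functoriality. Under the identification $F_B(G(V,N_\bullet))=V$ via $v\mapsto s_v$, an element $h\in G_0(K)=\on{Hom}((\overline{M}/f^*\N)^{\gp},\Ga)(K)$ acts by $v\mapsto \exp(\sum_i h(m_i)N_i)v$; since $\exp(\sum_i N_i(\Log(x_i)+h(m_i)))=\exp(\sum_i N_i\Log(x_i))\exp(\sum_i h(m_i)N_i)$, this is precisely the substitution $\Log(x_i)\mapsto \Log(x_i)+h(m_i)$ on horizontal sections, as claimed. Functoriality in $(x,\overline{M})$ over $(S,\N)$ is immediate: a morphism of log points induces a map $(\overline{M}'/(f')^*\N)\to(\overline{M}/f^*\N)$ of quotient monoids, hence compatible maps $G_0\to G_0'$ and a compatible square of functors $G$, and the substitution description is manifestly natural.

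The step I expect to be the main obstacle is the cup-product compatibility feeding the $H_{\mathscr{C}}$-injectivity hypothesis of Theorem~\ref{t:equivofcategories}: one must identify the Yoneda product on $\on{Ext}^\bullet(\mathbf{1},\mathbf{1})$ on each side with the exterior-algebra multiplication (on the rigid side this is the cup-product computation underlying Proposition~\ref{p:cohomologyoflogpoint}, on the representation side it is the Lie-algebra cohomology of an abelian Lie algebra) and check that the tensor functor $G$ respects it; granting this, the degree-$2$ isomorphism and the required injection follow formally. A secondary point to verify carefully is that $F_B\circ G$ is literally the forgetful functor, which amounts to solving the parallel-transport equation of Lemma~\ref{l:loganalyticsections} for the constant nilpotent connection $\sum_i N_i\tfrac{dx_i}{x_i}$.
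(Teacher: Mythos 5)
Your proposal is correct, but it takes a genuinely different route from the paper. The paper proves Lemma~\ref{l:pi1logpoint} by running the Hadian/AIK universal-object machinery of Subsection~\ref{ss:unipotentfundamentalgroups} directly: it computes the recursion $R^{(n+1)}=\ker\bigl(\on{Sym}^n U\otimes U\to \on{Sym}^{n-1}U\otimes \exterior{2}U\bigr)$ to get $R^{(n)}=\on{Sym}^n U$, exhibits the universal objects $\cE_n$ as trivial bundles $\on{Sym}^{\leq n}U^\vee\otimes\cO$ with connection form $\sum_i \frac{dx_i}{x_i}\otimes m_i^\vee$ and horizontal section $\exp\bigl(\sum_i\Log(x_i)m_i^\vee\bigr)$, and then reads off $A_\infty\cong\widehat{\on{Sym}}\,U^\vee$ with $U^\vee$ primitive, whence $\pi_1=\Spec(\on{Sym}^*U)=\on{Hom}(\overline{M}/f^*\N,\Ga)$. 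You instead build the tensor functor $G$ from $\on{Rep}^{\un}(\Ga^k)$ and invoke Theorem~\ref{t:equivofcategories} to get an equivalence of the whole category, after which the group, the explicit action, and functoriality fall out of Tannaka duality; your identification of $F_B\circ G$ with the forgetful functor is exactly the parallel-transport computation in Lemma~\ref{l:loganalyticsections}, and your action and functoriality checks match the paper's. What each approach buys: yours is more conceptual and yields the stronger statement that $\sC^{\rig,\un}((x,\overline{M}))\simeq\on{Rep}^{\un}(\Ga^k)$ outright, while the paper's construction hands over the explicit pointed objects $(\cE_n,e_n)$ and their $\Log$-analytic horizontal frames, which are reused heavily downstream (e.g.,~in Remark~\ref{r:froblogpoint} and in the specialization-path computations), so the paper's choice is not just a matter of taste.

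Concerning the obstacle you flag: it is real, but it is not a defect peculiar to your argument. The injectivity hypothesis of Theorem~\ref{t:equivofcategories} reduces, as you say, to injectivity of the cup/Yoneda map $\exterior{2}\on{Ext}^1_{\sC}(\mathbf{1},\mathbf{1})\to\on{Ext}^2_{\sC}(\mathbf{1},\mathbf{1})$ on the rigid side; note that Proposition~\ref{p:cohomologyoflogpoint} only computes $H^*_{\rig}$, so identifying the Yoneda $\on{Ext}^2$ (or at least the image of the cup product) with the wedge square of $H^1_{\rig}$ is an extra step. The paper's own proof consumes precisely the same input: the recursion $R^{(n+1)}=\ker\bigl(\on{Sym}^nU\otimes U\to\on{Sym}^{n-1}U\otimes\exterior{2}U\bigr)$ is only valid because the Yoneda pairing $U\otimes U\to H^2(\mathbf{1})$ is the wedge product with kernel exactly $\on{Sym}^2U$, which the paper sources from \cite{AIK} (``the Yoneda product will be equal to the cup product''). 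So your proof is complete to exactly the same degree as the paper's, with the shared degree-two identification discharged by the same citation.
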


\begin{proof}
By Proposition~\ref{p:cohomologyoflogpoint}, the log rigid cohomology $H^*_{\rig}((x,\overline{\M}))$ is isomorphic to $\exterior{*}U$ where $U\coloneqq(\overline{M}/f^*\Z_{\geq 0})^{\on{gp}}\otimes K$. 
By the recipe in \cite[Section~3]{AIK} as summarized in Subsection~\ref{ss:unipotentfundamentalgroups}, $\cE_n$ is a trivial bundle on $]X[_{\cP}$ with fiber equal to the truncated symmetric algebra on $U^\vee$, 
\[\on{Sym}^{\leq n} U^\vee=\bigoplus_{i=0}^n \on{Sym}^i U^\vee.\]
Indeed, we can show $R^{(n)}=\on{Sym}^n U$ by induction: $R^{(1)}=U$, 
\[R^{(n+1)}=\ker \left(\on{Sym}^n U\otimes U\to \on{Sym}^{n-1}U\otimes \exterior{2} U\right)\]
where the homomorphism is given by the wedge product of the last two entries.
By the description of $\on{Ext}(\mathbf{1},\mathbf{1})$ as $H^1_{\rig}((x,\overline{\M}))$, we can characterize the connection on $\cE_n$. 
Let $U^\vee$ act on $\on{Sym}^{\leq n}U^\vee$ by multiplication giving a homomorphism $U^\vee\to \End(\on{Sym}^{\leq n}U^\vee)$. Then, the image of the identity element under $U\otimes U^\vee\to U\otimes \End(\on{Sym}^{\leq n}U^\vee)$ gives a connection $1$-form, which can be written in coordinates as
$\omega=\sum_i \frac{dx_i}{x_i}\otimes m_i^\vee.$
The connection, $\nabla=d-\omega$ has a horizontal $\Log$-analytic section 
\[e_n=\exp\left(\sum_i \Log(x_i)m_i^\vee\right).\]
Therefore, there is an isomorphism
\[\on{Sym}^{\leq n} U^\vee\to F_B(\cE_n),\quad
y\mapsto y\exp\left(\sum_i \Log(x_i)m_i^\vee\right).\]

%For $n_1>n_2$, the projection $f_{n_1n_2}\colon \cE_{n_1}\to \cE_{n_2}$ is given by truncating the symmetric algebra. 
Hence, $A_\infty=\varprojlim F_B(\cE_n)\cong \widehat{\on{Sym}}^* U^\vee$
is the formal power series ring on $U^\vee$.
The comultiplication $A_\infty\to A_\infty\hat{\otimes} A_\infty$ is induced from the morphism
$\cE_{n+m}\to \cE_{n}\otimes \cE_m$
taking $e_{n+m}\mapsto e_n\otimes e_m$. By horizontality, this morphism must take $m^\vee\in U^\vee$ to $m^\vee\otimes 1+1\otimes m^\vee$, hence elements of $U^\vee$ are primitive.
Now,
\[\pi_1((x,\overline{\M}),F_B))\cong\Spec(A_\infty^\star)
\cong\Spec(\on{Sym}^* U)
\cong\on{Hom}((\overline{M}/f^*\Z_{\geq 0})^{\gp},\Ga).
\]
Its $K$-points correspond to group-like elements of $A_\infty$ which are of the form $\exp(m^\vee)$ for $m^\vee\in U^\vee$. The action of such a point $h\in \on{Hom}((\overline{M}/f^*\Z_{\geq 0})^{\gp},K)$ on $F_B(\cE_n)$ is
\begin{align*}
h\cdot y\exp\left(\sum \Log(x_i)m_i^\vee\right)
&=y\exp\left(\sum \Log(x_i)m_i^\vee\right)\exp\left(\sum h(m_i) m_i^\vee\right)\\
&=y\exp\left(\sum (\Log(x_i)+h(m_i))m_i^\vee\right).
\end{align*}

For $\cV$, a unipotent isocrystal, 
the action of $\pi_1((x,\overline{\M}),F_B)$ on $F_B(\cV)$ is inherited from that on $(\cE_n,e_n)$ by  universality.
\end{proof}

\begin{remark} \label{r:fiberfunctorsonlogdisc}
The above arguments show that the $K$-points of $\pi_1((S_t,\N_t),F_{\{e_t\}})\cong \Ga$ are given by the automorphism
$\gamma_u$ of $F_{\{e_t\}}$ induced by the substitution
\[\gamma_u=[\Log(t)\mapsto \Log(t)+u]\]
for $u\in K$. 
These elements can be related to the description of monodromy on cohomology coming from the Gauss--Manin connection (see \cite{Faltings:crystalline} and \cite[Section~2.1]{CI:Hidden}). Indeed, let $\cE$ be a unipotent  isocrystal on $(S_t,\N_t)$. It induces a unipotent vector bundle with connection $(E,\nabla)$ on the disc $(\Spf V\ps{t})^{\an}$. The $\Log$-analytic sections of $E$, which form a vector space $F_B(\cE)$, can be written as tuples of $\Log$-analytic functions as in the proof of Lemma~\ref{l:loganalyticsections}.
There is a natural isomorphism between $F_B$ and $\tilde{F}$ given by the tangential path $t\mapsto 0$, $\Log(t)\mapsto 0$ as in Remark~\ref{r:tangentialbasepoint}. By conjugating with the tangential path, we see that $\gamma_u$ induces an automorphism of $E_0$, the fiber of $E$ over the origin of $(\Spf V\ps{t})^{\an}$.

The action of $\pi_1((S_t,\N_t),F_{\{e_t\}})$ on $F_B(\cE)$ induces a homomorphism from the Lie algebra of $\Ga$ to the endomorphism algebra of the vector space $F_B(\cE)$. Let $D$ be the image of a generator of $\Ga$. Again, pick a non-horizontal trivialization of $E$ so $\nabla$ has connection matrix $\omega$. Let $\Res_0(\nabla)$ is the entry-wise residue of the connection matrix. 
Because $\Ga$ acts component-wise on sections by the substitution above, $D$ acts component-wise on $\Log$-analytic functions as the derivation given by $t\mapsto 0$, $\Log(t)\mapsto 1$. 
%This can be seen by writing $\gamma_u=\exp(uD)$ and equating components of a $\Log$-analytic section, considered as a power series in $u$. 
We can write the action of $D$ on $\tilde{F}(\cE)$ more explicitly by putting $E$ into standard form analogous to \cite[II.1.17]{Deligne:regularsingular} (compare \cite[p.~185]{CI:Hidden}). Let $\underline{E}_0$ be the trivial bundle on $(\Spf V\ps{t})^{\an}$ with fiber $E_0$. Equip $\underline{E}_0$ with the connection $\nabla_0$ given by $\omega_0=\Res_0(\nabla)\frac{dt}{t}$. By the proof of  Lemma~\ref{l:loganalyticsections}, $(E,\nabla)\cong (\underline{E_0},\nabla_0)$ by an isomorphism that restricts to the identity on the fiber over the origin. Then, the $\Log$-analytic sections of $(\underline{E_0},\nabla_0)$ are 
\[\exp(\Res_0(\nabla)\Log(t))e\]
for $e\in E_0$ (where we note that exponentiation is well-defined because $\Res_0(\nabla)$ is nilpotent).
Now, $D$ acts on such a section by the following formal differentiation
\begin{align*}
D(\exp(\Res_0(\nabla)\Log(t))e)&=\frac{\partial}{\partial u}(\exp(\Res_0(\nabla)(\Log(t)+u))e)|_{u=0}\\
&=\Res_0(\nabla)\exp(\Res_0(\nabla)\Log(t))e.
\end{align*}
Consequently, the action of $D$ on $\tilde{F}(\cE)=E_0$ is given by multiplication by 
$\Res_0(\nabla)$.
\end{remark}

\begin{example} \label{e:pi1annulus}
For $i=1,2$,  $\pi^{\rig,\un}_1((x,\overline{\M}_2),F_{\{f_i\}})\cong \Ga$ with its $K$-points given by $\{\gamma_{i,u}\}_u$ for $u\in \Ga(K)$ with
\[\gamma_{i,u}= \left[\Log(x_i)\mapsto \Log(x_i)+u\right]\]
After tensoring with $K[\ell]$, we see that $\pi_1((x,\overline{\M}_2);F_{\{f_1\}},F_{\{f_2\}})_{\ell}$ is the torsor
over each group with $K[\ell]$-points  given by
\[\{\delta_u\}_{u\in (\Ga)_{\ell}(K[\ell])},\ \delta_u=[\Log(x_1)\mapsto \ell-\Log(x_2)+u].\]
%Recall $\ell$ is a stand-in for $\Log(\pi)$.
The action of $\pi_1((x,\overline{\M}_2),F_{f_1})_{\ell}$ on the left and $\pi_1((x,\overline{\M}_2),F_{f_2})_{\ell}$ on the right gives 
$\gamma_{1,u_1}\delta_{u}\gamma_{2,u_2}=\delta_{u+u_1-u_2}.$
\end{example}

\begin{example} \label{e:pi1t-annulus}
In analogy with the above, one can show for $i=1,2$, \[\pi_1((x,\overline{\M}_{2,t}),F_{\{f_i,e_t\}})\cong \Ga\times\Ga\]
and its $K$-points are given by 
\[\gamma_{i,(u,u_t)}= \left[\begin{aligned}
\Log(x_i)&\mapsto \Log(x_i)+u\\
%\Log(x_{3-i})&\mapsto \Log(x_{3-i})-u+u_t\\
\Log(t)&\mapsto \Log(t)+u_t.
\end{aligned}
\right]
\]
The $K$-points of 
$\pi_1((x,\overline{\M}_{2,t});F_{\{f_1,e_t\}},F_{\{f_2,e_t\}})$ form a torsor
\[\{\delta_{(u,u_t)}\}_{(u, u_t)\in \Ga(K)\times\Ga(K)}\]
where 
\[\delta_{(u,u_t)}=
\left[
\begin{aligned}
\Log(x_1)&\mapsto \Log(t)-\Log(x_2)+u\\
\Log(t)     &\mapsto \Log(t)+u_t
\end{aligned}
\right]\]
The action of $\pi_1((x,\overline{\M}_{2,t}),F_{\{f_1,e_t\}})$ on the left and $\pi_1((x,\overline{\M}_{2,t}),F_{\{f_2,e_t\}})$ on the right gives
\[\gamma_{1,(u_1,u_{t_1})}\delta_{(u,u_u)}\gamma_{2,(u_2,u_{t_2})}=\delta_{(u+u_1-u_2+u_{t_2},u_t+u_{t_1}+u_{t_2})}.\]
\end{example}

\subsection{K\"{u}nneth formula}

The unipotent fundamental group of a Cartesian product is the direct product of the unipotent fundamental groups of the factors.

\begin{proposition} \label{p:kunnethpi1}
Let $(X_1,M_1)$ and $(X_2,M_2)$ be proper log schemes over $(S,\N)$. Let 
\[(X,M)=(X_1,M_1)\times_{(S,\N)} (X_2,M_2)\]
with projections $q_i\colon X\to X_i$. 
Let $F_a,F_b$ be fiber functors on $(X,M)$. Then,
\[\pi^{\un}_1((X,M);F_a,F_b)\cong \pi^{\un}_1((X_1,M_1);q_{1*}F_a,q_{1*}F_b)\times \pi^{\un}_1((X_2,M_2);q_{2*}F_a,q_{2*}F_b).\]
\end{proposition}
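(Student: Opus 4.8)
The plan is to realize the product map $(q_{1*},q_{2*})$ as the map on fundamental torsors induced by a single tensor functor, and to prove that this functor is an equivalence of categories by invoking Theorem~\ref{t:equivofcategories}. Write $G_i=\pi_1^{\rig,\un}((X_i,M_i),q_{i*}F_a)$ and let $\mathscr{A}=\on{Rep}^{\un}_K(G_1\times G_2)$ be the unipotent Tannakian category of finite-dimensional representations of the product, whose fundamental group is $G_1\times G_2$. Using the Tannakian reconstructions $\mathscr{C}^{\rig,\un}(X_i,M_i)\simeq \on{Rep}^{\un}(G_i)$ and the universal property of the product, I would define the external tensor product functor
\[F\colon \mathscr{A}\to \mathscr{C}^{\rig,\un}(X,M),\qquad V_1\boxtimes V_2\mapsto q_1^*V_1\otimes q_2^*V_2,\]
which is exact, $K$-linear, and tensor. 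Its significance is that $F_a\circ F$ and $F_b\circ F$ are the external product fiber functors, so that $\pi_1(\mathscr{A};F_a\circ F,F_b\circ F)$ is canonically the product torsor $\pi_1^{\un}((X_1,M_1);q_{1*}F_a,q_{1*}F_b)\times \pi_1^{\un}((X_2,M_2);q_{2*}F_a,q_{2*}F_b)$; this is the standard factorization of $\on{Isom}^\otimes$-torsors over a direct product of groups. Granting that $F$ is an equivalence, the remark following Theorem~\ref{t:equivofcategories} then yields exactly the asserted isomorphism.

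It therefore remains to verify the hypotheses of Theorem~\ref{t:equivofcategories} for $F$. The $i=0$ case is immediate since $\on{End}(\mathbf{1})=K$ in both categories. For the $i=1$ case, I would use that $\on{Ext}^1_{\mathscr{A}}(\mathbf{1},\mathbf{1})=H^1(G_1)\oplus H^1(G_2)$ is identified, via the Tannakian reconstruction, with $H^1_{\rig}((X_1,M_1))\oplus H^1_{\rig}((X_2,M_2))$, and that $F$ acts on it by $q_1^*\oplus q_2^*$. The key input is the Künneth formula $H^1_{\rig}((X,M))\cong H^1_{\rig}((X_1,M_1))\oplus H^1_{\rig}((X_2,M_2))$. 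Injectivity of $q_1^*\oplus q_2^*$ is formal once one fixes sections $\sigma_i\colon (X_i,M_i)\to (X,M)$ coming from the chosen basepoints, so that $\sigma_i^*q_i^*=\on{id}$ and $\sigma_i^*q_j^*=0$ for $i\ne j$ on $H^1$; surjectivity is the content of the Künneth decomposition.

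For the injectivity of $H_{\mathscr{A}}(S)\hookrightarrow H_{\mathscr{C}}(F(S))$, I would exploit that $F$ is a tensor functor and hence commutes with the Yoneda (cup) products computing the boundary maps, so that every class in $H_{\mathscr{A}}(S)\subset \on{Ext}^2_{\mathscr{A}}(\mathbf{1},\mathbf{1})$ is carried to its image under the induced map on $\on{Ext}^2$. Because $G_1\times G_2$ is a direct product, $\on{Ext}^2_{\mathscr{A}}(\mathbf{1},\mathbf{1})$ splits as $H^2(G_1)\oplus\big(H^1(G_1)\otimes H^1(G_2)\big)\oplus H^2(G_2)$, and $F$ sends the outer summands via $q_i^*$ and the middle summand via $\alpha\otimes\beta\mapsto q_1^*\alpha\cup q_2^*\beta$. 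The section splittings $\sigma_i^*$ show each $q_i^*$ is injective on $\on{Ext}^2$ of the factors, while the cross term maps onto the Künneth summand $H^1_{\rig}((X_1,M_1))\otimes H^1_{\rig}((X_2,M_2))$ of $\on{Ext}^2_{\mathscr{C}}(\mathbf{1},\mathbf{1})$; since the images of the three summands are independent, $F$ is injective on all of $\on{Ext}^2_{\mathscr{A}}(\mathbf{1},\mathbf{1})$, which is more than enough.

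The main obstacle I anticipate is precisely this degree-two input: establishing enough of a multiplicative Künneth formula for the log rigid cohomology of the product weak log scheme to guarantee that the cup-product cross term $H^1_{\rig}((X_1,M_1))\otimes H^1_{\rig}((X_2,M_2))\to \on{Ext}^2_{\mathscr{C}}(\mathbf{1},\mathbf{1})$ is injective and meets the images of $q_1^*,q_2^*$ only in zero. The degree-one Künneth isomorphism and the section argument are comparatively routine; it is the compatibility of the Tannakian Yoneda product with the cohomological cup product, together with the nonvanishing and independence of the cross term, that carries the real weight. Once that is in hand, Theorem~\ref{t:equivofcategories} applies and the product decomposition of the torsor follows formally.
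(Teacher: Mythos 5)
Your strategy is genuinely different from the paper's: the paper applies the K\"unneth formula for $\on{Ext}^i$ directly inside the universal-object construction of Section~\ref{ss:unipotentfundamentalgroups}, showing that the pointed projective system on the product is $q_1^*\cE_{X_1}\otimes q_2^*\cE_{X_2}$ and reading the product decomposition off from $\varprojlim F_b(\cE_n)$, whereas you route everything through Theorem~\ref{t:equivofcategories} (the same tool the paper uses for Lemma~\ref{l:aphomotopyequivalence}). That route is viable in principle, and your $\on{Ext}^0$/$\on{Ext}^1$ steps are fine granted the degree-one K\"unneth formula that both proofs must invoke. However, your verification of the third hypothesis has a genuine hole: the sections $\sigma_i\colon (X_i,M_i)\to(X,M)$ that you say come ``from the chosen basepoints'' do not exist in general. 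A section of $q_1$ is the same thing as an $(S,\N)$-morphism $X_1\to X_2$; the fiber functors $F_a,F_b$ of the proposition are attached to log points $(x,\overline{M})\to(X,M)$ with $\overline{M}\ne\N$ and are not morphisms of this kind, and such morphisms can fail to exist at all. For instance, there is no $(S,\N)$-point of the nodal log point $(x,\overline{M}_2)$: it would require a monoid map $h\colon\overline{M}_2\to\N$ with $h(f_1),h(f_2)\ne 0$ (compatibility of the log structures with the augmentations to $k$) and $h(f_1)+h(f_2)=e_\pi$, which is impossible. Since the proposition allows arbitrary log schemes as factors, and since your entire injectivity-and-independence argument on $\on{Ext}^2$ is carried by $\sigma_1^*$ and $\sigma_2^*$, the argument collapses exactly where you yourself located ``the real weight.''

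There is a second, related gap: you silently identify $\on{Ext}^2$ of the unipotent categories with rigid $H^2$. The phrase ``the K\"unneth summand $H^1_{\rig}((X_1,M_1))\otimes H^1_{\rig}((X_2,M_2))$ of $\on{Ext}^2_{\mathscr{C}}(\mathbf{1},\mathbf{1})$'' has no meaning as written: $\on{Ext}^2$ in $\mathscr{C}^{\rig,\un}(X,M)$ is $H^2$ of its Tannaka group, which comes only with a natural map to $H^2_{\rig}((X,M))$ and in general differs from it (already $\on{Ext}^1=H^1_{\rig}$ is special to degree one). Composing with that natural map does rescue the cross term --- a multiplicative K\"unneth formula makes $\alpha\otimes\beta\mapsto q_1^*\alpha\cup q_2^*\beta$ injective into the middle K\"unneth summand of $H^2_{\rig}$ --- but for the outer summands $H^2(G_i)$ you would then need injectivity of $\on{Ext}^2_{\mathscr{C}^{\rig,\un}(X_i,M_i)}(\mathbf{1},\mathbf{1})\to H^2_{\rig}((X_i,M_i))$, the analogue of the classical fact that $H^2$ of a unipotent completion injects into $H^2$ of the space. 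This is proved neither in the paper nor in your proposal, and it is precisely what the (nonexistent) sections were supposed to substitute for. To close the argument you must either supply that injectivity, or restrict attention to the boundary images $H_{\mathscr{A}}(S)$ (which is all Theorem~\ref{t:equivofcategories} actually demands) and check injectivity there by hand, or else follow the paper's shorter route through the universal objects, where the only input is K\"unneth for $\on{Ext}^i$ applied to the recursion defining $\cE_n$.
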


\begin{proof}
By applying the K\"{u}nneth formula for $\on{Ext}^i$ in the construction of the universal bundle, we obtain the isomorphism of projective systems
\[\cE_{X_1\times X_2}\cong q_1^*\cE_{X_1}\otimes q_2^*\cE_{X_2}\]
with filtration induced by that on each tensor factor. Moreover, 
\[(e_{X_1,n}\otimes e_{X_2,n})\in F_b((q_1^*\cE_{X_1}\otimes q_2^*\cE_{X_2})_n)\]
satisfies the properties for a system of basepoints when $e_{X_i,n}\in (q_{i*}F_b)(\cE_{X_i,n})$ is a system of basepoints.
The conclusion follows from the construction of the unipotent fundamental group.
\end{proof}

\subsection{Specialization map} \label{ss:specializationmap}

Let $(X,M)$ be a weak log curve with log dual graph $\Gamma$, and let $F_a,F_b$ be fiber functors on $\sC^{\rig,\un}(X,M)$ attached to log points anchored on irreducible components $X_{\overline{a}}$ and $X_{\overline{b}}$, corresponding to $\overline{a},\overline{b}\in V(\Gamma)$. We will define a {\em specialization map}
\[\pi_1^{\rig,\un}((X,M);F_a,F_b)\to \pi_1^{\un}(\Gamma;\overline{a},\overline{b}).\]
The specialization map is induced by the cospecialization functor of Tannakian categories, $\on{sp}^*\colon \sC^{\un}(\Gamma)\to \sC^{\rig,\un}(X,M)$ which we describe below. An object in $\sC^{\un}(\Gamma)$ is a unipotent local system of $K$-vector spaces on $\Gamma$, and can therefore be represented by the following data: 
\begin{enumerate}
    \item a nonnegative integer $N$ and
    \item for each closed directed edge $e=\overline{v}\overline{w}$, a linear isomorphism $T_e\colon K^N\to K^N$ given by a lower-triangular matrix with $1$'s on the diagonal such that $T_{\overline{e}}=T_e^{-1}$.
\end{enumerate}
 We can define a unipotent isocrystal on $(X,M)$ as follows: on each $X_{\overline{v}}$, set $\cF_{\overline{v}}=\mathbf{1}^{\oplus N}$; to each node $X_e$ corresponding to $e=\overline{v}\overline{w}$, we attach the morphism $\cF_{\overline{v}}|_{X_e}\to \cF_{\overline{w}}|_{X_e}$ induced by $T_e$. By proper descent \cite{Lazda:descent}, this defines a (unipotent) isocrystal $\cF$. The isocrystal can be defined explicitly by picking a log smooth frame $((X,M),(X,M),(\cP,L))$. On $]X_{\overline{v}}[_{\cP}$, take the module with trivial connection $(\cO_{]X_{\overline{v}}[_{\cP}}^{\oplus N},d)$ and
glue the modules on $]X_e[_{\cP}\ \subseteq\ ]X_{\overline{v}}[_{\cP}\cap ]X_{\overline{w}}[_{\cP}$ by the constant linear map $T_e$.  This gives a unipotent $\cO_{]X[}$-module with a unipotent integrable connection, thus a unipotent isocrystal on $(X,M)$.

Let $F_a$ is a fiber functor on $\sC^{\rig,\un}(X,M)$ corresponding to a log point anchored at a component $X_{\overline{a}}$, it is straightforward to see that the fiber functor $F_a\circ \on{sp}^*$ on $\sC(\Gamma)$ is canonically isomorphic to the fiber functor taking the fiber at $\overline{a}$.

We summarize the discussion as follows:
\begin{definition}[Specialization map]
Let $(X,M)$ be a weak log curve over $(S,\N)$ with log dual graph $\Gamma$. Let $F_a,F_b$ be fiber functors attached to log basepoints of $(X,M)$ anchored at components $X_{\overline{a}}, X_{\overline{b}}$, respectively. The {\em specialization map}
\[\on{sp}\colon \pi_1^{\rig,\un}((X,M);F_a,F_b)\to \pi_1^{\un}(\Gamma;\overline{a},\overline{b}).\]
is induced by the cospecialization functor $\on{sp}^*$.
\end{definition}

We may also view the specialization map as a homomorphism
\[\Pi((X,M);F_a,F_b)\to \Pi(\Gamma;\overline{a},\overline{b}).\]

The specialization map is functorial for morphisms of weak log curves: such a morphism $f\colon (X,M_X)\to (Y,N_Y)$ induces a submersion $\overline{f}\colon\Gamma_{(X,M)}\to\Gamma_{(Y,N)}$ of log dual graphs  giving a commutative diagram
\[\xymatrix{
\pi_1^{\rig,\un}((X,M);F_a,F_b)\ar[r]^>>>>>{f_*}\ar[d]^{\on{sp}}&\pi_1^{\rig,\un}((Y,N);f_*F_a,f_*F_b)\ar[d]^{\on{sp}}\\
\pi_1^{\rig,\un}(\Gamma_{(X,M)};\overline{a},\overline{b})\ar[r]^>>>>>{\overline{f}_*}&\pi_1^{\rig,\un}(\Gamma_{(Y,N)};\overline{f}(\overline{a}),\overline{f}(\overline{b})).
}\]

\subsection{The weight filtration} \label{ss:weightfiltration}

For a weak log curve $(X,M)$ and basepoints $F_a,F_b$ attached to log points, there is a weight filtration on $\pi_1^{\rig,\un}((X,M);F_a,F_b)$.
Let $(X,M')$ be the log curve obtained from $(X,M)$ obtained by replacing annular points and punctures with smooth points as in Remark~\ref{r:modifylogstructure}. Let $f\colon (X,M)\to (X,M')$ be the induced morphism. By functoriality, there is a natural map 
\[f_*\colon \Pi((X,M), F_a)\to\Pi((X,M'),F_a).\]

We set 
\[W_0=\Pi((X,M);F_a),\ W_{-1}=\mathscr{I},\  W_{-2}=\mathscr{I}^2+\on{ker}(f_*),\] 
and inductively define 
\[W_{-n}=\sum_{p+q=n, p,q>0} W_{-p}\cdot W_{-q} \text{ for } n>2.\]
This weight filtration induces the weight filtration on $\Pi((X,M),F_a)$-module $\Pi((X,M);F_a,F_b)$.
\begin{remark}
Under the identification $\mathscr{I}/\mathscr{I}^2\cong \pi_1^{\rig, \un}((X,M))^{\on{ab}}(K)=H^1_{\rig}((X,M))^\vee$ described in Section \ref{subsection:tannakian-and-fundamental-groups}, the weight filtration agrees with the usual weight filtration on $H^1_{\rig}((X,M))^\vee.$ The weight filtration on $\Pi((X,M),F_a)$ is characterized by this property together with multiplicativity. Recall that in the classical situation, the weight filtration is induced by the pole order filtration on the log de Rham complex at the points corresponding to punctures. This extends to our setting by the use of the residue map on the log de Rham complex of $]X[_P$ at punctures and annular points. This is in distinction with the monodromy-weight filtration which makes use of the pole filtration at nodes as well. The weight filtration on $H_{\rig}^1(X,M)$ is characterized by
\begin{multline*}
    W_0H_{\rig}^1((X,M))=0,\ W_1H_{\rig}^1(X,M)=\on{Im}\left(f^*\colon H_{\rig}^1((X,M'))\to H_{\rig}^1((X,M))\right),\\
    W_2H_{\rig}^1((X,M))=H_{\rig}^1((X,M))).
\end{multline*}

To justify the above description of the weight filtration on $\Pi((X,M),F_a)$, note that we have
\[W_{-3}(\mathscr{I}/\mathscr{I}^2)=0,\ W_{-2}(\mathscr{I}/\mathscr{I}^2)=(\mathscr{I}^2+\ker(f_*))/\mathscr{I}^2,\ 
W_{-1}(\mathscr{I}/\mathscr{I}^2)=\mathscr{I}/\mathscr{I}^2.\]
Using the weight filtration on $(\mathscr{I}/\mathscr{I}^2)^\vee=\on{Hom}(\mathscr{I}/\mathscr{I}^2,K)$ where $K$ is equipped with the weight filtration that is nonzero in nonnegative degrees,   we obtain
\begin{multline*}
W_{0}((\mathscr{I}/\mathscr{I}^2)^\vee)=0,\ W_{1}((\mathscr{I}/\mathscr{I}^2)^\vee)\cong \on{Im}\left(f^*\colon H_{\rig}^1((X,M'))\to H_{\rig}^1((X,M))\right),\\ 
W_{2}((\mathscr{I}/\mathscr{I}^2)^\vee)=(\mathscr{I}/\mathscr{I}^2)^\vee\cong H_{\rig}^1(X,M).
\end{multline*}
Indeed, the identification of $W_{1}((\mathscr{I}/\mathscr{I}^2)^\vee)$ is justified by recognizing it as the vector space of linear maps $\mathscr{I}/\mathscr{I}^2\to K$ vanishing on $\ker(f_*)$. This, in turn, is canonically identified with $H_{\rig}^1((X,M'))/\ker(f^*)\cong \on{Im}\left(f^*\colon H_{\rig}^1((X,M'))\to H_{\rig}^1((X,M))\right)$.
\end{remark}
\begin{remark}
The Deligne--Goncharov construction in Section~\ref{ss:delignegoncharov} yields $\Pi((X,M);a,b)/\mathscr{I}^n$ as the cohomology of a diagram of schemes; hence it admits a weight filtration. It can be shown \cite[Remark 3.10]{betts-litt} that this definition agrees with the above definition when $F_a$ and $F_b$ are attached to smooth points of $(X,M)$ by interpreting $\Pi((X,M);F_a,F_b)/\mathscr{I}^n$ as $\cE_{n-1}$ as in Remark~\ref{r:monodromyexplicit}. This definition can be extended to arbitrary log basepoints by the argument in Theorem~\ref{t:weightmonodromy}.
\end{remark}

\part{Frobenius and Monodromy on log curves}

\section{Homotopy exact sequence}

In this section, we will introduce a homotopy exact sequence analogous to the fibration exact sequence for a family over a punctured disc. This exact sequence will be used to define the Frobenius and monodromy operators on the fundamental group of a weak log curve. To describe the terms in the exact sequence, we must introduce overconvergent $F$-isocrystals. See \cite{Chiarellotto:weights} for a treatment in the non-log setting. The first step is defining the Frobenius pullback of overconvergent isocrystals in the log rigid setting.

\subsection{Definition of Frobenius pullback on overconvergent isocrystals}

Let $q=p^a$ for a positive integer $a$.
For a log scheme $(X,M)$ defined over $k=\mathbf{F}_q$, set the the absolute Frobenius $\varphi^{\abs}\colon (X,M)\to (X,M)$ to be the absolute $q$-power Frobenius on the underlying scheme together with multiplication by $q$ on $M$. 

The definition of Frobenius pullback in the log setting is more subtle than that of the non-log case where the relative Frobenius is a $k$-morphism to the Frobenius twist:
\[\varphi\colon X\to X^{(q)}=X\times_{k,\varphi^{\abs}} k\]
where the base-change is by $q$-power Frobenius $\varphi^{\abs}\colon\Spec k\to \Spec k$.
One can define the Frobenius pullback on overconvergent isocrystals in the non-log setting by the following steps: evaluate on a frame $\cP$ to obtain a vector bundle with overconvergent connection; base-change this bundle by a lift of Frobenius $\sigma\colon W(k)\to W(k)$ to obtain a bundle on $\cP^{\sigma}$ and thus an overconvergent isocrystal on $X^{(q)}$; and then pull back by relative Frobenius $\varphi\colon X\to X^{(q)}$ \cite[Section~8.3]{LS:Rigid-cohomology}. In the $p$-adic integration literature \cite{CI:Frobandmonodromy,Besser:Coleman}, one uses the isomorphism between $X$ and $X^{(q)}$ instead of twisting by $\sigma$ and obtains a {\em linear Frobenius} instead of a semi-linear Frobenius. We will follow this convention.  

In the log setting, because of the non-trivial action of $\varphi^{\abs}$ on the base $(S,\N)$, $(X^{(q)},\M^{(q)})$ can be different from $(X,\M)$, and it's not obvious how to relate their frames. We will remedy this problem by only studying log schemes $(X_t,M_t)$ arising from $\pi$-$t$ base-change. 
%Then, the dependency of the log scheme on the generator $e_\pi$ of $\N$ is transferred over to $e_t$ of $\N_t$. 
As will be described in Remark~\ref{r:faltingsfrob}, this idea originates in the work of Faltings \cite{Faltings:crystalline}.

For a log scheme $f\colon (X,M)\to (S,\N)$, we define the relative Frobenius $\varphi$ by the following commutative diagram
\[\xymatrix{(X,M)\ar[dr]\ar[r]^>>>>\varphi \ar@/^1.5pc/[rr]^{\varphi^{\abs}}& (X^{(q)},M^{(q)})\ar[r]^>>>>>{p_1}\ar[d]& (X,M)\ar[d]\\
\ &(S,\N)\ar[r]^{\varphi^{\abs}}&(S,\N)}\]
where $(X^{(q)},M^{(q)})=(X,M)\times_{(S,\N),\varphi^{\abs}} (S,\N)$ and $p_1$ is projection to the first factor.

We begin with the following observation:
\begin{lemma} \label{l:twisteddisc}
  The Frobenius twist $(S_t^{(q)},\N_t^{(q)})\coloneqq (S_t,\N_t)\times_{(S,\N),\varphi^{\abs}} (S,\N)$
  is isomorphic to $(S_t,\N_t)$ over $(S,\N)$. Moreover, there is a morphism
  \[h_t'\colon (S_t^{(q)},\N_t^{(q)})\to (S,\N)\]
  such that the following diagram commutes
  \[\xymatrix{
  (S_t,\N_t)\ar[dr]^{h_t}\ar[rr]^{\cong} & &(S_t^{(q)},\N_t^{(q)})\ar[dl]_{h_t'}\\
  & (S,\N)& 
  }\]
  where $h_t$ is given in Definition~\ref{d:pitbasechange}.
\end{lemma}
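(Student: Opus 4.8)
The plan is to reduce the statement to an explicit computation with monoid charts. Here the relevant structure morphism of $(S_t,\N_t)$ over $(S,\N)$ is $q$, given on charts by $\N\to\N_t$, $e_\pi\mapsto e_\pi$, and the absolute Frobenius $\varphi^{\abs}\colon (S,\N)\to (S,\N)$ is the identity on the underlying scheme $\Spec\mathbf{F}_q$ (as $x\mapsto x^q$ is the identity on $\mathbf{F}_q$) while being multiplication by $q$ on the chart $\N$. The point I want to keep in view is that $\varphi^{\abs}$ is \emph{not} an isomorphism of log schemes --- it is a genuine Frobenius on the log structure --- so the twist genuinely has to be computed. First I would recall that the fibre product in the category of fine saturated log schemes is obtained by taking the fibre product of underlying schemes together with the fine saturated pushout (amalgamated sum of charts, followed by integralization and saturation).

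Next I would carry out the chart computation. On underlying schemes, \[\Spec\mathbf{F}_q\times_{\Spec\mathbf{F}_q}\Spec\mathbf{F}_q=\Spec\mathbf{F}_q,\] since both legs are the identity. For the charts I form the pushout of $\N_t=\langle e_t,e_\pi\rangle$ and the second factor $\N=\langle e_\pi'\rangle$ over the base $\N$, whose generator maps to $e_\pi$ under $q$ and to $q\,e_\pi'$ under $\varphi^{\abs}$; this imposes the relation $e_\pi=q\,e_\pi'$. The key verification is that this pushout monoid is already fine and saturated: the relation only permits trading one $e_\pi$ for $q$ copies of $e_\pi'$, so the monoid injects into its groupification with image exactly $\N e_t\oplus\N e_\pi'\cong\N^2$. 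Hence no integralization or saturation is forced, and the underlying scheme of the fibre product stays $\Spec\mathbf{F}_q$; this is the step I expect to be the main subtlety, because fine saturated fibre products can in general modify the underlying scheme. Since $e_t$ and $e_\pi'$ both map to $0\in k$, this exhibits $(S_t^{(q)},\N_t^{(q)})$ as the log point $(\Spec k,\N^2)$ with both generators mapping to $0$, i.e.~as $(S_t,\N_t)$.

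Finally I would record the maps and verify the triangle. Let $\Phi\colon (S_t,\N_t)\to (S_t^{(q)},\N_t^{(q)})$ be the isomorphism given on charts by $\N_t^{(q)}\to\N_t$, $e_t\mapsto e_t$, $e_\pi'\mapsto e_\pi$, and set $q_t^{(q)}\coloneqq q_t\circ p_1$, where $p_1\colon (S_t^{(q)},\N_t^{(q)})\to (S_t,\N_t)$ is the first projection (on charts $e_t\mapsto e_t$, $e_\pi\mapsto q\,e_\pi'$) and $q_t$ is the $\pi$-$t$ base-change morphism (on charts $e_\pi\mapsto e_t$). To check $q_t^{(q)}\circ\Phi=q_t$ it suffices to follow the generator $e_\pi$ of the target $\N$: under $(q_t^{(q)})^{\flat}=p_1^{\flat}\circ q_t^{\flat}$ it goes to $e_t\in\N_t^{(q)}$, and $\Phi^{\flat}$ then sends $e_t$ to $e_t\in\N_t$, matching $q_t^{\flat}(e_\pi)=e_t$; as everything is the identity on $\Spec k$, this gives the commuting triangle. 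The substance is the monoid bookkeeping of the second paragraph, the present verification being purely formal.
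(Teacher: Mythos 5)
Your proof is correct and follows essentially the same route as the paper: a direct computation of the fibre product on monoid charts, identifying $\N_t^{(q)}$ with $\N^2$ freely generated by the image of $e_t$ and the second-factor generator $e_\pi'$ subject to $e_\pi = q\,e_\pi'$, which is exactly the paper's $\langle e_t^{(q)}, e_\pi^{(q)}\rangle$, together with the same isomorphism and the same map $q_t^{(q)}$ (your $q_t\circ p_1$, given on charts by $e_\pi\mapsto e_t^{(q)}$, is precisely the map the paper says is ``induced from the homomorphism of monoids $e_\pi\mapsto e_t^{(q)}$''). The only difference is one of detail: you verify explicitly that the pushout of charts is already fine and saturated, so that no integralization or saturation correction alters the underlying scheme, a point the paper's terser proof asserts implicitly.
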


\begin{proof}
  Recall that $(S_t,\N_t)$ has log structure $\N_t=\N^2$ generated by $e_\pi,e_t$ with structure morphism to $(S,\N)$ given by $\N\to \N_t$ with $e_\pi\mapsto e_\pi$.
  Now, $S_t^{(q)}=\Spec k$ and $\N_t^{(q)}=\N^2$ generated by $e^{(q)}_\pi,e^{(q)}_t$. The structure morphism $(S_t^{(q)},\N_t^{(q)})\to (S,\N)$ is given by $e_\pi\mapsto e_\pi^{(q)}$ (and the relative Frobenius $(S_t,\N_t)\to (S_t^{(q)},\N_t^{(q)})$ is induced by $e_\pi^{(q)}\mapsto e_\pi$ and $e_t^{(q)}\mapsto qe_t$).
  So, $(S_t,\N_t)$ and $(S_t^{(q)},\N_t^{(q)})$ are isomorphic by $e_t\mapsto e_t^{(q)}, e_\pi\mapsto e_\pi^{(q)}$ over $(S,\N)$. The morphism $h_t'$ is induced from the homomorphism of monoids $e_\pi\mapsto e_{t}^{(q)}$. 
\end{proof}

\begin{lemma}
  Let $f\colon (X,M)\to (S,\N)$ be a morphism of log schemes defined over $\mathbf{F}_q$. Consider the $\pi$-$t$ base-change 
$(X_t,M_t)=(X,M)\times_{(S,\N),h_t} (S_t,\N_t)$
and morphism $f_t\colon (X_t,M_t)\to (S_t,\N_t)$. Then, there is a canonical $(S,\N)$-isomorphism between $(X_t,M_t)$ and its Frobenius twist. %Consequently, one may interpret $\varphi$ as a morphism $(X_t,M_t)\to (X_t,M_t)$.
\end{lemma}

\begin{proof}
Note $(X_t,M_t)$ has Frobenius twist 
\begin{align*}
    (X^{(q)}_t,M^{(q)}_t) &= (X_t,M_t) \times_{{h,(S,\N)},\varphi^{\on{abs}}} (S,\N)\\
    &= ((X,M)\times_{(S,\N),h_t} (S_t,\N_t)) \times_{{h,(S,\N)},\varphi^{\on{abs}}} (S,\N)\\
    &= (X,M)\times_{(S,\N),h_t} ((S_t,\N_t)) \times_{{h,(S,\N)},\varphi^{\on{abs}}} (S,\N))\\
    &= (X,M)\times_{(S,\N),{h_t'}} (S_t^{(q)},\N_t^{(q)})\\
    &\cong (X,M)\times_{(S,\N),h_t} (S_t,\N_t)\\
    & = (X_t,M_t)
\end{align*}
where the isomorphism comes from Lemma~\ref{l:twisteddisc}.
\end{proof}

Putting everything together, we interpret $\varphi$ to be the composition given by the top row in the following diagram:
  \[\xymatrix{(X_t,M_t)\ar[r]&(X_t^{(q)},M_t^{(q)})\ar[d]^{f_t^{(q)}}\ar[rr]^{\cong} & &(X_t,M_t)\ar[d]^{f_t}\\
    &(S_t^{(q)},\N_t^{(q)})\ar[dr]
  \ar[rr]^{\cong} & &(S_t,\N_t)\ar[dl]\\
  & &(S,\N)& 
  }\]

\begin{remark} \label{r:frobliftinterp}
  This approach can be understood in terms of lifts.  Let $(\cQ,N_{\cQ})$ be the {\em log disc} where $\cQ=\Spf V\ps{t}$ is given the log structure $N_{\cQ}$ induced by $\Z_{\geq 0}^2$ with $e_t\mapsto t$, $e_\pi\mapsto \pi$. Let $(\cP,L)$ be a formal scheme over $(\cQ,N_{\cQ})$ whose closed fiber is $(X_t,M_t)$.
  Then $(\cP,L)$ is also a lift of $(X_t^{(q)},M_t^{(q)})$. 
  A lift of relative Frobenius (which may only exist locally) is a morphism 
  \[(\cP,L)\to (\cP,L)\]
  extending $\varphi$. In particular, this extends the map $(\cQ,N_{\cQ})\to(\cQ,N_{\cQ})$ given by $t\mapsto t^q$.
  Because the map fixes the origin of $\cQ$, it will be natural to take fibers over points above the origin. 
\end{remark}

\begin{definition}
An {\em $F$-isocrystal} $\cV$ on $(X_t,M_t)$ is an  isocrystal equipped with an $F$-structure, i.e.,~an isomorphism $\cV\cong \varphi^*\cV$. A morphism of  $F$-isocrystals is a morphism of the underlying  isocrystals intertwining $F$-structures. Let  $\sC^{\rig,\varphi}(X_t,M_t)$ denote the category of $F$-isocrystals on $(X_t,M_t)$.
%The category of {\em weakly unipotent overconvergent $F$-isocrystal}, $\sC^{\rig,\un,\varphi}(X_t,M_t)$ is the full subcategory of $\sC^{\rig,\varphi}$ consisting of objects whose underlying overconvergent isocrystals are unipotent (i.e.~the filtration need not consist be equipped with an $F$-structure).

For a subcategory of $\sC^{\rig}(S_t,\N_t)$, let the superscript ``$\nr$'' denote the full subcategory of isocrystals having nilpotent residues when evaluated on their respective frames. 

For a morphism $g\colon (X,M_X)\to (Z,M_Z)$, the category of {\em relatively unipotent $F$-isocrystals}, $\sC^{\rig,\varphi,\un(g)}(X,M_X)$ is the full subcategory of $\sC^{\rig,\varphi}(X,M_X)$ whose objects have a filtration (in $\sC^{\rig,\varphi}(X,M_X)$) whose associated gradeds are isomorphic to  $F$-isocrystals of the form $g^*\cV$ where $\cV$ is an $F$-isocrystal on $(Z,M_Z)$. 
For a morphism $f_t\colon (X_t,M_t)\to (S_t,\N_t)$, write the superscript ``$\un(f_t,\nr)$'' for the full subcategory of relatively unipotent isocrystals whose associated gradeds are isomorphic to $g^*\cV$ where $\cV$ has nilpotent residues on $(S_t,\N_t)$. 
\end{definition}

\begin{lemma} Let $f\colon (X,M)\to (S,\N)$ be a weak log curve with $\pi$-$t$ base-change $f_t\colon (X_t,M_t)\to (S,\N_t)$. Then any object of  $\sC^{\rig,\varphi,\un(f_t,\nr)}(X_t,M_t)$ is unipotent as an object of $\sC^{\rig}(X_t,M_t)$.
\end{lemma}

\begin{proof}
It suffices to show the unipotency of an object $f_t^*\cV$ of $\sC^{\rig,\varphi,\un(f_t,\nr)}(X_t,M_t)$ for $\cV\in \on{Obj}\sC^{\rig,\varphi}(S_t,\N_t)$. The $F$-isocrystal $\cV$ on $(S_t,\N_t)$ has nilpotent residues and 
as observed in \cite[Lemma~4.2.3]{Kedlaya:aws}, Dwork's trick in combination with \cite[Lemma~3.6.2]{Kedlaya:unipotence} shows that its evaluation on the log disc is unipotent. 
\end{proof}

\subsection{Homotopy exact sequence}

Let $\pi_1^{\rig,\varphi,\un(f_t,\nr)}((X_t,M_t),F)$ denote the fundamental group attached to $\sC^{\rig,\varphi,\un(f_t,\nr)}(X_t,M_t)$. Then, the forgetful
functor composed with restriction induced by $(X,M)\to (X_t,M_t)$,
\[\sC^{\rig,\varphi,\un(f_t,\nr)}(X_t,M_t)\to \sC^{\rig,\un}(X_t,M_t)\to  \sC^{\rig,\un}(X,M)\] 
induces morphisms of fundamental torsors
\[\pi_1^{\rig,\un}((X,M);F_1,F_2)\to \pi_1^{\rig,\un}((X_t,M_t);F_1,F_2)\to \pi_1^{\rig,\varphi,\un(f_t,\nr)}((X_t,M_t);F_1,F_2).\] 
We will understand the image of this composition below.

\begin{proposition} \label{p:homotopyexactsequence} Let $f\colon (X,M)\to (S,\N)$ be a weak log curve. Let $(X_t,M_t)$ be the $\pi$-$t$ base-change which induces $i\colon (X,M)\to (X_t,M_t)$ and $f_t\colon (X_t,M_t)\to (S_t,\N_t)$.
Let $F$ be a fiber functor on $(X,M)$ (which we will identify with its image on $(X_t,M_t)$ and $(S_t,\N_t)$). Then there is an exact sequence of fundamental groups
\[\xymatrix{0\ar[r]&\pi^{\rig,\un}_1((X,M),F)\ar[r]&\pi^{\rig,\varphi,\un(f_t,\nr)}_1((X_t,M_t),F)\ar[r]&\pi_1^{\rig,\varphi,\nr}((S_t,N_t),F)\ar[r]&0}.\]
\end{proposition}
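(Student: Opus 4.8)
The plan is to deduce exactness from the standard Tannakian criteria for a homotopy exact sequence, applied to the chain of exact $K$-linear tensor functors
\[
\sC^{\rig,\un,\varphi}((S_t,\N_t)) \xrightarrow{\,f_t^*\,} \sC^{\rig,\un,\varphi}((X_t,M_t)) \xrightarrow{\,i^*\,} \sC^{\rig,\un}((X,M)),
\]
all compatible with the fiber functor $F$. Following the frameworks of \cite{Lazda:rational} and \cite{CPS:Logpi1}, it suffices to verify three conditions: (a) $f_t^*$ is fully faithful with essential image stable under subobjects, which yields surjectivity of $f_{t*}$; (b) every object of $\sC^{\rig,\un}((X,M))$ is a subquotient of $i^*\cV$ for some $\cV$, which makes $i_*$ a closed immersion; and (c) for $\cV\in\sC^{\rig,\un,\varphi}((X_t,M_t))$, the restriction $i^*\cV$ is isomorphic to $\mathbf{1}^{\oplus m}$ if and only if $\cV\cong f_t^*\cW$ for some $\cW$, which identifies $\on{im}(i_*)$ with $\ker(f_{t*})$. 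The Tannakian dictionary then assembles (a)--(c) into the asserted short exact sequence.

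For (a), I would use that $f_t$ is a proper weak log curve with geometrically connected fibers, so that the relative $H^0_{\rig}$ of $\mathbf{1}$ is the unit; the projection formula then gives $\on{Hom}(f_t^*\cW_1, f_t^*\cW_2)\cong\on{Hom}(\cW_1,\cW_2)$, hence full faithfulness (surjectivity does not require agreement on higher $\on{Ext}$). Stability under subobjects follows because a horizontal sub-$F$-isocrystal of $f_t^*\cW$ meets each fiber in a constant subobject of locally constant rank, and geometric connectedness lets these fiberwise subobjects descend to a subobject over $(S_t,\N_t)$.

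For (b), the content is a lifting statement: every unipotent isocrystal on $(X,M)$ should be, up to subquotient, the restriction of a weakly unipotent overconvergent $F$-isocrystal on $(X_t,M_t)$. Using the universal pointed objects of Section~\ref{ss:unipotentfundamentalgroups}, this reduces to lifting iterated extensions of $\mathbf{1}$, hence to comparing $\on{Ext}^1(\mathbf{1},\mathbf{1})=H^1_{\rig}$ on the fiber and on the total space. The comparison between the cohomology over $(S,\N)$ and over $(S_t,\N_t)$, in the spirit of Lemma~\ref{l:aphomologyequivalence}, lets one spread each extension class out over the disc, while the overconvergent $F$-structure is supplied by Dwork's argument via the contracting substitution $t\mapsto t^q$ recalled in Remark~\ref{r:frobliftinterp}.

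The main obstacle is (c), exactness in the middle. One direction is formal: since $f_t\circ i = i\circ f$ (Remark~\ref{r:usualbasechange}) and the restriction of $\cW$ along $(S,\N)\to(S_t,\N_t)$ is a mere $K$-vector space by Corollary~\ref{c:isocrystalsonapoint}, any pullback $f_t^*\cW$ restricts to a constant isocrystal on $(X,M)$. The reverse implication---that an $F$-isocrystal on $(X_t,M_t)$ becoming trivial on the fiber over $t=\pi$ descends to the base---is the crux. Here I would again lean on the Frobenius structure: the relatively unipotent filtration together with Dwork's trick should force a weakly unipotent $F$-isocrystal whose restriction to one fiber is trivial to be constant along the disc, hence pulled back from $(S_t,\N_t)$. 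Making this rigidity precise, rather than the more routine verifications of (a) and (b), is where I expect the real work to lie.
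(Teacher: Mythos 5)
Your overall skeleton --- reducing everything to the Tannakian exactness criteria of Esnault--Hai--Sun for the chain of pullback functors --- is the same framework the paper uses (via Lazda), and your items (a) and (b) correspond to genuine steps of that argument. But there is a real gap at your item (c), and you have flagged it yourself: you never prove that a weakly unipotent overconvergent $F$-isocrystal on $(X_t,M_t)$ whose restriction to the fiber $t=\pi$ is trivial must be of the form $f_t^*\cW$. The tool you propose, Dwork's trick, cannot do that job by itself: since the Frobenius lift covers $t\mapsto t^q$, the $F$-structure only propagates triviality along the sequence of fibers $t=\pi^{q^n}$, and passing from triviality on (even infinitely many) fibers to constancy along the disc requires base-change and finiteness statements for the relative pushforwards $R^if_{t*}$ of overconvergent isocrystals over the open disc. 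That relative-cohomology machinery (overconvergence of the Gauss--Manin pushforward and its compatibility with restriction to fibers) is exactly the hard analytic input the paper imports by citation from Lazda, Shiho, and Chiarellotto--Di Proietto--Shiho; note that your items (a) and (b) also quietly depend on it (existence of $f_{t*}$ for the projection formula, descent of fiberwise subobjects, spreading out of extension classes over the disc).

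The paper also organizes the proof differently in a way that sidesteps your crux. It first establishes the exact sequence for the category of \emph{relatively} unipotent overconvergent $F$-isocrystals with nilpotent residues --- where the filtration with graded pieces pulled back from the base is part of the definition, so middle exactness is precisely Lazda's theorem adapted to the log setting --- and only afterwards identifies that category with the weakly unipotent one. Dwork's trick enters exactly there: combined with Kedlaya's unipotence results, it shows that an $F$-isocrystal on the log disc with nilpotent residues is unipotent, hence the pullbacks $f_t^*\cV$ occurring as graded pieces are absolutely unipotent, giving the equivalence of the two categories. So your instinct that Dwork's trick is the essential Frobenius input is correct, but it is the engine of a category equivalence, not of the rigidity statement in (c); without either that equivalence or a direct proof of (c) built on the relative pushforward theory, the proposal does not close.
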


\begin{proof}
This theorem is the log version of the main results of \cite[Section~3]{Lazda:rational} specialized to the situation where the base is the log disc, combined with some ideas from  \cite{CPS:Logpi1} which treats the de Rham case. It amounts to verifying the exactness criterion in \cite[Appendix~A]{esnault-hai-sun}. 
We explain how to adapt the above results to our setting.
Let $(\cQ,N_{\cQ})$ be the log disc as in Remark~\ref{r:frobliftinterp}.
Then $((S_t,\N_t),(S_t,\N_t),(\cQ,N_{\cQ}))$ is a proper log smooth frame. By deformation theory, we can 
pick a proper formal log smooth curve $(\cP,L)$ over
$(\cQ,N_{\cQ})$ around 
$(X_t,M_t)$. 
%The base-change of this curve by $(S_t,\N_t)\to (\cQ,N_\cQ)$ is $(X_t,M_t)$. 
There are proper log smooth frames $((X_t,M_t),(X_t,M_t),(\cP,L))$ and 
$((X,M),(X,M),(\cP_\pi,L_\pi))$
where the subscript $\pi$ denotes the fiber over $t=\pi$.

Now, \cite[Section~3]{Lazda:rational} constructs the exact sequence in the non-log rigid setting using overconvergent $F$-isocrystals. 
The arguments hold in our situation once one has the existence of higher pushforwards of convergent $F$-isocrystals which is a consequence of the comparison theorem between log convergent and log de Rham cohomology  \cite[Corollary~2.34]{Shiho:relative}.
Then, one may employ a Katz--Oda style argument as in \cite{CPS:Logpi1} to restrict to connections with nilpotent residues.
\end{proof}

\begin{remark} \label{r:universalbundle}
The crux of the above proof (following \cite[Section~3]{Lazda:rational}) is the construction of the relative analogue of the universal object of index of unipotency $n$, $W_n$ on $]X_t[_{\cP}$. It is a relatively unipotent vector bundle with integrable connection with nilpotent residues that occurs as the evaluation of an $F$-isocrystal. It restricts to the universal object $E_n$ on $]X[_{\cP_\pi}$, which is the fiber over $t=\pi$. Moreover, given a section $\sigma\colon (S_t,\N_t)\to (X_t,M_t)$ lifting to $\tilde{\sigma}\colon ]S_t[_{\cQ}\to ]X_t[_{\cP}$, we further constrain $W_n$ to ensure that $\tilde{\sigma}^*W_n\to \tilde{\sigma}^*W_0=\mathbf{1}$ has a splitting. 
This splitting is the {\em base section}. 
This construction is universal in the sense that given a relatively unipotent bundle $V$ with connection of index of unipotency at most $n$ and morphism over $]S_t[_{\cQ}$, $s\colon\mathbf{1}\to \tilde{\sigma}^*V$, there is a unique morphism $W_n\to V$ taking the base section to $s$.
The construction of $W_n$ is analogous to that of $E_n$.  
Specifically, it fits into an exact sequence
\[\xymatrix{
0\ar[r]&U_n\ar[r]&W_{n+1}\ar[r]&W_n\ar[r]&0.}
\]
where $U_n=f_t^*(R^1f_{t*}W_n^\vee)^\vee$. 
%This pushforward has a (Gauss--Manin) connection with nilpotent residues on $]S_t[_{\cQ}$, and hence by Kedlaya's observation, it is unipotent. Therefore, $W_{n+1}$ is also unipotent. 
By induction, $W_{n+1}$ is the evaluation of an $F$-isocrystal. 
%Alternatively, $U_n$ can be described in terms of pullbacks by $f_t^*$ of bundles on $]S_t[_{\cQ}$ analogous to the vector spaces $R^{(n)}$. They are constructed with $R^if_{t*}\mathbf{1}$ in place of $H^i(\mathbf{1})$. In particular, the Gauss--Manin connection on $(R^1f_{t*}W_n^\vee)^\vee$ is induced from that on $R^1f_{t*}\mathbf{1}$. Indeed, to construct the universal object in the overconvergent $F$-isocrystal setting, we do not need to pushforward $F$-isocrystals, we need only observe that $R^if_{t*}\mathbf{1}$ for $i=1$ and $i=2$ are $F$-isocrystals.
\end{remark}

For log points, the homotopy exact sequence can be verified directly in the unipotent case.

\begin{lemma}\label{l:fundamental-exact-sequence}
Let $f\colon(x,\overline{\M})\to (S,\N)$ be a log point where $\overline{M}$ is $\overline{M}_0$, $\overline{M}_1$, or $\overline{M}_2$. Let $B$ be coordinate system on $\overline{M}$. Then, there is an exact sequence
\[\xymatrix{0\ar[r]&\pi^{\rig,\un}_1((x,\overline{\M}),F_B)\ar[r]&\pi^{\rig,\un}_1((x,\overline{\M}_t),F_{B})\ar[r]&\pi^{\rig,\un}_1((S_t,\N_t),F_B)\ar[r]&0}.\]
\end{lemma}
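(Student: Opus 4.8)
The plan is to prove this by direct computation, since all three groups have already been determined explicitly in Lemma~\ref{l:pi1logpoint}. First I would observe that $(x,\overline{M})$, $(x,\overline{M}_t)$ and $(S_t,\N_t)$ are all log points over $(S,\N)$, with structure morphisms $f$, $q\circ f_t$ and $q$ respectively, and that $i\colon (x,\overline{M})\to (x,\overline{M}_t)$ and $f_t\colon (x,\overline{M}_t)\to (S_t,\N_t)$ are morphisms of log points over $(S,\N)$ (the compatibilities $q\circ f_t\circ i=f$ and $q\circ f_t=q\circ f_t$ are immediate from the constructions). Here the fiber functors on the middle and right terms are understood to be the pushforwards $i_*F_B$ and $f_{t*}i_*F_B$, so that the arrows really are $i_*$ and $f_{t*}$. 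By the naturality clause of Lemma~\ref{l:pi1logpoint}, under the identification of each fundamental group with a $\on{Hom}(-,\Ga)$ group, the map induced by a morphism $g$ of log points over $(S,\N)$ is $\on{Hom}(-,\Ga)$ applied to the homomorphism of characteristic groups induced by $g^\ast$ on monoids. Thus the entire sequence is the image under the contravariant functor $\on{Hom}(-,\Ga)$ of a three-term sequence of finitely generated abelian groups, and it suffices to analyze that sequence.

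Next I would make the $\pi$-$t$ base change explicit on monoids. Unwinding the definition of $(x,\overline{M}_t)$ as the pushout of $f^\ast\colon \N\to\overline{M}$ and $q_t^\ast\colon e_\pi\mapsto e_t$, one gets $\overline{M}_t^{\gp}\cong \overline{M}^{\gp}\oplus\Z\bar e_\pi$, where $\bar e_\pi=(q\circ f_t)^\ast e_\pi$ is a new free generator and the second generator $e_t$ of $\N_t$ is identified with $f^\ast e_\pi\in\overline{M}^{\gp}$; I would sanity-check this against the standard pairs $(\overline{M}_1,\overline{M}_{1,t})$ and $(\overline{M}_2,\overline{M}_{2,t})$. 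Computing the three characteristic groups then gives $(\N_t/q^\ast\N)^{\gp}=\Z e_t$, $(\overline{M}_t/(q\circ f_t)^\ast\N)^{\gp}\cong \overline{M}^{\gp}$ (quotient by $\bar e_\pi$), and $(\overline{M}/f^\ast\N)^{\gp}=\overline{M}^{\gp}/\Z f^\ast e_\pi$.

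With these identifications, the homomorphisms $f_t^\ast\colon \N_t\to\overline{M}_t$ and $i^\ast\colon\overline{M}_t\to\overline{M}$ induce exactly the short exact sequence of finitely generated free $\Z$-modules
\[0\to \Z e_t \xrightarrow{\,e_t\mapsto f^\ast e_\pi\,} \overline{M}^{\gp}\to \overline{M}^{\gp}/\Z f^\ast e_\pi\to 0,\]
with the second map the natural projection; exactness in the middle is the identity $\ker=\Z f^\ast e_\pi=\on{im}$, and freeness of the cokernel is precisely the torsion-freeness hypothesis on $\overline{M}/f^\ast\N$. Since these modules are free of finite rank, the sequence splits, so applying $\on{Hom}(-,\Ga)$ reverses the arrows and preserves exactness, producing
\[0\to \pi_1^{\rig,\un}((x,\overline{M}),F_B)\xrightarrow{i_*}\pi_1^{\rig,\un}((x,\overline{M}_t),F_B)\xrightarrow{f_{t*}}\pi_1^{\rig,\un}((S_t,\N_t),F_B)\to 0,\]
which is the assertion.

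The only real subtlety is bookkeeping the two layers of contravariance: $i$ and $f_t$ induce monoid maps in the opposite direction, and $\on{Hom}(-,\Ga)$ reverses them once more, so I would double-check at each stage that $i_*$ lands inside the middle group and that $f_{t*}$ surjects onto $\on{Hom}(\Z e_t,\Ga)\cong\Ga$, cross-referencing the explicit generators of these groups recorded in Example~\ref{e:pi1annulus} and Remark~\ref{r:fiberfunctorsonlogdisc}. Beyond this, the argument requires no analytic input, as the homotopy exact sequence for log points is entirely a statement about the abelian groups $\on{Hom}(\overline{M}/f^\ast\N,\Ga)$.
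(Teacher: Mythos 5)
Your proof is correct and takes essentially the same route as the paper's: both reduce the statement, via the functorial identification $\pi_1^{\rig,\un}((x,\overline{M}),F_B)\cong\on{Hom}(\overline{M}/f^*\N,\Ga)$ of Lemma~\ref{l:pi1logpoint}, to the exact sequence of characteristic groups $0\to(\N_t/\N)^{\gp}\to(\overline{M}_t/(q\circ f_t)^*\N)^{\gp}\to(\overline{M}/f^*\N)^{\gp}\to 0$ induced by the monoid morphisms $\N_t\to\overline{M}_t\to\overline{M}$. Your write-up merely makes explicit what the paper leaves implicit: the pushout description of $\overline{M}_t$, the exactness check, and the splitting (from torsion-freeness) that guarantees $\on{Hom}(-,\Ga)$ preserves exactness.
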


\begin{proof}
There is a sequence of morphisms of monoids
\[\xymatrix{
\overline{M}&\overline{M}_t\ar[l]&(\Z_{\geq 0})^2\ar[l]
}\]
where $(\Z_{\geq 0})^2\to\overline{M}_t$ is given by including $e_t$ and $e_\pi$, and $\overline{M}_t\to \overline{M}$ is given by identifying $e_\pi$ and $e_t$. This induces an exact sequence of abelian groups
\[\xymatrix{
0&(\overline{M}/f^*\Z_{\geq 0})^{\gp}\ar[l]
&(\overline{M}_t/f^*\Z_{\geq 0})^{\gp}\ar[l]&((\Z_{\geq 0})^2/\Z_{\geq 0})^{\gp}\ar[l]&0\ar[l]
 }\]
The conclusion follows from the functorial identification in Lemma~\ref{l:pi1logpoint}. 
\end{proof}

%It is a straightforward verification that all unipotent overconvergent  isocrsytals over a log point $(x,\overline{M}_t)$ can be endowed with the structure of an $F$-isocrystal. In particular, this applies to the universal bundle yielding the exact sequence
%\[\xymatrix{0\ar[r]&\pi^{\rig,\un}_1((x,\overline{M}),F_B)\ar[r]&\pi^{\rig,\un,\varphi}_1((x,\overline{M}_t),F_{B})\ar[r]&\pi^{\rig,\un,\varphi}_1((S_t,\N_t),F_B)\ar[r]&0}.\]

We now consider the homotopy exact sequence for fundamental torsors.
Let $1\to H\to G\to L\to 1$ be an exact sequence of (pro-)algebraic groups over a field,  $T_1$ a left $H$-torsor, $T_2$ a left $G$-torsor, $f\colon T_1\to T_2$ an $H$-equivariant map (for the induced left $H$-action on $T_2$), and $g: T_2\to L$ a $G$-equivariant map (for the induced left $G$-action on $L$). Then we have the following: 

\begin{lemma}\label{l:torsorfact}
The map $f$ is a closed embedding and the map $g$ exhibits $L$ as the quotient of $T_2$ by $H$.
\end{lemma}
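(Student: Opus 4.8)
The plan is to reduce this statement about torsors to elementary facts about group actions, working functorially with $R$-points. Since $T_1, T_2$ are torsors (hence nonempty after faithfully flat base change, and in our pro-unipotent setting actually have $K$-points), I would first choose base points to trivialize everything.

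First I would prove that $f$ is a closed embedding. Fix a point $t_1 \in T_1(K)$ (which exists since $T_1$ is a pro-unipotent torsor, hence a successive extension of affine spaces, or simply because in our application $T_1 = \pi_1^{\rig,\un}((X,M);F_a,F_b)$ has the identity-region point). Then $t_1$ induces an $H$-equivariant isomorphism $H \xrightarrow{\cong} T_1$ via $h \mapsto h\cdot t_1$. Set $t_2 = f(t_1) \in T_2(K)$; the point $t_2$ similarly induces a $G$-equivariant isomorphism $G \xrightarrow{\cong} T_2$ via $\gamma \mapsto \gamma \cdot t_2$. Under these trivializations, $H$-equivariance of $f$ forces the diagram
\[
\xymatrix{
H \ar[r]^{f} \ar[d]_{\cong} & G \ar[d]^{\cong}\\
T_1 \ar[r]_{f} & T_2
}
\]
to commute, where the top arrow is the inclusion $H \hookrightarrow G$. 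Indeed, for $h \in H(R)$ we compute $f(h \cdot t_1) = h \cdot f(t_1) = h \cdot t_2$, so $f$ corresponds to the given inclusion $H \to G$. Since $H \to G$ is a closed embedding by hypothesis (it is the kernel inclusion in the exact sequence, and kernels of homomorphisms of affine group schemes are closed), $f$ is a closed embedding.

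Next I would identify the quotient. Using the same trivialization $G \xrightarrow{\cong} T_2$, the $G$-equivariant map $g\colon T_2 \to L$ corresponds to a $G$-equivariant map $G \to L$. Translating by $g(t_2) \in L(K)$ so as to normalize, I may assume (after composing with left translation by $g(t_2)^{-1}$, which is an automorphism of $L$) that $g$ corresponds to the projection $G \to L$ from the exact sequence; concretely, $g(\gamma \cdot t_2) = \overline{\gamma}\cdot g(t_2)$ where $\overline\gamma$ is the image of $\gamma$ in $L$, so after normalizing $g$ becomes the homomorphism $G \to L$. The exact sequence $1 \to H \to G \to L \to 1$ exhibits $L$ as the quotient $G/H$, and the fibers of $G \to L$ are exactly the cosets of $H$. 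Transporting along the isomorphism $T_2 \cong G$, the fibers of $g$ are the $H$-orbits in $T_2$, which is precisely the assertion that $g$ exhibits $L$ as $T_2/H$.

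I expect the main subtlety to be phrasing the quotient claim correctly in the pro-algebraic category: rather than asserting a scheme-theoretic quotient exists abstractly, the cleanest route is to observe that $G \to L$ is already known to be a quotient map (it is the cokernel in the exact sequence of pro-unipotent groups, where such quotients are well-behaved and faithfully flat), and then to transport this property across the $G$-equivariant isomorphism $T_2 \cong G$. The only thing to verify is that the chosen trivialization really does make $g$ into the structure map $G \to L$ up to the harmless translation by $g(t_2)$, which follows from $G$-equivariance evaluated on the base point. Since all groups here are pro-unipotent (hence their torsors are trivial over $K$ and their quotients are geometrically well-behaved), no descent or flatness obstruction arises beyond what is already packaged in the hypothesis that $1 \to H \to G \to L \to 1$ is exact.
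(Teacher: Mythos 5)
Your proof is correct and follows essentially the same strategy as the paper's: trivialize the torsors by choosing compatible base points, observe that under these trivializations $f$ becomes the inclusion $H\hookrightarrow G$ and $g$ becomes (a translate of) the projection $G\to L$, and then import the two group-theoretic facts from the exact sequence.

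The one substantive difference is how you get the base points. The lemma is stated for arbitrary (pro-)algebraic groups over a field, so the torsors need not have rational points; the paper handles this by first passing to a finite field extension (where points exist) and implicitly using that being a closed embedding, and being a quotient by $H$, can be checked after faithfully flat base change. You instead invoke the pro-unipotent setting to produce $K$-points directly, which is valid in the paper's application (pro-unipotent torsors over a characteristic-zero field are trivial) but proves a weaker statement than the lemma as written; to match the stated generality you would need to add the paper's one-line descent step. Two smaller remarks: your normalization of $g$ should be by \emph{right} translation by $g(t_2)^{-1}$ (since equivariance gives $g(\gamma\cdot t_2)=\overline{\gamma}\cdot g(t_2)$), though with left translation the fibers are still exactly the $H$-cosets, so the conclusion is unaffected; and your identification of the quotient is slightly more explicit than the paper's, which simply cites $L=G/H$.
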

\begin{proof}
The statement may be checked after replacing the field with a finite extension, so we may assume the torsors in question have points and are hence trivial. Now, the first statement follows from the fact that $H \to G$ is a closed embedding, and the second follows from the isomorphism $L=G/H$.
\end{proof}
In this situation we refer to the sequence 
\[T_1\overset{f}{\longrightarrow}T_2 \overset{g}\longrightarrow L\]
as an \emph{exact sequence of torsors}.
From the homotopy exact sequence for fundamental groups and the above lemma one obtains the following:

\begin{corollary} \label{c:exactsequenceoftorsors}
Let $f\colon (X,M)\to (S,\N)$ be a weak log curve. Let $F_1,F_2$ be fiber functors on $(X,M)$ whose image under $f_*$ are equal. Then the sequence 
\[
\xymatrix{\pi^{\rig,\un}_1((X,M);F_1,F_2)\ar[r]^>>>>>{i_*}&\pi_1^{\rig,\varphi,\un(f_t,\nr)}((X_t,M_t);F_1,F_2)\ar[r]^{f_{t*}}&\pi_1^{\rig,\varphi,\nr}((S_t,
\N_t);F_1,F_2)}\]
is an exact sequence of torsors. The torsor analogue of Lemma~\ref{l:fundamental-exact-sequence} holds as well.
\end{corollary}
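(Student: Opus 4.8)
The plan is to deduce the statement from the group-level homotopy exact sequence together with the abstract torsor formalism of Lemma~\ref{l:torsorfact}. First I would apply Proposition~\ref{p:homotopyexactsequence}, in the case that $f$ is a weak log curve, or Lemma~\ref{l:fundamental-exact-sequence} together with the subsequent observation extending it to $F$-isocrystals, in the case of a log point, taking the fiber functor there to be $F_1$. This produces a short exact sequence of (pro-)algebraic groups
\[
1\to H\to G\to L\to 1,
\]
where $H=\pi^{\rig,\un}_1((X,M),F_1)$, $G=\pi_1^{\rig,\un,\varphi}((X_t,M_t),F_1)$, and $L=\pi_1^{\rig,\un,\varphi}((S_t,\N_t),F_1)$, the latter two groups based at the images of $F_1$ on $(X_t,M_t)$ and $(S_t,\N_t)$, and with group maps $i_*$ and $f_{t*}$.

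The crucial point is that the hypothesis $f_*F_1=f_*F_2$ forces the images of $F_1$ and $F_2$ on $(S_t,\N_t)$ to coincide, so that the torsor $\pi_1^{\rig,\un,\varphi}((S_t,\N_t);F_1,F_2)$ is canonically the automorphism group scheme $L$. With this identification in place, I would set $T_1=\pi^{\rig,\un}_1((X,M);F_1,F_2)$, viewed with its standard structure of a left $H$-torsor, and $T_2=\pi_1^{\rig,\un,\varphi}((X_t,M_t);F_1,F_2)$, viewed as a left $G$-torsor, both being the left-torsor structures associated to the basepoint $F_1$. The maps $f=i_*\colon T_1\to T_2$ and $g=f_{t*}\colon T_2\to L$ are the functorial maps on fundamental torsors induced by $i$ and $f_t$.

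The hypotheses of Lemma~\ref{l:torsorfact} then reduce to equivariance statements that are formal consequences of the functoriality of the Tannakian formalism: $i_*\colon T_1\to T_2$ is equivariant over $i_*\colon H\to G$, and $f_{t*}\colon T_2\to L$ is equivariant over the quotient homomorphism $G\to L$, where $G$ acts on $L$ through this homomorphism. Granting these, Lemma~\ref{l:torsorfact} shows that $i_*$ is a closed embedding and that $f_{t*}$ realizes $L$ as the quotient $T_2/H$, which is exactly the assertion that the displayed sequence is an exact sequence of torsors.

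I do not expect any genuine obstacle here: the content is entirely in the group-level exact sequence, which is already established, and in the trivialization of the rightmost torsor by the hypothesis on fiber functors. The only care required is bookkeeping --- confirming that the group sequence produced by Proposition~\ref{p:homotopyexactsequence} is the one based at $F_1$ and that the canonical identification of $\pi_1^{\rig,\un,\varphi}((S_t,\N_t);F_1,F_2)$ with $L$ is compatible with the $G$-action appearing in Lemma~\ref{l:torsorfact}. These compatibilities follow from the naturality built into the functorial description of fundamental torsors, for instance via the Deligne--Goncharov construction of Theorem~\ref{t:deligne-goncharov}.
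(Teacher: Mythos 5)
Your proposal is correct and follows exactly the paper's route: the paper's own proof consists of the single remark that the result is immediate from the homotopy exact sequence for fundamental groups (Proposition~\ref{p:homotopyexactsequence}, resp.\ Lemma~\ref{l:fundamental-exact-sequence} for log points) together with Lemma~\ref{l:torsorfact}. Your write-up simply makes explicit the bookkeeping — basing the group sequence at $F_1$, using $f_*F_1=f_*F_2$ to identify the rightmost torsor with the group $L$, and checking the equivariance hypotheses of Lemma~\ref{l:torsorfact} — that the paper leaves implicit.
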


%\begin{remark} \label{r:phiinclusion} The homomorphism from the usual log rigid fundamental torsor to its $F$-isocrystal analogue $\pi_1^{\rig,\un}((X_t,M_t);F_1,F_2)\to \pi_1^{\rig,\un(f_t),\varphi,\nr}((X_t,M_t);F_1,F_2)$
%can be shown to be injective by verifying the closed immersion criterion in  \cite[Appendix~A]{esnault-hai-sun}. This amounts to putting a relatively unipotent $F$-isocrystal structure on the universal object.
%\end{remark}

\section{Frobenius action on fundamental groups} \label{ss:frobaction}

In this section, we define the Frobenius action on the log rigid fundamental torsor and study Frobenius-invariant paths. As noted in the previous section, the main obstacle is the difficulty in comparing $(X,M)$ and $(X^{(q)},M^{(q)})$ .
We remedy this problem by relating the fundamental group of $(X,M)$ to that of $(X_t,M_t)$ following Lazda \cite[Section~3.2]{Lazda:rational}. By the homotopy exact sequence,
\[\pi^{\rig,\un}_1((X,M);F_1,F_2)\cong \ker\left(\pi_1^{\rig,\varphi,\un(f_t,\nr)}((X_t,M_t);F_1,F_2)\to\pi_1^{\rig,\varphi,\nr}((S_t,
\N_t);F_1,F_2)\right)\]
which does carry a Frobenius.
To obtain Frobenius-invariant basepoints, we will need to make use of  tangential basepoints (Remark~\ref{r:tangentialbasepoint}) and specialization paths (Definition~\ref{d:specializationpath}). The latter necessitates a choice of $\Log(\pi)$, and we take an indeterminate $\ell$, tensoring the fundamental torsor with $K[\ell]$ (which will be denoted by the subscript ``$\ell$'').

\subsection{Definition of the Frobenius operator}

Suppose that $(X,M)$ is defined over $k=\mathbf{F}_q$ for $q=p^a$. Consequently, we obtain a canonical isomorphism identifying $(X_t,M_t)$ and $(X_t^{(q)},M_t^{(q)})$. The relative Frobenius is a morphism
\[\varphi\colon (X_t,M_t)\to (X_t^{(q)},M_t^{(q)}).\]
To define the Frobenius on the fundamental group, we  need to introduce Frobenius invariant basepoints on $(X_t,M_t)$.
A log point $(x,\overline{\M})\to (X,M)$ defined over $k$ induces a log point $(x,\overline{\M}_t)\to (X_t,M_t)$ Given a fiber functor $F_B$ on $(x,\overline{\M})$, we may define $F_{B\cup \{e_t\}}$ on $(x,\overline{\M}_t)$. 
This, in turn, induces a tangential basepoint $\tilde{F}$ on $(X_t,M_t)$ as in Remark~\ref{r:tangentialbasepoint}. 
By employing the specialization and tangential paths, we obtain isomorphisms of fiber functors on $(x,\overline{\M}_t)$
\[\xymatrix{
F_{B,\pi}\otimes K[\ell]\ar[r]^>>>>>\cong & F_{B\cup \{e_t\}}\otimes K[\ell]\ar[r]^>>>>>\cong &\tilde{F}\otimes K[\ell]. }\]

The tangential basepoint is fixed by Frobenius in the following sense:
\begin{lemma} \label{l:canonicalfrobfiberfunctor}
  There is a canonical isomorphism $\varphi_*\tilde{F}\cong \tilde{F}$.
\end{lemma}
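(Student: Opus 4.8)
The plan is to make $\tilde F$ completely explicit as ``the tangential value at the origin of the residue tube,'' to observe that this origin is fixed by the relative Frobenius, and to produce the canonical isomorphism by a direct computation showing that the tangential specialization absorbs the $q$-fold scaling that Frobenius introduces on the logarithmic coordinates. This is the tangential analogue of the fact (implicit in \cite{Besser:Coleman}) that a residue-disc basepoint is Frobenius-stable. First I would fix a proper log smooth frame $((X_t,M_t),(X_t,M_t),(\cP,L))$ over the log disc $(\cQ,N_{\cQ})$ as in the proof of Proposition~\ref{p:homotopyexactsequence}, and unwind Remark~\ref{r:tangentialbasepoint}: for a unipotent overconvergent isocrystal $\cV$, evaluation on this frame gives a bundle with overconvergent connection $E$ on $]X_t[_{\cP}$, trivialized by horizontal $\Log$-analytic sections $s=\sum_\alpha c_\alpha(x)\Log(x)^\alpha$ (Lemma~\ref{l:loganalyticsections}), and $\tilde F(\cV)$ is the space of such sections identified with the fibre $E_0$ at the origin $0$ via the specialization $x_i\mapsto 0,\ \Log(x_i)\mapsto 0$, which reads off the tangential value $c_0(0)$.

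Next I would exploit the freedom in choosing a lift of Frobenius. Under the identification $(X_t^{(q)},M_t^{(q)})\cong(X_t,M_t)$ of Lemma~\ref{l:twisteddisc}, I would choose near $0$ the local lift of relative Frobenius $\Phi$ acting in the coordinates of Remark~\ref{r:tangentialbasepoint} by $x_i\mapsto x_i^q$ and $t\mapsto t^q$; this is a genuine lift of relative Frobenius, it reduces to the $q$-power map on the closed fibre, and it fixes the origin exactly. Since $\varphi^*\cV$ is computed by pulling $E$ back along $\Phi$, a horizontal $\Log$-analytic section $s$ of $\cV$ pulls back to $\Phi^*s=\sum_\alpha q^{|\alpha|}c_\alpha(x^q)\Log(x)^\alpha$, whose tangential specialization at $0$ retains only the $\alpha=0$ term and so equals $c_0(0)$, the tangential value of $s$ itself. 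Thus $\varphi_*\tilde F(\cV)=\tilde F(\varphi^*\cV)$ is carried to $\tilde F(\cV)$ by the identity on $E_0$ under the horizontal trivialization, and this identification is natural in $\cV$.

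Finally I would argue canonicity and compatibility. Independence of the chosen lift follows from the isocrystal formalism: the transition isomorphisms $\phi_u$ canonically identify the pullbacks of $\cV$ along different lifts of Frobenius, so the resulting isomorphism $\varphi_*\tilde F\cong\tilde F$ does not depend on $\Phi$; and it is a $\otimes$-isomorphism because evaluation-at-a-point and the base-change identifications are monoidal. Running the identical computation for the log point $(x,\overline{M}_t)$ and for the base $(S_t,\N_t)$ produces isomorphisms $\varphi_*\tilde F\cong\tilde F$ that are compatible with pushforward along $\iota\colon(x,\overline{M}_t)\to(X_t,M_t)$ and with $f_{t*}$, which is what will let the Frobenius descend through the homotopy exact sequence of Proposition~\ref{p:homotopyexactsequence}. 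The hard part will be the bookkeeping of the previous paragraph: checking that the \emph{tangential} (not merely disc-theoretic) structure is genuinely Frobenius-stable, i.e.\ that the $q$-fold scaling of the logarithmic variables and the branch choices hidden in the $\Log$-analytic sections are annihilated by the specialization at $0$, and confirming that the resulting identity-on-$E_0$ isomorphism is canonical and monoidal rather than an artifact of the convenient lift $\Phi$.
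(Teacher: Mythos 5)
Your proposal is correct and follows essentially the same route as the paper: reduce to the log point, use the $q$-power map on the coordinates of the frame $\Spf V\ps{\overline{M}}$ as the lift of relative Frobenius, and observe that the tangential specialization $x_i\mapsto 0$, $\Log(x_i)\mapsto 0$ at the Frobenius-fixed origin kills the $q$-scaling of the logarithmic terms. The only cosmetic difference is that the paper works directly with the canonical frame of $(x,\overline{M}_t)$ (where the $q$-power lift is globally defined as a morphism of frames), so it never needs your separate argument that the isomorphism is independent of the choice of local lift.
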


\begin{proof}
  By functoriality of Frobenius, it suffices to consider $\tilde{F}$ as a fiber functor on $(x,\overline{\M}_t)$. Let 
  \[((x,\overline{\M}_t),(x,\overline{\M}_t),(\cP_t,L_t))\]
  be the frame produced from $(x,\overline{\M}_t)$ as in Section~\ref{ss:logpoints}. 
  Write $(x,\overline{\M}_t^{(q)})$ for the Frobenius twist of $(x,\overline{\M}_t)$.
  Now, $\overline{M}_t=(q\circ f_t)^*{\Z_{\geq 0}}\oplus \overline{M}$, and so $\cP_t\cong \Spf V\ps{\overline{M}}.$
  Consequently, the $q$-power map on the coordinates of $\Spf V\ps{\overline{M}}$ is a lift $\varphi_{\cP}$ of relative Frobenius that gives a morphism of frames:
  \[\xymatrix{
  (x,\overline{\M}_t)\ar[r]\ar[d]_{\varphi}&(x,\overline{\M}_t)\ar[r]\ar[d]_{\varphi}&  (\cP_t,L_t)\ar[d]_{\varphi_{\cP}}\\
  (x,\overline{\M}^{(q)}_t)\ar[r]&(x,\overline{\M}^{(q)}_t)\ar[r]&  (\cP_t,L_t).
  }\]
  A unipotent isocrystal $\cE$ on $(X_t,M_t)$ induces a unipotent vector bundle with integrable log connection on 
  \[]x[_{\cP_t}\cong ]0[_{\Spf V\ps{\overline{M}}}.\]
  There is a canonical isomorphism between $F_{B\cup \{e_t\}}(\cE)$ and $\varphi_*F_{B\cup \{e_t\}}(\cE)=F_{B\cup\{e_t\}}(\varphi^*\cE)$ given by pullback by $\varphi_{\cP}^*\colon s\mapsto s\circ\varphi_{\cP}$. Because $t=0$ is a fixed point of the $\varphi_{\cP}$, $\tilde{F}$ is fixed by $\varphi_*$.
 \end{proof}

%This construction spreads out the usual construction of relative Frobenius.
%Define $i_a\colon (\Spec k,\N)\to (\Spec k,\N_t)$ by the identity on underlying schemes with map of monoids i_a^{-1}\N_t\to \N$  given by $e_\pi\mapsto e_\pi$ and $e_t\mapsto p^a e_\pi$. 
%Then,
%\begin{align*}
%(X,M)&=(X_t,M_t)\times_{(\Spec k,\N_t),{i_1}} (\Spec k,\N),\\ (X^{(q)},M^{(q)})&=(X_t,M_t)\times_{(\Spec k,\N_t),i_a} %(\Spec k,\N).
%\end{align*}
%Indeed, $i_a$ should be thought of as inducing on a lift the map
%\begin{align*}
%    \Spf V&\to \Spf V\ps{t}\\
%    \pi^{p^a}&\mapsfrom t,
%\end{align*}
%so the fiber product $(X_t,M_t)\times_{(\Spec k,\N_t),i_a} (\Spec k,\N)$ should be thought of as taking the fiber over %$t=\pi^{p^a}$.
%Write $i_1\colon (X,M)\to (X_t,M_t)$. Then, $\phi$, viewed as a map from the fiber over $i_1$ to the fiber over $i_a$, is exactly equal to the relative Frobenius $\phi^{\on{rel}}$.

\begin{definition}
Let $F_1,F_2$ be fiber functor attached to log points on $(X_t,M_t)$. Use the homotopy exact sequence and the specialization and tangential paths to produce isomorphisms
\begin{align*}
    \pi^{\rig,\un}_1((X,M);F_1,F_2)_{\ell}&\cong \ker\left(\pi_1^{\rig,\varphi,\un(f_t,\nr)}((X_t,M_t);F_1,F_2)_{\ell}\to\pi_1^{\rig,\varphi,\nr}(S_t,
\N_t);F_1,F_2)_{\ell}\right)\\
    &\cong \ker\left(\pi_1^{\rig,\varphi,\un(f_t,\nr)}((X_t,M_t);\tilde{F}_1,\tilde{F}_2)_{\ell}\to\pi_1^{\rig,\varphi,\nr}(S_t,
\N_t);\tilde{F}_1,\tilde{F}_2)_{\ell}\right)\\
\end{align*}
By identifying $(X_t^{(q)},M_t^{(q)})$ and $(X_t,M_t)$ as well as $(S_t,\N_t)$ and $(S_t^{(q)},\N_t^{(q)})$, relative Frobenius $\varphi_*$ induces an action on the kernel above. We define the action on $\ell$ to be $\varphi_*(\ell)=q\ell$,
and obtain a homomorphism
\[\varphi_*\colon \pi^{\rig,\un}_1((X,M);F_1,F_2)_{\ell}\to \pi^{\rig,\un}_1((X,M);F_1,F_2)_{\ell},\] semilinear over the action of $\varphi_*$ on $k[\ell]$. 
\end{definition}

\begin{remark} \label{r:froblogpoint}
We can justify the definition $\varphi_*\ell=q\ell$ by considering the Frobenius action on the log point $(x,\overline{\M})$ with $\pi$-$t$ base-change $(x,\overline{\M}_t)$. A coordinate system $B$ on $\overline{M}$ induces a coordinate system $B\cup \{e_t\}$ on $\overline{M}_t$.

Let $\cE_n$ be the universal bundle on $(x,\overline{\M}_{2,t})$ as described in Lemma~\ref{l:pi1logpoint}: set $U=K^2$; so $\cE_n$ is the trivial bundle attached to $\on{Sym}^{\leq n} U^\vee$ -- which can be interpreted as $K[f_1^\vee,e_t^\vee]/(f_1^\vee,e_t^\vee)^{n+1}$ for indeterminates $f_1^\vee,e_t^\vee$ --
with connection $1$-form
\[\omega=\frac{dx_1}{x_1}\otimes f_1^\vee+\frac{dt}{t}\otimes e^\vee_t.\]
The elements of $F_{f_1,e_t}(\cE_n)\otimes K[\ell]$ are of the form 
\[y\exp\left(\Log(x_i)f_1^\vee+\Log(t)e_t^\vee\right)\]
for $y\in \on{Sym}^{\leq n} U^\vee\otimes K[\ell]$. Under the tangential path such a section  is mapped to $y\in \tilde{F}(\cE_n)\otimes K[\ell]$ while the specialization path is the following isomorphism:  
\[y\exp\left(\Log(x_1)f_1^\vee+\Log(t)e_t^\vee\right)\mapsto
y\exp\left(\ell e_t^\vee\right)\exp\left(\Log(x_1)f_1^\vee\right).\]
Let $\cE_n^{(q)}$ (which can be identified with $\cE_n$) be the universal bundle on $(x,\overline{\M}_t^{(q)})$.
Then, $\varphi^*\cE_n^{(q)}$ has the same underlying bundle as $\cE_n$ but has connection form given by $q\omega$.  The inverse image of $y$ under the tangential path on $\varphi^*\cE_n^{(q)}$, is mapped as follows by the specialization path:
\[y\exp\left(q\Log(x_1)f_1^\vee+q\Log(t)e_t^\vee\right)\mapsto y\exp\left(q\ell e_t^\vee\right) \exp\left(q\Log(x_1)f_1^\vee\right).\]
Therefore, by conjugation with the tangential and specialization paths, the isomorphism $\varphi_*\tilde{F}\to \tilde{F}$ induces an isomorphism
\[\varphi_*F_{\{f_1\},\pi}\otimes K[\ell]\to F_{\{f_1\},\pi}\otimes K[\ell]\]
that takes $\ell\mapsto q\ell$.
\end{remark}

We now compute the action of Frobenius on the fundamental torsor of a log point.

\begin{proposition} \label{p:nodalfrobenius}
Let $(x,\overline{\M}_2)$ be a node. For $u\in(\Ga)(K)$, the path 
\[\delta_u\in \pi_1((x,\overline{\M}_2);F_{\{f_1\},\ell},F_{\{f_2\},\ell})_{\ell}\] 
from Example~\ref{e:pi1annulus} obeys $\varphi_*\delta_u=\delta_{qu}$.
Consequently $\delta_0$ is the unique Frobenius-invariant path.
\end{proposition}

\begin{proof}
We use the notation from Lemma~\ref{l:pi1logpoint} and Remark~\ref{r:froblogpoint}. By conjugating with the specialization path, $\delta_u$ induces an element 
\[\delta_u^t\in\pi_1^{\rig,\un}((X_t,\M_{2,t});F_{\{f_1,e_t\}},F_{\{f_2,e_t\}})_{\ell}.\]
that acts on $F_{\{f_1,e_t\}}(\cE_n)$ by
\[    y\exp\left(\Log(x_1)e_1^\vee+\Log(t)e_t^\vee\right)\\
    \mapsto 
\left(y\exp\left(ue_1^\vee\right)\right)\exp\left((\Log(t)-\Log(x_2))e_1^\vee+\Log(t)e_t^\vee\right)
\]
and acts on $F_{\{f_1,e_t\}}(\varphi^*\cE_n)$ by
\begin{multline*}
    y\exp\left(q\Log(x_1)e_1^\vee+q\Log(t)e_t^\vee\right)\\
    \mapsto 
\left(y\exp\left(que_1^\vee\right)\right)\exp\left((q\Log(t)-q\Log(x_2))e_1^\vee+q\Log(t)e_t^\vee\right)
\end{multline*}
It follows that $\varphi_*\delta_u^t=\delta_{qu}^t$. 
\end{proof}

\begin{remark} \label{r:faltingsfrob}
The use of the fiber functor $\tilde{F}$ is analogous to Faltings's construction of the Frobenius on log crystalline cohomology  \cite{Faltings:crystalline} (which was translated into the log rigid setting in \cite[Section~1.11]{GK:Frobandmonodromy})
Here, the cohomology of $(X,M)$ is defined to be the fiber over the origin of $R^if_{t*}\mathbf{1}$ for $f_t\colon (X_t,M_t)\to (S_t,\N_t)$. By the analogue of Katz--Oda (and arguments from the proof of Proposition~\ref{p:homotopyexactsequence}), $R^if_{t*}\mathbf{1}$ is a unipotent isocrystal. One evaluates $R^1f_{t*}\mathbf{1}$ on the frame $((S_t,\N_t),(S_t,\N_t),(\cQ,L_\cQ))$. The tube $]S_t[_{\cQ}$ is an open disc, and one can take the fiber over its origin. 

The relative Frobenius induces a morphism 
\[\varphi^*R^if^{(q)}_{t*}\mathbf{1}\to R^if_{t*}\mathbf{1}.\]
We can identify $H_{\rig}^i((X,M))$ with the fibers over the origin of these local systems (by use of the specialization and tangential paths after we tensor with $K[\ell]$). Because the origin is fixed by Frobenius, this gives a Frobenius action on log rigid cohomology.

In greater degree of generality, consider the case where $\cW$ is a relatively unipotent $F$-isocrystal on $(X_t,M_t)$ and $\cE=i^*\cW$ is a unipotent  isocrystal on $(X,M)$ (where $i$ is defined in Section~\ref{ss:logcurvesbackground}). Then, the relative Frobenius induces a map
\[\varphi^*R^if^{(q)}_{t*}\cW\to R^if_{t*}\cW.\]
By taking the fiber over the origin, we obtain the action of the Frobenius $\varphi^*$ on $H_{\rig}^*((X,M),\cE)$.
\end{remark}

\begin{remark}
We can unpack the definition of the Frobenius action in light of the universal object construction of the fundamental group at least in the special case where $F$ is a fiber functor attached to a smooth $k$-point $y$ of $(X,M)$. 
Let $\sigma\colon (S_t,\N_t)\to (X_t,M_t)$ be a section with $\sigma(S_t)=y$. 
Let $\cW_n$ be the universal object  on $(X_t,M_t)$ as constructed in Remark~\ref{r:universalbundle} where we pick a base section over $\sigma$.
Under the identification of $(X_t,M_t)$ and $(X_t^{(q)},M_t^{(q)})$,  $\cW_n$ is the relative universal object over 
 \[f_t^{(q)}\colon (X^{(q)}_t,M^{(q)}_t)\to (S^{(q)}_t,\N^{(q)}_t)\cong (S_t,\N_t)\]
(with the induced base section at $\sigma^{(q)}\colon (S_t^{(q)},\N_t^{(q)})\to (X^{(q)}_t,M^{(q)}_t)$).
We know that $i^*\cW_n=\cE_n$. 
The map 
\[\pi_1^{\rig,\varphi,\un(f_t,\nr)}((X_t,M_t),\tilde{F})\to 
\pi_1^{\rig,\varphi,\un(f_t,\nr)}((X_t^{(q)},M_t^{(q)}),\varphi_{*}\tilde{F})\] 
is induced by
\[\tilde{F}(\Phi_n)\colon \tilde{F}(\cW_n)\to \tilde{F}(\varphi^*\cW_n)=(\varphi_{*}\tilde{F})(\cW_n).\]
where $\Phi_n\colon \cW_n\to \varphi^{*}\cW_n$ be the unique map preserving base sections guaranteed by universality.
Lemma~\ref{l:canonicalfrobfiberfunctor} gives an isomorphism $(\varphi_{*}\tilde{F})(\cW_n)\to \tilde{F}(\cW_n)$ which, when composed with $\tilde{F}(\Phi_n)$,
gives an endomorphism of $\tilde{F}(\cW_n)$ corresponding to the Frobenius action on $\pi_1^{\rig,\varphi,\un(f_t,\nr)}((X_t,M_t),\tilde{F})$.
\end{remark}

\begin{remark} \label{r:dgfrob}
The Deligne--Goncharov construction (Section~\ref{ss:delignegoncharov}) explicitly realizes the fiber of $\cE_n$ over $x_2$, which is $\Pi((X,M); F_1,F_2)/\mathscr{I}^{n+1})$
as
$\mathbb{H}^{n}_\bullet(P_{(X,M), x_1,x_2}^{n+1}, \mathbf{1}_\bullet)^\vee$.
 The Frobenius operator is induced by considering an extension of  $P_{(X,M),x_1,x_2}^{n+1}$ to $(X_t,M_t)$. The computation of the hypercohomology by means of higher pushforwards as in \cite[Proposition~3.4]{DG:groupes} constructs $\cE^\vee_n$ and, implicitly, 
 $\cW^\vee_n$ on $(X_t,M_t)$ together with the base section over $x_1$ by reinterpreting rigid cohomology as in Remark~\ref{r:faltingsfrob}.
The relative Frobenius $\varphi$ induces by functoriality, as explained in Remark~\ref{r:delignegoncharovobject}, a morphism $\varphi^*\cW_n^\vee\to \cW_n^\vee$ 
 whose dual is $\Phi_n$. Thus, one can identify the Frobenius operator on the fundamental torsor defined through the Deligne--Goncharov construction with the Frobenius defined above.
\end{remark}

\subsection{Frobenius-invariant paths}

We will study Frobenius-invariant paths on $(X,M)$ using the specialization map to $\Gamma$, the log dual graph of $(X,M)$. We define \[\varphi_*\colon \pi_1^{\un}(\Gamma,\overline{a},\overline{b})_{\ell}\to \pi_1^{\un}(\Gamma,\overline{a},\overline{b})_{\ell}\]
to be the identity on $\pi_1^{\un}(\Gamma,\overline{a},\overline{b})$ and to satisfy $\varphi_*(\ell)=q\ell.$ We may factor the cospecialization map (Subsection~\ref{ss:specializationmap}) as
\[\on{sp}^*\colon \sC^{\un}(\Gamma)\to \sC^{\rig,\varphi,\un(f_t,\nr)}(X_t,M_t)\to\sC^{\rig,\un}(X,M)\]
where the first arrow is defined by descent similarly to the usual cospecialization map and the second arrow is the map $i^*$ from Section~\ref{ss:logcurvesbackground}. Here, the unipotent isocrystal on $(X_t,M_t)$ arising from cospecialization is canonically isomorphic to its Frobenius pullback, making $\on{sp}_*$ into a Frobenius-equivariant homomorphism. Indeed, objects in the image of $\on{sp}^*$ are obtained by gluing together $(\cO_{]X_{\overline{v}}[_{\cP}}^{\oplus N},d)$ on components by constant linear maps. Because we use the linear Frobenius, such objects can be canonically identified with their Frobenius pullbacks on $(X_t,M_t)$.

The following, the main result of this section, is a generalization to the semistable case of the approach of Besser \cite{Besser:Coleman} which identifies Frobenius-invariant paths on smooth curves:

\begin{proposition}\label{prop:Frob-invariants}
Let $(X,M)$ be a weak log curve with log dual graph $\Gamma$. Let $F_a,F_b$ be fiber functors on $\sC^{\rig,\un}(X,M)$ induced by log points anchored at $\overline{a},\overline{b}\in V(\Gamma)$, respectively. Then, the specialization  map yields an isomorphism
\[\on{sp}\colon \pi_1^{\rig,\un}((X,M);F_a,F_b)_{\ell}^{\varphi}\cong \pi_1^{\un}(\Gamma;\overline{a},\overline{b})_\ell^\varphi.\]
\end{proposition}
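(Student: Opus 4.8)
The plan is to deduce the statement from its effect on the associated graded of the augmentation filtration, where it becomes a weight computation on $H^1_{\rig}$. The text has already arranged that $\on{sp}_\ell$ is a $\varphi$-equivariant homomorphism respecting the $\mathscr{I}$-adic filtrations on both sides, so it induces a filtered map on $\varphi$-invariants. Because taking $\varphi$-invariants is only left exact, I would replace it by the two-term complex $[\,M \xrightarrow{\varphi-1} M\,]$, whose cohomology $H^\bullet_\varphi$ turns short exact sequences into long exact sequences and recovers $M^\varphi = H^0_\varphi$. The whole statement then reduces, by a five-lemma induction along $0 \to \mathscr{I}^n/\mathscr{I}^{n+1} \to \Pi/\mathscr{I}^{n+1} \to \Pi/\mathscr{I}^n \to 0$ and passage to the inverse limit (using $\mathscr{I}_{\overline a}$-adic completeness), to showing that $\on{sp}_\ell$ is an isomorphism on $H^\bullet_\varphi(\on{gr}^n)$ for the graded pieces $\on{gr}^n = \mathscr{I}^n/\mathscr{I}^{n+1}$.

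The essential input is the degree-one case. Here $\on{gr}^1 = \mathscr{I}/\mathscr{I}^2 \cong H^1_{\rig}((X,M))^\vee$ and $\varphi_*$ is the transpose of Frobenius on $H^1_{\rig}$. By the weight-monodromy description recalled in Section~\ref{ss:cohomologyoflogcurves} (see Proposition~\ref{p:monodromycohomology}), $H^1_{\rig}$ has graded pieces $H^1(\overline\Gamma)$ with Frobenius the identity, $\bigoplus_v H^1_{\rig}((X_v,M_v))$ with eigenvalues of absolute value $q^{1/2}$, and $H^\diamond_1(\Gamma)(-1)$ with Frobenius $q$. Hence every eigenvalue of $\varphi_*$ on $\on{gr}^1$ has absolute value $\geq 1$, equal to $1$ — in fact equal to the scalar $1$ — only on the part dual to $H^1(\overline\Gamma)$, namely $H_1(\overline\Gamma)$. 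Combined with the semilinear rule $\varphi(\ell)=q\ell$, an eigenvector $v$ with $\varphi_* v = \lambda v$ makes $v\otimes\ell^i$ invariant exactly when $\lambda q^i = 1$, forcing $\lambda=1,\ i=0$ for $i \geq 0$; the summands with $\lvert\lambda\rvert>1$ are therefore $\varphi$-acyclic (as $\varphi-1$ is invertible on them), so $H^\bullet_\varphi(\on{gr}^1) = H_1(\overline\Gamma)$ concentrated at $\ell^0$. This is the homological form of the identification of Frobenius invariants of rigid cohomology with $H^1(\overline\Gamma)$ in \cite{CI:Frobandmonodromy}, and on this invariant part $\on{sp}_\ell$ is the canonical isomorphism $H_1(\overline\Gamma) \cong H_1(\Gamma) = \mathscr{I}_{\overline a}/\mathscr{I}_{\overline a}^2$. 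Example~\ref{e:frobinvnode} is the local model: the geometric loop around an annulus sits in the eigenvalue-$q$ part, contributes no invariant, and in fact specializes to $\ell$ times a graph loop, so the invariant lifting a graph cycle is its weight-zero ``combinatorial'' representative rather than the vanishing cycle itself.

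For $n>1$ I would use that $\on{gr}^n$ is a quotient of $(\on{gr}^1)^{\otimes n}$ compatibly with $\varphi_*$ and with $\on{sp}_\ell$. Since the absolute values of Frobenius eigenvalues are multiplicative under tensor product and all are $\geq 1$ on $\on{gr}^1$, a tensor eigenvalue has absolute value $1$ iff each factor does, i.e. iff the vector lies in $H_1(\overline\Gamma)^{\otimes n}$ with $\varphi_*$-eigenvalue $1$; exactly as before this is the only contribution to $H^\bullet_\varphi(\on{gr}^n)$, the remaining summands being $\varphi$-acyclic. The surviving subquotient is the degree-$n$ graded piece of $\Pi(\Gamma;\overline a,\overline b)$, generated from $H_1(\Gamma) = H_1(\overline\Gamma)$; because $\on{sp}_\ell$ is a morphism of (co)algebras that is an isomorphism on $H^\bullet_\varphi(\on{gr}^1)$, it is an isomorphism on $H^\bullet_\varphi(\on{gr}^n)$, matching the combinatorial description of Section~\ref{s:combint} and the perfect pairing of Proposition~\ref{p:perfectpairing} on $\overline\Gamma$. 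The five-lemma induction and the inverse limit then deliver the isomorphism on $\varphi$-invariants.

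The step I expect to be hardest is the clean separation of $\varphi$-cohomology by weight at each graded level: rigorously justifying that the weight filtration on $\on{gr}^n\Pi$ is $\varphi$-stable, compatible with the tensor-power description, and that $\varphi-1$ is invertible on every summand with $\lvert\lambda\rvert>1$ (so those contribute neither invariants nor coinvariants). This rests on the purity of the three graded pieces of $H^1_{\rig}$ with the precise Frobenius scalars above — in particular that Frobenius is exactly the identity on $H^1(\overline\Gamma)$, not merely unipotent — and on the compatibility of the monodromy-weight filtration with the Hopf structure. The semilinear role of $\ell$ is what sharpens the weight grading into the vanishing $\lambda q^i = 1 \Rightarrow \lambda = 1,\ i = 0$ that isolates precisely the combinatorial fundamental group, and keeping track of the resulting $\ell$-twist in $\on{sp}_\ell$ (the geometric loop mapping to $\ell\cdot\overline{\gamma}$ rather than to $\overline{\gamma}$) is the subtle bookkeeping underlying the whole comparison.
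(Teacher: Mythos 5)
Your proposal is correct in outline, but it takes a genuinely different route from the paper. The paper first reduces to $F_a=F_b$ (a map of torsors over a group isomorphism is an isomorphism), then proves the isomorphism on abelianizations (Lemma~\ref{lem:specialization-abelianization}) — where the delicate point is that taking abelianization commutes with taking $\varphi$-invariants, proved at the level of Lie algebras using the semisimplicity of $\varphi$ from \cite{betts-litt} — and finally shows that $\pi_1^{\rig,\un}((X,M),F_a)_\ell^\varphi$ is \emph{free} pro-unipotent (because $H^2_{\rig}((X,M))_\ell^\varphi=0$, so $R^{(n)\varphi}=(H^1_{\rig,\ell}{}^\varphi)^{\otimes n}$), so that the abelianized isomorphism upgrades to a full isomorphism by the lifting property of free pro-unipotent groups (Lemma~\ref{l:free-prounipotent-iso}, i.e.\ Lubotzky--Magid). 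You instead perform a d\'evissage along the $\mathscr{I}$-adic filtration of $\Pi$, replacing $\varphi$-invariants by the two-term complex $[\,M\xrightarrow{\varphi-1}M\,]$ so that the five lemma applies, and you feed in the same weight inputs at the graded level: eigenvalue absolute values $\geq 1$ on $\on{gr}^1$ (transpose convention), multiplicativity under tensor powers, and the rule $\varphi(\ell)=q\ell$ isolating the scalar-$1$ eigenspace $H_1(\overline\Gamma)$ at $\ell^0$. What your route buys: it proves the statement directly at the level of the completed groupoid algebra, so Corollary~\ref{c:liftinggroupoidmodule} comes for free, it handles the torsor case without reduction to $F_a=F_b$, and it avoids the freeness criterion entirely. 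What the paper's route buys: it never needs the presentation of $\on{gr}_{\mathscr{I}}\Pi$ as the tensor algebra on $\mathscr{I}/\mathscr{I}^2$ modulo the ideal generated by $H^2_{\rig}{}^\vee$ (which your step ``$\on{gr}^n$ is a quotient of $(\on{gr}^1)^{\otimes n}$'' requires; in the paper this fact appears only later, in Proposition~\ref{p:monodromyidentification}, citing \cite{betts-litt}), and it lands directly on the group-scheme statement rather than on Hopf-algebra invariants.

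Three points you should make explicit to close the argument. First, your quotient step needs that the relation ideal meets the eigenvalue-$1$ eigenspace trivially: this is automatic because the relations are generated by the image of $H^2_{\rig}{}^\vee$, which is pure with Frobenius eigenvalue of absolute value $q$, so every relation in degree $n$ has eigenvalue of absolute value $\geq q>1$; and you need semisimplicity of $\varphi$ (again \cite{betts-litt}) so that invariants of a quotient are the image of invariants. Second, as you yourself flag, you must know that Frobenius is literally the identity on the weight-zero part $H^1(\overline\Gamma)$, not merely of absolute value $1$; this follows from the explicit Cech-cocycle description of the cospecialization map in Section~\ref{ss:specializationmap}, exactly as in the paper's Lemma~\ref{lem:specialization-abelianization}. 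Third, your conclusion is an isomorphism $\Pi((X,M);F_a,F_b)_\ell^\varphi\cong\Pi(\Gamma;\overline a,\overline b)_\ell^\varphi$ compatible with comultiplication; to recover the stated isomorphism of schemes $\pi_1^{\rig,\un}{}_\ell^\varphi\cong\pi_1^{\un}(\Gamma)_\ell^\varphi$ you should add a sentence passing to group-like elements (equivalently, dualizing to coordinate rings), since the proposition is about the fixed-point subscheme of the fundamental torsor rather than about $\Pi$.
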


Because $\pi^{\un}_1(\Gamma;\overline{a},\overline{b})_\ell^\varphi\cong \pi^{\un}_1(\Gamma;\overline{a},\overline{b})$, we will be able to conclude
\[\pi_1^{\rig,\un}((X,M);F_a,F_b)_{\ell}^{\varphi}\cong \pi_1^{\un}(\Gamma;\overline{a},\overline{b}).\]

\begin{lemma}\label{l:free-prounipotent-iso}
Let $G, H$ be pro-unipotent groups over a field $K$ of characteristic zero, with $H$ free. Then if $f\colon G\to H$ is a map inducing an isomorphism $G^{\on{ab}}\to H^{\on{ab}}$, $f$ is an isomorphism.
\end{lemma}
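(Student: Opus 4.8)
The plan is to pass from pro-unipotent groups to their Lie algebras. Over a field $K$ of characteristic zero the functor $\on{Lie}$, with inverse given by the exponential/Mal'cev construction, is an equivalence between pro-unipotent groups and complete pro-nilpotent Lie algebras, and it carries abelianization to $\mathfrak{g}\mapsto \mathfrak{g}^{\on{ab}}=\mathfrak{g}/\overline{[\mathfrak{g},\mathfrak{g}]}$. Writing $\mathfrak{g}=\on{Lie}(G)$ and $\mathfrak{h}=\on{Lie}(H)$, freeness of $H$ means $\mathfrak{h}=\hat{L}(V)$ is the completed free Lie algebra on $V\coloneqq\mathfrak{h}^{\on{ab}}$, and it suffices to show that the induced map $df\colon \mathfrak{g}\to\mathfrak{h}$ is an isomorphism of complete Lie algebras, given that it induces an isomorphism $df^{\on{ab}}\colon \mathfrak{g}^{\on{ab}}\xrightarrow{\sim}\mathfrak{h}^{\on{ab}}=V$.

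The technical heart is the following observation, which I would isolate first: any continuous Lie-algebra endomorphism $e$ of a complete pro-nilpotent Lie algebra inducing the identity on the abelianization is automatically an automorphism. I would prove this by passing to the associated graded $\on{gr}\,\mathfrak{g}$ for the lower central series. Because $\mathfrak{g}_{n+1}=[\mathfrak{g},\mathfrak{g}_n]$, the bracket $\on{gr}_1\otimes\on{gr}_n\to\on{gr}_{n+1}$ is surjective, so $\on{gr}\,\mathfrak{g}$ is generated in degree $1$; a graded Lie endomorphism that is the identity in degree $1$ is then the identity in all degrees, by induction on bracket length. Hence $\on{gr}\,e=\on{id}$, and a filtered endomorphism of a complete object that is the identity on the associated graded is an isomorphism. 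Note this step uses only pro-nilpotence, not freeness.

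Freeness enters precisely to construct a splitting. Choosing a continuous linear section $s\colon V\to\mathfrak{g}$ of $\mathfrak{g}\twoheadrightarrow\mathfrak{g}^{\on{ab}}$, the universal property of $\mathfrak{h}=\hat{L}(V)$ extends $s\circ(df^{\on{ab}})^{-1}\colon V\to\mathfrak{g}$ to a continuous Lie map $\psi\colon \mathfrak{h}\to\mathfrak{g}$. A direct computation on $V=\mathfrak{h}^{\on{ab}}$ shows that $df\circ\psi$ induces the identity on $\mathfrak{h}^{\on{ab}}$, so by the observation above $\alpha\coloneqq df\circ\psi$ is an automorphism of $\mathfrak{h}$; replacing $\psi$ by $\psi'\coloneqq\psi\circ\alpha^{-1}$ yields $df\circ\psi'=\on{id}_{\mathfrak{h}}$, so that $df$ is in particular split surjective.

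Finally, injectivity---the step I expect to be the main obstacle---follows by an idempotent argument. The composite $e\coloneqq\psi'\circ df$ is a Lie-algebra endomorphism of $\mathfrak{g}$ with $e^2=e$, and since $(\psi')^{\on{ab}}=(df^{\on{ab}})^{-1}$ one checks that $e^{\on{ab}}=\on{id}$. The observation then forces $e$ to be an automorphism, and an idempotent automorphism is the identity, so $\psi'\circ df=\on{id}_{\mathfrak{g}}$. Combined with $df\circ\psi'=\on{id}_{\mathfrak{h}}$, this shows $df$ is an isomorphism with inverse $\psi'$, and applying the equivalence $\on{Lie}$ backwards gives that $f$ is an isomorphism. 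The one place where freeness of $H$ is indispensable is the construction of $\psi$: without it there is no reason for an abelianization-isomorphism to lift to a section, and the statement indeed fails.
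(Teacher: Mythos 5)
Your proof is correct, but it takes a genuinely different route from the paper's. The paper disposes of this lemma in one sentence by citing \cite{lubotsky1982cohomology}: free pro-unipotent groups have the ``lifting property,'' and Corollary~2.6 there yields that a map into such a group inducing an isomorphism on abelianizations (equivalently on $H^1(-,\Ga)$) is an isomorphism. You instead reprove the substance of that citation by hand on the Lie-algebra side: pass through the Mal'cev correspondence, isolate the key observation that a continuous endomorphism of a complete pro-nilpotent Lie algebra inducing the identity on $\mathfrak{g}^{\on{ab}}$ is an automorphism (because $\on{gr}\,\mathfrak{g}$ for the lower central series is generated in degree one), use the universal property of the completed free Lie algebra to build the splitting $\psi$, and then get injectivity from the idempotent trick $e=\psi'\circ df$, $e^2=e$, $e^{\on{ab}}=\on{id}$, hence $e=\on{id}$. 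The two arguments rest on the same mechanism --- freeness supplies a lift, and ``identity on the abelianization implies automorphism'' --- but yours is self-contained and makes explicit where each hypothesis enters, at the cost of length, while the paper's is a black-box citation that stays at the group level. Two points your write-up uses silently and should at least be flagged: that a complete pro-nilpotent Lie algebra is Hausdorff and complete for its closed lower central series (needed both for the associated-graded argument and for extending maps from $\hat{L}(V)$), and that continuous surjections of linearly compact vector spaces admit continuous linear sections (needed to define $s$); both are standard facts in this setting.
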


\begin{proof}
This is immediate from \cite[Corollary~2.6]{lubotsky1982cohomology}, as free pro-unipotent groups have the so-called ``lifting property," (see \cite[p.~85]{lubotsky1982cohomology}). 
\end{proof}

The lemmas below are stated for torsors of paths between fiber functors $F_a$ and $F_b$. It suffices to restrict to the case where $F_a=F_b$ as a map between torsors for a given group is necessarily an isomorphism.

\begin{lemma}\label{lem:specialization-abelianization}
The cospecialization map $\on{sp}^*$ induces an isomorphism on abelianizations
\[\on{sp}\colon (\pi_1^{\rig,\un}((X,M);F_a,F_b)_{\ell}^{\varphi})^{ \on{ab}}\cong \pi_1^{\un}(\Gamma;\overline{a},\overline{b})^{\on{ab}}.\]
\end{lemma}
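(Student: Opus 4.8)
The plan is to reduce everything to first (co)homology and then run a weight--monodromy analysis. As the excerpt notes, it suffices to treat the group case $F_a=F_b$, so write $\pi\coloneqq\pi_1^{\rig,\un}((X,M),F_a)_\ell$ and let $\pi^\varphi$ be its Frobenius-fixed subgroup. On the combinatorial side $\pi_1^{\un}(\Gamma,\overline a)$ is free pro-unipotent on the loops of $\Gamma$, so its abelianization is $H_1(\overline\Gamma;K)$. On the curve side I would use the identification $\mathscr I/\mathscr I^2\cong \pi^{\on{ab}}\cong H^1_{\rig}((X,M))^\vee$ from Section~\ref{ss:weightfiltration}, together with the Frobenius of Remark~\ref{r:faltingsfrob}. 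The cospecialization map on abelianizations is then dual to the map $\on{sp}^*$ on $\on{Ext}^1(\mathbf 1,\mathbf 1)=H^1$; by the identification of graph cohomology with the lowest weight piece of rigid cohomology (Coleman--Iovita \cite{CI:Frobandmonodromy}, recovered here by the Steenbrink--Zucker computation preceding Proposition~\ref{p:monodromycohomology}), this is the inclusion of the weight-$0$, Frobenius-fixed subspace $M_0\cong H^1(\overline\Gamma)\hookrightarrow H^1_{\rig}((X,M))$. Hence $\on{sp}$ on abelianizations is the Frobenius-equivariant projection $(H^1_{\rig})^\vee_\ell\to H_1(\overline\Gamma)_\ell$ dual to this inclusion.

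Next I would compute $\varphi$-invariants, remembering that $\varphi(\ell)=q\ell$, so $\ell$ has weight $2$. On $\pi^{\on{ab}}_\ell=(H^1_{\rig})^\vee_\ell$ the weight-$0$ part contributes the degree-$0$ invariants $H_1(\overline\Gamma)$ (eigenvalue $1$), the weight-$1$ part contributes nothing (eigenvalues of absolute value $q^{-1/2}$ are never of the form $q^{-i}$), and the weight-$(-2)$ part $(H^\diamond_1(\Gamma)(-1))^\vee$ (eigenvalue $q^{-1}$) contributes a degree-$1$ piece $(H^\diamond_1(\Gamma)(-1))^\vee\ell$. Using $N\varphi=q\varphi N$ and the fact that $N$ induces an isomorphism $\on{gr}^W_0\xrightarrow{\sim}\on{gr}^W_{-2}$ --- which is exactly the non-degeneracy of the cycle pairing in Proposition~\ref{p:monodromycohomology} --- this degree-$1$ piece is precisely $N(H_1(\overline\Gamma))\,\ell$. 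Thus $(\pi^{\on{ab}}_\ell)^\varphi\cong H_1(\overline\Gamma)\oplus N(H_1(\overline\Gamma))\ell$, while the invariants of the target are just $H_1(\overline\Gamma)$, and the equivariant projection $\on{sp}$ kills the degree-$1$ summand.

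The heart of the matter is that the lemma concerns $(\pi^\varphi)^{\on{ab}}$, not $(\pi^{\on{ab}})^\varphi$, and these differ: the extra summand $N(H_1(\overline\Gamma))\ell$ lies in the invariants of the abelianization but must not arise from the abelianization of the invariant subgroup. Surjectivity of $\on{sp}\colon(\pi^\varphi)^{\on{ab}}\to H_1(\overline\Gamma)$ I would get by exhibiting Frobenius-invariant lifts of the graph loops: weight-$0$ classes are unobstructed, and the canonical invariant paths of Example~\ref{e:frobinvnode} (equivalently, the Frobenius-equivariant cospecialization functor) produce them. The main obstacle is injectivity: I must show that no nonzero weight-$(-2)$/degree-$1$ class is the abelianization of a Frobenius-invariant element. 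The naive candidate is a power of a loop around a node, but $\varphi_*\gamma_{i,u}=\gamma_{i,qu}$ shows that such local loops are never Frobenius-invariant, so any invariant lift would have to be corrected by commutators; the obstruction to doing so lives in the non-abelian Frobenius cohomology $H^1(\varphi,[\pi,\pi]/[[\pi,\pi],\pi])$ and is computed by the commutator (cup-product) pairing on $\pi^{\on{ab}}$. I expect the decisive point to be that this pairing is again the non-degenerate cycle pairing of Proposition~\ref{p:monodromycohomology}, so the obstruction is nonzero and the weight-$(-2)$ classes genuinely fail to lift; combined with surjectivity this yields $(\pi^\varphi)^{\on{ab}}\cong H_1(\overline\Gamma)\cong\pi_1^{\un}(\Gamma,\overline a)^{\on{ab}}$.
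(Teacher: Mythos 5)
Your opening moves (the reduction to $F_a=F_b$, the identification of $\on{sp}$ on abelianizations with the dual of the weight-zero inclusion $H^1(\overline\Gamma)\hookrightarrow H^1_{\rig}((X,M))$, and surjectivity via explicitly constructed Frobenius-invariant lifts) are sound and agree with the first steps of the paper's proof. The proposal goes wrong at the eigenvalue computation, and that error then dictates the strategy for everything that follows. You pair the \'etale-style (contragredient) Frobenius on the abelianization --- eigenvalue $q^{-1}$ on the classes dual to $\on{gr}^W_2H^1_{\rig}$ --- with the normalization $\varphi_*(\ell)=q\ell$. But in this paper the Frobenius on the fundamental group is pushforward along relative Frobenius, and its explicit computation at a node (the proposition preceding Example~\ref{e:frobinvnode}, which gives $\varphi_*\delta_u=\delta_{qu}$, equivalently $\varphi_*\gamma_{1,u}=\gamma_{1,qu}$ in the notation of Example~\ref{e:pi1annulus}) shows that the class of a local node loop --- which is precisely a class dual to $\on{gr}^W_2$ --- has $\varphi_*$-eigenvalue $q$, not $q^{-1}$. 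Consequently the eigenvalues appearing on the associated graded of the abelianization are $1$, Weil numbers of absolute value $q^{1/2}$, and $q$; since $\ell^i$ carries eigenvalue $q^i$, no power of $\ell$ can ever compensate, and $(\pi_1^{\rig,\un}((X,M),F_a)^{\on{ab}}_\ell)^\varphi$ is exactly $H_1(\overline\Gamma)$. Your extra summand $N(H_1(\overline\Gamma))\ell$ is a phantom. The Tate curve makes this concrete: there $\pi_1^{\rig,\un}$ is abelian of rank $2$, spanned over $K[\ell]$ by the node loop $\gamma_2$ (eigenvalue $q$) and the invariant lift $p$ of the graph loop, whose class pairs with $\nu=\frac{dz}{z}$ to give $\ell$ (Section~\ref{ss:berkexamples}); in the basis $([p],\gamma_2)$ the action is diagonal with eigenvalues $(1,q)$ and the fixed space is $K\cdot[p]$, one-dimensional. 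Your computation would make it two-dimensional and would produce a nonzero invariant element $\ell\gamma_2$ specializing to $0$, which is exactly what the lemma (and Proposition~\ref{prop:Frob-invariants}) rules out. So the ``heart of the matter'' you identify --- a genuine discrepancy between $(\pi^\varphi)^{\on{ab}}$ and $(\pi^{\on{ab}})^\varphi$ --- does not exist; the content of the lemma is that these agree.

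Because of this, your plan for the hard part aims at the wrong target, and even on its own terms it is incomplete in two ways. First, the decisive step is only conjectured (``I expect the obstruction \ldots to be the non-degenerate cycle pairing''); nothing is proved about the non-abelian cohomology class you introduce. Second, and more structurally, even a correct version of that argument would only constrain the \emph{image} of the natural map $(\pi^\varphi_\ell)^{\on{ab}}\to(\pi^{\on{ab}}_\ell)^\varphi$; injectivity of $\on{sp}$ on $(\pi^\varphi_\ell)^{\on{ab}}$ also requires this map to have trivial \emph{kernel}, i.e., that every Frobenius-invariant element of the closed commutator subgroup of $\pi_\ell$ lies in the closure of the commutator subgroup of $\pi^\varphi_\ell$. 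That commutation of abelianization with $\varphi$-invariants is the actual content of the lemma, and it is what the paper proves: by \cite{betts-litt}, $\varphi$ acts semisimply on the Lie algebra $\mathfrak{g}_a$, so $\mathfrak{g}_a\cong\on{gr}_\bullet\mathfrak{g}_a$ equivariantly; $\on{gr}_\bullet\mathfrak{g}_a$ is generated in degree $-1$ and its $\varphi$-weights all lie on one side of zero, so an invariant element can only be a combination of brackets of \emph{invariant} degree-$(-1)$ elements; hence $\mathfrak{g}_a^\varphi$ is generated in degree $-1$ and $(\mathfrak{g}_a^\varphi)^{\on{ab}}=(\mathfrak{g}_a^{\on{ab}})^\varphi$. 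Your proposal never engages with this step, so even after correcting the eigenvalue it would not close the argument.
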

\begin{proof}
It suffices to consider the case $F_a=F_b$.
We first show that the natural map \[\on{Hom}(\pi_1^{\un}(\Gamma,\overline{a}), \mathbb{G}_a)\to \on{Hom}(\pi_1^{\rig,\un}((X,M),F_a)_{\ell}, \mathbb{G}_a)^{\varphi}\] 
induced by $\on{sp}$ is an isomorphism. 
We have canonical identifications \[\on{Hom}(\pi_1^{\un}(\Gamma,\overline{a}), \mathbb{G}_a)=H^1(\Gamma)\]
and 
\[\on{Hom}(\pi_1^{\rig,\un}((X,M),F_a)_{\ell}, \mathbb{G}_a)^\varphi=H_{\rig}^1((X,M))_{\ell}^\varphi\]
(as both sides of the first line classify extensions of the trivial representation of $\pi_1^{\un}(\Gamma,\overline{a})$ by itself, while the second line comes from taking $\varphi$-invariants of the analogous statement for $\pi_1^{\rig,\un}((X,M),F_a)$), so this is equivalently a map 
\[H^1(\Gamma)\to H_{\rig}^1((X, M))^\varphi.\]

One may describe this map explicitly as follows: an element of $H^1(\Gamma)$ corresponds to an extension 
\[0\to \mathbf{1}\to E\to \mathbf{1}\to 0\] 
of $\pi_1(\Gamma,\overline{a})$-representations; its image under the cospecialization map corresponds to a unipotent isocrystal of rank $2$ on $(X,M)$. Explicitly, one takes the gluing datum defining $E$ and turns it into an isocrystal on $(X,M)$ via the recipe of Section \ref{ss:specializationmap}. By the discussion in Subsection~\ref{ss:cohomologyoflogcurves}, this corresponds to Cech cocycles representing an element of $H^1(\Gamma)\cong M_0H_{\rig}^1((X,M))$. The latter group is isomorphic to $H_{\rig}^1((X,M))^\varphi$ as a consequence of the degeneration of the Steenbrink--Zucker spectral sequence from subsection~\ref{ss:cohomologyoflogcurves} at $E_2$. See \cite[Section~5]{Mokrane} for a relevant discussion.

As a consequence of the above, we have that $\pi_1^{\un}(\Gamma, \overline{a})^{\on{ab}}\cong(\pi_1^{\rig,{\un}}((X,M), F_a)^{\on{ab}})_{\ell, \varphi}$. 
That is, we have identified the abelianization of the fundamental group of our graph with the $\varphi$-coinvariants of $\pi_1^{\rig, {\un}}((X,M), F_a)^{\on{ab}}$. 
As $\varphi$ acts semisimply on  $\pi_1^{\rig, \un}((X,M), F_a)^{\on{ab}}$, the coinvariants agree with the invariants. 

We conclude by showing
\[ (\pi_1^{\rig,{\un}}((X,M), F_a)_{\ell}^{\on{ab}})^\varphi\cong 
(\pi_1^{\rig,{\un}}((X,M), F_a)_{\ell}^\varphi)^{\on{ab}}.\]
We compute at the level of Lie algebras. Give the Lie algebra $\mathfrak{g}_a$ of $\pi_1^{\rig, \un}((X,M), F_a)$ the lower central series filtration. Because $\varphi$ acts semisimply on $\mathfrak{g}_a$ \cite[Theorem 1.4]{betts-litt}, there is a  $\varphi$-equivariant isomorphism $\mathfrak{g}_a\cong \on{gr}_\bullet\mathfrak{g}_a$.
Because the Lie algebra $\on{gr}_\bullet\mathfrak{g}_a$ is graded by $\mathbb{Z}_{<0}$, and generated in degree $-1$, the $\varphi$-weights appearing are non-positive. Hence, $\on{gr}_\bullet\mathfrak{g}_a^{\varphi}$ is generated by $\on{gr}_{-1}\mathfrak{g}_a^{\varphi}$.  Indeed, any element of $\on{gr}_\bullet\mathfrak{g}_a^{\varphi}$ can be written as a linear combination of brackets of elements of $\on{gr}_{-1}\mathfrak{g}_a$, but for weight reasons, every element appearing in the linear combination must have $\varphi$-weight zero.

We now claim that $(\on{gr}_\bullet\mathfrak{g}_a^{\varphi})^{\on{ab}}=(\mathfrak{g}_a^{\on{ab}})^{\varphi}$.
Indeed, the left side is computed as 
\[(\on{gr}_\bullet\mathfrak{g}_a^{\varphi})^\text{ab}=\on{coker}([-,-]: \bigwedge^2 \mathfrak{g}_a^{\varphi}\to \mathfrak{g}_a^{\varphi});\]
and by the above,  the commutator map surjects onto $\on{gr}_{\leq -2} \mathfrak{g}_a^{\varphi}$. Consequently, the cokernel is canonically identified with $(\on{gr}_{-1} \mathfrak{g}_a)^{\varphi}=(\mathfrak{g}_a^{\on{ab}})^{\varphi}$.
\end{proof}

\begin{proof}[Proof of Proposition \ref{prop:Frob-invariants}]
We suppose $F_a=F_b$. The specialization map becomes an isomorphism after abelianization by Lemma \ref{lem:specialization-abelianization}. 
Because $\pi_1^{\un}(\Gamma;\overline{a},\overline{b})_\ell^\varphi$ is free pro-unipotent, the conclusion follows from Lemma~\ref{l:free-prounipotent-iso}.
%Thus it suffices by Lemma~\ref{l:free-prounipotent-iso} to show that $\pi_1^{\rig,\un}((X,M),F_a)_{\ell}^{\varphi}$ is free pro-unipotent. Because $H_{\rig}^2((X,M))_{\ell}^\varphi=0$, $R^{(n)\varphi}=(H_{\rig}^1((X,M))_{\ell}^\varphi)^{\otimes n}$ according to the construction in Subsection~\ref{ss:unipotentfundamentalgroups}. Consequently, $\pi_1^{\rig,\un}((X,M),F_a)_{\ell}^{\varphi}$ is the free pro-unipotent group on $H_{\rig}^1((X,M))_{\ell}^{\varphi\vee}$.
\end{proof}

The isomorphism of Proposition~\ref{prop:Frob-invariants} is functorial with respect to strict morphisms of weak log curves. By the functoriality of their construction, it extends to Hopf groupoids:

\begin{corollary} \label{c:liftinggroupoidmodule}
  Under the hypotheses of Proposition~\ref{prop:Frob-invariants}, the specialization map induces an isomorphism
\[\on{sp}_{\ell}\colon \Pi((X,M);F_a,F_b)_{\ell}^{\varphi}\cong \Pi(\Gamma;\overline{a},\overline{b})_\ell^\varphi.\]
\end{corollary}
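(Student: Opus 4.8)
The plan is to bootstrap from Proposition~\ref{prop:Frob-invariants} (the statement for fundamental torsors) to the completed groupoid modules by tracking the $\varphi$-action through the universal-object construction of $\Pi$ recalled in Subsection~\ref{ss:unipotentfundamentalgroups}. First I would reduce to the pointed case $F_a=F_b$. Recall that $\Pi((X,M);F_a,F_b)_\ell$ is a free rank-one left module over $\Pi((X,M),F_a)_\ell$; by Proposition~\ref{prop:Frob-invariants} the torsor $\pi_1^{\rig,\un}((X,M);F_a,F_b)_\ell^\varphi$ is isomorphic to $\pi_1^{\un}(\Gamma;\overline a,\overline b)$, which has a $K$-point (e.g.~the class of an honest path, or the combinatorial canonical path of Proposition~\ref{p:canonicalpath}). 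Such a point is a $\varphi$-invariant group-like element $g\in\Pi((X,M);F_a,F_b)_\ell$, and right multiplication by $g$ is a $\varphi$-equivariant isomorphism $\Pi((X,M),F_a)_\ell\to\Pi((X,M);F_a,F_b)_\ell$. Taking $\varphi$-invariants, it restricts to an isomorphism on invariants, and similarly on the graph side via $\on{sp}(g)$; so it suffices to treat the group case $F_a=F_b$.

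For the group case I would use that $\Pi((X,M),F_a)_\ell=\varprojlim_n F_a(\cE_n)_\ell$, where the $\cE_n$ are the universal $F$-pointed objects, so that $\varphi$ acts on each $F_a(\cE_n)_\ell$ and $\on{sp}_*$ is $\varphi$-equivariant. By \cite[Theorem~1.4]{betts-litt}, $\varphi$ acts semisimply on each finite-dimensional truncation, so passing to $\varphi$-invariants is exact and commutes with the inverse limit; thus $\Pi((X,M),F_a)_\ell^\varphi=\varprojlim_n F_a(\cE_n)_\ell^\varphi$. It then remains to identify $F_a(\cE_n)_\ell^\varphi$, compatibly in $n$, with the fiber at $\overline a$ of the universal object $\cE_n^\Gamma$ of $\sC^{\un}(\Gamma)$. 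Here I would re-run the computation already carried out in the proof of Proposition~\ref{prop:Frob-invariants}: since $H^2_{\rig}((X,M))_\ell^\varphi=0$, the inductive description of the spaces $R^{(n)}$ from \cite{Hadian,AIK} gives $R^{(n)\varphi}=(H^1_{\rig}((X,M))_\ell^\varphi)^{\otimes n}$, matching the corresponding spaces $R^{(n)}=H^1(\Gamma)^{\otimes n}$ for the free group $\pi_1^{\un}(\Gamma,\overline a)$. Combined with the abelianization isomorphism $H^1_{\rig}((X,M))_\ell^\varphi\cong H^1(\Gamma)$ of Lemma~\ref{lem:specialization-abelianization} and semisimplicity of $\varphi$ (which splits the filtration on $\cE_n$ $\varphi$-equivariantly), $\on{sp}_*$ induces an isomorphism $F_a(\cE_n)_\ell^\varphi\cong(\cE_n^\Gamma)_{\overline a}$ respecting the transition maps. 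Taking the inverse limit yields $\Pi((X,M),F_a)_\ell^\varphi\cong\Pi(\Gamma,\overline a)$, and since $\varphi$ acts trivially on $\Pi(\Gamma,\overline a)$ and by $\ell\mapsto q\ell$ on $\ell$, this is exactly $\Pi(\Gamma,\overline a)_\ell^\varphi$.

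I expect the main obstacle to be the step asserting $R^{(n)\varphi}=(H^1_{\rig}{}^\varphi)^{\otimes n}$ and, equivalently, that the $\varphi$-invariants of the universal-object tower are generated by their degree-one part --- i.e.~that forming $\varphi$-invariants commutes with forming the completed enveloping algebra, so that $\Pi((X,M),F_a)_\ell^\varphi$ is the completed algebra of $\pi_1^{\rig,\un}((X,M),F_a)_\ell^\varphi$ rather than something strictly larger. This is not formal: for a general semisimple automorphism the invariants of a tensor algebra are strictly larger than the tensor algebra on the invariants. What saves us is the one-sided weight estimate used in Lemma~\ref{lem:specialization-abelianization}: after the twist by $K[\ell]$ (with $\varphi(\ell)=q\ell$), the $\varphi$-weights occurring in $H^1_{\rig}((X,M))_\ell$ are constrained so that a tensor monomial is $\varphi$-invariant only when each factor is, while the vanishing $H^2_{\rig}((X,M))_\ell^\varphi=0$ removes the potential contributions from the Yoneda differential defining the $R^{(n)}$. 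The technical heart of the argument is therefore to propagate this weight and semisimplicity bookkeeping --- already established for the abelianization --- through every tensor degree; Lemma~\ref{l:free-prounipotent-iso} and the freeness of $\pi_1^{\rig,\un}((X,M),F_a)_\ell^\varphi$ established in Proposition~\ref{prop:Frob-invariants} are precisely what make this propagation go through.
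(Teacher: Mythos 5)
Your proposal is correct, but it takes a genuinely more explicit route than the paper, which disposes of this corollary in a single sentence: the isomorphism of Proposition~\ref{prop:Frob-invariants} is asserted to be functorial, and ``by the functoriality of their construction, it extends to completed groupoid modules.'' In other words, the paper treats the passage from $\pi_1$ to $\Pi$ as formal, leaning implicitly on the fact that the proof of the proposition already controls the whole universal-object tower (freeness of $\pi_1^{\rig,\un}((X,M),F_a)_\ell^\varphi$ on $H^1_{\rig}((X,M))_\ell^{\varphi\vee}$ via $R^{(n)\varphi}=(H^1_{\rig}((X,M))_\ell^\varphi)^{\otimes n}$). Your version makes explicit two points the paper leaves silent, and this is where its value lies. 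First, the reduction of the torsor case to the group case by right multiplication by a $\varphi$-invariant group-like lift $g$ of a point of $\pi_1^{\un}(\Gamma;\overline{a},\overline{b})$: this needs $g$ to actually generate the free rank-one module, which holds because $\epsilon(g)=1$ and elements of $1+\mathscr{I}$ are invertible in the $\mathscr{I}$-adically complete algebra --- worth stating, but not a gap. Second, and more importantly, you correctly identify that the corollary is \emph{not} a formal consequence of the proposition: for a general semisimple automorphism the invariants of a completed tensor algebra strictly contain the completed algebra on the invariants (e.g.~eigenvalues $\lambda,\lambda^{-1}$), so some positivity of weights must be invoked. The paper's one-line appeal to functoriality silently absorbs exactly this point, and your weight argument is the right way to surface it.

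One refinement to your bookkeeping: absolute values of eigenvalues alone only show that an invariant monomial in $H^1_{\rig}((X,M))_\ell^{\otimes n}$ has every factor of weight zero and involves no positive power of $\ell$; to conclude that each factor is itself $\varphi$-invariant you additionally need that $\varphi$ acts as the identity (not merely with eigenvalues of absolute value $1$) on the weight-zero part. This is supplied by the identification $M^0H^1_{\rig}((X,M))=H^1_{\rig}((X,M))^\varphi$ established in the proof of Lemma~\ref{lem:specialization-abelianization}, so it is available to you; with it, your level-by-level identification $F_a(\cE_n)_\ell^\varphi\cong(\cE_n^\Gamma)_{\overline{a}}$, compatibility with transition maps, and passage to the inverse limit (which commutes with taking $\varphi$-invariants, as these are a kernel) complete the argument. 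In short: your proof is a correct unwinding of what the paper's functoriality claim must mean, and it buys a verification of a subtlety the paper elides, at the cost of redoing the $R^{(n)}$ computation in every tensor degree.
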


Moreover, it respects concatenation of paths and the coalgebra structure. We immediately recover a result of Besser.
\begin{corollary} \cite[Corollary~3.2]{Besser:Coleman}
Let $(X,M)$ be a weak log curve such that the underlying scheme $X$ is smooth. Let $F_a,F_b$ be fiber functors attached to log points anchored on $X$. Then, there is a unique Frobenius-invariant path in $\pi_1^{\rig,\un}((X,M);F_a,F_b)_{\ell}$.
\end{corollary}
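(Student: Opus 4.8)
The plan is to deduce this directly from Proposition~\ref{prop:Frob-invariants}, together with a computation of the fundamental group of the dual graph. Since both $F_a$ and $F_b$ are anchored on the single irreducible component $X$, they determine the same vertex $\overline{a}=\overline{b}$ of the log dual graph $\Gamma$. A Frobenius-invariant path is by definition a point of $\pi_1^{\rig,\un}((X,M);F_a,F_b)_{\ell}^{\varphi}$, so it suffices to show that this scheme consists of a single point.

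First I would analyze $\Gamma$. Because the underlying scheme $X$ is smooth, it has no nodes, and hence $\Gamma$ has no closed edges; its only edges are the half-open edges attached to the punctures and annular points of $(X,M)$. Thus the core $\overline{\Gamma}$ obtained by discarding half-open edges (as in Remark~\ref{r:contracttocore}) is the single vertex $\overline{a}$, and the half-open edges are contractible whiskers that do not change the homotopy type. Consequently $\pi_1(\Gamma,\overline{a})$ is trivial, its pro-unipotent completion $\pi_1^{\un}(\Gamma,\overline{a})$ is the trivial group $\Spec K$, and the torsor $\pi_1^{\un}(\Gamma;\overline{a},\overline{b})=\pi_1^{\un}(\Gamma,\overline{a})$ is a single point.

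Now I would invoke Proposition~\ref{prop:Frob-invariants}, which provides an isomorphism $\on{sp}_{\ell}\colon \pi_1^{\rig,\un}((X,M);F_a,F_b)_{\ell}^{\varphi}\cong \pi_1^{\un}(\Gamma;\overline{a},\overline{b})$. Since the target is a single point, so is the source, and therefore there is exactly one Frobenius-invariant path. The main content of the argument is carried entirely by Proposition~\ref{prop:Frob-invariants}; once that isomorphism is in hand, the remaining input---that a smooth underlying curve has a dual graph with trivial fundamental group---is purely combinatorial and poses no real obstacle. The only points requiring care are the identification $\overline{a}=\overline{b}$ and the observation that half-open edges contribute no cycles; these recover precisely Besser's original statement for smooth proper curves of good reduction as the special case with no punctures or annular points.
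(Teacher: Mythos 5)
Your proposal is correct and follows essentially the same route as the paper: the paper's proof is exactly the one-line application of Proposition~\ref{prop:Frob-invariants} together with the observation that the log dual graph of a weak log curve with smooth underlying scheme has trivial unipotent fundamental group. Your extra care in noting that the half-open edges (from punctures and annular points) are contractible whiskers, and that $\overline{a}=\overline{b}$ since $X$ is irreducible, only makes explicit what the paper compresses into ``the log dual graph is a vertex.''
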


\begin{proof}
Because the log dual graph of $(X,M)$ is a vertex, $\pi_1^{\un}(\Gamma,\overline{a})=\{e\}$, and there is a unique Frobenius-invariant path from $F_a$ to $F_b$.
\end{proof}

\section{Monodromy action on fundamental groups}

\subsection{Definition of the Monodromy Operator}
The monodromy operator $N$ on $\Pi((X,M);F_1,F_2)$ will be induced from a conjugation action on $\Pi((X_t,M_t);F_1,F_2)$ by using the homotopy exact sequence. The arguments in this section are similar to those in \cite[Section~2 and Section~6]{CPS:Logpi1}.
Let $f\colon (X,M)\to (S,\N)$ be a weak log curve. 
Let $i\colon (X,M)\to (X_t,M_t)$ be obtained from  $\pi$-$t$ base-change which also induces $f_t\colon (X_t,M_t)\to (S_t,\N_t)$.
Let $F_1,F_2$ be fiber functors on $(X,M)$ such that $(f_t\circ i)_*F_1=(f_t\circ i)_*F_2$ as fiber functors on $(S_t,\N_t)$. 
We have a commutative diagram of torsors
\[\xymatrix{\pi^{\rig,\un}_1((X,M);F_1,F_2)\ar[d]^{f_*}\ar[r]^>>>>>{i_*}
&\pi_1^{\rig,\un}((X_t,M_t);F_1,F_2)\ar[d]^{f_{t*}}\ar[r]
&\pi_1^{\rig,\varphi,\un(f_t,\nr)}((X_t,M_t);F_1,F_2)\ar[d]^{f_{t*}}\\
\pi^{\rig,\un}_1((S,\N);F_1,F_2)\ar[r]^{i_*}
&\pi_1^{\rig,\un}((S_t,\N_t);F_1,F_2))\ar[r]
&\pi_1^{\rig,\varphi,\nr}((S_t,
\N_t);F_1,F_2)}\]
Recall from Remark~\ref{r:fiberfunctorsonlogdisc} that $\pi_1^{\rig,\un}((S_t,\N_t),F_j)(K)\cong \Ga$.

\begin{definition} \label{defn:monodromy-operator}
Let 
$s_j\colon  \pi^{\rig,\un}_1((S_t,\N_t),F_j)\to \pi^{\rig,\un}_1((X_t,M_t),F_j)$
be sections of $f_{t*}$ for $j=1,2$. 
The \emph{monodromy action} $T(s_1,s_2)$ with respect to $(s_1,s_2)$ is the action of $\pi_1((S_t,\N_t),F_j)$ on $\pi^{\rig,\un}_1((X,M);F_1,F_2)$ given by 
\begin{align*}
\pi_1^{\rig,\un}((S_t,\N_t),F_j)&\to\on{Aut}(\pi^{\rig,\un}_1((X,M);F_1,F_2))\\
\gamma_u&\mapsto [\delta\mapsto (s_{1*}\gamma_u^{-1})(i_*\delta)(s_{2*}\gamma_u)].
\end{align*}
The {\em monodromy operator}
$N(s_1,s_2)\in\on{End}(\Pi((X,M);F_1,F_2))$
with respect to $(s_1,s_2)$ is the endomorphism obtained by applying $T(s_1, s_2)$ to the element of the Lie algebra of $\pi_1((S_t, \mathbb{N}_t), F_j)\cong \mathbb{G}_a$ given by $N_0:=\log(\gamma_u)/u$ (which is well-defined and independent of $u$ for $\|u\|<1$). 
\end{definition}

Observe that in the above definition, $(s_{1*}\gamma_u^{-1})(i_*\delta)(s_{2*}\gamma_u)$ is in $\pi^{\rig,\un}_1((X,M);F_1,F_2)$ by Corollary~\ref{c:exactsequenceoftorsors}.
We shall write $N$ for $N(s_1,s_2)$ where $s_1$ and $s_2$ are understood.
The monodromy operator is a derivation on composable paths:
\begin{lemma}
Let $F_1,F_2,F_3$ be fiber functors on $(X,M)$ that are mapped to the same fiber functor on $(S_t,\N_t)$ by $(f_t\circ i)_*$. Let $s_1,s_2,s_3$ be sections
\[s_i\colon  \pi^{\rig,\un}_1((S_t,\N_t),F_j)\to \pi^{\rig,\un}_1((X_t,M_t),F_j).\]
Let $\delta_1\in \pi^{\rig,\un}_1((X,M);F_1,F_2)$,  $\delta_2\in \pi^{\rig,\un}_1((X,M);F_2,F_3)$. Then 
\[N(s_1,s_3)(\delta_1\delta_2)=(N(s_1,s_2)\delta_1)\delta_2+\delta_1(N(s_2,s_3)\delta_2)\]
in $\Pi((X,M);F_1,F_3)$.
\end{lemma}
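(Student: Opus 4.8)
The plan is to establish the Leibniz rule by first proving a \emph{multiplicative} version at the level of the $\Ga$-action $T$, and only afterwards passing to the Lie algebra to recover $N$. The key observation is that, since $(f_t\circ i)_*F_1=(f_t\circ i)_*F_2=(f_t\circ i)_*F_3$, the three groups $\pi_1^{\rig,\un}((S_t,\N_t),F_j)$ are canonically identified, so a single element $\gamma_u$ may be pushed forward by $s_{1*},s_{2*},s_{3*}$ simultaneously. For every such $\gamma_u$ I would prove
\begin{equation} \label{eq:mult-monodromy}
T(s_1,s_3)(\gamma_u)(\delta_1\delta_2)=\big(T(s_1,s_2)(\gamma_u)\delta_1\big)\big(T(s_2,s_3)(\gamma_u)\delta_2\big)
\end{equation}
as an identity in $\pi_1^{\rig,\un}((X,M);F_1,F_3)$.

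To prove \eqref{eq:mult-monodromy}, I would use that $i_*$ is a morphism of fundamental groupoids and hence respects concatenation, so $i_*(\delta_1\delta_2)=(i_*\delta_1)(i_*\delta_2)$ in $\pi_1^{\rig,\un,\varphi}((X_t,M_t);F_1,F_3)$. Since $s_2$ is a homomorphism, $(s_{2*}\gamma_u)(s_{2*}\gamma_u^{-1})$ is the identity automorphism of $F_2$, and I can insert it between $i_*\delta_1$ and $i_*\delta_2$:
\begin{align*}
T(s_1,s_3)(\gamma_u)(\delta_1\delta_2)
&=(s_{1*}\gamma_u^{-1})(i_*\delta_1)(i_*\delta_2)(s_{3*}\gamma_u)\\
&=(s_{1*}\gamma_u^{-1})(i_*\delta_1)(s_{2*}\gamma_u)(s_{2*}\gamma_u^{-1})(i_*\delta_2)(s_{3*}\gamma_u).
\end{align*}
Grouping the first three and the last three factors identifies the right-hand side with $(T(s_1,s_2)(\gamma_u)\delta_1)(T(s_2,s_3)(\gamma_u)\delta_2)$, once one checks, exactly as in Definition~\ref{defn:monodromy-operator}, that each grouped factor again lies in $\ker f_{t*}=\on{im} i_*$ and hence in the appropriate torsor over $(X,M)$; this is where the hypothesis that the three fiber functors agree over $(S_t,\N_t)$ enters, via $f_{t*}(i_*\delta_k)=e$.

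To recover the additive statement I would use that each $T(s_j,s_k)$ is a homomorphism of pro-unipotent groups $\pi_1^{\rig,\un}((S_t,\N_t),F_j)\cong\Ga\to \on{Aut}(\Pi((X,M);F_j,F_k))$. Writing $\gamma_u=\exp(uN_0)$ and $N(s_j,s_k)=dT(s_j,s_k)(N_0)$, such a homomorphism intertwines exponentials, so $T(s_j,s_k)(\gamma_u)=\exp\big(u\,N(s_j,s_k)\big)$ as continuous operators on $\Pi$. Substituting this into \eqref{eq:mult-monodromy}, expanding both sides as power series in $u$, and using that concatenation is bilinear and continuous, the coefficient of $u^1$ yields
\[N(s_1,s_3)(\delta_1\delta_2)=(N(s_1,s_2)\delta_1)\delta_2+\delta_1(N(s_2,s_3)\delta_2),\]
which is the claim (the $u^0$-terms match trivially, giving $\delta_1\delta_2=\delta_1\delta_2$).

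The genuinely delicate points here are bookkeeping rather than conceptual: one must track the left/right bimodule structure carefully so that inserting $(s_{2*}\gamma_u)(s_{2*}\gamma_u^{-1})$ is compatible with the torsor structure, and one must verify that the conjugated expressions remain in the image of $i_*$, so that \eqref{eq:mult-monodromy} is an identity of honest paths on $(X,M)$ and not merely on $(X_t,M_t)$. The passage from the multiplicative identity \eqref{eq:mult-monodromy} to the additive Leibniz rule is the standard principle that the infinitesimal generator of a diagonal group action on a product acts as a derivation; once \eqref{eq:mult-monodromy} is in hand, this step is purely formal and is the part I expect to be routine.
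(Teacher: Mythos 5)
Your proposal is correct and follows essentially the same route as the paper: the paper's proof consists precisely of the multiplicative identity $(T(s_1,s_3)(\gamma_u))(\delta_1\delta_2)=\bigl(T(s_1,s_2)(\gamma_u)\delta_1\bigr)\bigl(T(s_2,s_3)(\gamma_u)\delta_2\bigr)$ (which, as you note, comes from inserting $(s_{2*}\gamma_u)(s_{2*}\gamma_u^{-1})$ into the defining conjugation formula), followed by differentiation with respect to $u$. Your write-up merely makes explicit the bookkeeping the paper leaves implicit.
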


\begin{proof}
This follows from
\[(T(s_1,s_3)(\gamma_u))(\delta_1\delta_2)=\Big((T(s_1,s_2)(\gamma_u))(\delta_1)\Big)\Big(T(s_2,s_3)(\gamma_u))(\delta_2)\Big).\]
\end{proof}

\begin{remark}
 We can write $N$ more directly. Let $N_0\in\Pi((S_t,\N_t),F_i)$ be the generator of the $1$-parameter subgroup $\{\gamma_u\}_{u\in K}$ described in Definition \ref{defn:monodromy-operator}. Then for $\delta\in \Pi((X,M);F_1,F_2)$, 
\[N\delta=-(s_{1*}N_0)i_*\delta+i_*\delta (s_{2*}N_0).\]
\end{remark}

When $F_j$ is a fiber functor $F_B$ attached a log point $\iota\colon (x,\overline{\M})\to (X,M)$,
a section 
\[s\colon  \pi^{\rig,\un}_1((S_t,\N_t),F_j)\to \pi^{\rig,\un}_1((X_t,M_t),F_j)\]
can be constructed explicitly as follows.
The induced morphism
$\iota_t\colon (x,\overline{\M}_t)\to (X_t,M_t)$
induces a homomorphism fitting into the commutative diagram
\[\xymatrix{
\pi_1^{\rig,\un}((x,\overline{\M}_t),F_B)\ar[r]^{\iota_{t*}}&\pi_1^{\rig,\un}((X_t,M_t),F_B)\ar[d]\\
&\ar@{-->}[ul]^{\sigma} \pi_1^{\rig,\un}((S_t,\N_t),p_*F_B)
}\] 
where we  choose $\sigma$ to be a section of the natural homomorphism (by Lemma~\ref{l:pi1logpoint}),
\begin{multline*}
    \pi_1^{\rig,\un}((x,\overline{\M}_t),F_i)\cong \on{Hom}((\overline{M}_t/(h\circ f_t)^*\Z_{\geq 0})^{\gp},\Ga)\\
    \to \pi_1^{\rig,\un}((S_t,\N_t),f_{t*}F_i)\cong \on{Hom}((\Z_{\geq 0}^2/h^*\Z_{\geq 0})^{\gp},\Ga).
\end{multline*}
To produce $\sigma$, we choose a monoid homomorphism $\overline{M}_t/(h\circ f_t)^*\Z_{\geq 0}\to \Z_{\geq 0}^2/h^*\Z_{\geq 0}$ taking $e_t\mapsto e_t$. Then, set
$s=\iota_{t*}\circ\sigma$.

\begin{definition} \label{d:inducedsections} Let $F_B$ be the fiber functor attached to a log point $\iota\colon(x,\overline{\M}_t)\to (X_t,M_t)$ together with coordinate system $B$ as in Definition~\ref{d:fiberfunctorlogpoint}. There are {\em induced sections} as above where $\sigma$ is constructed from $\overline{M}_t/f^*{\Z_{\geq 0}}\to \Z_{\geq 0}/f^*\Z_{\geq 0}$ given by
\begin{enumerate}
    \item for a smooth point $\overline{M}_{0,t}=\<e_\pi,e_t\>$, $e_t\mapsto e_t$; 
    \item for a puncture $\overline{M}_{1,t}=\<e_\pi,e_t,f\>$, $e_t\mapsto e_t$, $f\mapsto 0$;
    \item for a node $\overline{M}_{2,t}=\<e_\pi,f_1,f_2\>$ with $B=\{f_j,e_t\}$ for $j=1$ or $j=2$, we pick
    $e_t\mapsto e_t$, $f_j\mapsto 0$, $f_{3-j}\mapsto e_t$ (inducing a section $\sigma_j$), and
    \item for an annular point $\overline{M}_{2',t}$ with $B=\{f_1,e_t\}$,  $e_t\mapsto e_t$, $f_1\mapsto0$, $f_2\mapsto e_t$.
\end{enumerate}
\end{definition}

%\begin{remark}
%For nodes and annular points, the above maps of monoids do not induce maps of log schemes because they do not commute with the maps of the monoids to $k$. 
%
%For smooth points and punctures, they occur from base-changing the zero maps $\overline{M}/\N\to \N/\N$ by the base-change in Remark~\ref{r:usualbasechange}.
%\end{remark}

\begin{remark}
The sections for the node can be understood from their complex analytic analogues. The node corresponds to the map of analytic spaces 
\[f_t\colon (\C^*)^2\to \C^*,\ (z_1,z_2)\mapsto z_1z_2\]
For a small $\varepsilon>0$, the loop $t\mapsto \varepsilon\exp(2\pi i t)$ in $\C^*$ can lift to $s_j(t)$ where
\[s_1(t)=(1,\varepsilon\exp(2\pi i t)),\ s_2(t)=(\varepsilon\exp(2\pi i t),1)).\]
\end{remark}

\begin{remark} \label{r:monodromyexplicit}
The monodromy operator on the universal bundle $\cE_n$ can be interpreted as the residue of a connection, at least when $F_1$ and $F_2$ are attached to smooth points $x_1,x_2\in X(k)$. We will define $\cW_n$ to be the relative universal object for $(X_t,M_t)\to (S_t,\N_t)$ with base section over $x_1$ as in Remark~\ref{r:universalbundle}. Pick a frame $((X_t,M_t),(X_t,M_t),(\cP,L))$.
Write $(W_n,\nabla)$ for the evaluation of $\cW_n$ on the frame. Lift $x_i$ to sections $\tilde{x}_i\colon \Spf V\ps{t} \to \cP$. 
By construction, we can mandate that the morphism of unipotent vector bundles with connections on $]x_1[_{\Spf V\ps{t}}$ 
\[\tilde{x}_1^*W_n\to\tilde{x}_1^*W_0=\mathbf{1}\]
has a splitting.  
 Let $e_n\in F_1(\cW_n)$ be the image of $1$ under the above splitting,
 viewed as an element of the fiber of $W_n$ over $\tilde{x}_1(\pi)$. 

Because $\cW_n$ pulls back to the universal object $\cE_n$ by the map $i\colon (X,M)\to (X_t,M_t)$, an element $\delta\in \Pi((X,M);F_1,F_2)/\mathscr{I}^{n+1}$ can be interpreted as $\delta(e_n)\in  F_2(\cE_n)$ (and thus an element of the fiber of $W_n$ over $\tilde{x}_2(\pi)$). Now, let $\tilde{F}_j$ be the tangential basepoint (see Remark~\ref{r:tangentialbasepoint}) attached to $\tilde{x}_j$. By conjugating by the specialization and tangential paths at $\tilde{x}_1$ and $\tilde{x}_2$, we view $\delta$ as a map
$\tilde{F}_1(\cW_n)\to \tilde{F}_2(\cW_n)$.
By identifying $e_n$ with its image in $\tilde{F}_1(\cW_n)$, we can interpret $\delta$ as $\delta(e_n)\in \tilde{F}_2(\cW_n)$.
 
 Now, the image of $\gamma_u$ under the monodromy action on $\delta$, 
 \[T(s_1,s_2)(\gamma_u)(\delta)\in \pi_1^{\rig,\un}((X_t,M_t);\tilde{F}_1,\tilde{F}_2)\]
 can be described as 
 \[\left((s_{1*}\gamma_u^{-1})\delta(s_{2*}\gamma_u)\right)(e_n)\in \tilde{F}_2(\cW_n).\]
 Intuitively, the image of $\delta$ under the monodromy operator $N(s_1,s_2)$ is the derivative of this action with
 respect to $u$ at $u=0$.
 We first observe that $(s_{1*}\gamma_{u}^{-1})(e_n)=e_n$. Indeed,  
 $s_{1*}\gamma_u^{-1}$ acts trivially on $\mathbf{1}$ and 
 the base section $e_n$ is in the image of the splitting, $\tilde{x}_1^*W_0\to \tilde{x}_1^*W_n$.
 We thus need only compute $\delta(s_{2*}\gamma_u)$ which can be interpreted as 
 \[\left(\delta(s_{2*}\gamma_u)\right)(e_n)= \left(s_{2*}\gamma_u\right) \left(\delta(e_n)\right)\in \tilde{F}_2(\cW_n).\]
 By Remark~\ref{r:fiberfunctorsonlogdisc}, the derivative of $s_{2*}\gamma_u$ with respect to $u$ at $u=0$ acts on $\tilde{F}_2(\cW_n)$ as $\Res_0(\nabla)$ where $\nabla$ is the connection on $\tilde{x}_2^*W_n$. This is the usual construction of monodromy via the residue of the Gauss--Manin connection. The special case $n=1$ corresponds to the monodromy operator on log rigid cohomology.
\end{remark}

%\begin{example}
% We can apply the above remark to $\cW_1$ to obtain the action of the monodromy operation on log rigid cohomology. By Remark~\ref{r:universalbundle}, $(W_1,\nabla)$, which is the evaluation of $\cW_1$ on $(\cP,L)$ fits into an exact sequence
%\[\xymatrix{0\ar[r]&U_0=f_t^*(R^1f_{t^*}\mathbf{1})^\vee\ar[r]&W_1\ar[r]&\mathbf{1}\ar[r]&0.}\]
%Consider the case where $\delta\in \Pi((X,M))$ takes $e_1\in \tilde{F}_1(\cW_1)$ to an element
%\[\delta(e_1)\in \tilde{F}_2(\cU_0)\cong H^1_{\rig}((X,M))^\vee\]
%Then $N\delta$ corresponds to $\Res_0(\nabla)\delta(e_1)\in \tilde{F}_2(\cU_0)$. Because $U_0$ is the pullback by 
%$f_t^*$ of $(R^1f_{t*}\mathbf{1})^\vee$, this is just the dual of the usual monodromy operator acting on $H^1_{\rig}((X,M))$.
%\end{example}

\begin{remark} \label{r:dgmonodromy}
Analogous to the arguments involving the Frobenius operator in Remark~\ref{r:dgfrob}, one can identify the Deligne--Goncharov monodromy operator with the monodromy operator constructed here. Indeed, $\mathscr{W}^\vee_n$ is constructed from higher pushforwards in hypercohomology. The monodromy operator on hypercohomology is obtained by the residue construction as in cohomology, and thus agrees with our approach.
\end{remark}

\subsection{Monodromy action on good reduction curves}

The monodromy action is trivial if the underlying scheme of the weak log curve is smooth.

\begin{lemma} \label{l:trivialmonodromy} Let $(X,M)$ be a weak log curve whose underlying scheme is a smooth curve. Let $F_1,F_2$ be fiber functors attached to smooth points, punctures, or annular points anchored on $X$. Let $s_1,s_2$ be induced sections chosen as in Definition~\ref{d:inducedsections}. Then the monodromy action on $\pi^{\rig,\un}_1((X,M);F_1,F_2)$ is the identity.
\end{lemma}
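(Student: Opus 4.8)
Let $(X,M)$ be a weak log curve whose underlying scheme is smooth. Let $F_1,F_2$ be fiber functors attached to smooth points, punctures, or annular points anchored on $X$, with induced sections $s_1,s_2$ as in Definition~\ref{d:inducedsections}. Then the monodromy action on $\pi_1^{\rig,\un}((X,M);F_1,F_2)$ is the identity.

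Let me sketch a proof.

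=== BEGIN PROOF SKETCH ===

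The plan is to show that the conjugation action defining $T(s_1,s_2)$ lands inside the subgroup $i_*\pi_1^{\rig,\un}((X,M);F_1,F_2)$ without actually moving any element, by reducing to the good reduction structure of $(X,M)$. The key observation is that when $X$ is smooth, the log dual graph $\Gamma$ is a single vertex, so the specialization machinery trivializes and the homotopy exact sequence of Proposition~\ref{p:homotopyexactsequence} can be analyzed very explicitly.

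First I would reduce to the case $F_1 = F_2$, since the monodromy action is transported between torsors equivariantly (a $\pi_1$-equivariant map of torsors is determined by its behavior on the structure group), so it suffices to verify that $T(s_1,s_1)$ acts trivially on the group $\pi_1^{\rig,\un}((X,M),F_1)$. Next, unwinding Definition~\ref{defn:monodromy-operator}, the action of $\gamma_u$ sends $i_*\delta \in \pi_1^{\rig,\un,\varphi}((X_t,M_t),F_1)$ to $(s_{1*}\gamma_u^{-1})(i_*\delta)(s_{1*}\gamma_u)$; triviality of the action means this conjugate equals $i_*\delta$ for all $u$, i.e.\ that the image of $i_*$ centralizes $s_{1*}\gamma_u$ in $\pi_1^{\rig,\un,\varphi}((X_t,M_t),F_1)$.

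The heart of the argument is to identify $\pi_1^{\rig,\un,\varphi}((X_t,M_t),F_1)$ as a direct product using the Künneth formula (Proposition~\ref{p:kunnethpi1}) together with the homotopy exact sequence. Concretely, since $X$ is smooth and the section $\sigma$ producing $s_1$ comes from the monoid map sending $e_t \mapsto e_t$ (and the puncture/annular generators to $0$, as in cases (1), (2), (4) of Definition~\ref{d:inducedsections}), the section $s_1 = \iota_{t*}\circ\sigma$ factors through the fiber-functor-side of a splitting in which the $(S_t,\N_t)$-direction and the relative $(X,M)$-direction are genuinely independent. I would make this precise by showing that $(X_t,M_t) \to (S_t,\N_t)$, when $X$ is smooth, is étale-locally modelled on a product $(X,M)\times_{(S,\N)}(S_t,\N_t)$ via the $\pi$-$t$ base change, so that the homotopy exact sequence of Proposition~\ref{p:homotopyexactsequence} splits and the resulting semidirect product
\[
\pi_1^{\rig,\un,\varphi}((X_t,M_t),F_1) \cong \pi_1^{\rig,\un}((X,M),F_1) \rtimes \pi_1^{\rig,\un,\varphi}((S_t,\N_t),F_1)
\]
is in fact a direct product. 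The triviality of the monodromy action is then precisely the statement that the action of $\pi_1^{\rig,\un,\varphi}((S_t,\N_t),F_1)$ on the kernel is trivial, i.e.\ that the extension is central (indeed split as a product). The main obstacle, and the step requiring genuine care, is establishing that the specific induced sections $s_j$ of Definition~\ref{d:inducedsections} realize this product splitting compatibly with the fiber functors at the chosen basepoints; I expect this to follow from the explicit description of $\pi_1$ of log points in Example~\ref{e:pi1t-annulus} and Lemma~\ref{l:pi1logpoint}, where one checks directly that $\sigma$ lands in the factor commuting with the relative fundamental group. For the annular-point case (4) one must verify that the choice $f_1\mapsto 0,\ f_2\mapsto e_t$ does not introduce a nontrivial commutator with the relative loops; this is where Lemma~\ref{l:aphomotopyequivalence} (replacing annular points by punctures) can be invoked to transport the computation to the puncture case, where the vanishing is transparent because the monoid generator maps to $0$.

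=== END PROOF SKETCH ===
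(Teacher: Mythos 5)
Your proposal follows essentially the same route as the paper's proof: when only smooth points and punctures are present, the $\pi$-$t$ base change coincides with the usual base change (Remark~\ref{r:usualbasechange}), so Proposition~\ref{p:kunnethpi1} gives a direct product decomposition of $\pi_1^{\rig,\un}((X_t,M_t);F_{1,t},F_{2,t})$ in which the induced sections land in the $(S_t,\N_t)$-factor, making the conjugation visibly trivial; annular points are then handled by passing to punctures via Lemma~\ref{l:aphomotopyequivalence}, exactly as in the paper.

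Two caveats are worth fixing. First, your opening reduction to $F_1=F_2$ is not valid as justified: an equivariant automorphism of a torsor is \emph{not} determined by the induced action on the structure group (any translation of the torsor is equivariant and can act nontrivially while the group action is trivial), so triviality of $T(s_1,s_1)$ on $\pi_1^{\rig,\un}((X,M),F_1)$ does not formally yield triviality of $T(s_1,s_2)$ on the torsor. This step is also unnecessary, since Proposition~\ref{p:kunnethpi1} is stated for fundamental torsors, so the product argument applies directly to $\pi_1^{\rig,\un}((X,M);F_1,F_2)$. Second, ``\'etale-locally modelled on a product'' is too weak a statement to split the homotopy exact sequence of Proposition~\ref{p:homotopyexactsequence} or to invoke K\"unneth; moreover, at an annular point the $\pi$-$t$ base change differs from the usual product even \'etale-locally. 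The correct order of operations is the one you reach at the end: first replace annular points by punctures (Lemma~\ref{l:aphomotopyequivalence}, using $f_*F_{\{f_1\}}=F_{\{f\}}$), after which $(X_t,M_t)$ is \emph{globally} isomorphic to $(X,M)\times_{(S,\N)}(S_t,\N_t)$, and the induced sections for smooth points and punctures are themselves obtained by base change (as noted after Definition~\ref{d:inducedsections}); then no further commutator verification at log points is needed.
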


\begin{proof} 
We begin with the case where there are no annular points on $(X,M)$. Then the $\pi$-$t$ base-change $(X_t,M_t)$ is isomorphic to the usual base-change $(X,M)\times_{(S,\N)} (S_t,\N_t)$ as in Remark~\ref{r:usualbasechange}. The sections $s_1,s_2$ are also obtained by base-change.
Since, by Proposition~\ref{p:kunnethpi1}
\[\pi^{\rig,\un}_1((X_t,M_t);F_{1},F_{2})\cong \pi^{\rig,\un}_1((X,M);F_1,F_2)\times \pi^{\rig,\un}_1((S_t,\N_t),F_1),\]
the monodromy action is trivial.

Given a weak log curve $(X,M)$ over $(S,\N)$, let $f\colon (X,M)\to (X,M')$ be the  inclusion of weak log curves obtained by replacing annular points with punctures as in Remark~\ref{r:modifylogstructure}. Because $f_*F_{\{f_1\}}=F_{\{f\}}$ for log basepoints at an annular point, the result follows from  Lemma~\ref{l:aphomotopyequivalence}.
\end{proof}

\subsection{Monodromy action on a node} \label{ss:monodromynode}

\begin{remark}
For the case of a node, $(x,\overline{\M}_2)$, there are two natural sections corresponding to $B_1=\{f_1\}$ and $B_2=\{f_2\}$,
\[
s_1=\left[
\begin{aligned}
f_1&\mapsto 0\\
f_2&\mapsto e_t\\
e_t&\mapsto e_t
\end{aligned}\right],\quad 
s_2=\left[
\begin{aligned}
f_1&\mapsto e_t\\
f_2&\mapsto 0\\
e_t&\mapsto e_t
\end{aligned}\right].\]
Then $s_{i*}\gamma_s$ is given by
\[
s_{1*}\gamma_u=\left[
\begin{aligned}
\Log(x_1)&\mapsto \Log(x_1)\\
\Log(x_2)&\mapsto \Log(x_2)+u\\
\Log(t)&\mapsto \Log(t)+u
\end{aligned}\right],\quad 
s_{2*}\gamma_u=\left[
\begin{aligned}
\Log(x_1)&\mapsto \Log(x_1)+u\\
\Log(x_2)&\mapsto \Log(x_2)\\
\Log(t)&\mapsto \Log(t)+u
\end{aligned}\right].\]
In the language of Example~\ref{e:pi1t-annulus},
\[s_{1*}\gamma_u=\gamma_{1,(0,u)},\quad s_{2*}\gamma_u=\gamma_{2,(0,u)}.\]
\end{remark}

\begin{lemma} The monodromy action of $\gamma_s$ on $\pi^{\un}_1((x,\overline{\M}_2);F_{\{f_1\}},F_{\{f_2\}})_{\ell}$ 
is given by 
\[((T(s_1,s_2)(\gamma_u))(\delta_v)=\delta_{u+v}\]
where $\delta_v$ is defined in Example~\ref{e:pi1annulus}.
\end{lemma}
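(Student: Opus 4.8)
The plan is to transport the problem to the explicit torsor arithmetic on the base-changed node $(x,\overline{M}_{2,t})$ recorded in Example~\ref{e:pi1t-annulus}, where both the sections $s_1,s_2$ and the image path $i_*\delta_v$ can be written in closed form. By Definition~\ref{defn:monodromy-operator} the monodromy action is characterized by the identity
\[i_*\big((T(s_1,s_2)(\gamma_u))(\delta_v)\big)=(s_{1*}\gamma_u^{-1})\,(i_*\delta_v)\,(s_{2*}\gamma_u),\]
so it suffices to evaluate the right-hand side in $\pi_1^{\rig,\un,\varphi}((x,\overline{M}_{2,t});F_{\{f_1,e_t\}},F_{\{f_2,e_t\}})_\ell$ and recognize the answer as $i_*\delta_{u+v}$. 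Since $i_*$ is a closed embedding by the homotopy exact sequence for log points (Lemma~\ref{l:fundamental-exact-sequence} together with Corollary~\ref{c:exactsequenceoftorsors}), this determines $(T(s_1,s_2)(\gamma_u))(\delta_v)$ uniquely.

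The first step is to identify $i_*\delta_v$. Applying functoriality in Lemma~\ref{l:pi1logpoint} to the monoid map $\overline{M}_{2,t}\to\overline{M}_2$ underlying $i$ (which sends $e_t\mapsto e_\pi=f_1+f_2$ and fixes $f_1,f_2$), one reads off that $i_*\delta_v=\delta_{(v,0)}$ in the notation of Example~\ref{e:pi1t-annulus}; equivalently, $\delta_{(v,0)}$ is precisely the element of the kernel of $f_{t*}$ that restricts to $\delta_v$ under $t\mapsto\pi$, $\Log(t)\mapsto\ell$. The second step simply records, from the remark preceding the lemma, that $s_{1*}\gamma_u=\gamma_{1,(0,u)}$ and $s_{2*}\gamma_u=\gamma_{2,(0,u)}$, whence $s_{1*}\gamma_u^{-1}=\gamma_{1,(0,-u)}$ because the fundamental group of a log point is a vector group (Lemma~\ref{l:pi1logpoint}). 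All three factors lie in the image of the ordinary (non-$\varphi$) fundamental group, which includes into the $F$-isocrystal version by Remark~\ref{r:phiinclusion}, so the product may be computed using the explicit composition law of Example~\ref{e:pi1t-annulus}.

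It then remains to substitute into the product formula
\[\gamma_{1,(u_1,u_{t_1})}\,\delta_{(u',u'_t)}\,\gamma_{2,(u_2,u_{t_2})}=\delta_{(u'+u_1-u_2+u_{t_2},\,u'_t+u_{t_1}+u_{t_2})}\]
with $(u_1,u_{t_1})=(0,-u)$, $(u',u'_t)=(v,0)$, and $(u_2,u_{t_2})=(0,u)$. The first coordinate becomes $v+0-0+u=u+v$ and the second becomes $0+(-u)+u=0$, so the product is $\delta_{(u+v,0)}=i_*\delta_{u+v}$. Injectivity of $i_*$ then gives $(T(s_1,s_2)(\gamma_u))(\delta_v)=\delta_{u+v}$, as claimed. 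The only genuinely delicate point is the bookkeeping in the first two steps: one must check that $i_*$ lands in the slice $u_t=0$ rather than in some other lift, and that the left and right actions are correctly matched to the source fiber functor $F_{\{f_1,e_t\}}$ and the target fiber functor $F_{\{f_2,e_t\}}$; once these are pinned down, the remaining arithmetic is routine.
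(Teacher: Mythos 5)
Your proposal is correct and follows essentially the same route as the paper: identify $i_*\delta_v=\delta_{(v,0)}$, write the monodromy action as $(s_{1*}\gamma_u^{-1})\,\delta_{(v,0)}\,(s_{2*}\gamma_u)=\gamma_{1,(0,-u)}\,\delta_{(v,0)}\,\gamma_{2,(0,u)}$, and evaluate via the composition law of Example~\ref{e:pi1t-annulus} to get $\delta_{(u+v,0)}=i_*\delta_{u+v}$. The paper's proof is just this three-line computation; your additional justifications (injectivity of $i_*$ via the homotopy exact sequence, the kernel/specialization characterization of $\delta_{(v,0)}$) are points the paper leaves implicit.
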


\begin{proof}
Because $i_*(\delta_v)=\delta_{(v,0)}$ (where $i\colon (x,\overline{\M}_2)\to (x,\overline{\M}_{2,t})$), by Example~\ref{e:pi1t-annulus},
\[   (T(s_1,s_2)(\gamma_u))(\delta_v)=(s_{1*}\gamma_{-s})\delta_{(v,0)}(s_{2*}\gamma_{u})
=\gamma_{1,(0,-u)}\delta_{(v,0)}\gamma_{2,(0,u)}
=i_*\delta_{u+v}.\]
\end{proof}

The action of the monodromy operator can be described by differentiating the above with respect to $s$. We interpret $\delta_{v}$ as the substitution on  $F_{\{f_1\}}(\cE)$ (where elements are considered as $\Log$-analytic functions) given by 
\[\Log(x_1)\mapsto \ell-\Log(x_2)+v,\]
as in Example~\ref{e:pi1annulus} and write
\[\gamma_u=\exp(uN_0).\]
By matching powers of $u$, we immediately obtain the following:

\begin{lemma} \label{l:monodromyannulus}
Let $D$ be the endomorphism on $F_{\{f_1\}}(\cE)$ (considered as tuples of $\Log$-analytic functions) acting component-wise as the derivation induced by
\[x_1\mapsto 0,\quad \Log(x_1)\mapsto 1.\]
Then, the element $N^k\delta_0\in\Pi((x,\overline{\M}_2);F_{\{f_1\}},F_{\{f_2\}})_{\ell}$ is given by $\delta_0\circ D^k$ (i.e.,~$D^k$ followed by the substitution $\Log(x_1)\mapsto \ell-\Log(x_2))$.
\end{lemma}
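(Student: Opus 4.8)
The plan is to deduce the formula directly from the preceding lemma (which computes $T(s_1,s_2)(\gamma_u)(\delta_v)=\delta_{u+v}$) together with the definition of $N$ as the infinitesimal generator of the monodromy action. First I would record that, by Definition~\ref{defn:monodromy-operator}, the monodromy operator $N$ is the image of $N_0=\log(\gamma_u)/u$ under the differentiated action $T(s_1,s_2)$. Since $\gamma_u=\exp(uN_0)$ and $T(s_1,s_2)$ is a homomorphism from the one-parameter group $\pi_1((S_t,\N_t),F_j)\cong\Ga$ into the automorphisms of the completed module, the whole one-parameter family acts as $T(s_1,s_2)(\gamma_u)=\exp(uN)$; this identity is to be read at each finite truncation $\Pi/\mathscr{I}^{k+1}$, where $N$ is nilpotent, and then passed to the inverse limit. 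Combining this with the preceding lemma evaluated at $v=0$, namely $T(s_1,s_2)(\gamma_u)(\delta_0)=\delta_u$, yields the key relation $\exp(uN)\delta_0=\delta_u$ in $\Pi((x,\overline{M}_2);F_{\{f_1\}},F_{\{f_2\}})$.

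Next I would expand the right-hand side in powers of $u$. Viewing $\delta_u$ as the substitution $\Log(x_1)\mapsto \ell-\Log(x_2)+u$ on $F_{\{f_1\}}(\cE)$ as in Example~\ref{e:pi1annulus}, I would factor it as the shift $\Log(x_1)\mapsto\Log(x_1)+u$ followed by $\delta_0$. The point is that $D$, being the derivation with $D(x_1)=0$ and $D(\Log(x_1))=1$, is exactly formal differentiation $\partial/\partial\Log(x_1)$ on the presentation of a $\Log$-analytic section as a polynomial in $\Log(x_1)$ with analytic coefficients in $x_1$ (those coefficients are annihilated by $D$). By the formal Taylor expansion, the shift is $\exp(uD)$, so that $\delta_u=\delta_0\circ\exp(uD)=\sum_{k\geq 0}\tfrac{u^k}{k!}\,\delta_0\circ D^k$, where composing the two substitutions indeed returns $\Log(x_1)\mapsto\ell-\Log(x_2)+u$.

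Finally I would match coefficients of $u^k$ in the two expansions $\exp(uN)\delta_0=\sum_{k\geq 0}\tfrac{u^k}{k!}N^k\delta_0$ and $\delta_u=\sum_{k\geq 0}\tfrac{u^k}{k!}\,\delta_0\circ D^k$, both of which are genuine power series in $u$ that terminate at each truncation level, with coefficients in the completed module; this gives $N^k\delta_0=\delta_0\circ D^k$, as desired. The steps requiring care, rather than any genuine obstacle, are the justification that the monodromy action exponentiates $N$ (handled levelwise by nilpotence) and the identification of $D$ with formal differentiation in $\Log(x_1)$. The substitution bookkeeping establishing $\delta_u=\delta_0\circ\exp(uD)$ is the computational heart of the argument, but it is a one-line consequence of composing the two substitutions, so I expect the proof to be short once the exponential relation $\exp(uN)\delta_0=\delta_u$ is in place.
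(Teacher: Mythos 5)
Your proposal is correct and follows exactly the paper's route: the paper likewise combines the preceding lemma $T(s_1,s_2)(\gamma_u)(\delta_0)=\delta_u$ with the identity $\gamma_u=\exp(uN_0)$, interprets $\delta_u$ as the substitution $\Log(x_1)\mapsto\ell-\Log(x_2)+u$, and obtains the result "by matching powers of $u$." Your write-up merely makes explicit two points the paper leaves implicit — the levelwise-nilpotence justification that the one-parameter action exponentiates $N$, and the identification of $D$ with $\partial/\partial\Log(x_1)$ giving $\delta_u=\delta_0\circ\exp(uD)$ — so there is nothing to add.
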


It is worthwhile to describe the action of $D$ using the tangential basepoint $\tilde{F}_{\{f_1\}}$ on $(x,\overline{\M}_2)$ and arguments from Remark~\ref{r:fiberfunctorsonlogdisc}.  By virtue of the proof of Lemma~\ref{l:loganalyticsections}, we can evaluate $\cE$ in a frame around $(x,\overline{\M}_2)$. This yields a bundle $E\cong \cO^m$ equipped with unipotent connection $\nabla=d-\omega$  where $\omega$ is a matrix of log $1$-forms whose coefficients are analytic functions (expressed as power series) in $x_1$. Write $\Res_0(\nabla)\in \End(K^m)$ for the coefficient of $\frac{dx_1}{x_1}$ in $\omega$. We have the following straightforward lemma obtained by conjugating by the tangential path:

\begin{lemma} \label{l:monodromyresidue}
  Let $\cE$ be a unipotent isocrystal of index of unipotency $n$. Then, the action of $D^k$ on $\tilde{F}_{\{f_1\}}(\cE)$ is given by multiplying by $\Res_0(\nabla)^k$. In particular, if $k>n$, then $D^k$ acts as zero. 
\end{lemma}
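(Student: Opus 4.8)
The plan is to mirror the computation of Remark~\ref{r:fiberfunctorsonlogdisc} essentially verbatim, with the disc coordinate $t$ replaced by the annulus coordinate $x_1$, and then to extract the vanishing statement from the index of unipotency. First I would evaluate $\cE$ on the frame around $(x,\overline{M}_2)$ to obtain $(E,\nabla)$ with $E\cong\cO^m$ and $\nabla=d-\omega$, where $\omega$ is a matrix of log $1$-forms whose coefficients are power series in $x_1$, and set $\Res_0(\nabla)$ to be the value at $x_1=0$ of the coefficient of $\frac{dx_1}{x_1}$ in $\omega$.

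Next I would put $(E,\nabla)$ into standard form exactly as in Remark~\ref{r:fiberfunctorsonlogdisc}: let $\underline{E}_0$ be the trivial bundle with fiber $E_0=\tilde{F}_{\{f_1\}}(\cE)$ equipped with the connection $\nabla_0$ whose connection matrix is $\Res_0(\nabla)\frac{dx_1}{x_1}$. By the proof of Lemma~\ref{l:loganalyticsections} there is an isomorphism $(E,\nabla)\cong(\underline{E}_0,\nabla_0)$ restricting to the identity on the fiber over the origin, so it suffices to compute on $(\underline{E}_0,\nabla_0)$. The horizontal $\Log$-analytic sections of $(\underline{E}_0,\nabla_0)$ are $\exp(\Res_0(\nabla)\Log(x_1))e$ for $e\in E_0$, where exponentiation makes sense because $\Res_0(\nabla)$ is nilpotent. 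Iterating the same formal differentiation used in the remark gives
\[
D^k\big(\exp(\Res_0(\nabla)\Log(x_1))e\big)=\Res_0(\nabla)^k\exp(\Res_0(\nabla)\Log(x_1))e,
\]
and since the tangential path ($x_1\mapsto0$, $\Log(x_1)\mapsto0$) identifies $\exp(\Res_0(\nabla)\Log(x_1))e$ with $e\in E_0=\tilde{F}_{\{f_1\}}(\cE)$, conjugating by it shows that $D^k$ acts on $\tilde{F}_{\{f_1\}}(\cE)$ as multiplication by $\Res_0(\nabla)^k$.

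For the vanishing when $k>n$, I would pin down the nilpotence order of $\Res_0(\nabla)$. Choosing a trivialization adapted to a filtration $\cE=\cE^0\supset\cdots\supset\cE^{n+1}=0$ witnessing the index of unipotency $n$, the fact that $\nabla$ preserves the filtration and induces the trivial connection $d$ on each graded piece forces $\omega$, and hence $\Res_0(\nabla)$, to be strictly block-lower-triangular with $n+1$ blocks; thus $\Res_0(\nabla)^{n+1}=0$, so $\Res_0(\nabla)^k=0$ and $D^k=0$ for $k>n$. Nilpotence is conjugation-invariant, so this holds independently of the chosen trivialization.

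The main point is really the reduction to standard form, which is already carried out in Remark~\ref{r:fiberfunctorsonlogdisc}; the only genuinely new ingredients are the iteration $D^k$, which is immediate from the single-step formula, and the identification of the nilpotence degree of $\Res_0(\nabla)$ with the index of unipotency, which is a routine consequence of filtration-compatibility. I do not anticipate a serious obstacle. The one subtlety to watch is that $x_1=0$ is not a point of the annular tube, so $\Res_0(\nabla)$ and the fiber $E_0=\tilde{F}_{\{f_1\}}(\cE)$ must be read as the formal residue and formal fiber supplied by the $\Log$-analytic and tangential formalism rather than as honest evaluations, exactly as in the disc case of the preceding remark.
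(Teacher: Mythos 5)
Your proposal is correct and takes exactly the route the paper intends: the paper gives no written proof of Lemma~\ref{l:monodromyresidue}, describing it as a straightforward consequence of conjugating by the tangential path together with the standard-form computation of Remark~\ref{r:fiberfunctorsonlogdisc}, and that is precisely what you carry out with the disc coordinate $t$ replaced by the annulus coordinate $x_1$. The one detail the paper leaves entirely implicit --- that the index of unipotency $n$ forces $\Res_0(\nabla)$ to be strictly block-lower-triangular for a filtration with $n+1$ steps, hence $\Res_0(\nabla)^{n+1}=0$ --- is supplied correctly in your final paragraph.
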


\subsection{Frobenius and monodromy-weight filtrations}
We now define the monodromy filtration following \cite{deligne1980conjecture}.  Namely, we set $M_\bullet$ to be the unique filtration on $\Pi((X, M); F_1, F_2)$ such that $NM_i\subset NM_{i-2}$ and, for all $j,k$, $N^k$ induces an isomorphism $$N^k: \on{gr}^M_{j+k}\on{gr}^W_j \Pi((X, M); F_1, F_2)\overset{\cong}{\to} \on{gr}^M_{j-k}\on{gr}^W_j \Pi((X, M); F_1, F_2).$$
Uniqueness of this filtration is guaranteed by \cite[1.6.13]{deligne1980conjecture}; existence is a consequence of the following theorem which appears as \cite[Theorem 1.3]{betts-litt}.

\begin{theorem}[The weight-monodromy conjecture for $\Pi$] \label{t:weightmonodromy}
Let $F_1$ and $F_2$ be fiber functors attached to log points.
A monodromy filtration on $\Pi((X, M); F_1,F_2)$ exists, and moreover the monodromy filtration on $\on{gr}_j^W\Pi((X, M); F_1,F_2)$ is split by its decomposition into generalized eigenspaces of $\varphi$ with eigenvalues given by $q$-Weil numbers of a particular weight.
\end{theorem}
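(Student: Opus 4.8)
The plan is to reduce the statement to the weight--monodromy structure on $H^1_{\rig}((X,M))$, where it has already been made explicit in Section~\ref{ss:cohomologyoflogcurves}, and then to propagate it to all of $\Pi$ using the compatibility of $(\varphi,N)$ with the Hopf (equivalently, Lie) structure. It suffices to treat the group case $F_1=F_2$: since $\Pi((X,M);F_1,F_2)$ is a free rank-one $\Pi((X,M),F_1)$--$\Pi((X,M),F_2)$-bimodule, the filtrations $W$ and $M$ and the operators $\varphi,N$ transport to the torsor. I would work with the Lie algebra $\mathfrak{g}$ of $\pi_1^{\rig,\un}((X,M),F_1)_\ell$, whose weight-graded $\on{gr}^W\mathfrak{g}$ is generated in its lowest piece by $\mathfrak{g}^{\on{ab}}\cong H^1_{\rig}((X,M))^\vee$. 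Here $\varphi$ is an automorphism, $N$ a derivation, and they satisfy $\varphi N=qN\varphi$ (Remark~\ref{r:froblogpoint} and Definition~\ref{defn:monodromy-operator}: $\varphi$ scales the disc parameter $\ell$ by $q$, hence scales $N_0$ by $q$), so $N$ shifts the Frobenius weight by a fixed even amount, compatibly with lowering the monodromy index by $2$. Because both operators are determined by their restriction to the generators, it is enough to establish the statement on $H^1_{\rig}$ and then to show it extends.

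On $H^1_{\rig}((X,M))$ I would invoke the Steenbrink--Zucker description recalled in Section~\ref{ss:cohomologyoflogcurves}: the weight filtration has graded pieces $H^1(\overline{\Gamma})$ (weight $0$), $\bigoplus_v H^1_{\rig}((X_v,M_v))$ (weight $1$), and $H^\diamond_1(\Gamma)(-1)$ (weight $2$). By purity for the rigid cohomology of the smooth proper components $X_v$ and the Tate type of the graph pieces, $\varphi$ acts with $q$-Weil eigenvalues of exactly these weights, and by \cite[Theorem~1.4]{betts-litt} (already used in Lemma~\ref{lem:specialization-abelianization}) it acts semisimply. Proposition~\ref{p:monodromycohomology} identifies the single monodromy jump $N\colon\on{gr}^W_2\to\on{gr}^W_0$ with combinatorial single integration, i.e.~the cycle pairing, which is a perfect pairing for a proper graph; this exhibits $N^1$ as the hard-Lefschetz isomorphism and shows that on $H^1_{\rig}$ the weight and monodromy filtrations coincide, split by the $\varphi$-eigenspaces.

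To upgrade from generators to all of $\on{gr}^W\mathfrak{g}$ I would use semisimplicity of $\varphi$ to produce, on each $\on{gr}^W_j$, an $\on{sl}_2$-triple $(N,H,N^{+})$ in which $H$ is the Frobenius-weight grading operator and $N^{+}$ is the Lefschetz dual supplied by the isomorphism above on the generators. Since the bracket and $N$ are an algebra map and a derivation satisfying the defining relations on generators, they are morphisms of $\on{sl}_2$-modules, so $\on{gr}^W\mathfrak{g}$ decomposes into isotypic components. The relative monodromy filtration $M(N,W)$ is then the Frobenius-weight filtration; its uniqueness is \cite[1.6.13]{deligne1980conjecture}, its existence follows from this construction, and the required isomorphisms $N^k\colon\on{gr}^M_{j+k}\on{gr}^W_j\xrightarrow{\sim}\on{gr}^M_{j-k}\on{gr}^W_j$ are precisely the $\on{sl}_2$ lowering isomorphisms. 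The splitting by $\varphi$-eigenspaces is then immediate, since $M$ is by definition the Frobenius-weight filtration and $\varphi$ is semisimple; passing to the inverse limit over truncations $\Pi/\mathscr{I}^{n+1}$ yields the statement for $\Pi$.

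The hard part will be the propagation in the third step: showing that the $\on{sl}_2$-action (equivalently, that the Lefschetz dual $N^{+}$ interacts correctly with the bracket and the multiplicative filtration $W$) descends to the pro-nilpotent setting and is compatible with all truncations. This is where Frobenius semisimplicity is indispensable and where the genuine content of \cite[Theorem~1.3]{betts-litt} lies. An alternative that sidesteps the explicit $\on{sl}_2$ bookkeeping is to use the Deligne--Goncharov realization (Theorem~\ref{t:deligne-goncharov} and Remarks~\ref{r:dgfrob}, \ref{r:dgmonodromy}) to present each $(\Pi((X,M);F_1,F_2)/\mathscr{I}^{n+1})^\vee$ as the hypercohomology of the diagram $P^{n+1}_{(X,M),a,b}$, whose terms are rigid cohomologies of fibered products of $(X,M)$; weight--monodromy for these follows from the curve case by Künneth, and one checks it is preserved by the spectral sequence computing the hypercohomology, again using semisimplicity of $\varphi$ to guarantee the strictness needed at each stage.
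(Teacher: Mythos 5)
Your reduction to the group case $F_1=F_2$ contains a genuine gap, and it sits exactly where the real content of the paper's proof lies. You claim that because $\Pi((X,M);F_1,F_2)$ is a free rank-one bimodule, "the filtrations $W$ and $M$ and the operators $\varphi,N$ transport to the torsor." But a trivialization $x\mapsto x\cdot p$ intertwines these structures only if the grouplike element $p$ can be chosen compatibly with \emph{all} of them: $\varphi$-equivariance forces $p$ to be a Frobenius-invariant path, and then, since $N$ is a derivation on composable paths, $N(xp)=(Nx)p+x(Np)$, so $N$-equivariance forces $Np=0$. No such $p$ exists in general: by Proposition~\ref{p:combinatorialmonodromy}, if $p$ is the Frobenius-invariant lift of a graph path $\overline{p}$ from $\overline{a}$ to $\overline{b}$, then already modulo $\mathscr{I}^2$ the element $Np$ acts through $\cint_{\overline{p}}\eta_1$, which is nonzero for suitable tropical $1$-forms whenever $\overline{a}\neq\overline{b}$ (or $\overline{p}$ winds around a cycle); even the Vologodsky path only satisfies $N^rp_{\mathrm{Vol}}\in W_{-r-1}$, not $Np_{\mathrm{Vol}}=0$. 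So weight--monodromy for the group does not formally transport to the torsor, and one cannot dismiss the mixed-basepoint case in one line. The paper handles precisely this point by a different reduction: it first proves the statement when one of the two fiber functors comes from a smooth point $\tilde{x}$ of a lift $\mathscr{X}$, by restricting the Deligne--Goncharov diagram (Theorem~\ref{t:deligne-goncharov}, Remark~\ref{r:delignegoncharovobject}) to obtain a lisse sheaf on $\mathscr{X}$ and quoting the proof of \cite[Theorem 3.3]{betts-litt} verbatim, and then writes $\Pi((X,M);F_1,F_2)$ as a quotient of $\Pi((X,M);F_1,\overline{x})\otimes\Pi((X,M);\overline{x},F_2)$ and invokes the stability of the weight--monodromy property under tensor products and quotients \cite[Theorem 2.10]{betts-litt}. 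Your argument needs this (or an equivalent) tensor-and-quotient mechanism in place of the bimodule transport, together with the one-smooth-basepoint case as input.

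Two further comments. Your group-case propagation is a plausible sketch, and your commutation relation $\varphi N=qN\varphi$ is indeed the correct one in this paper's conventions (conjugation by $\varphi$ scales $N_0$ by $q$). But the $\on{sl}_2$ step still requires checking that the cup-product relation in $\on{gr}^W\mathfrak{g}$ is $\on{sl}_2$-stable, and, more seriously, producing the relative monodromy filtration on the filtered object $\Pi/\mathscr{I}^{n+1}$ itself rather than on $\on{gr}^W$ (existence of relative monodromy filtrations is not formal). You flag this as the hard part and lean on \cite[Theorem 1.4]{betts-litt} for semisimplicity; your fallback route via Deligne--Goncharov, K\"unneth, and strictness is essentially the mechanism of \cite[Theorem 3.3]{betts-litt} that the paper cites. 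So in the end your proposal does not avoid the external input that the paper's proof consists of --- it re-derives part of it --- while the one step the paper genuinely adds (the reduction of arbitrary log basepoints to smooth ones) is the step your proposal gets wrong.
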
   
\begin{proof}  
The argument is essentially the same as in \cite{betts-litt}; indeed, the argument there is essentially a formal deduction from the weight-monodromy conjecture for $H^1$. While  the result in that paper is stated for fiber functors coming from smooth points $x_1$ and $x_2$, the general case can be reduced to this one. We explain the reduction now.

In \cite{betts-litt}, $\Pi((X, M); x_1, x_2)$ is defined by applying the functor $D_{\on{pst}}$ to the $\mathbb{Q}_p$-pro-unipotent completion of the \'etale fundamental group of the generic fiber of a smooth lift of $X$ to $W(k)$; the equivalence with our definition follows from the construction of Deligne-Goncharov (Section \ref{ss:delignegoncharov}) and the comparison between \'etale and log-crystalline \cite{tsuji1999p} and log-crystalline and log rigid cohomology \cite{Shiho:relative}.

Let $\cX$ be the generic fiber of a smooth lift of $X$ to $W(k)$. First, we give the argument when either $F_1$ or $F_2$ arises from a smooth point $\tilde{x}$ of $\cX$.  We assume $F_1$ arises from a smooth point; the case where $F_2$ arises form a smooth point is identical. In this case we may restrict the diagram in schemes constructed in Section \ref{ss:delignegoncharov} to $\cX\times \{\tilde{x}\}$, yielding a simplicial scheme over $\cX$. (Explicitly, this diagram sends $J$ to $\pi_0^{-1}(x),$ where $\pi_0$ is defined as in Section \ref{ss:delignegoncharov}.) By Remark~\ref{r:delignegoncharovobject}, the $p$-adic \'etale cohomology of this diagram yields a lisse sheaf on $\cX$, which gives by Theorem \ref{t:deligne-goncharov}, upon applying $F_2\circ D_{pst}$, the algebra $\Pi((X,M), \overline x, F_2)/\mathscr{I}^n)^\vee$. 
Now the proof of \cite[Theorem 3.3]{betts-litt} applies verbatim.

Now choose an arbitrary point $\overline x$ of $\cX$; by the previous paragraph, we have that $\Pi((X,M); F_1, \overline x)$ and $\Pi((X, M); \overline x, F_2)$ satisfy the statement of the theorem. But $\Pi((X, M); F_1, F_2)$ is a quotient of $\Pi((X, M); F_1, \overline x)\otimes \Pi(X, M); \overline x, F_2)$, and the conclusion of the theorem is preserved by tensor products and quotients, by \cite[Theorem 2.10]{betts-litt}.
\end{proof}

The monodromy filtration is multiplicative in the sense that $$M_i\cdot M_j\subset M_{i+j}.$$

From Section \ref{ss:unipotentfundamentalgroups} we have a canonical isomorphism for the augmentation ideal $\mathscr{I}\subset \Pi((X,M);F_1,F_2)$,
\[\mathscr{I}/\mathscr{I}^{2} \cong H^1_{\rig}((X,M))^\vee;\]
this identification is $N$-equivariant and composing with the dual of the identification from Proposition \ref{p:monodromycohomology} gives the following statement.

\begin{proposition}\label{prop:harmonic-forms}
There is a canonical identification 
\[M_{-2}(\mathscr{I}(F_1, F_2)/\mathscr{I}(F_1,F_2)^{2}) = H_1^\diamond(\Gamma)^\vee.\]
\end{proposition}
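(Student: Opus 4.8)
The plan is to reduce the statement to the weight--monodromy computation on $H^1_{\rig}((X,M))$ carried out in Subsection~\ref{ss:cohomologyoflogcurves} and then dualize. The starting point is the canonical $N$-equivariant isomorphism $\mathscr{I}/\mathscr{I}^2 \cong H^1_{\rig}((X,M))^\vee$ recorded just above, which (by the remark on the weight filtration in Subsection~\ref{ss:weightfiltration}) also carries the weight filtration on $\mathscr{I}/\mathscr{I}^2$ to the dual of the weight filtration on $H^1_{\rig}((X,M))$. Since the identification intertwines $N$ and its transpose, the monodromy filtration induced on $\mathscr{I}/\mathscr{I}^2$ is the transpose of the monodromy filtration on $H^1_{\rig}((X,M))$; first I would make this compatibility precise.

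Next I would recall the explicit shape of the monodromy filtration on $H^1_{\rig}((X,M))$. By the discussion following Proposition~\ref{p:monodromycohomology}, it is given by $M_0 = H^1(\overline{\Gamma})$, $M_1 = \ker(\rho)$, $M_2 = H^1_{\rig}((X,M))$, where $\rho$ is the residue map; it is centered at weight $1$ and in fact coincides with the weight filtration (weight--monodromy for curves). In particular the top graded piece is $\on{gr}^M_2 H^1_{\rig}((X,M)) = H^1_{\rig}((X,M))/\ker(\rho) \cong \on{im}(\rho) = \Omega^1(\Gamma)$, the isomorphism being induced by $\rho$.

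Dualizing, the transpose monodromy filtration on $\mathscr{I}/\mathscr{I}^2 = H^1_{\rig}((X,M))^\vee$ is centered at $-1$, with $\on{gr}^M_{-2}(\mathscr{I}/\mathscr{I}^2) \cong (\on{gr}^M_2 H^1_{\rig}((X,M)))^\vee$; since the filtration step $M_{-3}$ vanishes, its bottom step is the annihilator
\[ M_{-2}(\mathscr{I}/\mathscr{I}^2) = (M_1 H^1_{\rig}((X,M)))^\perp = \ker(\rho)^\perp. \]
Because $\rho$ is surjective onto $\Omega^1(\Gamma)$, the transpose $\rho^\vee$ identifies $\ker(\rho)^\perp \cong \Omega^1(\Gamma)^\vee$. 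Finally, composing with the dual of the isomorphism $\Omega^1(\Gamma) \xrightarrow{\sim} H_1^\diamond(\Gamma)$, $\eta \mapsto \sum_e \eta(e)e$, gives the asserted canonical identification $M_{-2}(\mathscr{I}/\mathscr{I}^2) \cong H_1^\diamond(\Gamma)^\vee$. This is exactly the composition with ``the dual of the identification from Proposition~\ref{p:monodromycohomology}'' mentioned above; one should keep track of the Tate twist, which is suppressed in the statement.

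The step I expect to require the most care is the first: justifying that the monodromy filtration on the mixed object $\mathscr{I}/\mathscr{I}^2$---which carries weights $-2,-1,0$ and is only a subquotient of $\Pi((X,M);F_1,F_2)$, not a single weight-graded piece---genuinely is the transpose of the absolute monodromy filtration on $H^1_{\rig}((X,M))$. This is where the $N$- and $W$-compatibility of the identification, together with the strictness supplied by the weight--monodromy theorem (Theorem~\ref{t:weightmonodromy}), must be invoked; once that compatibility is in hand, the remaining steps are the purely formal dualization carried out above.
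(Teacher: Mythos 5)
Your proof is correct and takes essentially the same route as the paper, whose entire argument is the one sentence preceding the proposition: the $N$-equivariant identification $\mathscr{I}/\mathscr{I}^2\cong H^1_{\rig}((X,M))^\vee$, composed with the dual of the identification from Proposition~\ref{p:monodromycohomology}. Your explicit dualization of the filtration $M_0=H^1(\overline{\Gamma})$, $M_1=\ker(\rho)$, $M_2=H^1_{\rig}((X,M))$ to get $M_{-2}(\mathscr{I}/\mathscr{I}^2)=\ker(\rho)^\perp\cong\Omega^1(\Gamma)^\vee\cong H_1^\diamond(\Gamma)^\vee$, together with the compatibility of the induced and dual monodromy filtrations via uniqueness, simply spells out what the paper leaves implicit.
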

Recall that in the above proposition, $\mathscr{I}(F_1,F_2)^\bullet$ is the filtration on $\Pi((X,M);F_1,F_2)$ induced by the $\mathscr{I}$-adic filtration on $\Pi((X,M), F_1)$, where $\mathscr{I}\subset \Pi((X,M), F_1)$ is the augmentation ideal.

\begin{proposition} \label{p:monodromyidentification}
Under the identification above, the multiplication map 
\[\mathscr{I}(F_1,F_1)^{\otimes n-1}\otimes\mathscr{I}(F_1,F_2)\to \mathscr{I}(F_1,F_2)^n\] induces an isomorphism
\[M_{-2n}(\mathscr{I}(F_1, F_2)^n/\mathscr{I}(F_1,F_2)^{n+1}) \cong \left(H_1^{\diamond}(\Gamma)^{\otimes n}\right)^ \vee.\]
\end{proposition}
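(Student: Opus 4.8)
The strategy is to reduce the statement to the explicit iterated-kernel description of the universal object from Subsection~\ref{ss:unipotentfundamentalgroups}, and then to exploit the purity of $H^2_{\rig}((X,M))$ so that the recursive corrections to freeness always land in weight strictly above the bottom one. First I would record the basepoint-independent identification of graded pieces. By the universal object construction, $\Pi((X,M);F_1,F_2)/\mathscr{I}^{n+1}\cong F_2(E_n)$, and the $\mathscr{I}$-adic graded pieces are identified, $N$- and weight-equivariantly, with
\[\on{gr}^n_{\mathscr{I}}\Pi((X,M);F_1,F_2)\cong (R^{(n)})^\vee,\]
where $R^{(n)}\subseteq H^1_{\rig}((X,M))^{\otimes n}$ is produced by the recursion $R^{(0)}=K$, $R^{(1)}=H^1_{\rig}((X,M))$, and $R^{(n+1)}=\ker\bigl(R^{(n)}\otimes H^1_{\rig}\to R^{(n-1)}\otimes H^2_{\rig}\bigr)$. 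Under this identification the multiplication map of the proposition induces, on associated gradeds, the standard surjection $(\mathscr{I}/\mathscr{I}^2)^{\otimes n}\to \on{gr}^n_{\mathscr{I}}$, which is dual to the inclusion $R^{(n)}\hookrightarrow (H^1_{\rig})^{\otimes n}$. Since $R^{(n)}$ does not depend on the basepoints, the base case $n=1$ provided by Proposition~\ref{prop:harmonic-forms} together with this identification reduces everything to a weight computation inside $(H^1_{\rig})^{\otimes n}$.

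The heart of the argument is the claim that the top-weight graded piece satisfies
\[\on{gr}^W_{2n}R^{(n)}\cong \bigl(\on{gr}^W_2 H^1_{\rig}((X,M))\bigr)^{\otimes n}\cong \bigl(H_1^\diamond(\Gamma)(-1)\bigr)^{\otimes n},\]
which I would prove by induction on $n$. Recall from Subsection~\ref{ss:cohomologyoflogcurves} that $H^1_{\rig}((X,M))$ carries weights $0,1,2$, that the residue map identifies $\on{gr}^W_2 H^1_{\rig}\cong H_1^\diamond(\Gamma)(-1)$, and that $H^2_{\rig}((X,M))$ is pure of weight $2$. Consequently $R^{(n-1)}\otimes H^2_{\rig}$ has top weight $2(n-1)+2=2n-2$, so $\on{gr}^W_{2n}(R^{(n-1)}\otimes H^2_{\rig})=0$ and the defining map kills the weight-$2n$ part on associated gradeds. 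Hence $\on{gr}^W_{2n}R^{(n)}=\on{gr}^W_{2n}(R^{(n-1)}\otimes H^1_{\rig})=\on{gr}^W_{2(n-1)}R^{(n-1)}\otimes\on{gr}^W_2 H^1_{\rig}$, and the inductive hypothesis finishes the claim.

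Dualizing, and noting that $-2n$ is the minimal weight occurring in $\on{gr}^n_{\mathscr{I}}$, the bottom step of the filtration equals its associated graded:
\[M_{-2n}\bigl(\on{gr}^n_{\mathscr{I}}\Pi\bigr)=\bigl(\on{gr}^W_{2n}R^{(n)}\bigr)^\vee\cong \bigl(H_1^\diamond(\Gamma)^{\otimes n}\bigr)^\vee,\]
after suppressing the Tate twist. Since $\on{gr}^W_{2n}R^{(n)}=\on{gr}^W_{2n}\bigl((H^1_{\rig})^{\otimes n}\bigr)$ is an equality rather than a proper inclusion, the dual of $R^{(n)}\hookrightarrow (H^1_{\rig})^{\otimes n}$ restricts to an isomorphism on the bottom-weight piece; this is exactly the assertion that the multiplication map restricts to an isomorphism from $M_{-2}(\mathscr{I}/\mathscr{I}^2)^{\otimes n}=\bigl(H_1^\diamond(\Gamma)^\vee\bigr)^{\otimes n}$ onto $M_{-2n}(\mathscr{I}^n/\mathscr{I}^{n+1})$. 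Finally, the weight-monodromy theorem (Theorem~\ref{t:weightmonodromy}) ensures that the monodromy filtration $M_\bullet$ agrees in its lowest step with the weight filtration used above, so that $M_{-2n}$ is genuinely this bottom-weight piece.

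I expect the main obstacle to be bookkeeping rather than conceptual: one must verify that the identification $\on{gr}^n_{\mathscr{I}}\Pi\cong (R^{(n)})^\vee$ is compatible with the weight filtration as defined in Subsection~\ref{ss:weightfiltration} (via $f_*$ and its kernel) and with the monodromy filtration, and in particular that the induced weight filtration on $R^{(n)}\subseteq (H^1_{\rig})^{\otimes n}$ is the tensor-product filtration, so that passing to the top-weight part commutes with the iterated kernels. The purity of $H^2_{\rig}((X,M))$ and the compatibility of the recursion of Subsection~\ref{ss:unipotentfundamentalgroups} with weights are precisely what drive the induction.
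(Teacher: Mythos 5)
Your core mechanism is sound and is essentially the dual of the paper's argument: the paper reduces to $F_1=F_2$, presents $\on{gr}_{\mathscr{I}}\Pi((X,M),F_1)$ as the tensor algebra on $\mathscr{I}/\mathscr{I}^2$ modulo the ideal generated by the image of $H^2_{\rig}((X,M))^\vee$ (dual to cup product), and uses purity of $H^2$ to conclude that the subalgebra generated by $M_{-2}(\mathscr{I}/\mathscr{I}^2)$ is free; you instead work on the dual side with Hadian's iterated kernels $R^{(n)}\subseteq (H^1_{\rig})^{\otimes n}$ and argue that purity of $H^2$ makes the top graded piece survive each kernel. Either route works, and both hinge on exactly the same input. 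However, there is a genuine error in your handling of the filtrations. The filtration you call $W$, for which you assert $\on{gr}^W_2 H^1_{\rig}\cong H_1^\diamond(\Gamma)(-1)$ via residues, is the paper's \emph{monodromy} filtration $M$ (this is how $M$ is defined in Subsection~\ref{ss:cohomologyoflogcurves}); it is not the weight filtration $W$ of Subsection~\ref{ss:weightfiltration}, which is built from $\ker(f_*)$ for the map $f$ deleting punctures and annular ends and which only ``sees'' the half-open edges. Your final step — ``the weight-monodromy theorem ensures that $M_\bullet$ agrees in its lowest step with the weight filtration used above'' — is false for the paper's $W$: for a proper log curve with no punctures (e.g.\ the Tate curve), $W_{-2}(\mathscr{I}/\mathscr{I}^2)=0$ while $M_{-2}(\mathscr{I}/\mathscr{I}^2)\cong H_1(\overline{\Gamma})^\vee\neq 0$ by Proposition~\ref{prop:harmonic-forms}. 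So as written, your induction computes the extremal piece of the wrong filtration.

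The repair is straightforward but must be made explicitly: run the entire induction with the monodromy filtration $M$ (equivalently, with Frobenius weights, which split $M$ by Theorem~\ref{t:weightmonodromy}; exactness of $\on{gr}^M$ on the relevant kernels then follows because all maps in the recursion are $\varphi$-equivariant and generalized-eigenspace decompositions are exact). With that substitution, $\on{gr}^M_2 H^1_{\rig}\cong H_1^\diamond(\Gamma)(-1)$, $H^2_{\rig}$ is pure of monodromy degree $2$, and your induction gives $\on{gr}^M_{2n}R^{(n)}\cong \bigl(\on{gr}^M_2 H^1_{\rig}\bigr)^{\otimes n}$, whose dual is exactly $M_{-2n}(\mathscr{I}(F_1,F_2)^n/\mathscr{I}(F_1,F_2)^{n+1})$, as needed. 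Two smaller points: (i) your weight count ``$R^{(n-1)}\otimes H^2_{\rig}$ has top weight $2(n-1)+2=2n-2$'' contains an arithmetic/indexing slip — the target of the map defining $R^{(n)}$ is $R^{(n-2)}\otimes H^2_{\rig}$, whose top degree is $2(n-2)+2=2n-2$, which is what makes the argument go; and (ii) the basepoint bookkeeping you defer (that the surjection $(\mathscr{I}/\mathscr{I}^2)^{\otimes n}\to \on{gr}^n_{\mathscr{I}}$ induced by multiplication is dual to $R^{(n)}\hookrightarrow (H^1_{\rig})^{\otimes n}$, compatibly with $\varphi$ and $N$) is handled in the paper by the cleaner observation that $\on{gr}_{\mathscr{I}}\Pi((X,M);F_1,F_2)$ is a free rank-one module over $\on{gr}_{\mathscr{I}}\Pi((X,M),F_1)$ with the isomorphism $1\mapsto 1$ preserving $M$; you may want to adopt that reduction rather than tracking two-basepoint universal objects.
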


\begin{proof}
It suffices to show that the natural map 
\begin{multline*}
M_{-2(n-1)}(\mathscr{I}(F_1,F_1)/\mathscr{I}(F_1,F_1)^2)^{\otimes n-1}\otimes M_{-2}(\mathscr{I}(F_1,F_2)/\mathscr{I}(F_1,F_2)^2)\\
\to M_{-2n}(\mathscr{I}(F_1,F_2)^n/\mathscr{I}(F_1,F_2)^{n+1})
\end{multline*}
is an isomorphism. First note that it is enough to prove this when $F_1=F_2$, as $\Pi((X,M); F_1,F_2)$ is a free module of rank one over $\Pi((X,M); F_1)$ and the left module structure respects the monodromy and augmentation filtrations. Indeed, \[\on{gr}_{\mathscr{I}}\Pi((X,M); F_1, F_2)\coloneqq \bigoplus_n \mathscr{I}(F_1,F_2)^n/\mathscr{I}(F_1,F_2)^{n+1}\] is a free module of rank $1$ over $\on{gr}_{\mathscr{I}}\Pi((X,M),F_1)$ whose degree zero piece is canonically isomorphic to $K$. The isomorphism $\on{gr}_{\mathscr{I}}\Pi((X,M); F_1, F_2)\cong \on{gr}_{\mathscr{I}}\Pi((X,M),F_1)$ induced by sending $1$ to $1$ preserves the monodromy filtration, yielding the desired reduction.

Now, we note that the algebra 
\[\on{gr}_{\mathscr{I}}\Pi((X,M),F_1)\coloneqq \bigoplus_n \mathscr{I}(F_1,F_1)^n/\mathscr{I}(F_1,F_1)^{n+1}\] 
is the free tensor algebra on $\mathscr{I}(F_1,F_1)/\mathscr{I}(F_1,F_1)^2$, modulo the ideal generated by the image of the natural map 
\[H_{\rig}^2((X,M))^\vee\to H_{\rig}^1((X,M))^{\otimes 2,\vee}\cong (\mathscr{I}(F_1,F_1)/\mathscr{I}(F_1,F_1)^2)^{\otimes 2}\]
dual to the cup product. This is a standard fact about pro-unipotent fundamental groups; see e.g.,~\cite[Theorem A.1]{betts-litt} (alternatively, this follows from Section~\ref{ss:unipotentfundamentalgroups}).

Because $H_{\rig}^2((X,M))$ is pure of weight $2$ and has monodromy filtration concentrated in degree $2$, the tensor sub-algebra of $\on{gr}_{\mathscr{I}}\Pi((X,M),F_1)$ generated by $M_{-2}\mathscr{I}(F_1,F_1)/\mathscr{I}^2(F_1,F_1)$ is free, implying the conclusion.
\end{proof}

We will later need a description of a piece of the weight filtration.

\begin{corollary} \label{c:weightsubmodule}
The submodule induced by $W_{-n-1}$ on
\[
M_{-2n}(\mathscr{I}(F_1, F_2)^n/\mathscr{I}(F_1,F_2)^{n+1}) \cong \left(H_1^{\diamond}(\Gamma)^{\otimes n}\right)^\vee\]
is
\[\ker\left((\varrho^*)^{\vee}\colon \left(H_1^{\diamond}(\Gamma)^{\otimes n}\right)^\vee\to \left( H^{\diamond}_1(\overline{\Gamma})^{\otimes n}\right)^\vee \right)\]
where $\varrho\colon \Gamma\to\overline{\Gamma}$ is as in Remark~\ref{r:contracttocore}.
\end{corollary}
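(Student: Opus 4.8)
The plan is to establish the case $n=1$ directly from the definition of the weight filtration and the functoriality of the residue map, and then to bootstrap to general $n$ using the multiplicativity of $W$ together with the multiplicative description of $M_{-2n}(\mathscr{I}^n/\mathscr{I}^{n+1})$ furnished by Proposition~\ref{p:monodromyidentification}.

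For $n=1$, first I would identify $W_{-2}$ on the graded piece $\mathscr{I}/\mathscr{I}^2$. Since $f_*$ preserves the augmentation, $\ker(f_*)\subseteq\mathscr{I}$, and since $f^*\colon H^1_{\rig}((X,M'))\to H^1_{\rig}((X,M))$ is injective the dual map $f_*$ is surjective, so $f_*(\mathscr{I}^2)=\mathscr{I}'^2$; these two facts give $W_{-2}(\mathscr{I}/\mathscr{I}^2)=\ker\big(f_*\colon\mathscr{I}/\mathscr{I}^2\to\mathscr{I}'/\mathscr{I}'^2\big)=\ker((f^*)^\vee)$ under $\mathscr{I}/\mathscr{I}^2\cong H^1_{\rig}((X,M))^\vee$. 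Next I would use that the identification $M_{-2}(\mathscr{I}/\mathscr{I}^2)\cong H_1^\diamond(\Gamma)^\vee$ of Proposition~\ref{prop:harmonic-forms} realizes $M_{-2}$ as $\on{im}(\rho^\vee)=\on{Ann}(\ker\rho)$, where $\rho$ is the residue map of Proposition~\ref{p:monodromycohomology}. The geometric input is the commutativity of $\rho\circ f^*=\varrho^*\circ\rho'$, where $\rho'$ is the residue map for $(X,M')$ (whose log dual graph is $\overline{\Gamma}$) and $f$ induces $\varrho\colon\Gamma\to\overline{\Gamma}$. Granting this, for $\psi\in H_1^\diamond(\Gamma)^\vee$ one computes $(f^*)^\vee(\rho^\vee\psi)=(\rho')^\vee((\varrho^*)^\vee\psi)$, and since $\rho'$ is surjective $(\rho')^\vee$ is injective; hence $W_{-2}\cap M_{-2}$ corresponds exactly to $\ker((\varrho^*)^\vee)$, proving the base case.

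For general $n$ I would prove the two inclusions. For $W_{-n-1}\cap M_{-2n}\subseteq\ker((\varrho^*)^\vee)$, an induction on the recursive definition of $W$ together with multiplicativity of $f_*$ and the estimates $f_*(W_{-1})\subseteq\mathscr{I}'$, $f_*(W_{-2})\subseteq\mathscr{I}'^2$ shows $f_*(W_{-n-1})\subseteq\mathscr{I}'^{n+1}$; thus $W_{-n-1}\cap(\mathscr{I}^n/\mathscr{I}^{n+1})$ is killed by $f_*\colon\mathscr{I}^n/\mathscr{I}^{n+1}\to\mathscr{I}'^n/\mathscr{I}'^{n+1}$. By functoriality of the identification of Proposition~\ref{p:monodromyidentification}, the restriction of this map to $M_{-2n}$ is $(\varrho^*)^\vee$, giving the inclusion. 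For the reverse inclusion I would use the multiplication isomorphism $\mu\colon M_{-2}(\mathscr{I}/\mathscr{I}^2)^{\otimes n}\xrightarrow{\sim} M_{-2n}(\mathscr{I}^n/\mathscr{I}^{n+1})$: the kernel $\ker(((\varrho^*)^\vee)^{\otimes n})$ is the sum, over slots $i$, of the tensor factors with the $i$-th slot in $\ker((\varrho^*)^\vee)=W_{-2}\cap M_{-2}$ (the base case). Any such product has a representative with one factor in $W_{-2}$ and the remaining $n-1$ factors in $W_{-1}=\mathscr{I}$, so by multiplicativity of $W$ it lies in $W_{-2}\cdot W_{-1}^{\,n-1}\subseteq W_{-n-1}$, and by construction in $M_{-2n}$. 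Combining the inclusions gives the claimed equality.

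The step I expect to be the main obstacle is the functoriality of the residue description under $f$: concretely the commutativity $\rho\circ f^*=\varrho^*\circ\rho'$ in the base case, and for general $n$ the identification of $f_*$ restricted to $M_{-2n}$ with $(\varrho^*)^\vee$ (which also requires that $f_*$ respects the monodromy filtration). These are the only inputs that are not formal consequences of the filtration axioms; verifying them means tracing the Steenbrink--Zucker residue construction of Section~\ref{ss:cohomologyoflogcurves} through the morphism $f\colon(X,M)\to(X,M')$ and checking compatibility with the combinatorial contraction $\varrho\colon\Gamma\to\overline{\Gamma}$ and the induced inclusion $\varrho^*\colon H_1^\diamond(\overline{\Gamma})\to H_1^\diamond(\Gamma)$ of Remark~\ref{r:contracttocore}.
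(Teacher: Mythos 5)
Your proposal is correct and follows essentially the same route as the paper's proof: the $n=1$ case comes from the definition $W_{-2}=\mathscr{I}^2+\ker(f_*)$ combined with the identification of $\on{gr}^M_2(f^*)$ with $\varrho^*$ (your residue commutativity $\rho\circ f^*=\varrho^*\circ\rho'$ is exactly the spectral-sequence input the paper invokes from Section~\ref{ss:cohomologyoflogcurves}), and the general case follows by multiplicativity of the weight and monodromy filtrations via Proposition~\ref{p:monodromyidentification}. The only difference is one of detail, not of method: where the paper compresses the inductive step into the single phrase ``multiplicativity of the weight and monodromy filtrations,'' you make both inclusions explicit --- the estimate $f_*(W_{-n-1})\subseteq\mathscr{I}'^{\,n+1}$ together with functoriality of the multiplication isomorphism for one direction, and the tensor-kernel decomposition plus $W_{-1}^{\,i-1}\cdot W_{-2}\cdot W_{-1}^{\,n-i}\subseteq W_{-n-1}$ for the other --- which is a faithful expansion of the same argument.
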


\begin{proof}
  Let $(X,M')$ be obtained from $(X,M)$ by replacing annular points and punctures by smooth points as in Remark~\ref{r:modifylogstructure}. Let $f\colon (X,M)\to (X,M')$ be the natural morphism which induces a map on the associated gradeds of cohomology:
\[\xymatrix{  \on{gr}^M_2 H^1_{\rig}((X,M'))\cong H^{\diamond}_1(\overline{\Gamma})\ar[r]^{\varrho^*}&\on{gr}^M_2 H^1_{\rig}((X,M))\cong H_1^\diamond(\Gamma)}\]
as can be seen by examining the spectral sequence in Section~\ref{ss:cohomologyoflogcurves}. From the identification $\on{gr}^W_1 H^1_{\rig}((X,M))=\on{Im}(f^*)$, we can conclude 
\[\on{gr}^W_1\on{gr}^M_2 H^1_{\rig}((X,M))=\on{Im}(\varrho^*)\subset H_1^\diamond(\Gamma)\]
By the definition of the weight filtration in Section~\ref{ss:weightfiltration}, we obtain
\[W_{-2}M_{-2}(\mathscr{I}(F_1,F_2)/\mathscr{I}(F_1,F_2)^2)=\ker((\varrho^*)^\vee).\]
The conclusion follows by multiplicativity of the weight and monodromy filtrations.
\end{proof}

\part{Integration}

\section{Integration along paths} \label{s:integrationalongpaths}

In this section, we define and establish basic properties of iterated integrals of $1$-forms along ``formal sums of paths" in $(X, M)$ --- that is, elements of $\Pi((X,M);F_1,F_2)$. Because we allow basepoints attached to log points, extra care must be taken if we wish to write the values of integrals as elements of $K$.

\subsection{Definition of Integrals}
Let $((X,M_X),(X,M_X),(\cP,L))$ be a proper log smooth frame where $(X,M_X)$ is a weak log curve or a log point over $(S,N)$.
Let $\Omega^1$ denote the vector space of $1$-forms:
\[\Omega^1\coloneqq\Gamma(]X[_\cP,\cO_{]X[_\cP}\otimes\Omega^*_{(]X[_\cP,L)/(\Spf V^{\an},N))}).\]
Let $F_a$ and $F_b$ be fiber functors on $(X,M_X)$ attached to log points. For $p\in \Pi((X,M_X);F_a,F_b)$ and $\omega_1\dots\omega_n\in\Omega^1$, we will define
$\int_{p,a}^b \omega_1\dots\omega_r.$

\begin{definition} \label{d:hodgeconnection} For $\omega_1,\dots,\omega_r\in\Omega^1$, we define a unipotent isocrystal  $\cE_{\omega_1\dots\omega_r}$ on $(X,M_X)$ by specifying a unipotent vector bundle $E_{\omega_1\dots\omega_r}$ (i.e.,~a locally free $\cO_{]X[_\cP}$-module) with integrable connection. The underlying vector bundle is a trivial bundle of rank $r+1$ such that for basis sections $e_0,\dots,e_r$, the connection obeys
\[\nabla e_i=
\begin{cases}
-\omega_i \otimes e_{i+1} &\text{if }0\leq i\leq n-1\\
0 &\text{if }i=n
\end{cases}\]
\end{definition}
This bundle induces an isocrystal on $(X,M_X)$. Formally, 
\[\left(1,\int \omega_1,\int \omega_1\omega_2,\dots,\int \omega_1\dots\omega_r\right)\]
would be a horizontal global section of  $E_{\omega_1\dots\omega_r}$.

\begin{definition} \label{d:liftoflogpoint} Let $(x,\overline{\M})$ be a log point.
Given a log morphism $a\colon(x,\overline{\M})\to (X,M)$, a {\em lift} of $a$ is given by a morphism of frames
\[\xymatrix{
(x,\overline{\M})\ar[r]\ar[d]&(x,\overline{\M})\ar[r]\ar[d]&(\cP_{\overline{M}},L_{\overline{M}})\ar[d]\\
(X,M_X)\ar[r]&(X,M_X)\ar[r]&(\cP,L)}\]
where the top line is given in Section~\ref{ss:logpoints}. We will also denote the morphism of frames by $a$.
\end{definition}

For a fiber functor $F_B$ attached to a coordinate system $B$ of $\overline{M}$ at the log point $(x,\overline{\M})$, and a lift $a$, we may interpret $F_B$ in terms of $\Log$-analytic sections on $]x[_{\cP_{\overline{M}}}$ which should be thought of as functions pulled back from $]X[_{\cP}$ by $a$. When $(x,\overline{\M})$ is a smooth point, a lift is a smooth $K$-point of $]X[_{\cP}$, and $F_B$ is the fiber over that $K$-point where one interprets an  isocrystal on the frame  
$((x,\overline{\M}_0),(x,\overline{\M}_0),(\Spf V,L))$ as a $K$-vector space.

To define integrals, we need a basis for $F_B(\cE_{\omega_1\dots\omega_r})$. 
As per Lemma~\ref{l:loganalyticsections}, elements of $F_B(\cE_{\omega_1\dots\omega_r})$ can be expressed as $(r+1)$-tuples of $\Log$-analytic functions on $]x[_{\cP_{\overline{M}}}$. Their components  are polynomials in $\Log(x_1)$ whose coefficients are power or Laurent series in $x_1$ (depending on $\overline{M}$). Therefore, once we pick a frame, we may speak of the constant terms of a section (i.e.,~the coefficient of $x_1^0\Log(x_1)^0$). The following lemma is a straightforward consequence of the proof of Lemma~\ref{l:loganalyticsections}.

\begin{lemma}
Let $F_B$ be a fiber functor attached to a log point $(x,\overline{\M})$ 
where $\overline{\M}\in\{\overline{\M}_0,\overline{\M}_1,\overline{\M}_2\}$
Let $a\colon (\cP_{\overline{M}},L_{\overline{M}})\to (\cP,L)$ be a lift of the log point.
Then, $a^*\cE_{\omega_{1}\dots\omega_{r}}$ is the unipotent isocrystal on $(x,\overline{\M})$ induced by the bundle $E_{a^*\omega_1\dots a^*\omega_r}$ on $]x[_{\cP_{\overline{\M}}}$. There is a basis (which we call the {\em lifted basis})
\[\{h_0,\dots,h_r\}\subset F_B(\cE_{\omega_1\omega\dots\omega_r})\]
characterized by the equality of their constant terms:
\[\on{const}(h_i)=e_i.\]
For a smooth point, this becomes $h_0=e_0,\dots,h_r=e_r$. 
\end{lemma}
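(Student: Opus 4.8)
The plan is to construct the lifted basis directly from the recursive solution to the parallel transport equation given in the proof of Lemma~\ref{l:loganalyticsections}, and to verify that the map sending a $\Log$-analytic horizontal section to its constant term (the coefficient of $x_1^0\Log(x_1)^0$) is an isomorphism from $F_B(\cE_{\omega_1\dots\omega_r})$ to the fiber $\bigoplus_i Ke_i$. The first step is to record what $a^*\cE_{\omega_1\dots\omega_r}$ is: pulling back the connection of Definition~\ref{d:hodgeconnection} along the lift $a\colon(\cP_{\overline{M}},L_{\overline{M}})\to(\cP,L)$ replaces each $\omega_i$ by $a^*\omega_i$, so the induced bundle on $]x[_{\cP_{\overline{M}}}$ is exactly $E_{a^*\omega_1\dots a^*\omega_r}$ with connection $\nabla h_i = -a^*\omega_i\otimes h_{i+1}$ (and $\nabla h_r=0$); this is immediate from functoriality of the construction.

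Next I would produce the sections. Since $(\overline{M}/f^*\N)^{\gp}$ has rank at most $1$, the relative log differentials are spanned by $\frac{dx_1}{x_1}$, and every $a^*\omega_i$ is of the form $(\text{overconvergent series in }x_1)\,\frac{dx_1}{x_1}$, hence closed. Because the connection matrix is strictly lower-triangular (nilpotent), the parallel transport equation can be solved by the same upper-triangular recursion as in Lemma~\ref{l:loganalyticsections}: for each $j\in\{0,\dots,r\}$ set $h_j = \sum_{i\ge j} s_{ji}e_i$ with $s_{jj}=1$, $s_{ji}=0$ for $i<j$, and $s_{ji}=\int\!\big(\sum_{k=j}^{i-1}(a^*\omega_i)_{\text{conn}}\,s_{jk}\big)$ for $i>j$, where $\int$ is formal anti-differentiation of closed $\Log$-analytic $1$-forms. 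These $\{h_0,\dots,h_r\}$ form a basis of $F_B(\cE_{\omega_1\dots\omega_r})$ by Lemma~\ref{l:loganalyticsections}.

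The key point is then the normalization of constant terms. Formal anti-differentiation of a closed $\Log$-analytic $1$-form is only well-defined up to an additive constant, and I would fix this ambiguity by declaring $\on{const}(s_{ji})=0$ for each $i>j$; with $s_{jj}=1$ this forces $\on{const}(h_j)=e_j$. The main thing to check is that this normalization is consistent, i.e.~that each anti-derivative $s_{ji}$ genuinely admits a representative with vanishing constant term — equivalently, that the integrand $\sum_k (a^*\omega_i)_{\text{conn}}s_{jk}$, a closed $\Log$-analytic $1$-form, has a $\Log$-analytic primitive whose $x_1^0\Log(x_1)^0$-coefficient we are free to set to zero. This follows because the $\Log$-analytic de Rham complex is exact (noted after Definition~\ref{d:logpath}) and because integrating a monomial $x_1^m\Log(x_1)^e\frac{dx_1}{x_1}$ produces a $\Log$-analytic function whose constant term we can absorb into the free additive constant; the only term requiring care is $m=0$, where $\frac{dx_1}{x_1}$ integrates to $\Log(x_1)$, contributing nothing to the constant term, so no obstruction arises. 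I expect this verification — that the constant-term normalization is simultaneously achievable for all entries and renders the $h_j$ uniquely determined — to be the main (though routine) obstacle.

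Finally I would observe that the assignment $h_j\mapsto \on{const}(h_j)=e_j$ exhibits the lifted basis as the preimage of the standard basis $\{e_0,\dots,e_r\}$ under the constant-term map, which is therefore a linear isomorphism $F_B(\cE_{\omega_1\dots\omega_r})\overset{\cong}{\to}\bigoplus_i Ke_i$; uniqueness of the $h_j$ is forced by the triangular shape together with the vanishing of the off-diagonal constant terms. The smooth-point case is the degenerate instance where $\overline{M}=\overline{M}_0$, there is no $x_1$ or $\Log(x_1)$, every section is constant, the recursion collapses, and one gets $h_j=e_j$ outright, recovering the last sentence of the statement.
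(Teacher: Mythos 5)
Your proposal is correct and follows exactly the route the paper intends: the paper offers no separate argument, stating only that the lemma is ``a straightforward consequence of the proof of Lemma~\ref{l:loganalyticsections},'' and your writeup is precisely that consequence spelled out --- pull back the connection, run the triangular anti-differentiation recursion, and normalize each primitive to have vanishing constant term, with uniqueness forced by injectivity of the constant-term map on horizontal sections.
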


We write $h_{a,i}$ for $h_i$ when $a$ is not clear from the context.  These bases are natural under the following maps:
the horizontal inclusion for $1\leq k\leq r$,
\[\iota_k\colon \cE_{\omega_k\dots\omega_r}\to\cE_{\omega_1\dots\omega_r},\quad e_i\mapsto e_{i+k-1}\]
and the horizontal projection
\[p_k\colon\cE_{\omega_1\dots\omega_r}\to\cE_{\omega_1\dots\omega_k},\quad e_i\mapsto e_k,\]
in the sense that
\begin{enumerate}
    \item $h_0\in F(\cE_\varnothing)$ is $1$,
    \item for all $k$, $\iota_{k*}(h_i)=h_{i+k-1}$ for $0\leq i\leq r-k+1$, and
    \item for all $k$, $p_{k*}(h_i)=h_i$ for $0\leq i\leq k$.
\end{enumerate}

\begin{definition}
Let $F_a,F_b$ be fiber functors attached to log points for which we have chosen lifts $a$ and $b$. 
We define the {\em iterated integral} for $\omega_1,\dots,\omega_r$ and $p\in \Pi((X,M_X);F_a,F_b)$,
\[\int_{p,a}^b \omega_{1}\dots\omega_{r}\]
to be the $h_{b,r}$-component of the image of $h_{a,0}$ under the $K$-linear map
\[p\colon F_a(\cE_{\omega_1\dots\omega_r})\to F_b(\cE_{\omega_1\dots\omega_r}).\]
Here, the integral depends on the fiber functors $F_a,F_b$, but this has been suppressed from the notation.
Given $q\in \pi_1((X, M_X); F_a, F_b)(K),$ we define $$\int_{q, a}^b \omega_1\dots\omega_r:=\int_{g_q,a}^b \omega_1\dots\omega_r,$$ where $g_q\in \Pi((X, M_X); F_a, F_b)$ is the grouplike element defined in Remark \ref{rmk:construction-of-grouplike-elements}.
\end{definition}

\begin{example} \label{e:annulusintegral}
 Let $(x,\overline{\M}_2)$ be the nodal log point, and let $F_a=F_{\{f_1\}}$, $F_b=F_{\{f_2\}}$. 
 Pick $(\cP,L)$ as in Subsection~\ref{ss:logpoints}.
 For $\delta_0\in \pi_1^{\rig,\un}((x,\overline{\M}_2);F_a,F_b)_{\ell}$ as in Example~\ref{e:pi1annulus},
 \[\int_{\delta_0,a}^b \left(\frac{dx_1}{x_1}\right)^r=\frac{\ell^r}{r!}.\]
  Indeed, let $\omega=\frac{dx_1}{x_1}$. We can express $\cE_{\omega^r}$ evaluated on $]x[_{\cP}$ as the trivial bundle with fiber given by the vector space underlying the polynomial ring $K[c]/c^{r+1}$ and connection $1$-form $\omega$ given by 
  \[\frac{dx_1}{x_1}(c\ \cdot\ ) =-\frac{dx_2}{x_2}(c\ \cdot\ )\]
  where $(c\ \cdot\ )$ is the linear transformation given by multiplying by $c$.
  Then 
  \[h_{a,i}=c^i\exp(c\Log(x_1)),\ h_{b,i}=c^i\exp(-c\Log(x_2)).\]
  Now $\delta_0$ is given by the substitution $\Log(x_1)\mapsto \ell-\Log(x_2)$. Consequently,
  \[p(h_{a,0})=\exp(c(\ell-\Log(x_2)))=\exp(c\ell)(\exp(-c\Log(x_2))).\]
  Its $h_{b,r}$-component (that is, the $c^r$ component) is $\ell^r/r!$.
\end{example}

\begin{remark} \label{r:paralleltransport}
One can justify this definition in terms of parallel transport. Suppose that $F_a$ and $F_b$ are attached to smooth points lifting to $K$-points $a$ and $b$ and that
there exists a morphism $s\colon \mathbf{1}\to \cE_{\omega_1\dots\omega_r}$. Evaluated on a frame, this corresponds to a horizontal morphism from a rank $1$ trivial bundle (with trivial connection) to $E_{\omega_1\dots\omega_r}$. Suppose also that $F_a(s)(1)=e_0$ for $1\in F_a(\mathbf{1})$. Then,
\[F_b(s)(1)=F_b(s)(p(1))=p(F_a(s(1)))=\sum_{i=0}^n \left(\int_{p,a}^b \omega_1\dots\omega_i\right)e_i.\]

We can consider the case where $a$ and $b$ are in the same residue disc and treat $s(1)$ as a function of $b$. Then, horizontality implies
\[d\left(\int_{p,a}^b \omega_1\dots\omega_i\right)=\left(\int_{p,a}^b \omega_1\dots\omega_{i-1}\right)\omega_i.\]
This, together with $\int_{p,a}^a \omega_1\dots\omega_i=0$, characterizes the integral on a residue disc.
\end{remark}

\begin{lemma} \label{l:multilinear}
The iterated integral is multilinear in $\omega_1\dots\omega_r$.
\end{lemma}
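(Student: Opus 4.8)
The plan is to reduce multilinearity of the iterated integral to the multilinearity of the construction $\cE_{\omega_1\dots\omega_r}$ in its defining $1$-forms. The integral $\int_{p,a}^b \omega_1\dots\omega_r$ is defined as the $h_{b,r}$-component of $p(h_{a,0})$ under the $K$-linear map $p\colon F_a(\cE_{\omega_1\dots\omega_r})\to F_b(\cE_{\omega_1\dots\omega_r})$, so it suffices to track how this map, and the distinguished bases, depend linearly on a chosen form $\omega_j$ with the others held fixed.

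First I would fix an index $j$ and consider the connection of Definition~\ref{d:hodgeconnection}. The defining formula $\nabla e_i = -\omega_i\otimes e_{i+1}$ (for $0\le i\le r-1$) shows that the connection $1$-form is linear in each $\omega_i$ separately: if we replace $\omega_j$ by $\lambda\omega_j+\mu\omega_j'$, the resulting connection matrix is the corresponding linear combination of the two connection matrices in which only the $(j,j+1)$-entry differs. Because $\cE_{\omega_1\dots\omega_r}$ is an iterated extension whose extension data is given entry-wise by the $\omega_i$, there are natural morphisms of isocrystals realizing this additivity and scaling on the level of bundles. Concretely, one observes that the association $\omega_j\mapsto \cE_{\omega_1\dots\omega_r}$ factors through the extension class, and the parallel-transport map $p$ is functorial in the isocrystal; so I would exhibit explicit morphisms over the two degenerate connections that identify the relevant components.

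The cleanest route, which I would carry out, is to compute directly in the basis. By Remark~\ref{r:paralleltransport} and the horizontality characterization, the components $\int_{p,a}^b\omega_1\dots\omega_i$ of $p(h_{a,0})$ satisfy $d\big(\int_{p,a}^b\omega_1\dots\omega_i\big)=\big(\int_{p,a}^b\omega_1\dots\omega_{i-1}\big)\omega_i$; equivalently the top component $\int_{p,a}^b\omega_1\dots\omega_r$ is obtained by solving the lower-triangular parallel-transport system whose off-diagonal entries are the $\omega_i$. Since the top-right entry of the transported matrix is a sum over monomials, each of which is linear in each $\omega_i$ exactly once, linearity in $\omega_j$ (for fixed $p$ and the remaining forms) follows immediately from the linearity of the connection matrix entries and the linearity of parallel transport as a function of the connection. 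I would also note that the lifted basis $\{h_{a,i}\},\{h_{b,i}\}$ depends on $\omega_j$ only through the same linear data (via the constant-term normalization $\on{const}(h_i)=e_i$), so the extracted component remains linear.

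The main obstacle, which is minor, is bookkeeping: one must confirm that the choice of lifted bases at $a$ and $b$ is compatible with the scaling/addition morphisms, i.e.\ that the normalization $\on{const}(h_{a,i})=e_i$ transports correctly under the identifications of $\cE_{\omega_1\dots\omega_r}$ for the three values $\lambda\omega_j+\mu\omega_j'$, $\omega_j$, $\omega_j'$. This is guaranteed by the naturality properties of the lifted bases recorded just before the statement (that $p_{k*}h_i=h_i$ and $\iota_{k*}h_i=h_{i+k-1}$), which show the constant-term normalization is preserved under the horizontal maps used to compare connections. Once this compatibility is in place, extracting the $h_{b,r}$-component is a $K$-linear operation and the multilinearity follows.
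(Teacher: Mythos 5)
Your second route---the one you say you would actually carry out---has a genuine gap. An element $p\in\Pi((X,M);F_a,F_b)$ is not parallel transport along any actual path: it is an abstract natural transformation of fiber functors (an element of the completed groupoid algebra), and there is no Picard-iteration or path-ordered-exponential formula expressing its matrix entries in terms of the connection matrix. The ``lower-triangular parallel-transport system'' you invoke via Remark~\ref{r:paralleltransport} characterizes the integral only when $a$ and $b$ lie in a common residue disc and $p$ is the canonical transport path there; for a general $p$ (for instance a Frobenius-invariant lift of a loop in the dual graph, or a non-group-like formal sum) and for general log basepoints, no such differential-equation description exists. So the step ``linearity of parallel transport as a function of the connection'' assumes exactly what must be proven, namely that the entries of the matrix of $p$ on $\cE_{\omega_1\dots\omega_r}$ depend multilinearly on the forms. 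Since multilinearity compares the action of $p$ on three \emph{different} isocrystals, $\cE_{\dots(\omega_i+c\omega_i')\dots}$, $\cE_{\dots\omega_i\dots}$, and $\cE_{\dots\omega_i'\dots}$, and the only handle on $p$ is naturality, the comparison must be mediated by horizontal morphisms relating these isocrystals.

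Your first paragraph does sketch the correct strategy---exhibit morphisms of isocrystals and use functoriality of $p$---and this is what the paper does, but your sketch omits the one nontrivial point. There is no horizontal ``addition'' map on all of $E_{\omega}\oplus E_{\omega'}$: writing the connections out componentwise, the map $((t_0,\dots,t_r),(t'_0,\dots,t'_r))\mapsto(t_0,\dots,t_{i-1},t_i+ct'_i,\dots,t_r+ct'_r)$ intertwines them only where $t_{i-1}=t'_{i-1}$, the discrepancy in the $i$-th component being $c(t_{i-1}-t'_{i-1})\omega'_i$. The paper therefore restricts to the kernel $\mathscr{K}$ of the difference map $E_{\omega}\oplus E_{\omega'}\to E_{\omega_1\dots\omega_{i-1}}$, defines the horizontal map $s\colon\mathscr{K}\to E_{\omega^+}$ on that kernel, checks $s(h_0,h'_0)=h^+_0$, and then reads off the identity of $h_r$-components by applying the naturality of $p$ to $s$. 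Your appeal to the relations $p_{k*}h_i=h_i$ and $\iota_{k*}h_i=h_{i+k-1}$ does not substitute for this check: those relations concern inclusions and projections within a single tuple of forms, not comparison maps between different tuples. With the kernel construction and the basis compatibility in place the argument closes; without them, neither of your two routes is complete.
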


\begin{proof}
It suffices to show that for any $\omega'_i\in\Omega$ and $c\in K$,
\[\int_{p,a}^b \omega_1\cdots\omega_{i-1}(\omega_i+c\omega'_i)\omega_{i+1}\cdots\omega_r
= \int_{p,a}^b \omega_1\cdots\omega_{i-1}\omega_i\omega_{i+1}\cdots\omega_r+
c\int_{p,a}^b \omega_1\cdots \omega_{i-1}\omega'_i\omega_{i+1}\cdots\omega_r.\]
Write $\omega^+,\omega,\omega'$ for the above $r$-tuples of $1$-forms.
Consider the diagram of vector bundles with integrable connections on $]X[_{\cP}$:
\[\xymatrix{
0\ar[r]&\mathscr{K}\ar[r]\ar[d]^s&
E_{\omega}\oplus E_{\omega'}\ar[r]^c& E_{\omega_1\dots\omega_{i-1}}\ar[r]&0\\
&E_{\omega^+}&&&
}\]
where $c$ is given by
\[((t_0,\dots,t_r),(t'_0,\dots,t'_r))\mapsto (t_0-t'_0,\dots,t_{i-1}-t'_{i-1}),\]
$\mathscr{K}$ is the kernel, and $s$ is defined by
\[((t_0,\dots,t_r),(t'_0,\dots,t'_r))\mapsto (t_0,\dots,t_{i-1},t_i+ct'_i,\dots,t_r+ct'_r).\]
Let $h^+_i,h_i,h'_i$ denote the lifted bases of the respective bundles.
Now, $s((h_0,h'_0))=h^+_0$, and the image of $(h_0,h_0')$ and $h^+_0$ under $p$ have the following coefficients for  their $(h_r,h'_r)$ and $h^+_r$-components:
\[\left(\int_{p,a}^b \omega,\int_{p,a}^b \omega'\right),\quad \int_{p,a}^b \omega^+.\]
Since $s$ takes the former to the later, the conclusion follows.
\end{proof}

In the bases defined above, $p$ on $\cE_{\omega_1\dots\omega_r}$ can be expressed as a matrix 
\[\left[
\begin{array}{ccccc}
1               & 0 & 0 & \dots & 0\\
\int_{p,a}^b \omega_1         & 1               & 0 & \dots & 0\\
\int_{p,a}^b \omega_1\omega_2 & \int_{p,a}^b\omega_2 & 1 & \dots & 0\\
\vdots  & \vdots & \vdots & \ddots & \vdots\\
\int_{p,a}^b \omega_1\dots \omega_r & \int_{p,a}^b \omega_2\dots \omega_r&\int_{p,a}^b \omega_3\dots \omega_r& \cdots & 1
\end{array}
\right]
\]

By multiplying such matrices (and noting the convention that $pq$ means $p$ followed by $q$), we arrive at the following {\em concatenation formula}:

\begin{proposition} \label{p:concatenation} Let $F_a,F_b,F_c$ be fiber functors attached to log points equipped with lifts on $(X,M)$. Let $p\in \Pi((X,M);F_a,F_b)$ and $q\in \Pi((X,M);F_b,F_c)$. Then,
\[\int_{pq,a}^c \omega_1\dots \omega_r=\sum_{k=0}^r\int_{p,a}^b \omega_{1}\dots \omega_k\int_{q,b}^c \omega_{k+1}\dots\omega_{r}.\]
\end{proposition}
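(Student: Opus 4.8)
The plan is to deduce the concatenation formula directly from the matrix representation of paths that was just established, combined with the fact that paths act on isocrystals by composition of $K$-linear maps. First I would recall that for any path, the action on $\cE_{\omega_1\dots\omega_r}$ in the lifted bases $\{h_{a,i}\}$ and $\{h_{b,i}\}$ is given by the lower-triangular matrix whose $(i,j)$ entry (for $i \geq j$) is $\int_{p,a}^b \omega_{j+1}\dots\omega_i$, with $1$'s on the diagonal. The key structural fact to verify is that these lifted bases are \emph{compatible under composition}: since $p \in \Pi((X,M);F_a,F_b)$ and $q \in \Pi((X,M);F_b,F_c)$, the composite $pq$ acts as the composition of the two $K$-linear maps
\[
F_a(\cE_{\omega_1\dots\omega_r}) \xrightarrow{p} F_b(\cE_{\omega_1\dots\omega_r}) \xrightarrow{q} F_c(\cE_{\omega_1\dots\omega_r}),
\]
so that the matrix of $pq$ (expressed in the $\{h_{a,i}\}$ and $\{h_{c,i}\}$ bases) is the product of the matrix of $p$ (in the $\{h_{a,i}\}$, $\{h_{b,i}\}$ bases) with the matrix of $q$ (in the $\{h_{b,i}\}$, $\{h_{c,i}\}$ bases).

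The central subtlety, and what I expect to be the main obstacle, is ensuring that the \emph{intermediate} basis $\{h_{b,i}\}$ used for the range of $p$ is exactly the same basis used for the domain of $q$. This is not automatic: the lifted basis depends on the chosen lift $b$ at the basepoint $F_b$, and one must confirm that the characterization $\on{const}(h_{b,i}) = e_i$ pins down a single basis of $F_b(\cE_{\omega_1\dots\omega_r})$ regardless of whether we regard $F_b$ as the target of $p$ or the source of $q$. Since the basis is determined purely by the frame and the lift $b$ (via the constant-term condition, independent of the direction of any path), this compatibility holds; I would state this explicitly as the crux of the argument. Once this is granted, the formula is pure linear algebra.

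With the bases matched, I would simply compute the $(r,0)$ entry of the matrix product. Reading off the $h_{c,r}$-component of $(pq)(h_{a,0})$, the composition of lower-triangular matrices gives
\[
\int_{pq,a}^c \omega_1\dots\omega_r = \sum_{k=0}^r \left(\text{entry }(k,0)\text{ of }p\right)\left(\text{entry }(r,k)\text{ of }q\right),
\]
and by the matrix description above, entry $(k,0)$ of $p$ is $\int_{p,a}^b \omega_1\dots\omega_k$ while entry $(r,k)$ of $q$ is $\int_{q,b}^c \omega_{k+1}\dots\omega_r$, yielding the claimed identity. A minor point to address is linearity: the integral was defined on general $p \in \Pi$, and since the matrix entries are $K$-linear functionals of $p$ and $q$ while the comultiplication satisfies $\Delta$ compatibly, the formula extends from group-like elements to all of $\Pi$ by bilinearity of the composition pairing; for the group-like case the argument above is literally the statement that $pq$ acts as composition, which is the defining property of the groupoid structure on $\pi_1^{\rig,\un}$. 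I would close by noting that this is precisely the behavior of the classical iterated-integral concatenation formula, recovered here from the Tannakian matrix formalism.
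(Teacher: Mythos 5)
Your proposal is correct and takes essentially the same route as the paper: the paper likewise expresses $p$ in the lifted bases as a lower-triangular matrix whose $(i,j)$ entry is $\int_{p,a}^b \omega_{j+1}\dots\omega_i$, and obtains the concatenation formula by multiplying the matrices of $p$ and $q$ under the same convention that $pq$ means $p$ followed by $q$. The two points you single out — that the intermediate basis $\{h_{b,i}\}$ is pinned down by the lift $b$ alone (independent of any path), and that the identity extends from group-like elements to all of $\Pi$ by linearity — are treated as immediate in the paper, so your argument is just a more explicit version of the same proof.
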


\begin{proposition} \label{p:functoriality}
 Let 
    \[f\colon ((X,M_X),(X,M_X),(\cP,L))\to ((X',M_{X'}),(X',M_{X'}),(\cP',L'))\]
    be a morphism of proper log smooth frames where $(X,M_X)$ and $(X',M_{X'})$ are weak log curves. 
    Let $F_a$ and $F_b$ be fiber functors attached to log points equipped with lifts $a$ and $b$, respectively, on $(X,M)$.
    Then $f_*F_a$ and $f_*F_b$ are fiber functors attached to log points equipped with lifts $f\circ a$ and $f\circ b$, respectively. 
    Let $\omega'_1,\dots\omega'_r$ be $1$-forms on $]X'[_{\cP'}$. Let $p\in\Pi((X,M);F_a,F_b)$. Then
  \[\int_{p,a}^b f^*\omega'_1\dots f^*\omega'_r=
  \int_{f_*p,f\circ a}^{f\circ b} \omega'_1\dots\omega'_r.\]
\end{proposition}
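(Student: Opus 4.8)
The plan is to reduce the identity to the compatibility of three pieces of data under the pullback functor $f^*$: the Hodge-type isocrystals $\cE_{\omega_1\dots\omega_r}$, the fiber functors together with their lifts, and the lifted bases used to read off integrals. First I would observe that
\[
f^*\cE_{\omega'_1\dots\omega'_r}\cong \cE_{f^*\omega'_1\dots f^*\omega'_r}
\]
as unipotent overconvergent isocrystals on $(X,M_X)$, compatibly with the distinguished trivializing sections $e_0,\dots,e_r$ of Definition~\ref{d:hodgeconnection}. Indeed, under the description of $f^*$ on frames in Section~\ref{s:logrigidisocrystals}, the defining trivial bundle pulls back to the trivial bundle with pulled-back basis $\{f^*e_i\}$, and since $f^*$ intertwines connections one computes $f^*\nabla(f^*e_i)=f^*(-\omega'_i\otimes e_{i+1})=-f^*\omega'_i\otimes f^*e_{i+1}$. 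Thus the pulled-back connection $1$-form has entries $f^*\omega'_i$, so the isomorphism identifies $e_i$ with $e_i$.

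Next I would match the fiber functors and lifts. By the definition of the pushforward fiber functor (as in Definition~\ref{d:pushforwardfiberfunctor}, $f_*F=F\circ f^*$), there is a canonical identification
\[
(f_*F_a)(\cE_{\omega'_1\dots\omega'_r})=F_a(f^*\cE_{\omega'_1\dots\omega'_r})=F_a(\cE_{f^*\omega'_1\dots f^*\omega'_r}),
\]
and likewise at $b$. The lift attached to $f_*F_a$ is the composite morphism of frames $f\circ a$, which factors as the lift $a$ followed by $f$; hence evaluating $\cE_{\omega'_1\dots\omega'_r}$ along $f\circ a$ coincides with evaluating $\cE_{f^*\omega'_1\dots f^*\omega'_r}$ along $a$ on the same tube $]x[_{\cP_{\overline{M}}}$. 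Because the isomorphism of the first paragraph sends $e_i$ to $e_i$ and constant-term extraction is performed on this common tube, the characterizing condition $\on{const}(h_i)=e_i$ transports verbatim, so the lifted basis $h_{f\circ a,i}$ of $(f_*F_a)(\cE_{\omega'_\bullet})$ is carried to the lifted basis $h_{a,i}$ of $F_a(\cE_{f^*\omega'_\bullet})$, and similarly at $b$.

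Finally I would invoke the functoriality of the fundamental torsor described via universal objects in Section~\ref{ss:unipotentfundamentalgroups}: unwinding the construction of $f_*$, for any unipotent $\cV$ on $(X',M_{X'})$ the linear map by which $f_*p$ acts on $(f_*F_a)(\cV)\to (f_*F_b)(\cV)$ is identified, under $(f_*F_\bullet)(\cV)=F_\bullet(f^*\cV)$, with the action of $p$ on $F_a(f^*\cV)\to F_b(f^*\cV)$. Taking $\cV=\cE_{\omega'_1\dots\omega'_r}$ and combining with the previous identifications shows that the matrix of $f_*p$ in the bases $\{h_{f\circ a,i}\},\{h_{f\circ b,i}\}$ equals the matrix of $p$ in the bases $\{h_{a,i}\},\{h_{b,i}\}$ for $\cE_{f^*\omega'_\bullet}$; reading off the entry in row $r$, column $0$ yields
\[
\int_{f_*p,\,f\circ a}^{f\circ b}\omega'_1\dots\omega'_r=\int_{p,a}^b f^*\omega'_1\dots f^*\omega'_r,
\]
with the case of a $K$-point $q$ handled by passing to $g_q$, which $f_*$ sends to $g_{f_*q}$. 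The only genuinely non-formal step is the compatibility of the lifted bases with $f^*$: one must check that constant-term extraction along the lift is unchanged upon first pulling back by $f$, which is exactly the observation that $f\circ a$ and ``$f^*$ then $a$'' give the same frame evaluation and that pullback preserves the distinguished sections $e_i$. I expect this bookkeeping with the lifts to be the main point requiring care; everything else is formal Tannakian functoriality.
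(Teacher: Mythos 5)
Your proof is correct and follows the same route as the paper, whose entire argument is the single observation $f^*\cE_{\omega'_1\dots\omega'_r}=\cE_{f^*\omega'_1\dots f^*\omega'_r}$; your additional paragraphs simply make explicit the bookkeeping (identification of $f_*F_\bullet$ with $F_\bullet\circ f^*$, compatibility of lifts and lifted bases, Tannakian functoriality of $f_*p$) that the paper treats as immediate.
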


\begin{proof}
  This is an immediate consequence of $f^*\cE_{\omega'_1\dots\omega'_r}=\cE_{f^*\omega'_1\dots f^*\omega'_r}$
\end{proof}

We will need the notion of an analytic function vanishing at the fiber functor attached to a log point.

\begin{definition}
  Let $F$ be a fiber functor attached to a log point equipped with a lift $a$. Let $g\in \Gamma(]X[_{\cP},\cO_{]X[_{\cP}})$. We say $g$ {\em vanishes} at $a$ if the morphism $\mathbf{1}\to \cE_{dg}$ given by $e_0\mapsto e_0+ge_1$
  induces a map $F(\mathbf{1})\to F(\cE_{dg})$ taking $1$ to $h_0$.
\end{definition}

If $F$ is the fiber functor attached to a smooth point, this is the usual notion of vanishing at the point. In the case of log points, it means that the constant term vanishes when written as a $\Log$-analytic function after being  pulled back by the lift.

The following  analogue of integration by parts is useful for computing iterated integrals.

\begin{lemma} \label{l:integrationbyparts}
  Let $F_a$ and $F_b$ be fiber functors attached to log points equipped with lifts $a$ and $b$, respectively, and let $p\in \Pi((X,M_X);F_a,F_b)$.
  Let $g\in \Gamma(]X[_{\cP},\cO_{]X[_{\cP}})$ be a function vanishing at $a$. Let $\omega_1\dots\omega_r\in\Omega^1$. For $1\leq i\leq r-1$,
\[
\begin{split}
\int_{p,a}^b (dg)\omega_1\dots\omega_r
&=\int_{p,a}^b (g\omega_1)\omega_2\dots\omega_r,\\
\int_{p,a}^b \omega_1\dots\omega_i(dg)\omega_{i+1}\dots\omega_r
&=\int_{p,a}^b \omega_1\dots\omega_{i}(g\omega_{i+1})\omega_{i+2}\dots\omega_r-\int_{p,a}^b\omega_1\dots\omega_{i-1}(g\omega_{i})\omega_{i+1}\dots\omega_r,\\
\int_{p,a}^b \omega_1\dots\omega_r(dg)
&=g\int_{p,a}^b \omega_1\dots\omega_r-\int_{p,a}^b\omega_1\dots\omega_{r-1}(g\omega_r).
\end{split}
\]\end{lemma}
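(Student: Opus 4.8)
The plan is to prove all three integration-by-parts formulas uniformly by introducing, for a function $g$ vanishing at $a$, a morphism of unipotent overconvergent isocrystals that relates $\cE_{\omega_1\dots\omega_r}$ to the isocrystal obtained by inserting $dg$ at a prescribed slot and then commuting $dg$ past the neighboring $1$-forms. The key formal fact I would exploit is that multiplying a section by $g$ produces, via the Leibniz rule, exactly the combination $g\omega_i$ and $d g$ terms appearing in the stated identities, because the connection on $\cE_{dg}$ encodes $d(\text{function})$ as parallel transport.

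Concretely, first I would treat the middle formula, which is the generic case. Fix $1\le i\le r-1$ and consider the isocrystal $\cE_{\omega_1\dots\omega_i (dg)\omega_{i+1}\dots\omega_r}$, whose underlying bundle has basis $e_0,\dots,e_{r+1}$ with $\nabla$ reading off $\omega_1,\dots,\omega_i, dg,\omega_{i+1},\dots,\omega_r$ in succession. I would write down an explicit horizontal morphism to $\cE_{\omega_1\dots\omega_r}$ defined by a matrix that is the identity on the first $i+1$ basis vectors, uses $g$ to fold the $e_{i+1}$-slot (the one created by $dg$) into the adjacent slots via $e_{i+1}\mapsto g e_i$ (up to sign and index bookkeeping), and is shifted by one on the remaining vectors. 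Checking horizontality is a direct computation: the condition $\nabla(\text{image})=(\text{image})\circ\nabla$ reduces, slot by slot, to the identity $d(g\,e_i\text{-coefficient})=(dg)\cdot(\cdots)+g\cdot\nabla(\cdots)$, i.e.\ precisely the Leibniz rule, which is why $g\omega_i$ and $g\omega_{i+1}$ appear with opposite signs. Applying $F_a$ and $F_b$ and reading off the bottom-row component, together with the hypothesis that $g$ vanishes at $a$ (so the morphism sends $h_{a,0}$ to $h_{a,0}$ and introduces no constant-term correction at the basepoint), then yields the middle identity by comparing the two expressions for the image of $h_{a,0}$ under $p$.

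The first and third formulas I would obtain as the boundary cases $i=0$ and $i=r$ of the same construction, with the caveat that at the two extremes one of the two $g\omega$ terms is absent: for $i=0$ there is no preceding $\omega_0$, giving the single term $\int (g\omega_1)\omega_2\dots\omega_r$; for $i=r$ the insertion is at the very end and the ``outgoing'' slot contributes the boundary value $g$ evaluated through $F_b$, producing the term $g\int_{p,a}^b\omega_1\dots\omega_r$. Here I would use Remark~\ref{r:paralleltransport} to interpret the constant $g$ as the value of the parallel-transported section at $b$, and the vanishing hypothesis at $a$ to kill the corresponding term at the initial basepoint.

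The main obstacle I anticipate is the sign and index bookkeeping in defining the horizontal morphism so that it simultaneously yields the correct formula in the interior and degenerates correctly at both endpoints; in particular, verifying that the morphism is genuinely horizontal at the two ``splice points'' (slots $i$ and $i+1$) is where the Leibniz cancellation must be checked carefully, and where the vanishing-at-$a$ hypothesis enters to ensure $h_{a,0}$ is preserved rather than shifted by a constant. Once the morphism is correctly set up and its horizontality confirmed, extracting the three formulas is a routine matrix-coefficient comparison using the concatenation matrix display above.
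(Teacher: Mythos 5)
Your overall strategy---encode the identity in an explicit horizontal morphism between the bundles $\cE_{(\cdots)}$, verify horizontality by a Leibniz cancellation, and use the vanishing of $g$ at $a$ to see that the lifted basis vector $h_{a,0}$ is preserved---is indeed the method of the paper (it reruns the kernel construction from the proof of Lemma~\ref{l:multilinear}). But the concrete morphism you propose does not exist, and the failure is not sign or index bookkeeping: you take the target to be $\cE_{\omega_1\dots\omega_r}$, and no matrix with entries polynomial in $g$ makes a map $\cE_{\omega_1\dots\omega_i(dg)\omega_{i+1}\dots\omega_r}\to\cE_{\omega_1\dots\omega_r}$ horizontal. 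Already for $r=1$, $i=0$, with your assignment $e_0\mapsto f_0$, $e_1\mapsto g f_0$, $e_2\mapsto f_1$, horizontality at $e_0$ demands
\[\Phi(\nabla e_0)=-(dg)\,g f_0 \qquad\text{equal to}\qquad \nabla\Phi(e_0)=-\omega_1 f_1,\]
which fails: the Leibniz term $dg\,f_0$ coming from $\nabla(gf_0)$ has nothing to cancel against, because the connection on the target involves only $\omega_1,\dots,\omega_r$ and never $dg$ or $g\omega_j$. There is also a structural reason no cleverer matrix can repair this choice of target: if a horizontal $\Phi\colon \cE_{\omega_1\dots(dg)\dots\omega_r}\to\cE_{\omega_1\dots\omega_r}$ existed, then $F_b(\Phi)\circ p=p\circ F_a(\Phi)$ would express $\int_{p,a}^b\omega_1\dots\omega_i(dg)\omega_{i+1}\dots\omega_r$ as a $K$-linear combination, with scalar coefficients coming from $g$ at the two endpoints, of iterated integrals of consecutive subtuples of $(\omega_1,\dots,\omega_r)$ alone. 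The identities you must prove are not of this shape: their right-hand sides involve $\int_{p,a}^b\omega_1\dots\omega_i(g\omega_{i+1})\omega_{i+2}\dots\omega_r$ and $\int_{p,a}^b\omega_1\dots\omega_{i-1}(g\omega_i)\omega_{i+1}\dots\omega_r$, which are matrix entries of $p$ acting on \emph{different} bundles.

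The repair is to bring those bundles into the construction, which is what the paper does. Set $\omega=\omega_1\dots\omega_i(dg)\omega_{i+1}\dots\omega_r$, $\omega^+=\omega_1\dots\omega_i(g\omega_{i+1})\omega_{i+2}\dots\omega_r$ and $\omega^-=\omega_1\dots\omega_{i-1}(-g\omega_i)\omega_{i+1}\dots\omega_r$, let $\mathscr{K}\subset E_{\omega^+}\oplus E_{\omega^-}$ be the kernel of the difference map to $E_{\omega_1\dots\omega_{i-1}}$ (pairs of sections agreeing in the first $i$ slots), and map $\mathscr{K}$ \emph{into} the $dg$-bundle $E_\omega$ by
\[\bigl((t_0,\dots,t_r),(t'_0,\dots,t'_r)\bigr)\mapsto \bigl(t_0,\dots,t_{i-1},\,t_i,\;g\,t_i+t'_i,\;t_{i+1}+t'_{i+1},\dots,t_r+t'_r\bigr),\]
i.e.\ the folding is $u_i\mapsto e_i+g\,e_{i+1}$ and it lands in $E_\omega$, not in $\cE_{\omega_1\dots\omega_r}$. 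Horizontality now works precisely because the Leibniz term $dg\,e_{i+1}$ from $\nabla(g e_{i+1})$ cancels $\nabla e_i=-(dg)e_{i+1}$, while the leftover terms $-g\omega_{i+1}e_{i+2}$ and $+g\omega_i e_{i+1}$ are matched by the connection forms $g\omega_{i+1}$ and $-g\omega_i$ built into $E_{\omega^+}$ and $E_{\omega^-}$. Vanishing of $g$ at $a$ gives $s(h^+_{a,0},h^-_{a,0})=h_{a,0}$, and comparing bottom components of $p$ applied to both sides yields the middle formula; the first and third formulas are the degenerate cases where one summand of $E_{\omega^+}\oplus E_{\omega^-}$ is dropped, with the boundary value of $g$ at $b$ (through $F_b$) producing the extra term $g\int_{p,a}^b\omega_1\dots\omega_r$, much as you anticipated.
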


\begin{proof}
  We prove the middle formula using the techniques of Lemma~\ref{l:multilinear}. The other cases are similar.
  
  Let
  \begin{align*}
  \omega&=\omega_1\dots\omega_i(dg)\omega_{i+1}\dots\omega_r,\\
   \omega^+&=\omega_1\dots\omega_{i}(g\omega_{i+1})\omega_{i+2}\dots\omega_r\\ 
  \omega^-&=\omega_1\dots\omega_{i-1}(-g\omega_{i})\omega_{i+1}\dots\omega_r
  \end{align*}
  
  Consider the diagram where $\mathscr{K}$ is defined as the kernel
  \[\xymatrix{
0\ar[r]&\mathscr{K}\ar[r]\ar[d]^s&
E_{\omega^+}\oplus \cE_{\omega^-}\ar[r]& E_{\omega_1\dots\omega_{i-1}}\ar[r]&0\\
& E_{\omega}&&&
}\]
  and the map $s$ is given by
  \[((t^+_0,\dots,t^+_r),(t^-_0,\dots,t^-_r)\mapsto 
  (t_0,\dots,t_{i-1},t_i,gt_i+t'_i,t_{i+1}+t'_{i+1},\dots,t_r+t'_r).\]
  It is straightforward to verify that $s$ is horizontal and that $s(h^+_0,h^-_0)=h_0$. The conclusion follows as before.
\end{proof}

\subsection{Symmetrization relation}

Now we will prove the symmetrization relation:
\begin{proposition} \label{p:symmetrization} Let $F_a,F_b$ be fiber functors attached to smooth points equipped with lifts $a$ and $b$.
Let $\omega_1,\dots,\omega_r\in\Omega^1$ and $p\in \pi_1((X,M);F_a,F_b)$. Then,
\[\sum_{\sigma\in S_n} \int_{p,a}^b \omega_{\sigma(1)}\dots\omega_{\sigma(r)}=\prod_{i=1}^r \int_{p,a}^b\omega_i.\]
\end{proposition}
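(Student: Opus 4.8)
The plan is to reduce the symmetrization relation to a purely algebraic statement about the group-like element $p$ acting on a cleverly chosen isocrystal, and then to exploit the multilinearity already established in Lemma~\ref{l:multilinear} together with the grouplike property $\Delta(p) = p\otimes p$. First I would observe that by multilinearity (Lemma~\ref{l:multilinear}) it suffices to treat the case where all the $1$-forms are equal, say $\omega_1 = \cdots = \omega_r = \omega$; indeed, the symmetrization sum $\sum_{\sigma\in S_r}\int_{p,a}^b \omega_{\sigma(1)}\dots\omega_{\sigma(r)}$ is a symmetric multilinear functional in $(\omega_1,\dots,\omega_r)$, as is the product $\prod_i \int_{p,a}^b \omega_i$, so polarization lets me recover the general identity from the diagonal case where the right-hand side becomes $\left(\int_{p,a}^b\omega\right)^r$.

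For the diagonal case, the key idea is to compare the two isocrystals $\cE_{\omega\dots\omega}$ (the length-$r$ iterated connection of Definition~\ref{d:hodgeconnection}) and $\cE_\omega^{\otimes r}$, or more efficiently to use the symmetric power $\operatorname{Sym}^r$ of the rank-two isocrystal $\cE_\omega$. Concretely, $\cE_\omega$ is the rank-two bundle with $\nabla e_0 = -\omega\otimes e_1$, $\nabla e_1 = 0$, and its $r$-th tensor power has a horizontal action of $p$; since $p$ is grouplike, its action on $\cE_\omega^{\otimes r}$ is the $r$-fold tensor product of its action on $\cE_\omega$. The action of $p$ on $\cE_\omega$ is the unipotent matrix $\begin{psmallmatrix}1 & 0\\ \int_{p,a}^b\omega & 1\end{psmallmatrix}$, so on the tensor power it sends $e_0^{\otimes r}$ to $\prod_{i}\bigl(e_0 + (\int_{p,a}^b\omega)e_1\bigr)$-type expansions, and collecting the coefficient of $e_1^{\otimes r}$ gives exactly $\left(\int_{p,a}^b\omega\right)^r$. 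On the other hand, I would set up a horizontal morphism $\cE_{\omega\dots\omega}\to \operatorname{Sym}^r\cE_\omega$ (or its transpose) that matches the lifted basis vector $h_0$ on the source with $e_0^{\otimes r}$ on the target and that expresses $h_r$ in terms of $e_1^{\otimes r}$ with the combinatorial factor $r!$ accounting for the number of orderings; tracking the $h_r$-component of $p\cdot h_0$ through this morphism then relates $\sum_\sigma \int_{p,a}^b\omega\dots\omega = r!\int_{p,a}^b\omega\dots\omega$ on the left to the tensor-power computation on the right.

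The cleanest route is probably to build the explicit horizontal map $s\colon \cE_{\omega\dots\omega}\to \cE_\omega^{\otimes r}$ directly, verify its horizontality by a short computation with the connection matrices, and then use functoriality of the fiber functor $F_b$ together with the grouplike identity $F_b(p) = F_b(p)^{\otimes r}$ on the tensor factors. The main obstacle will be the bookkeeping in this comparison map: I must check that $s$ is genuinely horizontal (compatible with both connections), that it carries the lifted basis element $h_{a,0}$ to $e_0^{\otimes r}$ and reads off the correct combinatorial multiplicity when projecting onto $e_1^{\otimes r}$, and that the factor of $r!$ emerges correctly from the symmetrization rather than being double-counted. Once the horizontal comparison morphism is in place, the rest follows formally from the grouplike property and the matrix description of $p$ on $\cE_\omega$ displayed above.
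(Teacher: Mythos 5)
Your proof is correct, but it takes a genuinely different route from the paper's. The paper works with all $r$ forms at once: it introduces the bundle $\cE^\Omega_r$ whose fiber is the truncated tensor algebra $T^{\leq r}\Omega^\vee$ on the dual of the whole space of overconvergent $1$-forms, constructs a horizontal ``diagonal'' morphism $j\colon \cE^\Omega_r\to(\cE^\Omega_1)^{\otimes r}$, and then obtains the shuffle sum by pulling the functional $\omega_1\otimes\cdots\otimes\omega_r$ back along $F_b(j)$, the point being that $F_b(j)^*(\omega_1\otimes\cdots\otimes\omega_r)=\sum_{\sigma\in S_r}\omega_{\sigma(1)}\otimes\cdots\otimes\omega_{\sigma(r)}$; grouplikeness of $p$ enters once, to identify the action of $p$ on the tensor power with the tensor power of its action. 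You instead polarize first: both sides of the identity are symmetric $K$-multilinear in $(\omega_1,\dots,\omega_r)$ by Lemma~\ref{l:multilinear}, and $K$ has characteristic zero, so it suffices to treat the diagonal $\omega_1=\cdots=\omega_r=\omega$, where the claim becomes $r!\int_{p,a}^b\omega\cdots\omega=\bigl(\int_{p,a}^b\omega\bigr)^r$. Your comparison map then does exist and is horizontal once the coefficients are fixed correctly: with $\operatorname{Sym}^r\cE_\omega$ carrying the induced connection $\nabla(e_0^{r-k}e_1^k)=-(r-k)\,\omega\otimes e_0^{r-k-1}e_1^{k+1}$, the assignment $e_k\mapsto \frac{r!}{(r-k)!}\,e_0^{r-k}e_1^{k}$ is horizontal, sends $e_0\mapsto e_0^{r}$ and $e_r\mapsto r!\,e_1^{r}$, and naturality of $p$ with respect to this morphism, combined with $p(e_0^{r})=(p\,e_0)^{r}=\bigl(e_0+(\int_{p,a}^b\omega)\,e_1\bigr)^{r}$ (which uses grouplikeness exactly as you say), yields the diagonal identity with the factor $r!$ landing where it should. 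What your route buys: all bundles involved have small finite rank, the horizontality check is a one-line recursion, and you avoid the paper's universal object built on the generally infinite-dimensional space $\Omega^\vee$. What the paper's route buys: it treats mixed tuples of forms directly with no polarization step (hence no division by $r!$), and its map $j$ is visibly the coproduct, which makes the statement transparently an instance of the shuffle/grouplike formalism that recurs elsewhere in the paper (e.g.\ in the combinatorial integration theory).
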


This proposition is an immediate consequence of $p$ being group-like, but we will provide a proof for completeness.  Let $\cE^\Omega_r$ be the unipotent isocrystal attached to $E^\Omega_r$, the unipotent bundle with integrable connection on $]X[_{\cP}$ given by the trivial bundle attached to  the truncated tensor algebra $T^{\leq r}\Omega^\vee=\bigoplus_{i=0}^r \left(\Omega^\vee\right)^{\otimes i},$
with connection $1$-form $\omega$ 
given by the image of the identity under the map
\[\on{Hom}(\Omega^\vee,\Omega^\vee)\cong \Omega\otimes \Omega^\vee\to \Omega \otimes \End(T^{\leq r}\Omega^\vee)\]
where the arrow takes $v\in \Omega^\vee$ to the endomorphism given by right-multiplication by $v$.
%on $T^{\leq r}\Omega^\vee$.
%Now, let $\omega$ be the connection $1$-form on $E^\Omega_r$ 
%This gives a unipotent bundle with connection $\nabla=d-\omega$ that is universal among unipotent bundles of index of unipotency at most $r$ whose %underlying bundle is trivial. 
%Let $p_{r1}\colon E^{\Omega}_r\to E^{\Omega}_1$ be truncation.

There is a natural projection $q_{\omega_{i_1}\dots\omega_{i_r}}\colon E_r^\Omega\to E_{\omega_{i_1}\dots\omega_{i_r}}$ induced by 
the block diagonal linear map
\[T^{\leq n}\Omega^\vee=\bigoplus_{i=0}^r \left(\Omega^\vee\right)^{\otimes i}\to \bigoplus_{i=0}^r K=K^{r+1}\] 
given by
\begin{align*}
    \left(\Omega^\vee\right)^0&\to K, & \left(\Omega^\vee\right)^{\otimes m}&\to K\\
    1 &\mapsto 1, & v_1\dots v_m &\mapsto
    \<\omega_{i_1},v_1\>\cdot\dots\cdot\<\omega_{i_m},v_m\>. 
\end{align*}
There is also a horizontal morphism $j\colon E^{\Omega}_r\to \left(E^\Omega_1\right)^{\otimes r}$ given by $j=\sum_{i=1}^r j_i$
where $j_i$ is the algebra extension of 
\[\Omega^\vee\to \left(T^{\leq 1}\Omega^\vee\right)^{\otimes r},\ v\mapsto 1\otimes\dots\otimes 1\otimes v\otimes 1\otimes \dots\otimes 1\]
where  $v$ occurs in the $i$th factor.     
\begin{proof}
Note that $F_a(\cE)$ and $F_b(\cE)$ are the fibers over $a$ and $b$ of $E$, respectively, and the lifted bases are the basis vectors $e_0,\dots,e_n$. 
There is a commutative diagram
\[\xymatrix{
F_a(\cE^\Omega_r)\ar[r]^{F_a(j)}\ar[d]^p&F_a(\cE^\Omega_1)^{\otimes r}\ar[d]^p\\
F_b(\cE^\Omega_r)\ar[r]^{F_b(j)}&F_b(\cE^\Omega_1)^{\otimes r}
}\]
Note that $e_0\in  F_a(\cE^\Omega_r)=T^{\leq r}\Omega^\vee$ maps to $e_0\otimes\dots\otimes e_0$ under $F_a(j)$.
By treating $\omega_1\otimes\dots\otimes\omega_r\in \Omega^{\otimes r}$ as an element in $(F_b(\cE^\Omega_1)^{\otimes r})^\vee$, we
see for $s\in F_b((\cE^\Omega_1)^{\otimes r})$,
\[(\omega_1\otimes\dots\otimes\omega_r)(s)=[e_r](F_b(q_{\omega_1\dots\omega_r})s)
\]
where $[e_r]$ denotes taking the $e_r$ component.
We obtain
\begin{align*}
\prod_{i=1}^r \int_{p,a}^b\omega_i&=(\omega_1\otimes\dots\otimes\omega_r)p(e_0\otimes\dots\otimes e_0)\\
&=(\omega_1\otimes\dots\otimes\omega_r)p(F_a(j)(e_0))\\
&=(\omega_1\otimes\dots\otimes\omega_r)F_b(j)(p(e_0))\\
&=(F_b(j)^*(\omega_1\otimes\dots\otimes \omega_r))p(e_0)\\
&=\sum_{\sigma\in S_r} (\omega_{\sigma(1)}\otimes\dots\otimes\omega_{\sigma(r)})  p(e_0)\\
&=\sum_{\sigma\in S_r} \int_{p,a}^b \omega_{\sigma(1)}\dots\omega_{\sigma(r)}.
\end{align*}
Here, the first equality uses $\Delta(p)=p\otimes p$, which translates into $p$ respecting tensor products.
\end{proof}

%We expect similar symmetrization formula to hold for log basepoints, but we do not explore it here.

\section{Berkovich--Coleman integration}

We now explain our formalism for Berkovich--Coleman integration as integration along Frobenius-invariant paths.
Recall that  $\ell$ is an indeterminate standing in for $\Log(\pi)$, and we write it as a subscript to mean $\otimes K[\ell]$. 

\subsection{Properties of Berkovich--Coleman Integration}
Let $((X,M_X),(X,M_X),(\cP,L))$ be a proper log smooth frame, where $(X,M_X)$ is a weak log curve over $(S,\N)$.
Let $\Omega^1$ denote the vector space of log $1$-forms as in the previous section.
Let $F_a, F_b$ be fiber functors on $(X,M_X)$ attached to log points, anchored at components $\overline{a},\overline{b}\in V(\Gamma)$, and equipped with lifts $a,b$.
Let $\overline{p}\in \Pi(\Gamma; \overline{a}, \overline{b})$. By Proposition~\ref{prop:Frob-invariants}, there is a unique Frobenius-invariant $p\in \Pi((X,M);F_a,F_b)_{\ell} ^\varphi$ specializing to $\overline{p}$. For $\omega_1\dots\omega_r\in\Omega^1$, we define the {\em Berkovich--Coleman integral} to be
\[\BCint_{\overline{p},a}^b \omega_1\dots\omega_r\coloneqq\int_{p,a}^b \omega_1\dots\omega_r.\]
%We may interpret integration as collection of maps
%\[\Pi((X,M);F_a,F_b)\otimes(\Omega_{]X[_{\cP}}^1)^{\otimes n}\to K[\ell].\]
%as $((X,M),(X,M),(\cP,L))$ varies over log smooth frames around  weak log curves or log points, $F_a,F_b$ varies over fiber functors attached to log points equipped with lifts, and $n$ varies over nonnegative integers.

Berkovich--Coleman integration satisfies the natural properties of an integration theory according to the following theorem, an analogue of \cite[Theorem~9.1.1]{Berkovich:integration}. This theorem should be compared to the characterization of integration theories in \cite{KRZB}.

\begin{theorem} \label{t:bcprops} Let $F_a,F_b,F_c$ be fiber functors attached to points anchored on components $\overline{a},\overline{b},\overline{c}\in V(\Gamma)$ and equipped with lifts $a,b,c$. The Berkovich--Coleman integral has the following properties:
\begin{enumerate}
    \item \label{i:mulitlinearity} Berkovich--Coleman integration is multilinear in $1$-forms.
    \item \label{i:concatenation} Let $\overline{p}\in\Pi(\Gamma;\overline{a},\overline{b}), \overline{q}\in \Pi(\Gamma;\overline{b},\overline{c})$. Then
    \[\BCint_{\overline{pq},a}^c \omega_1\dots\omega_r=
    \sum_{k=0}^r\BCint_{\overline{p},a}^b \omega_{1}\dots \omega_{k}\BCint_{\overline{q},b}^c \omega_{k+1}\dots\omega_r.\]
   \item \label{i:functoriality} Let 
    $f\colon ((X,M_X),(X,M_X),(\cP,L))\to ((X',M_{X'}),(X',M_{X'}),(\cP',L'))$
    be a morphism of proper log smooth frames around weak log curves. Let $\omega'_1,\dots\omega'_r$ be $1$-forms on $]X'[_{\cP'}$. Let $\overline{p}\in\Pi(\Gamma;\overline{a},\overline{b})$. Then
  \[\int_{\overline{p},a}^b f^*\omega'_1\dots f^*\omega'_r=
  \int_{\overline{f}_*\overline{p},f\circ a}^{f\circ b} \omega'_1\dots\omega'_r.\]
   \item \label{i:symmetrization} If $F_a$ and $F_b$ are attached to smooth points, then the Berkovich--Coleman integral obeys the symmetrization relation: for $\overline{p}\in \pi_1(\Gamma;\overline{a},\overline{b})$
   \[\sum_{\sigma\in S_r} \BCint_{\overline{p},a}^b \omega_{\sigma(1)}\dots\omega_{\sigma(r)}=\prod_{i=1}^r\left( \BCint_{\overline{p},a}^b\omega_i\right).\]
   
%   \item \label{i:intbyparts} The formulas in Lemma~\ref{l:integrationbyparts} hold.
   
   \item \label{i:goodreduction} Let $(X,M)$ be a weak log curve whose underyling scheme is smooth. Let $\overline{p}$ be the unique constant path on $\Gamma$. Then $\BCint_{\overline{p},a}^b \omega_1\dots\omega_r$ is given by integrating along
   the unique Frobenius-invariant path in $\pi_1((X,M);F_a,F_b)_{\ell}^\varphi$.

 \item \label{i:annularexample} Let $\overline{p}=e$ be an edge in the dual graph $\Gamma$, and let 
     $\iota\colon ((x,\overline{\M}_2),(x,\overline{\M}_2),(\cP_2,L_2))\to ((X,M_{X}),(X,M_{X}),(\cP,L))$
    be a morphism of a proper log smooth frame around a node (given as in Subsection~\ref{ss:logpoints}) to one around a weak log curve so that the node corresponds to $e=\overline{a}\overline{b}$. Let $\omega_1,\dots\omega_r$ be $1$-forms on $]X[_{\cP'}$. Then
  \[\int_{\overline{p},\iota_*F_{\{f_1\}}}^{\iota_*F_{\{f_2\}}} \omega_1\dots\omega_r=
  \int_{\delta_0} f^*\omega'_1\dots f^*\omega'_r\]
  where $\delta_0$ is the path in Example~\ref{e:pi1annulus}. 
    \item \label{i:antiderivative} Let $F_a,F_b$ be fiber functors attached to the same smooth $k$-point in $X$, and let $\overline{p}$ be the constant path at $\overline{a}=\overline{b}$. Then $\int_{\overline{p},a}^b \omega_1\dots\omega_r$ is given by 
    classical parallel transport as in Remark~\ref{r:paralleltransport}.
\end{enumerate}
\end{theorem}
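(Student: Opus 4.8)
The plan is to identify the Frobenius-invariant path that defines $\BCint_{\overline{p},a}^b$ with the classical parallel-transport path inside the common residue disc, and then appeal to Remark~\ref{r:paralleltransport}. Write $D$ for the residue disc around the smooth $k$-point to which both $F_a$ and $F_b$ are attached; the lifts $a,b$ are $K$-points of $D$, and $F_a=a^\ast$, $F_b=b^\ast$ are the fiber functors of evaluation at these points. Restricting any unipotent overconvergent isocrystal $\cE$ on $(X,M)$ to $D$ gives a unipotent bundle with integrable connection on a disc, which therefore admits a full set of horizontal analytic sections; analytic continuation of horizontal sections from $a$ to $b$ defines a Tannakian isomorphism $\tau\in\pi_1^{\rig,\un}((X,M);F_a,F_b)(K)$, namely the parallel-transport path of the residue-disc basepoint of Definition~\ref{d:smoothpoint}. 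By Remark~\ref{r:paralleltransport}, applied to $\cE_{\omega_1\dots\omega_r}$ together with the horizontal section $(1,\int\omega_1,\int\omega_1\omega_2,\dots)$ built from the anti-derivatives on $D$, the integral $\int_{\tau,a}^b\omega_1\dots\omega_r$ is exactly the classical iterated integral characterized by the parallel-transport ODE together with the vanishing condition at $a$. So the statement reduces to the identity $p=\tau$, where $p$ is the unique Frobenius-invariant lift of the constant path $\overline{p}\in\pi_1^{\un}(\Gamma;\overline{a},\overline{a})$.

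By Proposition~\ref{prop:Frob-invariants} (and its extension to groupoid modules in Corollary~\ref{c:liftinggroupoidmodule}), the specialization map is an isomorphism onto the $\varphi$-invariants after tensoring with $K[\ell]$; hence it suffices to verify that $\tau$ is Frobenius-invariant and that $\on{sp}(\tau)=\overline{p}$. The specialization is the easy half. Recall from Section~\ref{ss:specializationmap} that every object in the image of the cospecialization functor $\on{sp}^\ast$ restricts on the component $X_{\overline{a}}$, and hence on the disc $D$ (which meets no node), to a direct sum of copies of $(j^\dagger\cO,d)$; its horizontal analytic sections on $D$ are therefore constant, so $\tau$ acts as the identity on $F_a\circ\on{sp}^\ast=F_b\circ\on{sp}^\ast$, both canonically identified with the fiber at $\overline{a}$. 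This says precisely that $\on{sp}(\tau)$ is the trivial element of $\pi_1^{\un}(\Gamma;\overline{a},\overline{a})$, i.e.\ the constant path $\overline{p}$.

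The substantive point, and the step I expect to be the main obstacle, is the Frobenius-invariance of $\tau$. Since the Frobenius of Section~\ref{ss:frobaction} is defined indirectly through the $\pi$-$t$ base change and the tangential and specialization paths, one cannot simply assert that parallel transport commutes with $\varphi$. I would argue that every unipotent overconvergent isocrystal on $(X,M)$ extends, over $(X_t,M_t)$, to a weakly unipotent overconvergent $F$-isocrystal $\cW$ whose $F$-structure $\cW\cong\varphi^\ast\cW$ is horizontal; restricting to $D$, the lift of relative Frobenius fixing the center (as in Lemma~\ref{l:canonicalfrobfiberfunctor} and the universal-bundle discussion) carries horizontal sections to horizontal sections and is compatible with evaluation at $a$ and $b$ through the tangential and specialization paths, so the transport isomorphism intertwines the two copies of the $F$-structure and $\varphi_\ast\tau=\tau$. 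Alternatively, and perhaps more cleanly, one reduces to the good-reduction case already recorded: the corollary following Corollary~\ref{c:liftinggroupoidmodule} (Besser's theorem) produces a \emph{unique} Frobenius-invariant path between log basepoints anchored on a weak log curve whose underlying scheme is smooth, and that path is Besser's Coleman path, which is parallel transport; applying this to the local good-reduction model supporting $D$ and transporting through the functoriality of Proposition~\ref{p:functoriality} and of the specialization map identifies $\tau$ with the Frobenius-invariant lift. Either route yields $p=\tau$, whence $\int_{p,a}^b\omega_1\dots\omega_r=\int_{\tau,a}^b\omega_1\dots\omega_r$ is classical parallel transport, as claimed.
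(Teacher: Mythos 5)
Your proposal addresses only part (\ref{i:antiderivative}) of the theorem; the other six properties are never discussed. Most of them are quick --- (\ref{i:mulitlinearity})--(\ref{i:symmetrization}) follow from Lemma~\ref{l:multilinear}, Proposition~\ref{p:concatenation}, Proposition~\ref{p:functoriality} and Proposition~\ref{p:symmetrization} once one records that Frobenius-invariant lifting is compatible with concatenation and pushforward (the lift of $\overline{p}\,\overline{q}$ is $pq$, the lift of $\overline{f}_*\overline{p}$ is $f_*p$), property (\ref{i:goodreduction}) is definitional, and property (\ref{i:annularexample}) is an explicit computation with the bundle $\mathbf{1}[c]/c^{r+1}$ and the path $\delta_0$ of Example~\ref{e:frobinvnode} --- but they are part of the statement and a proof of the theorem must include them.

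For part (\ref{i:antiderivative}) itself, your reduction is sound in outline: by Proposition~\ref{prop:Frob-invariants} it suffices to show that the parallel-transport path $\tau$ specializes to the constant path and is fixed by $\varphi$, and your specialization argument (cospecialized objects are trivial on the residue disc, so transport acts as the identity on them) is correct. The gap is the Frobenius-invariance of $\tau$, which is exactly the hard point, and neither of your routes closes it. Route A asserts that the transport isomorphism is ``compatible with evaluation at $a$ and $b$''; this is false as literally stated, since a Frobenius lift on the residue disc fixes the center but not the $K$-points $a$ and $b$ --- the paper's $\varphi$ is defined on torsors between \emph{log-point} fiber functors, conjugated through the tangential and specialization paths, precisely to avoid anchoring at points that Frobenius moves, and your argument never performs this conjugation. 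Route B invokes ``Besser's Coleman path, which is parallel transport''; within this paper that identification on residue discs is essentially the content of (\ref{i:antiderivative}) itself (the comparison with Coleman's functions is established only in the Remark following the theorem, using the theorem), so as written this is circular, or at best an unjustified external appeal. The paper's own proof sidesteps the issue entirely: it views the residue disc as a frame over the log point $(x,\overline{M}_0)$ via a morphism $g\colon \cP'\to\cP$, observes that $\pi_1^{\rig,\un}((x,\overline{M}_0);F_a,F_b)$ is a torsor under $\on{Hom}(\overline{M}_0/f^*\N,\Ga)=0$ by Lemma~\ref{l:pi1logpoint}, so the unique path $q$ there is automatically Frobenius-invariant; then $g_*q$ is Frobenius-invariant and specializes to the constant path, hence equals $p$ by uniqueness, and (\ref{i:functoriality}) together with Remark~\ref{r:paralleltransport} (a horizontal section $s$ with $s(1)_a=e_0$ built by iterated anti-differentiation on the disc) finishes. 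If you want to salvage Route A, the precise statement you need is that conjugating $\tau$ by the transport paths from $a$ and $b$ to the center identifies it with the identity path at the Frobenius-fixed tangential basepoint $\tilde{F}$ of Lemma~\ref{l:canonicalfrobfiberfunctor}; that must be said explicitly, and it amounts to the same collapse the paper achieves through the trivial torsor on $(x,\overline{M}_0)$.
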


\begin{proof}
  Property (\ref{i:mulitlinearity}) follows from Lemma~\ref{l:multilinear}.

  For (\ref{i:concatenation}), note that the Frobenius-invariant lift of $\overline{pq}$ is equal to $pq$. The equality follows from Proposition~\ref{p:concatenation}.
  
  To prove (\ref{i:functoriality}), we note that the Frobenius-invariant lift of $\overline{f}_*\overline{p}$ is $f_*p$ and then apply  Proposition~\ref{p:functoriality}.
  
  The symmetrization relation (\ref{i:symmetrization}) follows from Proposition~\ref{p:symmetrization} together with the observation that if $\overline{p}\in\pi_1(\Gamma;a,b)$ then $p\in \pi^{\rig,\un}_1((X,M);F_a,F_b)_{\ell}^\varphi$. 
 
Property (\ref{i:goodreduction}) follows from definitions.

  For (\ref{i:annularexample}), it suffices to show $\iota_*\delta_0=p$, the Frobenius invariant lift of $\overline{p}=e$. Clearly, $\iota_*\delta_0$ is Frobenius invariant. Moreover, by functoriality of specialization, $\iota_*\delta_0$ must specialize to an element of $\pi_1(\Gamma_X;\overline{a},\overline{b})$ that is in the image of $\pi_1(\Gamma_{(X',M')};\overline{a},\overline{b})\to \pi_1(\Gamma_{(X,M)};\overline{a},\overline{b})$ for any log morphism $(X',M')\to (X,M)$ with the node corresponding to $e$ in its image. By considering the normalization at all nodes except for $e$, $(\tilde{X},M_{\tilde{X}})\to(X,M)$, we see $\on{sp}(\iota_*\delta_0)=\overline{p}$. The conclusion follows from Proposition~\ref{prop:Frob-invariants}.
  
  For (\ref{i:antiderivative}), suppose $a,b\in \cP(K)$ specialize to $a_0\in X(k)$. Set $\cP'=\Spf V\ps{x_1}$, let $(\cP',L')$ 
 be the formal residue disc as in Definition~\ref{d:smoothpoint}, and choose the frame 
  \[((x,\overline{\M}_0),(x,\overline{\M}_0),(\cP',L')).\]
  By \cite[Proposition~2.2]{BL:stable1}, there is a morphism $g\colon\cP'\to\cP$ parameterizing the residue disc around $a_0$ and we may  treat $F_a$ and $F_b$ as fiber functors on $(x,\overline{\M}_0)$. Then, 
  $g^*\cE_{\omega_1\dots\omega_r}=\cE_{g^*\omega_1\dots g^*\omega_r}$.
  We can produce a horizontal morphism
  \[s\colon\mathbf{1}\to E_{g^*\omega_1\dots g^*\omega_r}\]
  with $s(1)_a=e_0$ by iterated anti-differentiation.
  Let $q\in\pi_1^{\rig,\un}((x,\overline{\M}_0);F_a,F_b)$ be 
  the unique (and thus Frobenius-invariant) path. Because $g_*(q)=p$, the 
  conclusion follows from (\ref{i:functoriality}) and 
  Remark~\ref{r:paralleltransport}.
\end{proof}

\begin{remark}
  There is an analogous parallel transport property on nodal log points making use of $\Log$-analytic functions. Together with the above properties, it can be shown to characterize Berkovich--Coleman integration. Here, one uses (\ref{i:concatenation}) and (\ref{i:functoriality}) to reduce to the case of curves with smooth underlying schemes and nodal log points. The former is covered  by (\ref{i:goodreduction}) and the latter by the parallel transport property.
  \end{remark}

\begin{remark}
  One can define locally analytic functions from the above theory of integration by using Besser's language of {\em abstract Coleman functions} \cite[Section~4]{Besser:Coleman}. Besser's functions are equivalent to Coleman's for log curves $(X,M)$ with only smooth points and punctures. The functions arising from our definition of Berkovich--Coleman integration are equivalent to the functions defined in \cite{Coleman-deShalit} and \cite{Berkovich:integration}. Indeed, a path $\overline{p}$ in $\Gamma$ can be decomposed into edges. Consequently, its Frobenius-invariant lift can be decomposed into a concatenation of paths of the following kind:
  \begin{enumerate}
      \item $i_{v*}p_v\in \pi_1^{\rig,\un}((X,M);F_1,F_2)_\ell^\varphi$
      where $p_v\in \pi_1^{\rig,\un}((X_v,M_v);F_1,F_2)_{\ell}^\varphi,\\$
      $i_{\overline{v}}\colon (X_{\overline{v}},M_{\overline{v}})\to (X,M)$
      is the inclusion of a component $X_{\overline{v}}$ of $X$ equipped with the induced log 
      structure, and $F_1$ and $F_2$ are fiber functors attached to log 
      points anchored at $\overline{v}$; and
      \item $i_{e*}\delta_0\in\pi_1^{\rig,\un}((x,\overline{\M}_2);i_{e*}F_{\{f_1\}},i_{\overline{e}*}F_{\{f_2\}})_{\ell}^\varphi$ where $i_e\colon (x,\overline{\M}_2)\to (X,M)$ is the inclusion of a nodal point.
  \end{enumerate}
  We need only verify equivalence for the two kinds of paths by Property  (\ref{i:functoriality}) of Theorem~\ref{t:bcprops}.
  For the first kind, let $j_{\overline{v}}\colon (X_{\overline{v}},M_{\overline{v}})\to (X_{\overline{v}},M'_{\overline{v}})$ be the morphism given by Remark~\ref{r:modifylogstructure} where annular points are replaced by punctures. The conclusion follows from Property (\ref{i:functoriality}) and \cite[Section~5]{Besser:Coleman}. For the second kind, we employ Example~\ref{e:annulusintegral}.
\end{remark}
\begin{remark}
Our use of the bundle $\cE_{\omega_1\dots\omega_n}$ can be interpreted more systematically by considering the Hodge filtration on the de Rham fundamental group. 

Let $(X,M_X)$ be a proper log smooth curve, and let 
$\cX$ be the fiber of $\cP$ over $\Spec K$ and $\cD$ be the divisor on $\cX$ corresponding to punctures on $(\cP,L)$. Let 
 $F_a,F_b$ be fiber functors given by taking the fiber at $a,b\in \cX(K)$. Let $\mathscr{I}$ be the augmentation ideal of the unipotent de Rham module $\Pi^{\dR}((\cX,L); F_a, F_b)$ and $F$ the Hodge filtration. Then there is a natural identification 
 \[F^{r}\left((\Pi^{\dR}((\cX,L); F_a,F_b)/\mathscr{I}^{r+1})^\vee\right)
 \cong H^0(\cX, \Omega^1_X(\log D))^{\otimes r}\]
Indeed, the result is trivial for $r=0$ and for for $r=1$ is immediate from the  identification \[\mathscr{I}/\mathscr{I}^2\cong H^1_{\dR}((\cX,L))^\vee.\] 
The general case follows by multiplicativity. In the language of universal bundles, the functional given by \[\omega_1\otimes\dots\otimes \omega_r\in H^0(\cX, \Omega^1_X(\log D))^{\otimes r}\]
is induced by the projection $\cE_r\to \cE_r^{\Omega}$ followed by the functional $\omega_1\otimes\dots\otimes\omega_{r}$ as constructed in the proof of Proposition~\ref{p:symmetrization}.

Let $\overline{p}\in \Pi(\Gamma, \overline{a}, \overline{b})$. Then, the Berkovich-Coleman integral of $\omega_1\otimes \cdots\otimes \omega_r$ along $\overline{p}$ is defined as follows. Let $p$ be the unique lift of $\overline{p}$ to $\Pi((X,M), F_a, F_b)_{\ell}^{\varphi}$, and let $\on{comp}(p)\in \Pi^{\dR}((\cX,L), F_a,F_b)_{\ell}$ be its image under the comparison map and $\overline{\on{comp}(p)}$ its reduction modulo $\mathscr{I}^{r+1}$. The Berkovich-Coleman integral is equal to the natural pairing
$\langle \overline{\on{comp}(p)}, \omega_1\otimes \cdots\otimes \omega_r\rangle.$

%The properties of this integral follow from the properties of the relevant Hopf modules. For example, we obtain the symmetrization relation for $n=2$ involving the integrals of $\omega_1, \omega_2,$ and $\omega_1\otimes\omega_2$ as follows. Suppose that $\overline{p}\in\pi_1^{\un}(\Gamma;\overline{a},\overline{b})$ so that $\overline{p}$ is group-like (which implies $\Delta(\overline{p})=\overline{p}\otimes \overline{p}$). Any representative of $\omega_i$, as an element of $\mathscr{I}^\vee$, satisfies \[\Delta(\omega_i)=\omega_i\otimes 1+1\otimes\omega_i\bmod (\mathscr{I}^2)^\vee\]
%and thus
%\[\langle \Delta(p), \Delta(\omega_1\otimes\omega_2)\rangle=\langle p\otimes p, (\omega_1\otimes 1+1\otimes \omega_1)\cdot (\omega_2\otimes 1+1\otimes \omega_2)\rangle,\] 
%which is a special case of the symmetrization formula.
\end{remark}

%\subsection{Examples} \label{ss:berkexamples}

\begin{example} \label{ex:twicedpunctureP1}
Let $(X,M)$ be $\P_k^1$ with punctures at $0$ and $\infty$. This has a natural frame given by 
\[((X,M),(X,M),(\cP,L))\]
where $\cP=\hat{\P}_V^1$ with $L$ induced by punctures at $0$ and $\infty$ (given 
in an inhomogeneous coordinate $z$). Because $]X[_{\cP}=(\P_K^1)^{\an}$, the log 
rigid cohomology of $(X,M)$ corresponds to the de Rham cohomology of 
$((\P_K^1)^{\an},L)$ which is spanned by $\omega=\frac{dz}{z}$. Consequently, 
$\cE_{\omega^r}$ evaluated on our frame is the trivial bundle with fiber
$K[c]/c^{n+1}$ and connection $1$-form $\frac{dz}{z}(c\ \cdot\ )$ as in Example~\ref{e:annulusintegral}.

Let $F_0$ and $F_\infty$ be the log basepoints at the punctures $0$ and $\infty$ respectively (each given by basis $\{f\}$). Then
$h_0\in F_0(\cE_{\omega^r})$ is given by the function $\exp(\Log(z)c)$.
Let $\overline{p}$ be the constant path in $\Gamma$ and $p\in\pi_1^{\rig,\un}((X,M),F_0,F_\infty)_{\ell}^\varphi$ be its Frobenius-invariant lift.
By a computation directly analogous to Proposition~\ref{p:nodalfrobenius} or by making use of the lift of Frobenius $z\mapsto z^q$, we see that the unique Frobenius-invariant path from $F_0$ to $F_\infty$ must be the substitution $\Log(z)\mapsto -\Log(w)$ where $w=z^{-1}$ is the coordinate at $\infty$. Consequently,
$p(h_{0,0})=\exp(-\Log(w)c)$. Since $h_{\infty,i}=c^i\exp(-\Log(w)c)$, and 
\[\int_{\overline{p},0}^\infty\omega^r=\begin{cases}
1 &\text{if }r=0\\
0 &\text{if }r\geq 1.
\end{cases}\]
\end{example}

\begin{example}
Let $(X',M')$ be the  log curve where $X$ is the nodal elliptic curve coming from identifying $0$ and $\infty$ on $\P^1$, and $M'$ is the induced by the nodal log structure at that point. Let $F_0$ and $F_{\infty}$ be the log base-points attached to the branches containing $0$ and $\infty$, respectively. These base-points possess a lift given by the coordinates $z$ and $w=z^{-1}$ This curve is the closed fiber of a semistable model of the Tate curve $E_\pi=\Gm/\pi^\Z$ and has a regular $1$-form given by $\nu=\frac{dz}{z}$. The log dual graph is a vertex with a loop. Let $\overline{p}$ be
a closed path going around that loop (oriented from the branch of the node near $\infty$ to the branch of the node near $0$). Let $p\in \pi_1((X',M');F_0,F_0)_{\ell}^{\varphi}$ be its Frobenius-invariant lift.

Let $(X'',M'')$ be $\P^1_k$ with annular points at $0$ and $\infty$. The log dual graph is a point. Let $F''_0$ and $F''_\infty$ be the corresponding log base points. By Lemma~\ref{l:aphomotopyequivalence}, we have an Frobenius-equivariant isomorphism with the fundamental group of the curve in Example~\ref{ex:twicedpunctureP1}
\[\pi^{\un}_1((X'',M'');F''_0,F''_\infty)\cong
\pi^{\un}_1((X,M);F_0,F_\infty).\]
Consequently, if $p''$ is the Frobenius-invariant path from $F''_0$ to $F''_\infty$, then 
\[\int_{p'',0}^\infty \nu^r=\begin{cases}
1 &\text{if }r=0\\
0 &\text{if }r\geq 1.
\end{cases}\]

Let $g\colon (x,\overline{\M}_2)\to (X',M')$ be the inclusion of the nodal log point such that $f_1\in \overline{M}_2$ corresponds to the branch of $(X',M')$ at $\infty$ and $f_2\in \overline{M}_2$ corresponds to the branch of $(X',M')$ at $0$. Then 
$\int_{\delta_0,\infty}^0 g^*\nu^r=\ell^r/r!$ by Property (\ref{i:annularexample}) of Theorem~\ref{t:bcprops} and Example~\ref{e:annulusintegral}.
Because $p$ is the composition of $p''$ and $g_*\delta_0$, by the concatenation formula,
\[\BCint_p \nu^r=\frac{\ell^r}{r!}.\]
This is consistent with the period of the Tate curve $E_\pi$ being $\Log(\pi)=\ell$ and the symmetrization formula.
\end{example}

\begin{example} \label{e:longertatecurve}
We can modify the above example by considering a cycle of $m$ $\P^1$'s. It has a lift given by the Tate curve $E_{\pi^m}=\Gm/\pi^{m\Z}$. By composing paths through the $\P^1$'s, the above arguments show that
\[\BCint_{\overline{p}} \nu^n=\frac{m^n\ell^n}{n!}.\]
where $\overline{p}$ is a (suitably oriented) closed path around the dual graph $\Gamma$.
\end{example}

\section{Vologodsky Integration}

\subsection{Definition of the Vologodsky integral}

\begin{definition}
Let $((X,M_X),(X,M_X),(\cP,L))$ be a proper log smooth frame where $(X,M_X)$ is a weak log curve. 
Let $F_a, F_b$ be fiber functors on $(X,M_X)$ attached to log points anchored at components $\overline{a},\overline{b}\in V(\Gamma)$ and equipped with lifts $a$ and $b$. For $\omega_1\dots\omega_r\in\Omega^1$, the 
{\em Vologodsky integral} is
\[\Vint_a^b \omega_1\dots\omega_r\coloneqq\BCint_{\overline{p}_{\overline{a}\overline{b}},a}^b \omega_1\dots\omega_r\]
where $\overline{p}_{\overline{a}\overline{b}}$ is the combinatorial canonical path from $\overline{a}$ to $\overline{b}$ from Proposition~\ref{p:canonicalpath}.
\end{definition}

This notion of integral is path independent and has the following properties:

\begin{theorem}  Let $F_a,F_b,F_c$ be fiber functors attached to points anchored on components $\overline{a},\overline{b},\overline{c}\in V(\Gamma)$ equipped with lifts $a,b,c$. The Vologodsky integral has the following properties:
\begin{enumerate}
    \item \label{i:Vmulitlinearity} Vologodsky integration is multilinear in $1$-forms.
    \item \label{i:Vconcatenation} 
    \[\Vint_a^c \omega_1\dots\omega_r=
    \sum_{k=0}^r\Vint_a^b \omega_{1}\dots \omega_{k}\Vint_b^c \omega_{k+1}\dots\omega_r.\]
    \item \label{i:Vsamecomponent} If $\overline{a}=\overline{b}$, then 
    \[\Vint_a^b \omega_1\dots\omega_r=\BCint_{\overline{p},a}^b \omega_1\dots\omega_r\]
    where $\overline{p}$ is the constant path at $\overline{a}$.
    \item \label{i:Vsymmetrization} If $F_a$ and $F_b$ are attached to smooth points, then the Vologodsky integral obeys the symmetrization relation
      \[\sum_{\sigma\in S_r} \Vint_a^b \omega_{\sigma(1)}\dots\omega_{\sigma(r)}=\prod_{i=1}^r \Vint_a^b\omega_i.\]
    %\item The formulas in Lemma~\ref{l:integrationbyparts} hold.
 \end{enumerate}
\end{theorem}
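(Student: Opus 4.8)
The plan is to derive each of the four properties of the Vologodsky integral directly from the corresponding property of Berkovich--Coleman integration established in Theorem~\ref{t:bcprops}, using the definition $\Vint_a^b\omega_1\dots\omega_r=\BCint_{\overline{p}_{\overline{a}\overline{b}},a}^b\omega_1\dots\omega_r$ together with the structural properties of the combinatorial canonical path $\overline{p}_{\overline{a}\overline{b}}$ from Proposition~\ref{p:canonicalpath}. The key observation throughout is that since $\overline{p}_{\overline{a}\overline{b}}$ is a \emph{fixed} element of $\Pi(\Gamma;\overline{a},\overline{b})$, any property of $\BCint$ that holds for an arbitrary path in $\Gamma$ specializes immediately to $\Vint$ once we substitute this particular path.

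First I would prove multilinearity (\ref{i:Vmulitlinearity}): this is immediate, since $\BCint_{\overline{p}_{\overline{a}\overline{b}},a}^b$ is multilinear in the $1$-forms by part (\ref{i:mulitlinearity}) of Theorem~\ref{t:bcprops}, and multilinearity is a statement about the $1$-form arguments with the path held fixed. Next, for the concatenation property (\ref{i:Vconcatenation}), I would invoke the concatenation property (\ref{i:concatenation}) of Berkovich--Coleman integration, which gives
\[\BCint_{\overline{p}_{\overline{a}\overline{b}}\,\overline{p}_{\overline{b}\overline{c}},a}^c \omega_1\dots\omega_r=\sum_{k=0}^r\BCint_{\overline{p}_{\overline{a}\overline{b}},a}^b \omega_{1}\dots\omega_k\,\BCint_{\overline{p}_{\overline{b}\overline{c}},b}^c \omega_{k+1}\dots\omega_r,\]
and then apply the concatenation property $\overline{p}_{\overline{a}\overline{c}}=\overline{p}_{\overline{a}\overline{b}}\,\overline{p}_{\overline{b}\overline{c}}$ of the canonical path from Proposition~\ref{p:canonicalpath} to identify the left-hand side with $\Vint_a^c$ and the two factors on the right with $\Vint_a^b$ and $\Vint_b^c$ respectively. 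This is the step where the defining concatenation property of $\overline{p}_{\overline{a}\overline{b}}$ does the essential work.

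For (\ref{i:Vsamecomponent}), when $\overline{a}=\overline{b}$ I would argue that the combinatorial canonical path $\overline{p}_{\overline{a}\overline{a}}$ reduces to the constant path at $\overline{a}$: by the characterization in Proposition~\ref{p:canonicalpath}, $\overline{p}_{\overline{a}\overline{a}}$ is the unique element of $\Pi(\Gamma;\overline{a},\overline{a})$ with $\cint_{\overline{p}_{\overline{a}\overline{a}}}\varnothing=1$ and $\cint_{\overline{p}_{\overline{a}\overline{a}}}\eta_1\dots\eta_n=0$ for all $n\geq 1$, and the constant path trivially satisfies these conditions, so by uniqueness they coincide; the result then follows by definition. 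Finally, for the symmetrization relation (\ref{i:Vsymmetrization}), I would use that the canonical path is group-like (it obeys the shuffle formula and lies in $\pi_1^{\un}(\Gamma;\overline{a},\overline{b})$ by Proposition~\ref{p:canonicalpath}), so its Frobenius-invariant lift lies in $\pi_1^{\rig,\un}((X,M);F_a,F_b)_\ell^\varphi$, and hence the symmetrization property (\ref{i:symmetrization}) of Berkovich--Coleman integration applies verbatim. The main subtlety to verify carefully is precisely this last point, namely that $\overline{p}_{\overline{a}\overline{b}}$ being group-like guarantees its lift is an honest grouplike path rather than merely a formal sum, so that the symmetrization relation—which requires a genuine element of the pro-unipotent group—is legitimately available; everything else is a direct transfer from Theorem~\ref{t:bcprops} through the definition.
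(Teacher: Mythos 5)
Your proposal is correct and follows essentially the same route as the paper: each property is transferred from the corresponding property of Berkovich--Coleman integration in Theorem~\ref{t:bcprops}, with the concatenation identity $\overline{p}_{\overline{a}\overline{c}}=\overline{p}_{\overline{a}\overline{b}}\,\overline{p}_{\overline{b}\overline{c}}$ and the group-likeness of the canonical path (Proposition~\ref{p:canonicalpath}) supplying exactly the two non-immediate steps. Your uniqueness argument identifying $\overline{p}_{\overline{a}\overline{a}}$ with the constant path is a detail the paper leaves implicit, and it is a valid way to fill it in (noting only that for non-proper $\Gamma$ one applies the uniqueness in $\overline{\Gamma}$ and then pushes forward).
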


\begin{proof}
  All of these are immediate from the analogous properties of Berkovich--Coleman integration except the following: (\ref{i:Vconcatenation}) requires the concatenation property of the combinatorial canonical path; and (\ref{i:Vsymmetrization}) requires the combinatorial canonical path to be group-like. These follow from Lemma~\ref{p:canonicalpath}.
\end{proof}

%\begin{remark}
  Because the combinatorial canonical path is not necessarily functorial under morphisms of graphs as noted in Remark~\ref{r:canonicalfunctorial}, the Vologodsky integral is not necessarily functorial under morphisms of frames. 
%\end{remark}

\subsection{Monodromy and the comparison with Vologodsky's definition} \label{ss:monodromycomparison}
We now explain an alternative description of Vologodsky iterated integration, which is implicit in \cite{Vologodsky} where single integrals are described in detail.
\begin{proposition}[Vologodsky path]
Let $(X, M), F_a, F_b$ be as in the previous subsection. Then there is a unique element $p_{\text{Vol}}$ of $\Pi((X,M);F_a, F_b)_{\ell}$ such that
\begin{enumerate}
    \item $p_{\text{Vol}}\equiv 1\bmod \mathscr{I}$
    \item $p_{\text{Vol}}$ is fixed by $\varphi$, and
    \item For all $n>0$, $N^n(p_{\text{Vol}})=0 \bmod W_{-n-1}$.
\end{enumerate}
Vologodsky defined $\Vint_a^b \omega_1\dots\omega_r$ to be $\int_{p_{\text{Vol}},a}^b \omega_1\dots\omega_r$.
\end{proposition}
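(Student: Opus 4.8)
The plan is to reduce the characterization of $p_{\text{Vol}}$ to the combinatorial characterization of the canonical path on the dual graph, and then to match the two definitions of the Vologodsky integral. First I would use conditions (1) and (2) to identify the candidates: by Corollary~\ref{c:liftinggroupoidmodule} the specialization map is an isomorphism $\on{sp}_\ell\colon \Pi((X,M);F_a,F_b)_\ell^\varphi \cong \Pi(\Gamma;\overline a,\overline b)_\ell^\varphi$, and since $\varphi$ acts trivially on $\Pi(\Gamma;\overline a,\overline b)$ and by $\ell \mapsto q\ell$ on the coefficient, a direct computation gives $\Pi(\Gamma;\overline a,\overline b)_\ell^\varphi = \Pi(\Gamma;\overline a,\overline b)$. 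Thus any $\varphi$-fixed $p_{\text{Vol}}$ with $p_{\text{Vol}} \equiv 1 \bmod \mathscr I$ is the unique Frobenius-invariant lift of some $\overline p \in \Pi(\Gamma;\overline a,\overline b)$ with $\epsilon(\overline p) = 1$, and conversely every such $\overline p$ lifts. The task therefore becomes to show that condition (3) holds for this lift if and only if $\overline p = \overline p_{\overline a\overline b}$.

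Next I would translate the monodromy--weight condition (3) into the vanishing of combinatorial iterated integrals, working one $\mathscr I$-adic degree at a time on associated gradeds. By Proposition~\ref{p:monodromyidentification} there is a canonical identification $M_{-2n}(\mathscr I(F_a,F_b)^n/\mathscr I(F_a,F_b)^{n+1}) \cong (H_1^\diamond(\Gamma)^{\otimes n})^\vee$, and the combinatorial monodromy formula of Proposition~\ref{p:combinatorialmonodromy} identifies the leading term of $N^n$ applied to the Frobenius-invariant lift $p$ with multiplication by $n!\,\cint_{\overline p}\eta_1\dots\eta_n$ against the residue data of a unipotent isocrystal of index of unipotency $n$; letting the isocrystal range over all such objects realizes all tuples of tropical forms. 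Corollary~\ref{c:weightsubmodule} identifies the submodule $W_{-n-1}$ on this graded piece with $\ker\big((\varrho^*)^\vee\colon (H_1^\diamond(\Gamma)^{\otimes n})^\vee \to (H_1^\diamond(\overline\Gamma)^{\otimes n})^\vee\big)$, where $\varrho\colon\Gamma\to\overline\Gamma$ is the contraction of Remark~\ref{r:contracttocore}. Combining these, the requirement $N^n(p_{\text{Vol}})\equiv 0 \bmod W_{-n-1}$ for every $n$ is equivalent to $\cint_{\overline p}\eta_1\dots\eta_n = 0$ for all $n$ and all $\eta_i \in \Omega^1(\overline\Gamma) \subset \Omega^1(\Gamma)$.

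With this equivalence in hand, existence and uniqueness follow from the combinatorial canonical path. Since $\varrho$ is a homotopy equivalence it induces $\Pi(\Gamma;\overline a,\overline b) \cong \Pi(\overline\Gamma;\overline a,\overline b)$, and under this identification Proposition~\ref{p:canonicalpath}, together with the perfect pairing of Proposition~\ref{p:perfectpairing} on the proper graph $\overline\Gamma$, exhibits $\overline p_{\overline a\overline b}$ as the unique element with $\epsilon = 1$ whose combinatorial iterated integrals against $\Omega^1(\overline\Gamma)$ all vanish. Hence $\overline p = \overline p_{\overline a\overline b}$ is forced, giving uniqueness of $p_{\text{Vol}}$; and the Frobenius-invariant lift of $\overline p_{\overline a\overline b}$ satisfies (1)--(3) by the same equivalence, giving existence. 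Finally, because $\BCint_{\overline p_{\overline a\overline b},a}^b\omega_1\dots\omega_r$ is by definition $\int_{p,a}^b\omega_1\dots\omega_r$ with $p$ the Frobenius-invariant lift of $\overline p_{\overline a\overline b}$, and we have just shown $p = p_{\text{Vol}}$, the integral $\Vint_a^b\omega_1\dots\omega_r$ of the preceding section agrees with Vologodsky's $\int_{p_{\text{Vol}},a}^b\omega_1\dots\omega_r$.

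The main obstacle I anticipate is the weight bookkeeping in the second step: one must verify that the ``leading term of $N^n p$'' in the sense of Proposition~\ref{p:combinatorialmonodromy} is precisely the class of $N^n(p_{\text{Vol}})$ in $\on{gr}^W_{-n}$ of the relevant graded piece, tracking carefully how $N$ shifts both the $\mathscr I$-adic and weight filtrations and confirming that deeper-weight contributions do not interfere. Related care is needed to confirm that the monodromy operator entering condition (3) --- defined via the homotopy exact sequence and the induced sections of Definition~\ref{d:inducedsections} --- is the same operator whose combinatorial formula is recorded in Proposition~\ref{p:combinatorialmonodromy}, and that the graded leading term is insensitive to the choice of sections. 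Once this compatibility is pinned down, the remaining steps are formal consequences of the results already established.
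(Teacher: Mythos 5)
Your proposal is sound in outline, but it is worth being precise about what the paper actually does, because the two arguments are organized differently. The paper never proves this proposition as such: existence and uniqueness of $p_{\text{Vol}}$ are recalled as implicit in Vologodsky's work, and the paper's own contribution is the subsequent theorem that the Frobenius-invariant lift $p$ of the combinatorial canonical path satisfies (1)--(3). That proof runs: when $(X,M)$ has no punctures, $W_{-n-1}=\mathscr{I}^{n+1}$ and Corollary~\ref{c:vanishingmonodromy} gives $N^np\in\mathscr{I}^{n+1}$ directly; in general one pushes forward along $f\colon (X,M)\to (X,M')$ (punctures replaced by smooth points), computes $f_*(N^np)=N^n p'\equiv 0\bmod\mathscr{I}^{n+1}$ by the punctureless case, and concludes $N^np\in W_{-n-1}$ from $N^np\in M_{-2n}$ together with Corollary~\ref{c:weightsubmodule}. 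The point of this functoriality detour is precisely to avoid the step you flag as your main obstacle: the paper never needs to identify the class of $N^np$ in $M_{-2n}(\mathscr{I}^n/\mathscr{I}^{n+1})\cong\left(H_1^{\diamond}(\Gamma)^{\otimes n}\right)^\vee$ with the functional $\eta_1\otimes\cdots\otimes\eta_n\mapsto n!\cint_{\overline{p}}\eta_1\dots\eta_n$; it only needs membership statements checkable by acting on isocrystals, plus naturality of the identification under $f_*$.

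Your route, by contrast, needs that identification in both directions of your claimed equivalence between condition (3) and the vanishing of $\cint_{\overline{p}}\eta_1\dots\eta_n$ for $\eta_i\in\Omega^1(\overline{\Gamma})$, and as written this is the one substantive step you do not supply. It is a genuine but fillable gap: for $n=1$ it is the definition of the pairing $\mathscr{I}/\mathscr{I}^2\cong H^1_{\rig}((X,M))^\vee$ composed with the residue map, and for higher $n$ it follows by unwinding Proposition~\ref{p:monodromyidentification} (whose isomorphism is built from multiplication and the $n=1$ case) against Proposition~\ref{p:combinatorialmonodromy}, using that an element of $M_{-2}\mathscr{I}$ acts on successive graded pieces of a unipotent isocrystal only through the residues of the extension classes. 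What your approach buys in exchange is real: the forward direction of the equivalence, combined with the perfect pairing of Proposition~\ref{p:perfectpairing} on $\overline{\Gamma}$ and the reduction of conditions (1)--(2) via Corollary~\ref{c:liftinggroupoidmodule}, yields \emph{uniqueness} of $p_{\text{Vol}}$, which the paper does not prove but inherits from Vologodsky; your argument, once the compatibility is written out, is therefore self-contained where the paper's is not. If you want the most economical hybrid, keep your uniqueness argument and replace your existence argument by the paper's punctureless-plus-functoriality reduction, which requires no extra bookkeeping.
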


In this section, we will connect our definition with Vologodsky's. This will involve studying the action of the monodromy operator on paths in $\Pi((X,M);F_a,F_b)$.

\begin{definition}
Let $((X,M_X),(X,M_X),(\cP,L))$ be a proper log smooth frame where $(X,M_X)$ is a weak log curve.  Let $\cE$ be a unipotent isocrystal on $(X,M)$
equipped with a filtration
\[\cE=\cE^0\supset \cE^1\supset \dots \supset \cE^{n+1}=0\]
where we pick isomorphisms $\cE^i/\cE^{i+1}\cong \mathbf{1}^{\oplus n_i}$.
Evaluate $\cE$ on the frame to obtain a unipotent vector bundle with integrable connection $(E,\nabla)$ on $]X[_{\cP}$ with filtration $\{E^i\}$. The induced unipotent connection on $E^i/E^{i+2}$ can be interpreted as a matrix of $1$-forms
\[\omega_{i+1}\in\Omega^1\otimes \on{Hom}(E^i/E^{i+1},E^{i+1}/E^{i+2})\cong\Omega^1\otimes \on{Hom}(\cO^{\oplus n_i},\cO^{\oplus n_{i+1}}).\]
For each puncture, node, or annular point $(x,\overline{\M})\to (X,M)$ corresponding to an edge $e$ in the log dual graph,  by \cite[Section~2]{Coleman:RLC}  there is a residue $\eta_i(e)$ 
\[\eta_i(e)=\Res_{(x,\overline{\M})}(\omega_i)\in \on{Hom}(K^{\oplus n_i},K^{\oplus n_{i+1}}).\]
This residue can also be computed as the coefficient of $\frac{dx_1}{x_1}$ as in Section~\ref{ss:monodromynode}. 
By the residue theorem \cite[Proposition~4.3]{Coleman:RLC}, for each component $\overline{a}\in V(\Gamma)$,
\[\sum_{(x,\overline{\M})}\Res_{(x,\overline{\M})}(\omega_i)=0\]
where the sum is over punctures, nodes, and annular points anchored on $\overline{a}$. Consequently, $\eta_i$ is a matrix of tropical $1$-form 
\[\eta_i\in \Omega^1(\Gamma)\otimes \on{Hom}(K^{n_i},K^{n_{i+1}}).\]
The {\em tropical multiform} attached to the above data is the composition
\[\eta_1\dots\eta_n\in\Omega^1(\Gamma)^{\otimes n}\otimes \on{Hom}(K^{n_0},K^{n_n}).\]
\end{definition}

%, and thus for any path $\overline{p}$ in $\Gamma$,
%\[\int_{\overline{p}} \eta_1\dots\eta_n \in \on{Hom}(K^{n_0},K^{n_n}).

The following is a generalization of the combinatorial description of the monodromy pairing as in Proposition~\ref{p:monodromycohomology}.

\begin{proposition} \label{p:combinatorialmonodromy}
Let $F_a, F_b$ be fiber functors on $(X,M_X)$ attached to log points and anchored at components $\overline{a},\overline{b}\in V(\Gamma)$. Let $p$ be the Frobenius-invariant lift of some $\overline{p}\in\Pi(\Gamma;\overline{a},\overline{b})$. Let $\cE$ be a unipotent isocrystal of index of unipotency $n$. Then,
\begin{enumerate}
    \item the linear transformation $N^kp$  factors as 
\[N^kp\colon F_a(\cE)\to F_a(\cE^0/\cE^{n+1-k})\to F_b(\cE^k)\to F_b(\cE);\]
    \item for $n=k$, the linear map $N^np\colon K^{n_0}=F_a(\cE^0/\cE^1)\to F_b(\cE^n)=K^{n_n}$ in the above factorization is given by multiplication by
$n!\left(\cint_{\overline{p}} \eta_1\dots\eta_n\right)$
where $\eta_1\dots\eta_n$ is the tropical multiform attached to $\cE$.
\end{enumerate}
\end{proposition}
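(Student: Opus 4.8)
The plan is to reduce the statement about paths in $(X,M)$ to the combinatorial setting via the specialization map, and to track the monodromy operator through this reduction using the explicit residue description established for nodal log points. The key inputs are Lemma~\ref{l:monodromyannulus} and Lemma~\ref{l:monodromyresidue} (which compute $N$ at a single node in terms of the residue $\Res_0(\nabla)$), Proposition~\ref{prop:Frob-invariants} (identifying Frobenius-invariant paths with paths in $\Gamma$), the monodromy derivation property, and the block-upper-triangular structure of combinatorial iterated integration from Proposition~\ref{c:uppertriangular}. The factorization in part (1) should follow from weight considerations: the monodromy operator $N$ strictly decreases the weight filtration (i.e.,~$N$ maps $W_{-j}$ into $W_{-j-2}$), and since $\cE$ has index of unipotency $n$, applying $N^k$ must land in $F_b(\cE^k)$ and annihilate everything below $\cE^{n+1-k}$ in the source. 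I would make this precise using the identification of graded pieces with $\left(H_1^\diamond(\Gamma)^{\otimes n}\right)^\vee$ from Proposition~\ref{p:monodromyidentification} together with Corollary~\ref{c:weightsubmodule}.

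First I would establish part (1). Because $\cE$ is an iterated extension, $p$ acts as a block-lower-triangular automorphism with respect to the filtration, and $N$ acts as the derivation $N\delta = -(s_{1*}N_0)i_*\delta + i_*\delta(s_{2*}N_0)$. The crucial point is that the monodromy operator shifts the filtration down by exactly one step: the residue data $\eta_i$ lives in $\on{Hom}(K^{n_i},K^{n_{i+1}})$, so a single application of $N$ moves $F_a(\cE/\cE^1)$ into the next graded piece. Iterating, $N^k$ can only connect $F_a(\cE^0/\cE^{n+1-k})$ to $F_b(\cE^k)$, which gives the claimed factorization. I would verify this by reducing to the case of a path along a single edge using the concatenation property and the derivation property of $N$, where the computation is governed by Lemma~\ref{l:monodromyannulus}.

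For part (2), the heart of the matter, I would combine the edge-by-edge computation with the combinatorial concatenation formula. Along a single edge $e$, Lemma~\ref{l:monodromyannulus} and Lemma~\ref{l:monodromyresidue} show that $N$ acts via multiplication by the residue matrix $\eta_i(e)$, and the factor of $1/r!$ in the definition of $\cint_e$ matches the powers of $\ell$ appearing in the nodal computation (Property~(\ref{i:annularexample}) of Theorem~\ref{t:bcprops}). For a general path $\overline{p}$, I would write $\overline{p}$ as a concatenation of edges, use the Leibniz rule for $N^n$ applied to a product of $n$ monodromy-sensitive factors, and match the resulting sum against the combinatorial concatenation formula \eqref{i:combconcatenation}. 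Specifically, when $k=n$ equals the index of unipotency, only the top-degree term survives, and the $n!$ arises because $N^n$ distributes over the $n$ edges via the multinomial expansion, exactly as in Proposition~\ref{c:uppertriangular} where $\cint_{(p_1-1)\cdots(p_n-1)p}$ picks out the product $\left(\cint_{p_1}\eta_1\right)\cdots\left(\cint_{p_n}\eta_n\right)$. The factor $n!$ comes from symmetrizing over the orderings in which the $n$ copies of $N$ hit the $n$ tensor factors $\eta_1,\dots,\eta_n$.

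The main obstacle will be bookkeeping the interaction between the non-commutative composition of the $\on{Hom}(K^{n_i},K^{n_{i+1}})$-valued residues and the commutative combinatorial integral: one must show that the ordered composition $\eta_n\circ\cdots\circ\eta_1$ of residue matrices along the path, weighted by the combinatorial iterated integral, reproduces $n!\cint_{\overline{p}}\eta_1\dots\eta_n$ with the tensor factors in the correct order. I expect this to require carefully setting up the identification between the Frobenius-invariant lift's action on $F_a(\cE^0/\cE^1)\to F_b(\cE^n)$ and the tropical multiform, using that specialization is Frobenius-equivariant (Corollary~\ref{c:liftinggroupoidmodule}) so that the monodromy action on $p$ is computed entirely by the residue data on $\Gamma$. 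A clean way to organize this is to pass to the universal tropical connection—the combinatorial analogue of $\cE^\Omega_r$—and observe that the diagonal term of $N^n$ is precisely the pairing against $\eta_1\dots\eta_n$, with the $n!$ supplied by the symmetrization relation of Proposition~\ref{prop:shuffle-formula}.
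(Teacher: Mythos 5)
Your proposal is correct and follows essentially the same route as the paper's proof: the single-node case via the residue computations of Lemmas~\ref{l:monodromyannulus} and~\ref{l:monodromyresidue}, then an edge-by-edge decomposition of the Frobenius-invariant lift combined with the derivation (Leibniz) property of $N$ and the combinatorial concatenation formula, which is exactly the paper's induction on path length. Two small corrections to keep in mind when fleshing it out: the vanishing of $N$ on within-component subpaths (Lemma~\ref{l:trivialmonodromy}) must be invoked so that only the nodal factors are \emph{monodromy-sensitive}, and the factor $n!$ arises from the $1/n!$ in the definition of $\cint_e$ interacting with the binomial coefficients of the Leibniz rule (the number of edges of $\overline{p}$ is unrelated to $n$, and no appeal to the shuffle relation of Proposition~\ref{prop:shuffle-formula} is needed).
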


\begin{proof}
We first consider the case where  $\overline{p}=e$ is the edge corresponding to a node, and $F_a=F_{\{f_1\}}$, $F_b=F_{\{f_2\}}$ at that node. Since $p=i_{e*}\delta_0$ where $i_{e}\colon (x,\overline{\M}_2)\to (X,M)$ is the inclusion of the node, we may work on $(x,\overline{\M}_2)$.
By Lemma~\ref{l:monodromyresidue}, because $\cE^{n+1-k}$ has index of unipotency $k-1$, $N^k\delta_0$ is vanishes on $F_{\{f_1\}}(\cE^{n+1-k})$. Consequently, $N^k\delta_0$ factors through $F_{f_1}(\cE/\cE^{n+1-k})$.
Similarly, because $N^k\delta_0\colon F_{\{f_1\}}(\cE/\cE^k)\to F_{\{f_2\}}(\cE/\cE^k)$ is zero, $N^k\delta_0$ factors through $F_{\{f_2\}}(\cE^k)$.
Suppose $n=k$. By Lemma~\ref{l:monodromyresidue}, on $F_{\{f_1\}}(\cE)$, $N^n\delta_0$ is given by multiplication by \[\Res_{(x,\overline{\M}_2)}(\omega_1)\dots\Res_{(x,\overline{\M}_2)}(\omega_n)=\int_e \eta_1\dots\eta_n.\]

In the general case, it suffices to prove the result for $\overline{p}\in\pi_1(\Gamma;a,b)$.
We induct on the length of the path $\overline{p}$. Consider the case where $\overline{p}$ is the constant path at a vertex $\overline{a}=\overline{b}$. Let $(X_a,M_a)$ be the component of $X$ corresponding to the vertex $a$ equipped with the log structure induced from $X$. Then, the points corresponding to nodes on $X$ become annular points on $(X_a,M_a)$. There is a natural log morphism $i\colon (X_a,M_a)\to (X,M)$ and  fiber functors $F'_a$ and $F'_b$ on $(X_a,M_a)$ such that $F_a=i_*F'_a$ and $F_b=i_*F'_b$. Moreover, by the functoriality of the Frobenius action, the Frobenius-invariant path $p'\in\pi_1^{\rig,\un}((X_a,M_a);F'_a,F'_b)$ lifting $\overline{p}$ satisfies $i_*p'=p$. By Lemma~\ref{l:trivialmonodromy} applied to $(X_a,M_a)$, $Np'=0$. By the functoriality of monodromy, $Np=0$.

Now, write $\overline{p}=\overline{p}_1e$ where $e=\overline{v}\overline{b}$ is an edge and $\overline{p}_1$ is shorter than $\overline{p}$. Let $F_{\{f_1\}}$ (resp.~$F_{\{f_2\}}$) be the fiber functor anchored at $\overline{v}$ (resp.~$\overline{b}$) corresponding to the node indexed by $e$. 
Then $p=p_1(i_{e*}\delta_0)q$ where $p_1,\delta_0,q$ are Frobenius-invariant paths with
\begin{enumerate}
    \item $p_1\in\Pi((X,M);F_a,F_{\{f_1\}})_{\ell}^\varphi$ lifting $\overline{p}_1$,
    \item $\delta_0\in\Pi((x,\overline{\M}_2);F_{\{f_1\}},F_{\{f_2\}})_{\ell}^\varphi$ and 
    $i_e\colon (x,\overline{\M}_2)\to (X,M)$ parameterizing the node corresponding to $e$, and
    \item $q\in\Pi((X,M);F_{\{f_2\}},F_b)_{\ell}^\varphi$ lifting the constant path at $\overline{b}$.
\end{enumerate}
%where $(x,\overline{M}_2)$ is the nodal point corresponding to $e$.
Since $Nq=0$, 
\[N^kp=\sum_{i=0}^k \binom{k}{i} (N^ip_1)(N^{k-i}\delta_{0})q.\]
Now, we may factor $N^ip_1$ and $N^{k-i}\delta_0$ as follows:
%\[\xymatrix{
%    N^ip_1\colon F(\sE)\ar[r]&F(\sE/\sE^{n+1-i})\ar[r]&F^e_{\{f_1\}}(\sE^i)\ar[r]&F^e_{\{f_1\}}(\sE)\\
%    
\[    N^ip_1\colon F_a(\cE/\cE^{n+1-(k-i)})\to F_a(\cE/\cE^{n+1-k})\to F_{\{f_1\}}(\cE^i/\cE^{n+1-(k-i)})\to F_{\{f_1\}}(\cE/\cE^{n+1-(k-i)})\]
%\]
\[
    N^{k-i}\delta_{0}\colon F_{\{f_1\}}(\cE^i)\to F_{\{f_1\}}(\cE^i/\cE^{n+1-(k-i)}) \to F_{\{f_2\}}(\cE^k)\to F_{\{f_2\}}(\cE).
\]
Therefore, $N^kp$ factors as described.

When $n=k$, by induction, the middle arrow in the first row above, $F_a(\cE/\cE^{n+1-i})\to F_{\{f_1\}}(\cE^i/\cE^{i+1})$
acts as multiplication by
$i!\cint_{\overline{p}_1}\nu_1\dots\nu_i.$
By the special case, the middle arrow in the second row, $F_{\{f_1\}}(\cE^i/\cE^{i+1})\to F_{\{f_2\}}(\cE^n)$
acts as multiplication by
\[\nu_{i+1}(e)\dots\nu_n(e)=(n-i)!\cint_e \nu_{i+1}\dots\nu_n.\]
The conclusion follows from the concatenation formula for combinatorial iterated integrals.

%\[p=p_0q_0p_1q_1\dots p_{m-1}q_{m-1}p_m\]
%where each factor is a Frobenius invariant path
%\begin{enumerate}
%    \item $q_i\in\pi_1^{\rig,\un}((X,M);F^{e_i}_{f_1},F^{e_i}_%{f_2})$
%    where $F^e_{f_i}$ are the fiber functors on the log %point $(\overline{X}_2,\overline{M}_2)$ corresponding to %$e_i$,
%    \item $p_0$ is pushed forward from  %$\pi_1^{\rig,\un}((X_{a_0},M_{a_0});F,F^{e_0}_{f_1})$ 
%    \item For $1\leq  i\leq m-1$, $p_i$
%    is pushed forward from %$\pi_1^{\rig,\un}(X_{a_i};F^{e_{i-1}}_{f_2},F^{e_{i}}_{f_1})$
%    \item $p_m$ is pushed forward from
%    $\pi_1^{\rig,\un}((X_{a_m},M_{a_0});F^{e_{m-1}}_{f_2},G)$ 
%\end{enumerate}
\end{proof}

We expect this formula for $N^np$ to extend to arbitrary elements $p\in\Pi((X,M_X);F_a,F_b)$ by a generalized Seifert--van Kampen theorem \cite{Stix:svk} to decompose elements of $\pi^{\rig,\un}_1((X,M_X);F_a,F_b)$ into paths in components and nodal points.
We have the following conclusion for proper log curves without punctures:

\begin{corollary} \label{c:vanishingmonodromy}
Let $(X,M)$ be a proper log curve without punctures. Let $F_a,F_b$ be fiber 
functors attached to log points anchored at $\overline{a},\overline{b}\in V(\Gamma)$. 
Let $p\in \pi_1^{\rig,\un}((X,M);F_a,F_b)_\ell^\varphi$ be the Frobenius-invariant 
lift of the canonical path 
$\overline{p}_{\overline{a}\overline{b}}\in\pi_1^{\un}(\Gamma;\overline{a},\overline{b})$. 
For $n\in\N$, $N^np\colon F_a(\cE)\to F_b(\cE)$ is the zero map for all unipotent isocrystals of index $\cE$ of unipotency less than or equal to $n$.
\end{corollary}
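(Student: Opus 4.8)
The plan is to deduce Corollary~\ref{c:vanishingmonodromy} directly from Proposition~\ref{p:combinatorialmonodromy} together with the vanishing of the combinatorial canonical path against tropical $1$-forms. First I would observe that since $(X,M)$ is a proper log curve without punctures, its log dual graph $\Gamma$ is proper, so that $\Omega^1(\Gamma) = \Omega^1(\overline{\Gamma})$ and the canonical path $\overline{p}_{\overline{a}\overline{b}}$ of Proposition~\ref{p:canonicalpath} satisfies $\cint_{\overline{p}_{\overline{a}\overline{b}}}\eta_1\dots\eta_n = 0$ for all $n \geq 1$ and all tropical $1$-forms $\eta_i \in \Omega^1(\Gamma)$. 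This is precisely property~(2) of the canonical path.

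Next I would apply Proposition~\ref{p:combinatorialmonodromy} to the Frobenius-invariant lift $p$ of $\overline{p}_{\overline{a}\overline{b}}$. Let $\cE$ be a unipotent overconvergent isocrystal of index of unipotency at most $n$. I treat two cases. If the index is exactly $n$, part~(2) of the Proposition identifies $N^n p\colon F_a(\cE^0/\cE^1) \to F_b(\cE^n)$ with multiplication by $n!\left(\cint_{\overline{p}_{\overline{a}\overline{b}}} \eta_1\dots\eta_n\right)$, where $\eta_1\dots\eta_n$ is the tropical multiform attached to $\cE$. Since the combinatorial integral of the canonical path against any collection of tropical $1$-forms vanishes, this is the zero map; and by part~(1), $N^n p$ on all of $\cE$ factors through this map, hence $N^n p = 0$. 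If the index of unipotency is strictly less than $n$, then the filtration terminates earlier: writing the index as $m < n$, the factorization in part~(1) of the Proposition routes $N^n p$ through $F_b(\cE^n)$, but $\cE^n = 0$ because the length-$m$ filtration has $\cE^{m+1} = 0$ and $m+1 \leq n$. Thus $N^n p = 0$ trivially in this case.

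The only subtlety I anticipate is matching the convention on ``index of unipotency'' in the statement of Proposition~\ref{p:combinatorialmonodromy} — which is stated there for $\cE$ of index exactly $n=k$ in part~(2) — against the corollary's hypothesis that the index is \emph{at most} $n$. I would handle this by noting that an isocrystal of index of unipotency $m \le n$ can be regarded as filtered with $\cE^{m+1} = \dots = \cE^{n+1} = 0$, so part~(1) of the Proposition still applies verbatim and forces $N^n p$ to factor through the vanishing graded piece $F_b(\cE^n)$. Combining the two cases, $N^n p$ is the zero map on $F_a(\cE)$ for every unipotent overconvergent isocrystal of index of unipotency at most $n$, which is exactly the assertion. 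The main work — indeed essentially all of it — has already been done in establishing Proposition~\ref{p:combinatorialmonodromy} and the vanishing property of the canonical path; the corollary is a direct specialization, so the writeup should be short.
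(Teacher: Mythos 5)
Your proposal is correct and is exactly the argument the paper intends: the corollary is stated as an immediate consequence of Proposition~\ref{p:combinatorialmonodromy} combined with the defining vanishing property of the combinatorial canonical path (Proposition~\ref{p:canonicalpath}), which applies to all of $\Omega^1(\Gamma)$ precisely because properness and the absence of punctures make $\Gamma$ a proper graph. Your two extra observations — entry-wise vanishing of the matrix-valued combinatorial integral by multilinearity, and padding the filtration with zeros to handle index of unipotency strictly less than $n$ — are just explicit versions of what the paper leaves implicit, and both are sound.
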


%\begin{remark}
%The converse of the above corollary is true as a consequence of Proposition~\ref{p:monodromyidentification}.
%\end{remark}

\begin{theorem} Let $(X,M)$ be a proper log curve. Then, Vologodsky integration satisfies Vologodsky's characterization.
\end{theorem}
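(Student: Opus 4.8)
The plan is to show that the Frobenius-invariant lift $p$ of the combinatorial canonical path $\overline{p}_{\overline{a}\overline{b}}$ satisfies exactly the three defining properties of Vologodsky's path $p_{\on{Vol}}$, so that $p = p_{\on{Vol}}$ by uniqueness and hence our Berkovich--Coleman integral along $\overline{p}_{\overline{a}\overline{b}}$ agrees with Vologodsky's integral. Writing $p$ for the Frobenius-invariant lift, the first property $p \equiv 1 \bmod \mathscr{I}_a$ is immediate since every path specializes to $1$ in degree zero, and the second property $\varphi(p) = p$ holds by construction (Proposition~\ref{prop:Frob-invariants} produces $p$ precisely as a Frobenius-invariant element). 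The real content is the third property: $N^r(p) \in W_{-r-1}$ for all $r > 0$.

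First I would reduce the monodromy condition to a statement about the truncations $\Pi((X,M);F_a,F_b)/\mathscr{I}_a^{n+1}$, since the weight filtration and the monodromy operator are both defined compatibly with these truncations. Fix $r > 0$ and work modulo $\mathscr{I}_a^{n+1}$. The key tool is Proposition~\ref{p:combinatorialmonodromy}, which computes the action of $N^n$ on the top graded piece in terms of the combinatorial iterated integral $n!\,\cint_{\overline{p}}\eta_1\dots\eta_n$ of the tropical multiform attached to an isocrystal. By the defining property of the combinatorial canonical path (Proposition~\ref{p:canonicalpath}), we have $\cint_{\overline{p}_{\overline{a}\overline{b}}}\eta_1\dots\eta_n = 0$ for all tuples of tropical $1$-forms and all $n \geq 1$. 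Combined with the factorization statement in part~(1) of Proposition~\ref{p:combinatorialmonodromy} and Corollary~\ref{c:vanishingmonodromy}, this shows that the extreme component of $N^r(p)$ in the appropriate graded piece vanishes. The plan is then to interpret this vanishing via Proposition~\ref{p:monodromyidentification} and Corollary~\ref{c:weightsubmodule}: these identify $M_{-2n}(\mathscr{I}^n/\mathscr{I}^{n+1})$ with $(H_1^\diamond(\Gamma)^{\otimes n})^\vee$ and pin down the submodule $W_{-n-1}$ as the kernel of $(\varrho^*)^\vee$. The vanishing of the combinatorial integrals against forms supported on the core $\overline{\Gamma}$ is exactly what places $N^r(p)$ into this kernel, hence into $W_{-r-1}$.

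The main obstacle I anticipate is translating the combinatorial vanishing statement (which Proposition~\ref{p:combinatorialmonodromy} gives cleanly only on the \emph{top} graded quotient, i.e.\ for isocrystals of index of unipotency exactly equal to the relevant $n$) into a statement about $N^r(p)$ landing in $W_{-r-1}$ for \emph{all} $r$ simultaneously and across all weights. One must be careful about the interaction of the weight filtration with the monodromy filtration: the condition $N^r(p_{\on{Vol}}) \in W_{-r-1}$ is a statement about the full element, not just its image in a single associated graded. My plan to handle this is to argue by induction on $n$ using the multiplicativity of the weight filtration (Section~\ref{ss:weightfiltration}) together with the weight-monodromy compatibility of Theorem~\ref{t:weightmonodromy}, reducing the general weight condition to the extremal graded computation that Proposition~\ref{p:combinatorialmonodromy} provides. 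Once property~(3) is verified in this fashion, uniqueness of $p_{\on{Vol}}$ forces $p = p_{\on{Vol}}$, and the identity
\[
\Vint_a^b \omega_1\dots\omega_r = \int_{p,a}^b \omega_1\dots\omega_r = \BCint_{\overline{p}_{\overline{a}\overline{b}},a}^b \omega_1\dots\omega_r
\]
follows directly from the definitions of the two integrals, completing the proof.
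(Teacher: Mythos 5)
Your skeleton matches the paper's: properties (1) and (2) of Vologodsky's characterization are immediate, and the substance is property (3), attacked through Proposition~\ref{p:combinatorialmonodromy}, the vanishing property of the combinatorial canonical path, and Corollary~\ref{c:weightsubmodule}. But there is a genuine gap at the central step. You assert, citing Proposition~\ref{p:canonicalpath}, that $\cint_{\overline{p}_{\overline{a}\overline{b}}}\eta_1\dots\eta_n=0$ for \emph{all} tropical $1$-forms on $\Gamma$. Proposition~\ref{p:canonicalpath} applies only to \emph{proper} graphs. A proper log curve may have punctures, and each puncture contributes a half-open edge to $\Gamma$, so $\Gamma$ need not be proper; in that case the canonical path is \emph{defined} as the pushforward of the canonical path of $\overline{\Gamma}$, and its iterated integrals are only known to vanish against forms in $\Omega^1(\overline{\Gamma})\subset\Omega^1(\Gamma)$, not against all of $\Omega^1(\Gamma)$ (harmonicity on $\Gamma$ lets flow escape through half-open edges, so a form in $\Omega^1(\Gamma)$ restricted to the closed edges need not lie in $\Omega^1(\overline{\Gamma})$). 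For the same reason your appeal to Corollary~\ref{c:vanishingmonodromy} is unjustified in general: that corollary is stated only for proper log curves \emph{without punctures}. This case distinction is exactly why the paper's proof has two steps: when there are no punctures, $W_{-n-1}=\mathscr{I}^{n+1}$ and Corollary~\ref{c:vanishingmonodromy} finishes immediately; when there are punctures, the paper pushes forward along $f\colon(X,M)\to(X,M')$ (replacing punctures by smooth points), uses functoriality of $\varphi$, $N$, and the specialization map to get $f_*(N^np)=N^n p'\equiv 0\bmod \mathscr{I}^{n+1}$ on $(X,M')$, and only then invokes Corollary~\ref{c:weightsubmodule} to conclude $N^np\in W_{-n-1}$.

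Your closing sentence --- that vanishing against forms supported on the core $\overline{\Gamma}$ places the class of $N^r(p)$ in $\ker((\varrho^*)^\vee)$ --- is the correct statement and is morally the paper's argument, so the gap is repairable; but running it directly on $(X,M)$ as you propose also requires justifying that, under the identification of Proposition~\ref{p:monodromyidentification}, the class of $N^np$ in $M_{-2n}(\mathscr{I}^n/\mathscr{I}^{n+1})\cong\left(H_1^{\diamond}(\Gamma)^{\otimes n}\right)^\vee$ is precisely the combinatorial-integration functional appearing in Proposition~\ref{p:combinatorialmonodromy}; the paper's $f_*$-reduction is designed to avoid exactly that bookkeeping. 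Finally, your worry about passing from associated-graded statements to the condition on the full element does not require an induction through Theorem~\ref{t:weightmonodromy}: since $W_{-1}=\mathscr{I}$ and the weight filtration is multiplicative, one has $\mathscr{I}^{n+1}\subseteq W_{-n-1}$, so knowing that the class of $N^np$ modulo $\mathscr{I}^{n+1}$ lies in the image of $W_{-n-1}$ (together with $N^np\in M_{-2n}\cap\mathscr{I}^n$) already yields $N^np\in W_{-n-1}$.
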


\begin{proof}
Write $p\in \pi_1^{\rig,\un}((X,M);F_a,F_b)_{\ell}^\varphi$ for the Frobenius-invariant lift of the combinatorial canonical path $\overline{p}_{\overline{a}{b}}$. It clearly satisfies the first two properties of Vologodsky's characterization.

 If $(X,M)$ does not have any punctures, the third property is a consequence of Corollary~\ref{c:vanishingmonodromy} as 
 \[N^np=0 \bmod  W_{-n-1}=\mathscr{I}^{n+1}.\]
 Now, let $(X,M')$ be obtained from $(X,M)$ by replacing punctures by smooth points as in Remark~\ref{r:modifylogstructure}. Let $f\colon (X,M)\to (X,M')$ be the canonical morphism which induces $\varrho\colon \Gamma_X\to \overline{\Gamma}_X$ as in Remark~\ref{r:contracttocore}.  
 %The fiber functors $F_a,F_b$ are pushforwards by $f_*$ of fiber functors on $(X,M)$ with which they will be identified.
 Write $p'\in \pi_1^{\rig,\un}((X,M');f_*F_a,f_*F_b)^\varphi_{\ell}$ for the Frobenius-invariant lift of the canonical path from $\overline{a}$ to $\overline{b}$ on $\overline{\Gamma}_X$.
 By the functoriality of the Frobenius action and the specialization map, $f_*p=p'$.
 Because in $\Pi((X,M');F_a,F_b)_{\ell}$,
 \[f_*(N^np)=N^n(f_*p)=N^np'=0 \bmod \mathscr{I}^{n+1},\]
 and $N^np\in M_{-2n}\Pi((X,M);F_a,F_b)_{\ell}$, we can conclude
 $N^np\in W_{-n-1}$ from Corollary~\ref{c:weightsubmodule}.
\end{proof}

%\begin{definition}Let $\Omega^1_{]X[_{\cP}}
%=\Gamma(j^\dagger\cO_{]Y[_\cP}\otimes\Omega^1_{(]Y[_\cP,L)/(S^{\an},N)})$. 
%Let $F, G$ be fiber functors on $(X,M_X)$ attached to log %points and anchored at components %$\overline{a},\overline{b}\in V(\Gamma)$. Then for %$\eta_1,\dots,\eta_n\in\Omega^1(\Gamma)$, the 
%{\em monodromy integral} is
%\[\Mint_{F,\overline{p}}^G %\eta_1\dots\eta_n=\frac{1}{n!}\int_{N^np} %\eta_1\dots\eta_n\]
%where $p$ is the Frobenius-invariant path lifting $\overline{p}$.
%\end{definition}

\subsection{A formula for Vologodsky integrals}
We can give a description of Vologodsky integrals in terms of periods, i.e., Berkovich--Coleman integrals along closed loops. 
%The algorithmic consequences of the following description will be explored in future work of the first-named author with Enis Kaya. 
Some of these arguments emerged out of a discussion with Amnon Besser. The approach using completed tensor algebras was inspired by \cite{Chen:BHformula}.

Let $F_a,F_b$ be a fiber functors attached to log points anchored at $\overline{a},\overline{b}\in V(\Gamma)$ equipped with lifts $a,b$. Combinatorial and Berkovich--Coleman integration induces linear maps to completed tensor algebras:
\[    C_{\overline{a}}^{\overline{b}}\colon \Pi(\Gamma;\overline{a},\overline{b})\to \hat{T}(\Omega^1(\overline{\Gamma})^\vee),\quad
    \BC_a^b\colon \Pi(\Gamma;\overline{a},\overline{b})\to \hat{T}((\Omega^1)^\vee\otimes K[\ell]).
\]
In other words, for $\overline{p}\in\Pi(\Gamma;\overline{a},\overline{b})$, write $C_{\overline{a}}^{\overline{b}}(\overline{p})\in \hat{T}(\Omega^1(\overline{\Gamma})^\vee)$ for the element which when evaluated on $\eta_1\otimes\dots\otimes\eta_n$ gives $\cint_{\overline{p}}\eta_1\dots\eta_n$ and similarly for $\BC_a^b(\gamma)$. As a consequence of concatenation formulas, these maps are groupoid homomorphisms in that
\[C_{\overline{a}}^{\overline{c}}(\overline{p}\overline{q})=C_{\overline{a}}^{\overline{b}}(\overline{p})C_{\overline{b}}^{\overline{c}}(\overline{q}),\quad 
\BC_a^c(\overline{p}\overline{q})=\BC_a^b(\overline{p})\BC_b^c(\overline{q})\]
for $\overline{p}\in\Pi(\Gamma;\overline{a},\overline{b})$ and $\overline{q}\in\Pi(\Gamma;\overline{b},\overline{c})$.
Similarly, we may write $V_a^b\in \hat{T}((\Omega^1)^\vee\otimes K[\ell])$ for Vologodsky integration.
%the element giving
%\[\omega_1\otimes\dots\otimes \omega_n\mapsto \Vint_a^b\omega_1\dots\omega_n.\]
The map $C_{\overline{a}}^{\overline{a}}$ is full-rank as a map between the respective truncations by Proposition~\ref{p:perfectpairing}. For $\overline{p}\in\pi_1(\Gamma;\overline{a},\overline{b})$, $BC_a^b(\overline{p})$ is an invertible element of $\hat{T}((\Omega^1)^\vee\otimes K[\ell])$.

\begin{theorem} \label{t:vologodskyformula}
Let $F_a, F_b$ be fiber functors on $(X,M_X)$ attached to log points anchored at components $\overline{a},\overline{b}\in V(\Gamma)$ and equipped with lifts $a,b$.
Let $\overline{p}$ be a path in $\Gamma$ from $\overline{a}$ to $\overline{b}$. Then,
\[V_{\overline{a}}^{\overline{b}}=\BC_{a,a}\left(
(C_{\overline{a}}^{\overline{a}})^{-1}\left(\frac{1}{C_{\overline{a}}^{\overline{b}}(\overline{p})}\right)\right)\BC_{a,b}(\overline{p})\]
where the reciprocal is taken in $\hat{T}(\Omega^1(\overline{\Gamma})^\vee)$.
\end{theorem}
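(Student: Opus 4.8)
The plan is to reduce the formula to the defining characterization of the combinatorial canonical path together with the groupoid-homomorphism property of Berkovich--Coleman integration. Recall that by definition $V_{\overline{a}}^{\overline{b}}=\BC_{a,b}(\overline{p}_{\overline{a}\overline{b}})$, where $\overline{p}_{\overline{a}\overline{b}}\in\Pi(\Gamma;\overline{a},\overline{b})$ is the combinatorial canonical path of Proposition~\ref{p:canonicalpath}. Translating its characterization into the language of the map $C_{\overline{a}}^{\overline{b}}$, the path $\overline{p}_{\overline{a}\overline{b}}$ is the unique element of $\Pi(\Gamma;\overline{a},\overline{b})$ with $C_{\overline{a}}^{\overline{b}}(\overline{p}_{\overline{a}\overline{b}})=1$, the unit of $\hat{T}(\Omega^1(\Gamma)^\vee)$; uniqueness here is exactly the injectivity of $C_{\overline{a}}^{\overline{b}}$, which follows from the perfect pairing of Proposition~\ref{p:perfectpairing}.

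First I would verify that the element in the formula is well defined. Since $\overline{p}$ is a genuine path, its degree-zero combinatorial integral $\cint_{\overline{p}}\varnothing=1$, so $C_{\overline{a}}^{\overline{b}}(\overline{p})$ has invertible constant term and its reciprocal $1/C_{\overline{a}}^{\overline{b}}(\overline{p})$ exists in $\hat{T}(\Omega^1(\Gamma)^\vee)$, computed by the geometric series on the augmentation ideal. Because $C_{\overline{a}}^{\overline{a}}$ is an isomorphism of $\Pi(\Gamma;\overline{a},\overline{a})$ onto $\hat{T}(\Omega^1(\Gamma)^\vee)$ (again Proposition~\ref{p:perfectpairing}), the loop
\[\overline{q}\coloneqq (C_{\overline{a}}^{\overline{a}})^{-1}\!\left(\frac{1}{C_{\overline{a}}^{\overline{b}}(\overline{p})}\right)\in\Pi(\Gamma;\overline{a},\overline{a})\]
is well defined. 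The crux is then the identification $\overline{q}\,\overline{p}=\overline{p}_{\overline{a}\overline{b}}$. Using that $C$ is a groupoid homomorphism, I compute
\[C_{\overline{a}}^{\overline{b}}(\overline{q}\,\overline{p})=C_{\overline{a}}^{\overline{a}}(\overline{q})\,C_{\overline{a}}^{\overline{b}}(\overline{p})=\frac{1}{C_{\overline{a}}^{\overline{b}}(\overline{p})}\,C_{\overline{a}}^{\overline{b}}(\overline{p})=1,\]
and by the uniqueness recorded above this forces $\overline{q}\,\overline{p}=\overline{p}_{\overline{a}\overline{b}}$.

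Finally, applying the Berkovich--Coleman map and its groupoid-homomorphism property to this equality yields
\[V_{\overline{a}}^{\overline{b}}=\BC_{a,b}(\overline{p}_{\overline{a}\overline{b}})=\BC_{a,b}(\overline{q}\,\overline{p})=\BC_{a,a}(\overline{q})\,\BC_{a,b}(\overline{p}),\]
which is the desired formula. The argument is essentially formal once the inputs are assembled, so I expect the only point genuinely requiring care is the bookkeeping of invertibility: that $C_{\overline{a}}^{\overline{a}}$ is an \emph{isomorphism} (not merely injective), so that $(C_{\overline{a}}^{\overline{a}})^{-1}$ makes sense on all of $\hat{T}(\Omega^1(\Gamma)^\vee)$, and that the reciprocal $1/C_{\overline{a}}^{\overline{b}}(\overline{p})$ indeed lands in its image. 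Both reduce to Proposition~\ref{p:perfectpairing} together with the fact that $\overline{p}$ has constant term $1$, and I would also remark that the two-sided inverse in the completed (noncommutative) tensor algebra coincides with the stated reciprocal, so the left-factor condition $C_{\overline{a}}^{\overline{a}}(\overline{q})\,C_{\overline{a}}^{\overline{b}}(\overline{p})=1$ is met.
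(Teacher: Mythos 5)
Your proposal is correct and follows essentially the same route as the paper's proof: both form the loop $\overline{q}=(C_{\overline{a}}^{\overline{a}})^{-1}\bigl(1/C_{\overline{a}}^{\overline{b}}(\overline{p})\bigr)$, use the groupoid-homomorphism property of $C$ to get $C_{\overline{a}}^{\overline{b}}(\overline{q}\,\overline{p})=1$, identify $\overline{q}\,\overline{p}$ with the combinatorial canonical path via its characterization, and then apply $\BC$ together with the concatenation formula. Your extra bookkeeping (existence of the reciprocal for an element with constant term $1$, surjectivity of $C_{\overline{a}}^{\overline{a}}$ from Proposition~\ref{p:perfectpairing}, and the two-sided nature of the inverse in the noncommutative completed tensor algebra) only makes explicit what the paper leaves implicit.
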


\begin{proof}
Because 
\[C_{\overline{a}}^{\overline{b}}\left((C_{\overline{a}}^{\overline{a}})^{-1}\left(\frac{1}{C_{\overline{a}}^{\overline{b}}(\overline{p})}\right)\overline{p}\right)
=
C_{\overline{a}}^{\overline{a}}\left((C_{\overline{a}}^{\overline{a}})^{-1}\left(\frac{1}{C_{\overline{a}}^{\overline{b}}(\overline{p})}\right)\right)C_{\overline{a}}^{\overline{b}}(\overline{p})
=
1,\]
$(C_{\overline{a}}^{\overline{a}})^{-1}\left(\frac{1}{C_{\overline{a}}^{\overline{b}}(\overline{p})}\right)\overline{p}$ is the combinatorial canonical path from $a$ to $b$. We apply $\BC$ to it to obtain the Vologodsky integral, and then use the concatenation formula.
\end{proof}

By writing
\begin{equation} \label{eq:canonicalformula} \frac{1}{C_{\overline{a}}^{\overline{b}}(\overline{p})}=\frac{1}{1+(C_{\overline{a}}^{\overline{b}}(\overline{p})-1)}=1-(C_{\overline{a}}^{\overline{b}}(\overline{p})-1)+(C_{\overline{a}}^{\overline{b}}(\overline{p})-1)^2+\dots,
\end{equation}
the above theorem gives a way to compute Vologodsky integrals, which in the case of single integrals becomes the following:

\begin{corollary} \label{c:vsingleint}
Let $F_a, F_b$ be fiber functors on $(X,M_X)$ attached to log points anchored at components $\overline{a},\overline{b}\in V(\Gamma)$ and equipped with lifts $a, b$.
Let $\overline{C}_1,\dots,\overline{C}_h\in H_1(\Gamma;K)$ and 
$\eta_1,\dots,\eta_h\in\Omega^1(\Gamma)$ be dual 
bases with respect to single combinatorial 
integration. Let 
$\overline{\gamma}_1,\dots,\overline{\gamma}_h\in\pi_1^{\un}(\Gamma,a)$ be loops whose homology classes are 
$\overline{C}_1,\dots,\overline{C}_h$, respectively.
Pick a path 
$\overline{p}$ in $\Gamma$ from $\overline{a}$ to $\overline{b}$.
Then,
\[\Vint_a^b\omega=\BCint_{\overline{p},a}^b\omega-\sum_i \left(\BCint_{\overline{\gamma}_i,a}^a \omega\right)\left(\cint_{\overline{p}} \eta_i\right)\]
for every $\omega\in\Omega^1$.
\end{corollary}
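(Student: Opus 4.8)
The plan is to specialize Theorem~\ref{t:vologodskyformula} to length one and extract the homology class of the ``correction loop.'' Write $q = (C_{\overline{a}}^{\overline{a}})^{-1}\!\left(1/C_{\overline{a}}^{\overline{b}}(\overline{p})\right)\in\Pi(\Gamma;\overline{a},\overline{a})$, so that, by the proof of Theorem~\ref{t:vologodskyformula}, the product $q\,\overline{p}$ is the combinatorial canonical path $\overline{p}_{\overline{a}\overline{b}}$ and hence $\Vint_a^b\omega=\BCint_{q\overline{p},a}^b\omega$. First I would record that $\epsilon(q)=1$: the degree-zero part of $1/C_{\overline{a}}^{\overline{b}}(\overline{p})$ is $1$ because $C_{\overline{a}}^{\overline{b}}(\overline{p})$ has constant term $\cint_{\overline{p}}\varnothing=1$, while the degree-zero part of $C_{\overline{a}}^{\overline{a}}(q)$ is $\cint_q\varnothing=\epsilon(q)$. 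Since also $\epsilon(\overline{p})=1$, the length-one case of the concatenation formula (Proposition~\ref{p:concatenation}, or Theorem~\ref{t:bcprops}\,(\ref{i:concatenation})) gives
\[
\Vint_a^b\omega=\epsilon(q)\,\BCint_{\overline{p},a}^b\omega+\BCint_{q,a}^a\omega\,\epsilon(\overline{p})=\BCint_{\overline{p},a}^b\omega+\BCint_{q,a}^a\omega.
\]

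Next I would identify the homology class of $q$. Comparing degree-one parts in the identity $C_{\overline{a}}^{\overline{a}}(q)=1/C_{\overline{a}}^{\overline{b}}(\overline{p})$ and expanding $1/(1+x)=1-x+\cdots$ as in \eqref{eq:canonicalformula}, the length-one combinatorial integrals of $q$ satisfy $\cint_q\eta=-\cint_{\overline{p}}\eta$ for every $\eta\in\Omega^1(\Gamma)$. The length-one combinatorial integral factors through the image $[q]\in\mathscr{I}_{\overline{a}}/\mathscr{I}_{\overline{a}}^2\cong H_1(\Gamma;K)$, and since $\{\overline{C}_i\}$, $\{\eta_i\}$ are dual bases with respect to single combinatorial integration, this pins down
\[
[q]=\sum_i\bigl(\cint_q\eta_i\bigr)\overline{C}_i=-\sum_i\bigl(\cint_{\overline{p}}\eta_i\bigr)\overline{C}_i.
\]

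Finally I would transport this to Berkovich--Coleman integration. For a fixed $\omega$, the assignment $\overline{\gamma}\mapsto\BCint_{\overline{\gamma},a}^a\omega$ is additive under concatenation (the length-one case of Theorem~\ref{t:bcprops}\,(\ref{i:concatenation})), hence defines a homomorphism $\pi_1(\Gamma,\overline{a})\to(K[\ell],+)$ that factors through $H_1(\Gamma;K)$; equivalently, viewed as a functional on $\Pi(\Gamma;\overline{a},\overline{a})$, single Berkovich--Coleman integration annihilates the constants and $\mathscr{I}_{\overline{a}}^2$, so it depends only on $[q]$. Linearity together with $[\overline{\gamma}_i]=\overline{C}_i$ then yields $\BCint_{q,a}^a\omega=-\sum_i(\cint_{\overline{p}}\eta_i)\,\BCint_{\overline{\gamma}_i,a}^a\omega$, and substituting into the displayed formula for $\Vint_a^b\omega$ gives the claim. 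The main obstacle is exactly this last justification that single Berkovich--Coleman integration descends to $H_1(\Gamma;K)$, i.e.\ that it is insensitive to the $\mathscr{I}_{\overline{a}}^2$-part of the path; this is the abelian nature of single integrals, and it is what lets the length-one truncation interact cleanly with the combinatorial duality, everything else being a bookkeeping of degree-zero and degree-one components.
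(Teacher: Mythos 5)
Your proposal is correct and follows essentially the same route as the paper: both specialize Theorem~\ref{t:vologodskyformula}, identify the correction term $(C_{\overline{a}}^{\overline{a}})^{-1}\bigl(1/C_{\overline{a}}^{\overline{b}}(\overline{p})\bigr)$ modulo $\mathscr{I}_{\overline{a}}^2$ as $1-\sum_i\bigl(\cint_{\overline{p}}\eta_i\bigr)(\overline{C}_i-1)$ using the dual bases, and then apply Berkovich--Coleman integration and the length-one concatenation formula. The paper compresses into one displayed truncation what you spell out as the three steps ($\epsilon(q)=1$, the homology class of $q$, and the descent of single integration to $H_1(\Gamma;K)$), but the underlying argument is identical.
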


\begin{proof}
In the truncation $\Pi(\Gamma,\overline{a})/\mathscr{I}_{\overline{a}}^2$, we have
\[(C_{\overline{a}}^{\overline{a}})^{-1}(1/C_{\overline{a}}^{\overline{b}}(\overline{p}))=1-\sum_i \left(\cint_{\overline{p}}\eta_i\right)\left(\overline{C}_i-1\right).\]
We obtain the conclusion by applying $\BC_a^a$, multiplying on the right by $\BC_a^b(\overline{p})$, and evaluating on $\omega$.
\end{proof}

This Corollary allows us to identify Vologodsky integrals with abelian integrals defined through the Lie group structure on the Jacobian of a curve \cite{Zarhin1996}. Indeed, the formula for abelian integrals in terms of Berkovich--Coleman integrals proved in \cite{KRZB} and rephrased as \cite[Theorem~3.16]{KatzKaya} is identical to the above formula.

\begin{example} \label{ex:vexample}
Let $(X,M)$ be as in Example~\ref{e:longertatecurve}. Let $E=\C_p^*/\pi^{m\Z}$ and $\nu=\frac{dz}{z}.$ Identify $\Gamma$ with $\mathbb{R}/m\Z$ with vertices at every integer.
Let $F_a$ and $F_b$ be fiber functors attached to smooth points $a,b\in E(K)$ which are anchored at $\overline{a},\overline{b}\in V(\Gamma)$.
Choose $\tilde{\overline{a}},\tilde{\overline{b}}\in\Z$ with reduction mod $m$ equal to $\overline{a},\overline{b}$, respectively. Let $\overline{p}$ be the path in $\Gamma$ whose lift to $\tilde{\Gamma}=\R$ linearly interpolates from $\tilde{\overline{a}}$ to $\tilde{\overline{b}}$.
Lift $a,b$ to $\tilde{a},\tilde{b}\in \C_p^*$ with $\val(\tilde{a})=\tilde{\overline{a}}\val(\pi)$ and $\val(\tilde{b})=\tilde{\overline{b}}\val(\pi)$. Then,
$\BCint_{\overline{p},a}^b \nu=\Log(\tilde{b})-\Log(\tilde{a})$
where $\Log$ is the branch with $\Log(\pi)=\ell$. For the tropical $1$-form $\eta=dt$ on $\Gamma$, $\cint_{\overline{p}} \eta = \tilde{\overline{b}}-\tilde{\overline{a}}.$
For the closed loop $\overline{\gamma}$ from $\overline{a}$ to $\overline{a}+m$ in $\Gamma$, $\BCint_{\overline{\gamma},a}^a \nu=m\ell.$
Therefore, by Corollary~\ref{c:vsingleint}
\[\Vint_a^b \nu=\Log(\tilde{b})-\Log(\tilde{a})-\ell(\tilde{\overline{b}}-\tilde{\overline{a}}).\]
%Observe that this integral is independent of the choice of $\tilde{\overline{a}}, \tilde{\overline{b}}$.
By the symmetrization formula,
\[\Vint_a^b \nu^n=\frac{1}{n!}\left(\Log(\tilde{b})-\Log(\tilde{a})-\ell(\tilde{\overline{b}}-\tilde{\overline{a}})\right)^n.\]
\end{example}

\bibliographystyle{plain}
\bibliography{references}
\end{document}